\newtheorem {theorem}{Theorem}[section]
\newtheorem {definition}{Definition}[section]
\newtheorem {proposition}[theorem]{Proposition}
\newtheorem {lemma}[theorem]{Lemma}
\newtheorem {corollary}[theorem]{Corollary}
\newtheorem {assumption}[theorem]{Assumption}
\newcounter{conjecture}\setcounter{conjecture}{1}
\newcounter{remark}\setcounter{remark}{1}
\newcommand{\eqnsection}{
   \renewcommand{\theequation}{\thesection.\arabic{equation}}
   \makeatletter
   \csname @addtoreset\endcsname{equation}{section}
   \makeatother}
\def \be{\begin{equation}}
\def \ee{\end{equation}} 
\def \bt{\begin{theorem}}
\def \et{\end{theorem}}
\def \bea{\begin{eqnarray}}
\def \eea{\end{eqnarray}}
\def \bas{\begin{eqnarray*}}
\def \eas{\end{eqnarray*}}
\def \bl{\begin{lemma}} 
\def \el{\end{lemma}}
\def \al{\alpha}
\def \bb{\beta}
\def \ga{\gamma}
\def \Ga{\Gamma}
\def \de{\delta}
\def \De{\Delta}
\def \ep{\epsilon}
\newcommand{\eps}{\varepsilon}
\def \la{\lambda}
\def \ka{\kappa}
\def \si{\sigma}
\def \th{\theta}
\def \ze{\zeta}
\def \ff{\infty}
\def \wh{\widehat}
\def \wt{\widetilde}
\def \rar{\rightarrow}
\newcommand{\ls}[1]
   {\dimen0=\fontdimen6\the\font \lineskip=#1\dimen0
\advance\lineskip.5\fontdimen5\the\font \advance\lineskip-\dimen0
\lineskiplimit=.9\lineskip \baselineskip=\lineskip
\advance\baselineskip\dimen0 \normallineskip\lineskip
\normallineskiplimit\lineskiplimit \normalbaselineskip\baselineskip
\ignorespaces }
\def \R{{\bf R}}
\def \Z{{\bf Z}}
\def \T{{\bf T}}
\def \S{{\bf S}}
\def \AA{{\mathcal A}}
\def \BB{{\mathcal B}}
\def \CC{{\mathcal C}}
\def \GG{{\mathcal G}}
\def \HH{{\mathcal H}}
\def \II{{\mathcal I}}
\def \JJ{{\mathcal J}}
\def \KK{{\mathcal K}}
\def \LL{{\mathcal L}}
\def \PP{{\mathcal P}}
\def \UU{{\mathcal U}}
\def \VV{{\mathcal V}}
\def \WW{{\mathcal W}}
\def \Qb{{\mathbb Q}}
\def \Iyz{\II_{y,z}}
\def \Iyzp{\II_{y',z}}
\def \Iyzh{\widehat\II_{y,z}}
\def \Iyzhp{\widehat\II_{y',z}}
\def \Iyzhpk{\widehat\II_{y', k, z}}
\def \({\left(}
\def \){\right)}
\def \lc{\left\{}
\def \rc{\right\}}
\def \nn{\nonumber}
\def \bc{\begin{center} }
\def \ec{\end{center} }
\newcommand{\beq}[1]{\begin{equation}\label{#1}}
\newcommand{\eeq}{\end{equation}}
\newcommand{\E}{{\Bbb E}}
\newcommand{\beqn}[1]{\begin{eqnarray}\label{#1}}
\newcommand{\eeqn}{\end{eqnarray}}
\def\squarebox#1{\hbox to #1{\hfill\vbox to #1{\vfill}}}
\renewcommand{\qed}{\hspace*{\fill}
            \vbox{\hrule\hbox{\vrule\squarebox{.667em}\vrule}\hrule}\smallskip}
\newcommand{\beaa}{\begin{eqnarray*}}
\newcommand{\eeaa}{\end{eqnarray*}}
\def \rzero{ r_{0} }
\def \npp{0}
\newcommand{\Pbm}{\mathbb{P}}
\newcommand{\Ebm}{\mathbb{E}}
\newcommand{\Pgw}{P^{\mbox{\fontsize{.03in}{1in}\rm GW}}}
\newcommand{\hit}{H}
\newcommand{\BM}{W}
\newcommand{\BMS}{X}
\newcommand{\dwa}{d^1_{\mbox{\fontsize{.03in}{1in}\rm Wa}}}
\newcommand{\gykm}{\GG_{y,k}(N_{k,a})}
\newcommand{\trav}[7]{\ensuremath{T_{#5,#6\to #7}^{#2,#4\overset{#1}{\to}#3} }}
\newcommand{\travshort}[7]{\trav{#1}{#2}{#3}{#4}{#5}{#6}{#7}}
\def\corJ{}
\def\corO{}
\title[Tightness for Cover Times]
{Tightness for the Cover Time of the two dimensional sphere}
\author[David Belius\;\; Jay Rosen\;\; Ofer Zeitouni]
{David Belius\;\;  Jay Rosen\;\; 
Ofer Zeitouni}
\date{January 18, 2018. Revised April 14, 2019, July 29, 2019,
November 15, 2019 and January 28, 2020}
\thanks{Jay Rosen was partially supported by the Simons Foundation.}
\thanks{This project has received funding from the European Research Council (ERC) under the European Union's Horizon 2020 research and innovation programme (grant agreement No. 692452).}
\subjclass[2010]{60J65}
 \keywords{Cover time. Two dimensional sphere. Barrier estimates.}
\begin{document}

\begin{abstract}
  Let $\CC^\ast_{\ep,\S^2}$ denote the cover 
  time of the two dimensional sphere by
a Wiener sausage of radius $\ep$. We prove that 
$$\sqrt{\CC^{\ast}_{\ep,\S^2} } 
-\sqrt{\frac{2A_{\S^2}}{\pi}}\(\log \ep^{-1}-\frac14\log\log \ep^{-1}\)$$ is tight, where $A_{\S^2}=4\pi$ denotes the Riemannian area of $\S^2$.   
\end{abstract}
\maketitle

\section{Introduction}
Let $M$ be a smooth, compact, connected,
two-dimensional Riemannian manifold without boundary. For each $x\in M$ let
$\CC_{x,\ep,M}$ be  the amount of time needed for the
 Brownian motion to come within (Riemannian) distance
$\ep$ of $x$. 
Then  $\CC^{\ast}_{\ep,M}=\sup_{x}\CC_{x,\ep,M}$ is the 
$\ep$-cover time of $M$.
It is shown in \cite[Theorem 1.3]{DPRZ} that
\begin{equation}
  \lim_{\eps\rar 0}\frac{\CC^{\ast}_{\ep,M}}{
\left(\log \ep\right)^2}=\frac{2A_{M}}{\pi}  \quad a.s.,
\label{mp.10}
\end{equation} 
where $A_{M}$ denotes the Riemannian area of $M$. 
For the special case where $M$ is the two dimensional torus $\T^2$
with $A_{\T^2}=1$,
\cite{BK} showed that the rescaled cover time 
$\CC^{\ast}_{\ep,\T^2}/(\frac{1}{\pi} \log \epsilon^{-1})$
has a log-log correction term:
\begin{equation}
  \label{eq-BK1.2}
  \frac{\CC^{\ast}_{\ep,\T^2}}{ \frac{1}{\pi} \log \epsilon^{-1} } = 
  2 \log \ep^{-1} - \log\log \ep^{-1} + o(\log\log \ep^{-1}),
\end{equation}
see \cite[(1.2)]{BK}. (An analogue of 
\eqref{eq-BK1.2} for the cover time
of the discrete
torus by simple random walk was recently obtained in \cite{Abe}.)
Note that, with
$c^*_{M}=\sqrt{\frac{2A_{M}}{\pi}}$ and
\begin{equation}
  \label{eq-meps}
  m_{\ep,M}=c^*_M\left(\log \ep^{-1}-\frac{1}{4}
  \log \log \epsilon^{-1}\right),
\end{equation}
\eqref{eq-BK1.2} can also be written as
\begin{equation}
  \label{eq-BK1.2bis}
  \sqrt{\CC^{\ast}_{\eps,\T^2}}- m_{\ep,\T^2}
  =o(\log\log \epsilon^{-1}).
\end{equation}

\subsection{Tightness of cover time}
In spite of recent progress concerning the study of the maximum of various correlated fields, see Section \ref{Background} for details,
improving on \eqref{eq-BK1.2} has remained elusive.
Our goal in this paper is to improve on 
\eqref{mp.10} and 
\eqref{eq-BK1.2} by  proving tightness, 
in the case of the standard two dimensional sphere, $M=\S^2$, where
$A_{\S^2}=4\pi$ and $c^*_{\S^2}=2\sqrt{2}$. (We comment below on our choice
of working with $M=\S^2$.)
%
%
%
Let $\Pbm^x$ denote the probability measure for  
Brownian motion on the sphere starting at $x$, and 
whenever probabilities do not depend on the starting point $x$
of the Brownian motion, we write $\Pbm$ instead of $\Pbm^x$.
We use $B_{d}\(   a,r\)$ denote the ball in $\S^2$ centered at $a$ of radius $r$ in the standard metric $d$ for $\S^2$, see Section \ref{sec-isot} and \eqref{eq-Bdxr}.
Our main result reads as follows.
\begin{theorem}
  \label{theo-tightness}
The sequence of random variables
$$ \sqrt{\CC^{\ast}_{\eps,\S^2}}- m_{\eps,\S^2} $$
is tight. More explicitly, 
\begin{equation}
  \label{eq-tight}
  \lim_{K\to\infty}\limsup_{\ep\to 0}\Pbm\left(
  \left|\sqrt{\CC^{\ast}_{\eps,\S^2}}- m_{\eps,\S^2}\right|>K\right)=0.
\end{equation}

In addition, for any $B_{d}\(   a,r\)\subseteq \S^2$, the same result holds if $\CC^{\ast}_{\eps,\S^2}$ is replaced by $\CC^{\ast}_{\eps,\S^2, B_{d}\(   a,r\)}$, the 
$\ep$-cover time of $B_{d}\(   a,r\)\subseteq \S^2$ by  Brownian motion on $\S^2$.
\end{theorem}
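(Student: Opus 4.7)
The plan is to adapt the multiscale, branching-random-walk (BRW) philosophy that underlies modern results on the extrema of log-correlated fields, which in the cover-time setting has been pushed forward on the torus in \cite{BK}. The sphere $\S^2$ is chosen because its conformal symmetry makes the local geometry around every point identical: after passing to an isothermal coordinate chart at an arbitrary base point $x$, the local picture reduces to the standard one on a planar disc with no boundary adjustments per base point. The fundamental quantity is the number $N_k(x)$ of Brownian excursions between the concentric geodesic discs $B_d(x,e^{-(k-1)})$ and $B_d(x,e^{-k})$ up to time $t$, for $k=1,\dots,n$ with $n=\log\eps^{-1}$. The process $k\mapsto N_k(x)$ behaves, up to controlled drifts, like a random walk with nearly Gaussian increments of variance $O(1)$ per scale, and non-coverage of $x$ to within $\eps$ by time $t$ translates into a ballot/barrier-type event after recentering around the deterministic decay.

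For the upper bound $\Pbm(\sqrt{\CC^{\ast}_{\eps,\S^2}}-m_{\eps,\S^2}>K)\le Ce^{-cK}$ I would run a first-moment argument on an $\eps$-net $\NN_\eps\subset\S^2$ of cardinality $O(\eps^{-2})$: if the cover time exceeds $(m_{\eps,\S^2}+K)^2$, at least one $x\in\NN_\eps$ must remain unvisited. For each such $x$ the non-coverage probability is bounded by a \emph{one-sided barrier estimate}, namely the probability that the scale-indexed excursion walk of length $n$ stays above a negative line. After extracting the Gaussian cost $\eps^{2}(\log\eps^{-1})^{1/4}$, this barrier probability is of order $e^{-cK}/\sqrt{n}$. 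The factor $(\log\eps^{-1})^{1/4}$ is exactly what the $-\tfrac{1}{4}\log\log\eps^{-1}$ correction in $m_{\eps,\S^2}$ produces, and combined with the ballot factor $1/\sqrt{n}$ and the union bound over $\NN_\eps$, the total bound is $O(e^{-cK}/(\log\eps^{-1})^{1/4})\to 0$.

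The lower bound $\Pbm(\sqrt{\CC^{\ast}_{\eps,\S^2}}-m_{\eps,\S^2}<-K)\le o_K(1)$ is the harder half and proceeds via a truncated second-moment/Paley--Zygmund argument. I would fix a coarse net and define $\LL_\eps$ as the random set of net points whose excursion-count process stays above an appropriately chosen truncation at every scale, at the time $(m_{\eps,\S^2}-K)^2$. The matching lower half of the barrier estimate gives $\Ebm[|\LL_\eps|]\to\infty$ as $K\to\infty$. The decisive input is the pair bound $\Ebm[|\LL_\eps|^2]\le C\,\Ebm[|\LL_\eps|]^2$: the contribution of a pair $(x,y)$ is split at the common-ancestor scale $k^\ast(x,y)\approx\log d(x,y)^{-1}$, where the two excursion-count processes decouple into nearly independent walks, and the joint survival probability factors as a one-sided barrier on the common trunk times two truncated two-sided barriers on the branches.

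The main obstacle is establishing the barrier estimates with sufficient sharpness for the $-\tfrac{1}{4}\log\log\eps^{-1}$ correction to emerge cleanly: the per-scale discrepancy between the excursion counts and an exact Gaussian walk must be controlled uniformly, and in the two-point case one must handle the ``split'' barrier without incurring iterated-log losses that would overwhelm the $O(1)$ fluctuation window. Once these estimates are secured, the extension to a geodesic disc $B_d(a,r)\subseteq\S^2$ follows by the same strategy: restricting $\NN_\eps$ to the disc only changes constants, while localising the excursion construction to $B_d(a,r)$, with a small boundary buffer handled by coupling or a strong Markov argument, preserves the barrier estimates because they are local in nature.
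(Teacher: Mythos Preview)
Your high-level architecture — first moment with barrier for the right tail, truncated second moment for the left tail — is the same as the paper's, and your identification of the excursion-count process $k\mapsto T_k^{y}$ as the right object is correct. However, two genuine gaps remain, and they correspond precisely to the paper's two main technical innovations.

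\textbf{Right tail (your ``upper bound'').} A union bound over $\NN_\eps$ combined with a barrier estimate handles only the event $\{T_L^{y,t_z}=0\}\cap\{\sqrt{2T_l^{y,t_z}}\ge \alpha(l)\ \forall l\}$; you still must control $\Pbm\big(\exists\,y\in F_L,\ \exists\,l<L:\ \sqrt{2T_l^{y,t_z}}<\alpha(l)\big)$. You cannot union-bound this over $|F_L|\asymp e^{2L}$ points, because the single-point probability at scale $l$ decays like $e^{-2l}$, not $e^{-2L}$. In the genuine BRW/tree setting one replaces $F_L$ by $F_l$ at scale $l$; on a manifold this requires knowing that $T_l^{y,t}$ and $T_l^{y',t}$ are close when $d(y,y')\ll h_l$. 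In \cite{BK} this was done by deterministic sandwiching of annuli, at a cost that is too large here. The paper instead develops \emph{probabilistic continuity estimates} (Section~\ref{sec-continest}): the key is Lemma~\ref{lem-equalmean}, which uses the rotational symmetry of $\S^2$ to show that $\Ebm^{\mu}(A_{x,1}-B_{y,1})=0$ exactly, enabling sub-Gaussian concentration (Lemma~\ref{lem-cont1}) and a chaining argument. This is not a routine step and is the main reason the paper restricts to $\S^2$; your proposal does not address it. (Separately, your bookkeeping $\eps^2(\log\eps^{-1})^{1/4}\cdot n^{-1/2}$ is off: the barrier gains a factor $1/L$, the centering contributes a factor $L$, and the union bound yields $(1+z)e^{-2z}$ with no residual power of $\log\eps^{-1}$; cf.\ Lemma~\ref{prop:BarrierSecGWProp}.)

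\textbf{Left tail (your ``lower bound'').} The second-moment scheme is correct, but the statement that at the split scale $k^\ast$ ``the two excursion-count processes decouple into nearly independent walks'' is exactly what fails without further work. The Markov property gives independence only conditionally on the entry/exit points of each excursion at scale $k^\ast$, and Poisson-kernel errors accumulate multiplicatively over $\sim L^2$ excursions. The approach in \cite{BK} (skip $O(\log L)$ layers near $k^\ast$) loses too much when $k^\ast$ is small. The paper's solution (Section~\ref{sec-Decoupling}) is to augment the good event $\Iyz$ with a Wasserstein-$L^1$ control on the empirical angular increments (the events $\WW_{y,k}(N_k)$), then couple the actual excursions to i.i.d.\ ones via Birkhoff's theorem on doubly stochastic matrices (Lemma~\ref{lem-decoup}). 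Without this device — or something equivalent — the two-point bound for early branching ($k\le (\log L)^4$) does not close, and the second moment blows up. Your proposal acknowledges the difficulty (``without incurring iterated-log losses'') but offers no mechanism to avoid it.

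Finally, you will also need a uniform comparison between excursion counts and real time (Theorem~\ref{theo-etrt}), which again rests on the continuity estimates, to pass from the excursion-count tightness to tightness of $\sqrt{\CC_\eps^*}$.
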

Note that \eqref{eq-tight} is equivalent to the statement 
\begin{equation}
  \label{eq-tightbis}
  \lim_{K\to\infty}\limsup_{\ep\to 0}\Pbm\left(
  \left|\frac{\CC^{\ast}_{\eps,\S^2}}{\frac12 (c^*_{\S^2})^2\log \ep^{-1}}-
  \left(2\log \ep^{-1}-\log\log\ep^{-1}\right)\right|>K\right)=0.
\end{equation}
With a slight abuse of language, we use the statement 
\textit{the cover time of $M$ is tight} to mean
either \eqref{eq-tight} or \eqref{eq-tightbis}, with $\S^2$ replaced by $M$.

As in \cite{DPRZ} and subsequent work on finer results including
\cite{BK} and \cite{BRZ},
the key to Theorem \ref{theo-tightness} is obtaining 
good upper and lower bounds on
the right tail of the distribution of the centered cover time (with
possibly different centerings for the upper and lower bounds).
Our main technical
result is in this spirit, this time with precisely the correct centering for both bounds.
\bt\label{theo-spheretight}
On $\S^{2}$, for some $0<C, C',d<\ff$ and all $z\geq 0$,
\begin{equation}
  \limsup_{\ep\to 0}  \Pbm\(\sqrt{\CC^{\ast}_{\ep,\S^2} } -
  m_{\ep,\S^2}
  \geq z\)\leq C  e^{ -\sqrt{2}z+d\sqrt{z}}.\label{goal.1asto}
\end{equation}
and
\begin{equation}
  \liminf_{\ep\to 0}  \Pbm\(\sqrt{\CC^{\ast}_{\ep,\S^2} } 
  -m_{\ep,\S^2}
  \geq z\)
  \geq
  C'  e^{ -\sqrt{2}z-d\sqrt{z}} 
  .\label{goal.1astow}
\end{equation}
\et
We believe, in analogy with \cite{BRZ}, that 
the right side of \eqref{goal.1asto} and \eqref{goal.1astow} should
be multiplied by a factor $(z\vee 1)$ and that
the factor $d\sqrt{z}$ should not be present in the exponent. Obtaining 
such precision is beyond our current method of proof, but would be important 
if one tried to establish a limit law for $\sqrt{\CC^{\ast}_{\ep,\S^2} } -
m_{\ep,\S^2}$. See subsection  \ref{sec-open} below.

We believe that Theorem \ref{theo-spheretight} should extend to other
two-dimensional compact manifolds.\footnote{We expect that
  the case of the torus $\mathbf{T}^2$ could be handled by our methods, 
although we do not address this in the paper.}
 While this is currently
outside our reach\footnote{An earlier version of this article claimed 
  such a result,
based on a faulty reduction from general manifolds to planar Brownion motion.}, the following
may serve as intermediate step.
Let $B_{e}\(   a,r\)$  denote the ball in $\R^2$ centered at 
$a$ of radius $r$.
Let $\corJ{\CC_{\epsilon,P}^*}$ denote the $\epsilon$-cover time of the unit 
disc by planar Brownian motion $W_t$. For $R>0$, set
\begin{equation}
  \label{eq-CR}
  \CC_{\ep,P,R}^*=\int_0^{\CC_{\ep,P}^*}{\bf 1}_{W_t\in B_e(0,R)} dt,
\end{equation}
   \corJ{the amount of time the path needs to spend in $B_e(0,R)$ to come within $\ep$ of each point in the unit 
disc. }
Set 
$c^*_{P,R}=\sqrt{2}R$ and,
in analogy with \eqref{eq-meps}, set
\begin{equation}
  \label{eq-mepsplane}
  m_{\ep,P,R}=c^*_{P,R}\left(\log \ep^{-1}-\frac{1}{4}
  \log \log \epsilon^{-1}\right).
\end{equation}
\begin{corollary}
  \label{cor-plane}
  Let $R\geq 1$. Then the sequence of random variables
  $$ \sqrt{\CC^{\ast}_{\eps,P,R}}- m_{\eps,P,R} $$
is tight.
\end{corollary}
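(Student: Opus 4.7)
The plan is to transfer Theorem \ref{theo-tightness} from the sphere to the plane via stereographic projection and conformal invariance of 2D Brownian motion. Let $\phi : \S^2 \setminus \{N\} \to \R^2$ denote stereographic projection from the north pole, normalized so that $\phi$ sends the closed southern hemisphere $Q$ onto $\overline{B_e(0,1)}$ with conformal factor $\rho(z) = 2/(1+|z|^2)$ (i.e.\ $(\phi^{-1})^* g_{\S^2} = \rho^2 g_{\R^2}$). By conformal invariance, if $X_s$ is Brownian motion on $\S^2$ started at a point of $Q$, then $W_t := \phi(X_{A(t)})$ is planar Brownian motion, where $s = A(t) = \int_0^t \rho(W_u)^2\, du$. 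With $\tau^*_{\ep, P}$ denoting the planar $\ep$-cover time of $B_e(0,1)$, $s^* := A(\tau^*_{\ep, P})$, and $\tilde Q_R := \phi^{-1}(B_e(0, R))$, the substitution $dt = \rho^{-2}\, ds$ gives
\[
\CC_{\ep, P, R}^* = \int_0^{s^*} \one_{X_s \in \tilde Q_R}\, \rho(\phi(X_s))^{-2}\, ds.
\]

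I would first control the stopping time $s^*$. Since $\rho \in [1,2]$ on $\overline{B_e(0,1)}$, a Euclidean $\ep$-ball at $p \in B_e(0,1)$ pulls back approximately to a spherical $\rho(p)\ep$-ball at $\phi^{-1}(p)$, giving (up to $o(\ep)$ corrections in the radii) the sandwich $\CC^*_{2\ep, \S^2, Q} \leq s^* \leq \CC^*_{\ep, \S^2, Q}$. Theorem \ref{theo-tightness} applied to the spherical ball $Q$ gives tightness of $\sqrt{\CC^*_{c\ep, \S^2, Q}} - m_{\ep, \S^2}$ for any fixed $c > 0$ (using that $m_{c\ep, \S^2} = m_{\ep, \S^2} + O(1)$ as $\ep \to 0$), hence $\sqrt{s^*} - m_{\ep, \S^2}$ is tight; in particular $s^* = m_{\ep, \S^2}^2 + O(m_{\ep, \S^2})$ in a tight sense.

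For the occupation-time integral, set $F(q) := \one_{\tilde Q_R}(q)\, \rho(\phi(q))^{-2}$, a bounded function on $\S^2$. Using $dA_{\S^2} = \rho^2\, dz$ under $\phi^{-1}$, its uniform-measure mean on the sphere is
\[
\bar F = \frac{1}{4\pi}\int_{\tilde Q_R} \rho^{-2}\, dA_{\S^2} = \frac{1}{4\pi} \int_{B_e(0, R)} dz = \frac{R^2}{4}.
\]
A CLT for additive functionals of spherical BM --- via the Poisson equation $\frac12\Delta G = F - \bar F$ on $\S^2$, It\^o's formula, and Doob/BDG on the resulting martingale --- should give $\int_0^T F(X_s)\, ds = \bar F T + O(\sqrt T)$ with tight error, uniformly over stopping times $T = O(m_{\ep, \S^2}^2)$. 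Applying this at $T = s^*$ and using the control above yields $\CC_{\ep, P, R}^* = (R^2/4) m_{\ep, \S^2}^2 + O(m_{\ep, \S^2})$. Since $(c^*_{P, R})^2 = 2R^2 = (R^2/4)(c^*_{\S^2})^2$, one has $(R^2/4) m_{\ep, \S^2}^2 = m_{\ep, P, R}^2$, so $\CC^*_{\ep,P,R} = m_{\ep, P, R}^2 + O(m_{\ep, P, R})$ tightly; taking square roots then yields the desired tightness of $\sqrt{\CC^*_{\ep, P, R}} - m_{\ep, P, R}$.

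The hardest step will be the uniform CLT control for the additive functional at the random stopping time $s^*$ with the discontinuous integrand $F$. This requires smoothing $\one_{\tilde Q_R}$ (the boundary $\partial \tilde Q_R$ is a smooth curve polar for 2D BM, so the smoothing introduces negligible error) and using the a priori upper bound $s^* \leq \CC^*_{\ep, \S^2, Q}$ together with Theorem \ref{theo-spheretight} to localize the stopping time within a window of size $O(m_{\ep, \S^2}^2)$ on which martingale maximal inequalities give tight control.
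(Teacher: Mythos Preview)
Your approach is correct and lands on the same constant $a_R=R^2$ (your $\bar F=R^2/4$ is the same computation in different units), but the route differs from the paper's in one structural way. The paper does \emph{not} work directly with the spherical cover time and an additive–functional CLT. Instead, it first extracts from Theorem~\ref{theo-tightness} (for a ball) together with \eqref{clt.3a} the tightness of $\sqrt{2N_L}-\lambda_L$, where $N_L$ is the number of excursions between $\partial B_d(s,h_1)$ and $\partial B_d(s,h_0)$ needed to $h_L$-cover $B_d(s,t)$. Stereographic projection then transfers this to tightness of the planar excursion count $\sqrt{2N_L^P}-\lambda_L$. Finally, $\CC^*_{\ep,P,R}$ is sandwiched by $\sum_{k=1}^{N_L^P}S_k$ and $\sum_{k=1}^{N_L^P-1}S_k$, where $S_k$ is the time spent inside $B_e(0,R)$ during the $k$-th excursion cycle $\partial B_e(0,r_1)\to\partial B_e(0,r_0)\to\partial B_e(0,r_1)$; by rotational invariance these are \emph{i.i.d.}\ with exponential tails and mean $a_R$, so ordinary large-deviation concentration for i.i.d.\ sums finishes the job. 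The mean $a_R$ is then computed from the boundary-value problem $\tfrac12(u''+u'/r)=-\one_{\{r\le R\}}$ on $\{r\ge r_1\}$ with $u(r_1)=0$, $u'(R)=0$, giving $a_R=R^2$.

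Compared with your plan, the paper's detour through excursion counts buys simplicity: once you have i.i.d.\ $S_k$'s, there is no Poisson equation, no smoothing of the indicator, and no need to control a martingale at a random stopping time via maximal inequalities. Your approach has the merit of being more direct---it never leaves ``real time''---but the step you yourself flag (the uniform CLT for the additive functional at $s^*$ with a discontinuous integrand) is genuinely heavier than the paper's i.i.d.\ argument, even if it goes through. If you want to keep your framework, a cleaner variant is to notice that $\int_0^{s^*}\one_{\tilde Q_R}(X_s)\rho(\phi(X_s))^{-2}\,ds$ decomposes over the same excursion cycles into i.i.d.\ pieces (this is exactly what the paper exploits), which lets you avoid the Poisson-equation machinery altogether.
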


In the rest of the paper except for Section \ref{sec-genman} , 
we drop $\S^2$ from the notation and write 
$\CC^\ast_\ep$, $c^*$ instead of
$\CC^\ast_{\ep,\S^2}$, $c^*_{\S^2}$.

\subsection{Methods and limitations}
Theorem  \ref{theo-tight} is the counterpart of our earlier
result \cite[Theorem 1.3]{BRZ}
concerning the cover time of the binary tree of depth $n$ by 
a random walk, and has the same form. In fact, the underlying method 
of proof is, at a high level, similar. 
The uninitiated 
reader may find  it helpful to
read 
\cite[Section 5]{BRZ} prior to going over the details of the proofs 
in this paper. However, the tree 
possesses a certain decoupling property that is not present in $\S^2$, and
for this reason the proof for $\S^2$ is much more intricate. 
Two crucial new ideas, which could be considered as the main innovations of this paper, are needed
in order to handle the manifold case. First, continuity estimates, which provide control of correlations in short scales,
are developed in Section \ref{sec-continest}. This is a point where the argument differs from all previous works, and it is precisely 
in order to derive such estimates that we work with $\S^2$ instead of a general manifold. Note that such issues are not present
for the cover time of the binary tree, discussed in \cite{BRZ}. The second main new idea relates to decoupling at coarse scales, where we introduce and use an $L^1$-Wasserstein distance in order to couple certain dependent Brownian excursions to a collection of independent ones. This is explained below and in Section \ref{sec-Decoupling}.
In the rest of this subsection, we give an outline of our argument.

As in 
\cite{DPRZ} and \cite{BK}, rather than work directly with  
$\CC^\ast_\ep$, 
we work with an object  $t^{\ast}_{L}$ 
which we call the $h_{L}$-cover local excursion time. Here
$h_{l}=2\arctan ( \rzero e^{ -l}/2)$ with $\rzero$ small
(the appearance of $\arctan$ is due to our use of
isothermal coordinates, see
Section \ref{sec-isot} below), 
and 
$L$ is chosen
so that $h_L\sim \ep$, i.e. $L\sim \log(1/\ep)$. 

To define $t_L^*$, let $T_{l}^{x, n }$ be the number of excursions from  $\partial B_{d}\left(x,h_{l-1}\right)$ to $ \partial B_{d}\left(x,h_{l}\right)$ prior 
to completing the $n$'th excursion from  
$\partial B_{d}\left(x,h_{1}\right)$ to $ \partial B_{d}\left(x,h_{0}\right)$. 
The processes $T^{x,n}_l,l\ge0,$ are in fact critical Galton-Watson 
processes with geometric offspring distribution, which
explains why the estimates in \cite{BRZ} are relevant.

Let 
  \begin{equation}
t^{\ast}_{x, L}=\inf  \{n\,| T_{L}^{x,n}\neq 0\},\label{deftast}
 \end{equation}
 the number of excursions from $\partial B_{d}\left(x,h_{1}\right)$ to $ \partial B_{d}\left(x,h_{0}\right)$ needed   for the
 Brownian motion to come within (Riemannian) distance
$h_{L}$ of $x$, and let $F_{l}$ be the centers of an $h_{l}/1000$ minimal cover of $\S^{2}$.
Then
\begin{equation}
  \label{eq-tsar}
  t^{\ast}_{L}:=\sup_{x\in F_L} t^{\ast}_{x, L},
\end{equation}
 can be thought of as a cover time, but with time measured  locally, 
 that is,  in terms of the number of
 excursions from $\partial B_{d}\left(x,h_{1}\right)$ 
 to $ \partial B_{d}\left(x,h_{0}\right)$,
for each $x$. Note that  
\begin{equation}
  \label{eq:ttotz}
  \Big\{t^{\ast}_{ L}> t\Big\}=\Big\{T_{L}^{y,t}=0\mbox{ for some }y\in F_{L}\Big\}. 
\end{equation}

\begin{equation}
\rho_{L}= 2-\frac{\log L}{2L},\label{eq:defofts}
\end{equation}
and 
\begin{equation}
t_{z}=t_{L,z}=\(\rho_{L}L+z\)^{2}/2=L\left(2L-\log L+2z\right)+z^{2}/2+O\(z\log L\).\label{eq:defoftsr}
\end{equation}
In terms of excursion counts, our main result is the following.
\bt\label{theo-tight}
Fix $r_0$ small. On $\S^{2}$,   for some $C,C'<\ff$ and all $z\geq 0$,
\begin{equation}
 \limsup_{L\to\ff} \Pbm\left( T_{L}^{y,t_{z}}=0\mbox{ for some }y\in F_{L}\right)\leq C(1+z)e^{ -2z },\label{goal.1}
\end{equation}
and  
\begin{equation}
 \liminf_{L\to\ff} \Pbm\left( T_{L}^{y,t_{z}}=0\mbox{ for some }y\in F_{L}\right)\geq C'ze^{ -2z}.\label{goal.1L}
\end{equation}
\et
Once Theorem \ref{theo-tight} is established, in order
to relate excursion counts to time, we use the fact that 
there are many excursions at the macroscopic level before
the cover time, and hence a law of large numbers should allow one to 
transfer excursion counts to running time. The actual argument is 
somewhat more complicated, mostly because one is dealing with excursion counts
between circles of different centers, and so continuity considerations are
important - one needs to show that, with high probability,
the function $x\mapsto t^{\ast}_{x, L}$
is essentially continuous. Since the same issue also arises 
in the study of the upper bound \eqref{goal.1}, we discuss it in greater
detail below, and only mention that this is one place where the assumption
that $M=\S^2$ is used crucially.

We now discuss the  proof of the upper bound
\eqref{goal.1}. The basic idea is simple, and 
reminiscent of similar computations done in 
the context of branching random walks, going back to \cite{BM78},
see \cite{ZLN,K13,LP} for a review;
we follow \cite{BK} closely in the
precise mapping of our cover time problem to the language of branching random walk.
Using the fact that 
$T^{x,n}_l$ is a critical geometric Galton-Watson process, 
one may attempt a union bound of the form
$$\Pbm\left( T_{L}^{y,t_{z}}=0\mbox{ for some }y\in F_{L}\right)
\leq \sum_{y\in F_L}
\Pbm\left( T_{L}^{y,t_{z}}=0\right).$$
By standard estimates for Galton--Watson processes, see 
\eqref{eq:NotHitByrLa} below, one sees
that this computation is not sharp enough and would work only if
one decreased the $\log L$  correction term in the definition
of $  \rho_L$. (Indeed, this is what is done in \cite{DPRZ}, which only 
provides
the correct leading order.) 
Instead,  informed by the branching random walk analogy of \cite{BK},
one observes that the process $l \to \sqrt{2T_l^{y,t_{z}}}$ should behave
like a particle in branching random walk, and should therefore satisfy
a barrier condition.
Indeed, by barrier estimates
for geometric
Galton--Watson processes that were derived in our earlier work \cite{BRZ}, 
see Lemma \ref{prop:BarrierSecGWPropUB}, 
the estimate \eqref{goal.1} would follow
by a union bound
if instead of the event  $\{T_{L}^{y,t_{z}}=0\}$, one would
consider the event
$$\{\sqrt{2T_l^{y,t_{z}}}\geq \alpha(l), l=1,\ldots,L-1, T_{L}^{y,t_{z}}=0\}$$
where $\alpha(\cdot)$ is the  barrier
\begin{equation}
\alpha\left(l\right) =  \rho_{L}(L-l)  -
l_{L}^{\ga}, \quad 
l_{L} = (l\wedge (L-l)),\quad \ga=0.4, \;\mbox{\rm $l$ integer}.
\label{eq:AlphaBarrierDefd}
\end{equation}
Thus, it remains to handle the event that there exists $y\in F_L$ for which
$\{T_{L}^{y,t_{z}}=0\}$ \textit{and}
$\{\sqrt{2T_l^{y,t_{z}}}<  \alpha(l)$ for some $l\leq L-1\}$.

It is at this point that the strategy diverges from the 
case of branching random walks: one cannot take a union bound 
over all $y\in F_L$  concerning events at level $l$ - the decay probability
for the event $T_l^{y,t_{z}}<  \alpha(l)^2$ has exponential decay
rate 
roughly $-2l$ but the exponential growth rate of $|F_L|$ is
$2L$. Instead, we must effectively reduce the cardinality 
of points $y$ to consider for events involving scale $l$. 
This point was already present in previous work relating 
extreme problems to extremes of branching random walks, see
e.g. \cite{ABH}, and appears also in the context of cover times
in \cite{BK}. In the latter paper
this problem
was solved 
by allowing a 
margin of error and using a \textit{deterministic} sandwiching of excursions 
between slightly smaller/larger balls to relate $T_l^{y,t_{z}}$ and $T_l^{y',t_{z}}$
for $y,y'$ with $d(y,y')\ll e^{-2l}$.
Here, we cannot afford such errors, and instead resort to a probabilistic 
estimate:  because of the symmetry of the sphere, during one excursion 
between concentric circles $S_1,S_2$ 
centered at a point $x$ started uniformly on the inner circle $S_1$, the 
expected number of excursions between concentric smaller circles 
centered around a point $y$ well inside $S_1$ does not depend on $y$,
see Lemma \ref{lem-equalmean}.
This, together with a chaining argument (which can be traced back, in this context, to
\cite{ABH}),
allows us to obtain concentration bounds,
which we refer to as ``continuity estimates'', that are strong enough
to control the event that the barrier has been crossed by some 
``particle'' at some
intermediate level $l$.

A significant effort has to be invested in the proof of
the lower bound \eqref{goal.1L}. As in \cite{DPRZ,BK,BRZ}, we use
a second moment method. Similarly to
\cite{BK,BRZ}, we apply it 
together with a (linear) barrier, which means that
we need to compute probabilities of events of the form
\begin{equation}
  \label{eq-eqinA}
  \bigcap_{y\in \{y_1,y_2\}}\{\sqrt{2T_l^{y,t_{z}}}\geq  \rho_L (L-l)
  , l=1,\ldots,L-1, T_{L}^{y,t_{z}}=0 \}.
\end{equation}
(Actually, we consider a more complicated notion of \textit{good event}, see
\eqref{eq:TruncatedSummandLBzz}.)
 In the case
of the tree detailed in \cite{BRZ}, there is complete decoupling between
the excursions in different branches of the tree, given the number of excursions
at the edges below the common root of the branches. This exact 
decoupling is not true on $M$. In \cite{DPRZ} and \cite{BK}, this 
obstacle was circumvented by disregarding several levels in the barrier 
(i.e., those layers corresponding roughly to $\log d(y_1,y_2)/2\pm O(\log L)$),
and applying an estimate on the Poisson kernel for Brownian motion.
This results in a loss in the upper bound on the probability of the event in
\eqref{eq-eqinA}, which we cannot afford, especially
when $d(y_1,y_2)$ is relatively large.
Our way to circumvent this issue is to observe that when $l<L/2$,
there are many excursions at level $l$ before the cover time, and hence
the empirical measure of the angle between the starting and ending points
of each excursion approaches the equilibrium measure (in Wasserstein
distance). We then add the event that this Wasserstein distance
is not large to our definition of good event. On the good event, 
we can use Poisson kernel estimates to obtain a good decoupling, sufficient 
for an application of the second moment method, see 
Lemma \ref{lem-decoup}. Here too, working with $M=\S^2$
somewhat simplifies the analysis, but not significantly: the proof of the
lower bound carries over to general compact two-dimensional manifolds, although
we do not carry this out here.

Having invested all this work, transferring the results from $\S^2$ to the plane
is
then straightforward. The result concerning excursion counts extends immediately, by using stereographic projection. The control of cover time (measured only inside the disc $B_e(0,R)$) is then a concentration result. The details are provided in  
Section \ref{sec-genman}.
%
%
%

\subsection{Background}\label{Background}
There is convincing evidence that, at the leading order,
the cover time of graphs and manifolds 
is closely related to extremes of the Gaussian free field  on the same space. 
Perhaps most striking is the sequence 
\cite{DLP,Ding,Zhai}, where it is established that for discrete graphs with
bounded degrees, the 
cover time normalized by the size of the edge set is asymptotic to
a (universal) constant multiple of the (square of) the 
maximum of the Gaussian free field (GFF)
on the graph. 
In dimension $d=2$, the GFF obtained by a discretization of $M$ 
with finer and finer mesh is a
logarithmically correlated centered 
Gaussian field.  In recent years, a theory has 
emerged concerning the leading order of the maximum of such fields
\cite{BDG01}, the tightness of the maximum \cite{BZ,DRZlog}, and even
the fluctuations of the maximum, see
\cite{BDZ,BL,DRZlog}. In particular, one has a log-log 
correction term for the centering of the maximum, and the
centered maximum has tight fluctuations as the mesh-size 
tends to $0$. However, as pointed out already in \cite{DZ} (for the tree) and
\cite{BK} (for the torus), the log-log terms do not match what they are in the
case of the GFF. 
As is clear from the latter papers and 
\cite{BRZ}, the mismatch in the log-log correction term
is not due to a basic difference between the behavior of the
cover time and the associated GFF. Rather, it is because Gaussian random walks
(in the case of maximum of the GFF) are essentially 
replaced by Bessel processes. However, even after this replacement, much work
needs to be done to obtain a properly decoupled tree structure,
and it is precisely this extra step that the current paper addresses.
(Compare with \cite{BRZ}, where cover time results for the tree are obtained in
a relatively straight-forward way from the barrier estimates for
critical geometric Galton Watson processes.)

\subsection{Open problems}
\label{sec-open}
We expect that Theorem \ref{theo-tightness} extends to general smooth two-dimensional compact manifolds. In addition,
   based on the analogy with the extrema of Branching random walks 
    and log-correlated Gaussian fields, one expects that Theorem 
    \ref{theo-tightness} should be replaced by the statement 
    \begin{eqnarray} 
    \label{eq-OP}
    &&\mbox{\em The sequence of random variables 
      $ \sqrt{\CC^{\ast}_{\eps,M}}- m_{\eps,M} $ converges }\nonumber\\
    && \mbox{\em  
    in distribution
  to a randomly shifted Gumbel random variable.}
\end{eqnarray}
As mentioned above, 
 a key step in proving such convergence would be 
the improvement of the tail estimates in Theorem \ref{theo-spheretight} for $z$
large,
which in turn would require a corresponding improvement of Theorem
\ref{theo-tightness}.

A first step in the direction of proving
(\ref{eq-OP}),  
by resolving the analogous question for random walk on the binary tree 
has recently been taken in \cite{oren}, by methods different from those employed in this paper. For a proof based on the methods here,
 see  the forthcoming \cite{amirjay}.

\subsection{Structure of the paper}
Sections \ref{sec-isot}--\ref{sec-etrt} are devoted to a proof of
(\ref{goal.1}). The proof employs barrier estimates from \cite{BRZ} which are 
adapted to our needs in Appendix \ref{sec:BoundaryCrossing}, and
a comparison of excursion counts to excursion times around different 
centers, see 
Theorem \ref{theo-etrt} (whose proof is given in
Section \ref{sec-fromex-to-ct})
for a precise statement. The  comparison
heavily relies on the continuity estimates 
contained in Section \ref{sec-continest}, see Lemma \ref{lem-cont1}.
Theorem \ref{theo-etrt} is used again in Section \ref{sec-fromex-to-ct} 
to show that  
(\ref{goal.1}) implies   (\ref{goal.1asto}), which gives  
one side of \eqref{eq-tight}.  
Section \ref{sec-lowerboundex} is devoted to the proof of the
lower bounds (\ref{goal.1astow}) and \eqref{goal.1L}. 
After quickly showing, using again Theorem \ref{theo-etrt}, that
\eqref{goal.1L} implies \eqref{goal.1astow}, 
the rest of the section and most of the effort 
are devoted to the proof of \eqref{goal.1L};
a key ingredient is Lemma \ref{lem-decoup} (the decoupling lemma), proved 
in subsection \ref{sec-Decoupling}. Equipped with the lemma and 
the barrier estimates of Appendix \ref{sec:BoundaryCrossing}, the argument 
employs a second moment method (of a counting statistic). 
Subsection \ref{subsec-UB}
is devoted to a lower bound
on the first moment of the statistic, while subsections 
\ref{subsec-bulk}-\ref{subsec-veryearly} are devoted to an upper bound
on the second moment, divided into 
cases according to the distance between the points involved.
Finally, Section \ref{sec-genman} is devoted to the proof of Corollary 
\ref{cor-plane}.\\[2em]

\noindent
{\bf Acknowledgement} We thank two anonymous referees for a detailed reading of the paper. We particularly thank one of the referees for raising doubts concerning our reduction 
of the general compact manifold case.

\newpage
\subsection{Index of Notation}
The following are frequently used notation, and a pointer to the location where the definition appears.

$ \begin{array}{ll}
  t^{\ast}_{x, L},t^{\ast}_{L},F_l&\mbox{\eqref{deftast},\eqref{eq-tsar}}\\
\rho_{L}& \mbox{(\ref{eq:defofts})}\\
t_{z}&\mbox{(\ref{eq:defoftsr})}\\
l_{L},\alpha(l) & \mbox{(\ref{eq:AlphaBarrierDefd})}\\
B_{d}(x,r)&\mbox{\rm \eqref{eq-Bdxr}}\\
r_l,h_l&\mbox{\eqref{dr.1}}\\
  T_{x,l}^{y,n}=\trav{n}{y}{r_0}{r_1}{x}{r_{l-1}}{r_l},\trav{n}{x}{\tilde R}{\tilde r}{u}{R}{r}& \mbox{\eqref{eq-calgary}}\\
 T^{x,n}_l= \trav{n}{x}{r_0}{r_1}{x}{r_{l-1}}{r_l} & (\ref{eq-traver1})\\
 \tau_{x}(m)& \mbox{(\ref{eq-taux})}\\
s(z), s_{L}(z)& \mbox{(\ref{clt.2})}\\
I_u& \mbox{\rm \eqref{notation.31}}\\
\tilde c, q_0& \mbox{\rm \eqref{eq-q0c}}\\
\mathcal{A}_{z,d}&\mbox{\rm \eqref{17.0a}}\\
\wh z&  \mbox{\rm 
  \eqref{eq-hatz}}\\
\mathcal{B}_{t,l}&\mbox{\rm \eqref{17.defb}}\\
b(l,L,z,\th)&\mbox{\rm \eqref{eq-bdef}}\\
\mathcal{D}_{0,t,l}(   j)&
\mbox{\rm \eqref{eq-Ddef}}\\
 \mathcal{Q}&
 \mbox{\rm \eqref{defQ}}\\
\mathcal{G}_{l}&
\mbox{\rm   \eqref{defG}}\\
\wt t_{z}&
\mbox{\rm   \eqref{geq: to prove Glf}}\\
\mathcal{C}_{0,  t,l}&
\mbox{\rm \eqref{Cdef}}\\
T_{y,\wt r_{l}}^{0, t}, T_{\wt r_{l}}^{0, t}&
 \eqref{scale2}\\
\mathcal{B}^{ \ga ,k}_{t,l}& \mbox{\rm \eqref{zdek}}\\
\ga(l)& \mbox{(\ref{14.1})}\\
\Iyzh,\Iyz&
\mbox{\rm \eqref{eq:TruncatedSummandLBz},\eqref{eq:TruncatedSummandLBzz}}\\
\WW_{y,k}(n)& \mbox{(\ref{eq:LBTruncatedSumz})}\\
N_{k,a}&\mbox{\rm \eqref{nkdef.2}}\\
\HH_{k,a}&
\mbox{\rm \eqref{hka.1}}\\
k^+,k^{++}& \mbox{\rm \eqref{eq:DefOfKPlus}}\\
\BB_{y,k,L},\KK_{k,p,a}& \mbox{\rm \eqref{eq:JDownnew},\eqref{eq-kpa}}\\
\Bbb{  B}_{y,k+3,k^{ +} }^{j' },\widehat{\mathfrak{B}}_{y,k^{ ++}+1,L}& 
\mbox{\rm \eqref{167.1},\eqref{167.2}}\\
{\mathcal G}_{k^{ +}}^y& \mbox{\rm \eqref{g-sigma}}\\
\Bbb{  B}_{y,k+3,k^{ +}, k^{ ++} }^{j', j'' }&\mbox{\rm \eqref{167.1q}}\\
\VV_{y,k}(   n),\gykm& \mbox{\rm \eqref{eq:LBTruncatedSumz8},\eqref{short}}\\
  \Phi_{k,a }, \mathcal{A}_{N,k}& \mbox{\rm \eqref{was6a},\eqref{12was.0}}\\
  \ka_{a,b}&\mbox{\rm \eqref{kap.1}}
\end{array} $
\section{Isothermal coordinates and notation}
\label{sec-isot}

As explained in \cite[Section 8]{DPRZ}, the existence of a 
smooth isothermal coordinate system in each small disc allows 
us to  transform Brownian motion on $M$ to a time changed   
Brownian motion in the plane. Since hitting probabilities 
do not depend  on a time change, it follows that in these coordinates, 
for $\rho_{1}<\rho_{2}<\rho_{3}$,
\begin{equation}
\Pbm^{v\in\partial B_{e}\left(y,\rho_{2}\right)}\left(\hit_{\partial B_{e}\left(y,\rho_{1}\right)}<\hit_{\partial B_{e}\left(y,\rho_{3}\right)}\right)=
\frac{\log \({ \rho_{2}/ \rho_{3}}\)}{\log \({ \rho_{1}/ \rho_{3}}\)},\label{isc.1}
\end{equation}
where $\hit_{A}$ is the first  hitting time for 
$A$ and  $B_{e}\left(x,r \right)$ is the open Euclidean ball 
of radius $r$ centered at $x$. 

For $\S^2$,
isothermal coordinates are nothing but
stereographic projection. That is, we consider $\S^{2}$ as the unit sphere 
in $\R^{3}$ centered at $(0,0,1)$, so that $\S^{2}$ is tangent to 
$R^{2}:=\R^{2}\times\{0\}\subset \R^{3}$. 
The stereographic projection of $p=(p_{1}, p_{2},p_{3})\in \S^{2}$ is 
the point $\si (p)$ where the line between $p$ and the `North Pole' $\textbf{0}:=(0,0,2)$
intersects $R^{2}$. Thus
\begin{equation}
  \si (p_{1}, p_{2},p_{3})={\frac{(p_{1}, p_{2})}{ 1-p_{3}/2}}.\label{isc.2}
\end{equation}
We will refer to these as the isothermal coordinates centered at $(0,0,0)$. 
It can be shown that $\si$ maps circles into circles or lines.
As shown in \cite[p. 335]{spivak}, the stereographic projection 
 is an isometry if we give $R^{2}$ the metric
 \begin{equation}
   \frac1{(1+{\frac14}(x^{2}+y^{2}))^{2}}\(dx\otimes dx+dy\otimes dy\). \label{itc.3}
 \end{equation}
In this metric the   distance from $(0,0)$ to $(  \rho,0)$ is given by
\begin{equation}
  d((0,0), (  \rho,0))=\int_{0}^{ \rho}{\frac{1}{(1+{\frac14 }x^{2} )} }\,dx
=2\arctan ( \rho/2).\label{itc.4}
\end{equation}
Let
\begin{equation}
  \label{eq-Bdxr}
  B_{d}(x,r)= \{y\in \S^2: d(x,y)<r\}
\end{equation}
denote  the open
ball of radius $r$ in the spherical metric  centered at $x$. Setting
\begin{equation}
h(r)=2\arctan (r/2),\label{hdef}
\end{equation}
we get from (\ref{isc.1}) that 
\begin{equation}
\Pbm^{v\in\partial B_{d}\left(0,h (\rho_{2}) \right)}\left(\hit_{\partial B_{d}\left(0,h (\rho_{1})\right)}<\hit_{\partial B_{d}\left(0,h (\rho_{3})\right)}\right)=
\frac{\log \({ \rho_{2}/ \rho_{3}}\)}{\log \({ \rho_{1}/ \rho_{3}}\)}.
\label{isc.5}
\end{equation}

As in \cite{DPRZ} and \cite{BK}, an 
important role is played by the Poisson kernel  
for $B_{d}(   0, r)\subseteq \S^{ 2}$, denoted
$p_{B_{d}(   0, r)}(   z,x)$.
Indeed, using isothermal coordinates, one readily sees that
  \begin{equation}
p_{B_{d}(0,r)}(   z,x)=\frac{\sin^{ 2} (   r/2)-\sin^{ 2} (   d(  0,z)/2)}{\sin^{ 2} ( d(  z,x)/2)}  \label{poisson}
  \end{equation}
  for $d(0,x)=r$ and $d(0,z)<r$, see \cite[p. 439]{E}. 
\subsection{Notation}
Throughout this paper, unless otherwise stated, \corO{constants $c$, $c_i$ and $C_i$ may change from line to line
but, unless noted otherwise, are universal 
and their value does not depend on additional parameters. Other constants (e.g., $\tilde c$, $q_0$, etc) will be fixed.}
Given $a,b>0$ which may depend on other parameters, we write
$a\asymp b$ if the ratio $a/b$ and $b/a$ are
bounded above. \corO{We write $a\ll b$ if $a/b\to 0$ as function of an implicit parameter, which is always clear from the context.}

Recall that $\Pbm^x$ denotes the probability measure for
Brownian motion
on the sphere started at a point $x$. We let $X_t$ denote the cannonical process
under 
$\Pbm^x$. When probabilities do not depend on $x$, we use $\Pbm$ instead of
$\Pbm^x$.

We need to introduce notation for various hitting times, excursion counts and
excursions times.
For a set $A$ of positive capacity, we let $\hit_A$ denote the hitting time of $A$ by Brownian motion.

We fix \corO{$r_0\in (0,10^{-6})$} small enough so that 
\begin{equation}
  \label{dr.1}
  r_l=r_0e^{-l}\; \mbox{\rm and} \; 
h_l=h(r_l)\;
\mbox{\rm satisfy} \;
0.9\,r_{l} \leq h_{l} \leq r_{l}\, \mbox{\rm for all $l=0,1,\ldots$},
\end{equation}
see \eqref{hdef}.

In addition to the traversal counts $T^{x,n}_l$,
the number of excursions from  $\partial B_{d}\left(x,h_{l-1}\right)$ to $ \partial B_{d}\left(x,h_{l}\right)$ prior to 
completing the $n$'th excursion from  
$\partial B_{d}\left(x,h_{1}\right)$ to $ \partial B_{d}\left(x,h_{0}\right)$,
we will often consider traversal counts between sometimes 
non-concentric annuli. The following notation will be particularly
useful.
\begin{definition}\label{def-trav}For any $0<r<R<\tilde{r}<\tilde{R}$,
	let $\trav{n}{x}{\tilde R}{\tilde r}{u}{R}{r}$ be the number of
	traversals  
	$\partial B_{d}(u,h(R))\to\partial B_{d}(u,h(r))$
	during $n$ excursions $\partial B_{d}(x,h(\tilde{r}))\to\partial B_{d}(x,h(\tilde{R}))$. 
\end{definition}
Note that with this notation
\begin{equation}
  \label{eq-traver1}
  T^{x,n}_l = \trav{n}{x}{r_0}{r_1}{x}{r_{l-1}}{r_l}.
\end{equation}
We will often abbreviate the notation in Definition 
\ref{def-trav}, writing e.g.
\begin{equation}
  \label{eq-calgary}
  T_{x,l}^{y,n}=\trav{n}{y}{r_0}{r_1}{x}{r_{l-1}}{r_l}.
\end{equation}

  We will need to consider certain traversal processes that ``start
at lower scales''. For each $k\ge1$ we define 
\begin{equation}
  \label{eq-traver2}
  T_{l}^{y,k,m} 
  =\trav{m}{y}{r_{k-1}}{r_{k}}{y}{r_{l-1}}{r_{l}}
\end{equation}
to be the number of traversals from scale $l-1$ to $l$ during the
first $m$ excursions from scale $k$ to scale $k-1$. Note the crucial ``compatibility'' property that
\begin{equation}
T_{l}^{y,k,m}=T_{l}^{y,t_{z}}\mbox{ for }l\ge k,\mbox{\,\ on }\left\{ m=T_{k}^{y,t_{z}}\right\}.\label{eq:TkandTCompatability}
\end{equation}

We also need to keep track of the time it takes to complete a prescribed
number of excursions. We set
\begin{eqnarray}
  \label{eq-taux}
  \nonumber
  \tau_x(m)&=&
  \mbox{\rm 
  time needed for Brownian motion to complete $m$ excursions}\\
  &&\mbox{\rm from  
$\partial B_{d}\left(x,h_{1}\right)$ to $\partial B_{d}\left(x,h_{0}\right)$}.
\end{eqnarray}
Similarly, for
$0<a<b<\pi$ we 
set
\begin{eqnarray}
  \label{eq-tauab}
  \nonumber
  \tau_{x, a,b}(m)&=& \mbox{\rm  time needed for Brownian motion
  to complete $m$ excursions}\\
 &&\mbox{\rm   from 
   $\partial B_{d}\left(x,a\right)$ to $\partial B_{d}\left(x,b\right)$}.
 \end{eqnarray}

 In addition to the notation $t_z$, see \eqref{eq:defoftsr}, 
 it is sometimes convenient 
 to consider its rescaled linear approximation, defined as
 \begin{equation}
s\left(z\right)=s_{L}\left(z\right)=L\left(2L-\log L+z\right).\label{clt.2}
\end{equation}

We let $\Pgw_n$ denote the law of a critical Galton-Watson process with geometric offspring distribution with initial offspring $n$. Using (\ref{isc.5}) and the strong Markov property,
 it is easy to see that
\begin{equation}
  \label{eq-Pn}
  \mbox{the }\Pbm\mbox{-law of }T^{x,n}_l,l\ge0,\mbox{ is }\Pgw_n.
\end{equation}

For any number $u$ we write
\begin{equation}
  I_{u}=[u,u+1).\label{notation.31}
\end{equation}

\section{The upper bound}
\label{sec-righttailex}

Let $r_l$ be as in \eqref{dr.1} and 
recall that $F_{l}$ are the centers of a minimal $h_{l}/1000$ cover of $M$.
We can and will assume that $F_{l}\subseteq F_{l+1}$.
We record for future
use that  
\begin{equation}
\left|F_{l}\right|\asymp r_{l}^{-2}=r^{-2}_{0} e^{2l},l\ge0.\label{dr.2}
\end{equation}
Recall also the excursion counts 
$T_{l}^{y, t_{z} }$ and the notation $t_z$ and $s(z)$, see \eqref{eq-traver1},
\eqref{eq:defoftsr} and \eqref{clt.2}.
\corO{Let 
\begin{equation}
\label{eq-q0c}
\tilde{c}=q_0/2<1/2
\end{equation}
 denote small constants, with $q_0$ chosen according to the deviation estimates in Lemma \ref{lem-cont2}.
For each $l$ we then choose $x_{i}\in F_{L}$ so that
 \begin{equation}
 \label{eq-FLgrid}
 F_{L}=\cup_{i=1}^{ce^{2l}}\{ F_{L}\cap B_{d}\left(x_{i},\tilde{c}h_{l}\right)\} \, \mbox{\rm  for some $c=c(\tilde{c})<\ff$.}
 \end{equation}}

Our goal in this section is to prove the upper bounds in
Theorems \ref{theo-tight} and \ref{theo-spheretight}, 
namely to prove \eqref{goal.1} and \eqref{goal.1asto}. 
As it turns out, both parts rely on an accurate comparison of real time needed 
to complete roughly $t_z$ traversals
between concentric circles on the sphere.
 Recall that
$\tau_{x}\(m\)$ denotes
the time needed to complete $m$ excursions from $\partial B_{d}\left(x,h_{1}\right)$ to $ \partial B_{d}\left(x,h_{0}\right)$, see \eqref{eq-taux}.

\bt\label{theo-etrt} Fix $r_0>0$ small. 
Then there exist
constants $d, z_{0},c$ (possibly depending on $r_0$) 
such that for $L $ sufficiently  large and all $z$ with $z_{0}\leq |z|\leq L^{1/2}\log^{2} L$,
\begin{equation}
\Pbm\( 4s_{L}\left(z-d\sqrt{|z|}\right)     \leq \tau_{x}\(s_{L}\left(z \right) \)\leq   4s_{L}\left(z+d\sqrt{|z|}\right),\,\forall x\in F_{L}\)\geq 1-ce^{-4|z|}.\label{r1.1}
\end{equation}
\et
\corO{The constant $4$ plays no particular role - except that it is important that $4>2$.} The proof of Theorem \ref{theo-etrt} appears in
Section \ref{sec-etrt}, and is based on the 
continuity
estimates provided in Section \ref{sec-continest}, see Lemma \ref{lem:4.58m}. 
Given Theorem \ref{theo-etrt}, most of the work in this section 
is in the proof of
the following proposition.

\begin{proposition}
\label{gprop:MainGoalEasyRegimemm}  There exists a constant  $c >0$ such that for
all $L $ sufficiently  large,   and all $  z\geq  0$,
\be
\Pbm\left(  
\inf_{y\in F_{L}} T_{L}^{y,t_{z}}=0
 \right)\le c(1+z)e^{-2z }e^{-\frac{z^{2}}{20L}\wedge { \frac{z}{4}}}. \label{g17.0mm}
\ee
\end{proposition}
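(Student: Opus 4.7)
The plan is to exploit the fact that, by \eqref{eq-Pn}, for each fixed $y\in F_{L}$ the process $l\mapsto T_{l}^{y,t_{z}}$ is a critical geometric Galton--Watson chain starting from $t_{z}$, and to decompose the event $A_{y}:=\{T_{L}^{y,t_{z}}=0\}$ according to whether the ``branching particle'' $l\mapsto\sqrt{2T_{l}^{y,t_{z}}}$ respects the barrier $\alpha(\cdot)$ of \eqref{eq:AlphaBarrierDefd}. Set
\begin{equation*}
A_{y}^{\mathrm{bar}}=A_{y}\cap\{\sqrt{2T_{l}^{y,t_{z}}}\geq\alpha(l),\; 1\le l\le L-1\},\qquad A_{y}^{(l)}=\{\sqrt{2T_{l}^{y,t_{z}}}<\alpha(l)\}.
\end{equation*}
A union bound reduces \eqref{g17.0mm} to the two tasks: (i) bound $\sum_{y\in F_{L}}\Pbm(A_{y}^{\mathrm{bar}})$, and (ii) for each $1\le l\le L-1$, bound $\Pbm\bigl(\bigcup_{y\in F_{L}}A_{y}^{(l)}\bigr)$ and sum over $l$.

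For (i), I would invoke the geometric Galton--Watson barrier estimate (the relevant variant of Lemma \ref{prop:BarrierSecGWPropUB}) proved in Appendix \ref{sec:BoundaryCrossing}. Since $\sqrt{2t_{z}}=\rho_{L}L+z+O(z^{2}/L)$ and $\alpha(L)=-L^{\gamma}$, this estimate gives
\begin{equation*}
\Pbm(A_{y}^{\mathrm{bar}})\leq C(1+z)e^{-2L}e^{-2z}e^{-\frac{z^{2}}{20L}\wedge\frac{z}{4}},
\end{equation*}
where the $e^{-2L}$ is the ballot-type cost of staying above a barrier of slope $\approx 2$, the $e^{-2z}$ is the premium for starting $z$ above the barrier, and the last factor reflects the local CLT correction imposed by the endpoint requirement $T_{L}^{y,t_{z}}=0$ (with a crossover at $z\asymp L$ at which the Gaussian regime saturates into pure exponential decay). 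Multiplying by $|F_{L}|\asymp e^{2L}$ from \eqref{dr.2} reproduces the right-hand side of \eqref{g17.0mm}.

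The main obstacle is (ii). A crude union bound fails: $\Pbm(A_{y}^{(l)})$ decays only like $(1+z)e^{-2l}e^{-2z}$, while $|F_{L}|\asymp e^{2L}\gg e^{2l}$. The key point is that the \emph{effective} cardinality at scale $l$ should be $|F_{l}|\asymp e^{2l}$, not $|F_{L}|$. Concretely, using the covering \eqref{eq-FLgrid} one groups $F_{L}$ into $Ce^{2l}$ patches $F_{L}\cap B_{d}(x_{i},\tilde c h_{l})$. The continuity estimates of Section \ref{sec-continest} (a chaining built on Lemma \ref{lem-cont1} together with the deviation bound Lemma \ref{lem-cont2}) give
\begin{equation*}
\sup_{y\in F_{L}\cap B_{d}(x_{i},\tilde c h_{l})}\bigl|T_{l}^{y,t_{z}}-T_{l}^{x_{i},t_{z}}\bigr|\ll\alpha(l)^{2}
\end{equation*}
with probability whose complement is absorbed by the on-barrier budget. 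Consequently $\bigcup_{y\in F_{L}\cap B_{d}(x_{i},\tilde c h_{l})}A_{y}^{(l)}$ is implied (up to the continuity exceptional event) by a barrier-crossing event at the single representative $x_{i}$ against a slightly relaxed barrier; the sub-polynomial slack $l_{L}^{\gamma}$ baked into $\alpha$ is precisely what permits this relaxation at no cost. A barrier estimate for each representative then gives a per-patch bound $\lesssim e^{-2l}e^{-2z}(1+z)e^{-\frac{z^{2}}{20L}\wedge\frac{z}{4}}$ times a sub-exponential correction in $l$; combining with the $Ce^{2l}$ patches yields a per-level bound of the same order as in (i) (times a sub-polynomial factor in $L$), and summing $l=1,\ldots,L-1$ costs only a polynomial in $L$, harmlessly absorbed.

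The chief difficulties lie in the chaining of step (ii): verifying that the continuity deviation rate from Lemma \ref{lem-cont2} beats $e^{-2l}$ \emph{uniformly} across all intermediate scales, and calibrating the parameters $\tilde c$, $q_{0}$ in \eqref{eq-q0c} and $\gamma=0.4$ in \eqref{eq:AlphaBarrierDefd} so that the chaining error remains strictly below the barrier slack $l_{L}^{\gamma}$. This is precisely the new ingredient that distinguishes the sphere from the tree setting of \cite{BRZ}, where such continuity considerations are absent because of the exact decoupling on the tree.
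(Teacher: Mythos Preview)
Your decomposition into ``on-barrier'' and ``barrier-crossing'' pieces is exactly the right idea and matches the paper's strategy. One cosmetic point: since $l_{L}=l\wedge(L-l)$, one has $\alpha(L)=0$ (not $-L^{\gamma}$), so in fact $A_{y}\subset A_{y}^{(L)}$ and your step (i) is subsumed by step (ii) at $l=L$; this is precisely how the paper reduces Proposition~\ref{gprop:MainGoalEasyRegimemm} to Proposition~\ref{gprop:MainGoalEasyRegime}.

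There are, however, two genuine gaps in step (ii). First, and most seriously, your ``per-level bound of the same order as (i) \ldots summing costs only a polynomial in $L$, harmlessly absorbed'' is not correct for $l>L/2$. The one-point deviation estimate \eqref{17.1} gives $\Pbm(A_{x_{i}}^{(l)})\le cL^{l/L}e^{-2l-2z-2l_{L}^{\gamma}}$; multiplying by $e^{2l}$ patches leaves $L^{l/L}e^{-2z-2l_{L}^{\gamma}}$. For $l\le L/2$ this is fine since $L^{l/L}\le e^{l_{L}^{\gamma}/2}$, but for $l$ near $L$ one has $l_{L}=L-l$ small and $L^{l/L}\sim L$, so the bound is $\sim Le^{-2z}$, and there is nothing to absorb this $L$ into---the target \eqref{g17.0mm} has no $L$-dependent prefactor. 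The paper therefore treats $l>L/2$ by an \emph{inductive} argument (Section~\ref{sec-begen}, Lemma~\ref{lem-westjet}): one intersects with the full barrier event $\mathcal{C}_{0,t_{z},l}$ at all earlier levels $1,\ldots,l-2$, and the resulting ballot-problem estimate \eqref{cb.decp7} supplies exactly the missing $1/L$ factor. This is the analogue of Bramson's inductive step and cannot be replaced by a level-by-level argument.

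Second, the continuity estimates of Section~\ref{sec-continest} compare $T_{y,l}^{0,n}$ with $T_{x_{i},l}^{0,n}$, i.e.\ traversal counts around $y$ and $x_{i}$ during the \emph{same} $n$ excursions around a fixed centre $0$. The quantities $T_{l}^{y,t_{z}}$ and $T_{l}^{x_{i},t_{z}}$ in your display are counted during $t_{z}$ excursions around \emph{different} centres $y$ and $x_{i}$, and these are not directly comparable by chaining. The paper bridges this via the real-time comparison Theorems~\ref{theo-etrt} and~\ref{theo-etrtc} (encoded in the events $\mathcal{A}_{z,d}$ and $\mathcal{Q}$), which convert own-centre excursion counts to fixed-centre ones at the cost of an additive $O(\sqrt{z})$, respectively $O(1)$, shift in $z$. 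This step is essential and not merely a calibration issue.
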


To see what is involved in the proof,
we begin with a simple estimate. 
\bl
\label{lem:NotHitByrL} 
There exists a $c'>0$ so that,
for all $y\in R^{2}$, $x\notin B_{d}\left(y,h_{1}\right)$
and $z>0$,
\begin{equation}
\Pbm^{x}\left( T_{L}^{y,t_{z}}=0\right)\leq c'e^{-2L}L e^{ -2z-z^{2}/4L}.\label{eq:NotHitByrL}
\end{equation}
\el
\begin{proof}
It follows from (\ref{isc.5}) with $\rho_1=r_0, \rho_2=r_1$ and
$\rho_3=r_L$, and the strong Markov property, that
\be
\Pbm^{x}\left( T_{L}^{y,t_{z}}=0\right)=\left(1-\frac{1}{L}\right)^{t_{z}} \leq  e^{-\frac{t_{z}}{L}}.\label{base.1}
\ee
The estimate 
(\ref{eq:NotHitByrL})  then follows from (\ref{eq:defoftsr}) and the fact that the 
$ O(z\log L)/L$ term is bounded by $1+z^{2}/4L$.
\end{proof}
 
Using Lemma \ref{lem:NotHitByrL} and \eqref{dr.2} with $l=L$, 
a union bound  would give 
\begin{equation}
\Pbm^{x}\left( T_{L}^{y,t_{z}}=0\mbox{ for some }y\in F_{L}\right)\leq C L e^{ -2z-z^{2}/4L}.\label{eq:NotHitByrLa}
\end{equation}
The factor $L$ on the right hand side 
destroys any chance of using \eqref{eq:NotHitByrLa}
to obtain (\ref{g17.0mm}). However, if $  z> L^{1/2}\log  L$ it is easily seen that  \eqref{eq:NotHitByrLa}
implies  (\ref{g17.0mm}).
It thus remains to prove    (\ref{g17.0mm})    for $ 0\leq z\leq  L^{1/2}\log  L$.

To improve on \eqref{eq:NotHitByrLa},
we will use the fact that 
the events  $\{T_{L}^{y,t_{z}}=0\}_{y=y_1,y_2}$ are correlated because
if $\log d(y_1,y_2)\ll r$ then the number $T_{L-r}^{y_i,t_{z}}$
of traversals  around $y_i$ at level $L-r$ will be almost the same for $i=1,2$.
To deal effectively with this, we 
recall 
the barrier $\alpha\left(l\right)$ and the notation $l_L$, see
\eqref{eq:AlphaBarrierDefd}:
\[
\alpha\left(l\right) =\rho_{L}(L-l)  -
l_{L}^{\ga}, \quad 
l_{L} = (l\wedge (L-l)),\qquad \ga=.4 \]
Since $\alpha(L)=0$,  Proposition \ref{gprop:MainGoalEasyRegimemm} will follow from the next proposition. 

\begin{proposition}
\label{gprop:MainGoalEasyRegime}  There exists $c $ such that for
all $L $ sufficiently  large,   and all $ 0\leq z\leq  L^{1/2}\log^{2} L$,
\be
\Pbm\left( 
\exists x\in F_{L}\mbox{ and }1\le l\le L\mbox{ such that }
\sqrt{2T_{l}^{x,t_{z}}}\le\alpha(l)
 \right)\le c(1+z)e^{-2z }e^{-\frac{z^{2}}{20L}\wedge { \frac{z}{4}}}. \label{g17.0}
\ee
\end{proposition}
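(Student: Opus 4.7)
The plan is to decompose the bad event according to the first level $l\in\{1,\ldots,L\}$ at which some $x\in F_L$ violates the barrier, and then at each level $l$ to sharpen the naive union bound over $F_L$ by exploiting a continuity-in-$x$ estimate to reduce the effective cardinality from $|F_L|\asymp e^{2L}$ to just $N_l\asymp e^{2l}$ representative points. Concretely, I would write the event in \eqref{g17.0} as $\bigcup_{l=1}^{L}\bigcup_{x\in F_L} E_l^x$, where $E_l^x$ is the first-crossing event at level $l$:
\[
E_l^x=\bigl\{\sqrt{2T_j^{x,t_z}}>\alpha(j)\text{ for }1\le j<l,\ \sqrt{2T_l^{x,t_z}}\le\alpha(l)\bigr\}.
\]
For each $l$ I use the covering \eqref{eq-FLgrid} to split $F_L$ into $N_l\asymp e^{2l}$ groups $F_L\cap B_d(x_i,\tilde c h_l)$, and argue via the continuity estimate of Lemma \ref{lem-cont2} that, up to a negligible exceptional event, $\sqrt{2T_l^{x,t_z}}\le\alpha(l)$ for some $x$ in such a ball forces $\sqrt{2T_l^{x_i,t_z}}\le\alpha(l)+1$ at the representative itself. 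This replaces the untenable union bound over $F_L$ by a union bound over $\{x_i\}$.

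Next, using the fact \eqref{eq-Pn} that $(T_j^{x_i,t_z})_{j\ge 0}$ is a critical geometric Galton--Watson process started at $t_z$, I would apply the barrier estimate of Lemma \ref{prop:BarrierSecGWPropUB} from Appendix \ref{sec:BoundaryCrossing} to a thickened first-crossing event $\tilde E_l^{x_i}$, to obtain
\[
\Pbm(\tilde E_l^{x_i})\;\le\;C\,\frac{(1+z)(1+l_L^{\gamma})}{l^{3/2}}\,e^{-2l}\,e^{-2z-z^{2}/(20L)}.
\]
Heuristically the $e^{-2l}$ is the extinction cost of $\sqrt{2T^{x_i}_{\cdot,t_z}}$ reaching the low value $\alpha(l)$, the Gaussian factor $e^{-z^{2}/(20L)}$ reflects the diffusive concentration of the $0$-dimensional Bessel-like walk $\sqrt{2T^{x_i}_{\cdot,t_z}}$ over $l$ steps given the shifted start $\sqrt{2t_z}\approx\rho_L L+z$, and the $(1+z)(1+l_L^{\gamma})/l^{3/2}$ prefactor is the standard ballot-type term for a walk staying above a slightly concave linear barrier. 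Multiplying by $N_l\asymp e^{2l}$, the exponential factors $e^{2l}\cdot e^{-2l}$ cancel, and summing over $l$ is harmless because $\gamma=0.4<1/2$ makes $\sum_{l=1}^{L}(1+l_L^{\gamma})/l^{3/2}$ bounded, yielding the target bound $C(1+z)e^{-2z-z^{2}/(20L)}$. Since $z\le L^{1/2}\log^{2} L\ll L$ forces $z^{2}/(20L)\le z/4$, the $\wedge z/4$ in the statement is inactive and \eqref{g17.0} follows.

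The principal obstacle is justifying the reduction to $\sim e^{2l}$ representatives via continuity. The traversal counts $T_l^{x,t_z}$ and $T_l^{x_i,t_z}$ count excursions around \emph{different} centers and are correlated in a complicated way; a deterministic sandwiching between concentric annuli of slightly different radii, as in \cite{BK}, is too crude to give the exact $(1+z)e^{-2z}$ prefactor without a spurious $L$-factor. The sharper probabilistic comparison is exactly the content of Section \ref{sec-continest} and rests on the rotational symmetry of $\S^{2}$ (via the equality of expected excursion counts in Lemma \ref{lem-equalmean}) combined with a multiscale chaining argument of the type used in \cite{ABH}. This is the step where the assumption $M=\S^{2}$ is genuinely used and where the argument departs most significantly from prior work.
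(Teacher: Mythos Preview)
Your high-level strategy---reduce the union over $F_L$ to $\asymp e^{2l}$ representatives via continuity, apply a barrier estimate, and sum over $l$---matches the paper's. However, there is a genuine gap in the continuity step, and a structural difference in how the two regimes of $l$ are handled.

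\textbf{The gap.} You invoke Lemma~\ref{lem-cont2} to compare $T_l^{x,t_z}$ with $T_l^{x_i,t_z}$ when $d(x,x_i)\le \tilde c h_l$. But Lemma~\ref{lem-cont2} compares traversal counts $\trav{n}{0}{\tilde R}{\tilde r}{x}{R}{r}$ and $\trav{n}{0}{\tilde R}{\tilde r}{y}{R}{r}$ during the \emph{same} $n$ outer excursions around a \emph{common} center. The quantities $T_l^{x,t_z}$ and $T_l^{x_i,t_z}$ count inner traversals during $t_z$ outer excursions around \emph{different} centers $x$ and $x_i$; even though $d(x,x_i)\ll h_1$, the times $\tau_x(t_z)$ and $\tau_{x_i}(t_z)$ are not equal, and Lemma~\ref{lem-cont2} says nothing about this. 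The paper handles this by first invoking Theorem~\ref{theo-etrt} (via the event $\mathcal{A}_{z,d}$ for $l\le L/2$) or Theorem~\ref{theo-etrtc} (via $\mathcal{Q}$ for $l>L/2$) to replace $T_l^{x,t_z}$ by $T_{x,l}^{0,t_{\hat z}}$, i.e.\ inner traversals around $x$ during a fixed number of outer excursions around the common center $0$. Only after this reduction is the continuity estimate applicable. Your proposal skips this step entirely.

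\textbf{The ``$+1$'' claim is also too coarse.} Even after aligning top-level centers, the paper does not deduce that $\sqrt{2T_{x,l}^{0,t}}\le\alpha(l)$ forces $\sqrt{2T_{l-2}^{0,t}}\le\alpha(l)+1$. Instead it conditions on $\sqrt{2T_{l-2}^{0,t}}\in I_{\alpha(l)+j}$ and shows (Lemmas~\ref{lem-sec3.3}, \ref{lem-sec3.3a}) that the conditional probability of some nearby $x$ violating the barrier at level $l$ is at most $e^{-4j}$; this is then combined with the barrier estimate \eqref{18.20} for the endpoint $j$ and summed over $j$. The chaining (with scale-dependent grids $F_{l+m}$) occurs inside the proof of these conditional estimates, not at the level you describe.

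\textbf{The two regimes.} The paper treats $l\le L/2$ (Proposition~\ref{prop:MainGoalEasyRegime}) and $l>L/2$ (Lemma~\ref{lem-westjet}) separately. For small $l$ no barrier-along-the-path estimate is used---just the one-point Gaussian bound \eqref{17.1} and the conditional continuity Lemma~\ref{lem-sec3.3}; summability in $l$ comes from the curvature term $e^{-l_L^{\gamma}}$, and the resulting bound carries no $(1+z)$ prefactor. For large $l$ the paper argues inductively through the events $\mathcal{G}_l$, and it is the barrier estimate \eqref{cb.decp7} that produces the $(1+z)$ factor. Your uniform first-crossing bound $\Pbm(\tilde E_l^{x_i})\le C(1+z)(1+l_L^{\gamma})l^{-3/2}e^{-2l-2z-z^{2}/(20L)}$ is not what Lemma~\ref{prop:BarrierSecGWPropUB} gives (that lemma bounds the probability of landing in $I_{\alpha(k)+j}$, not of first crossing), and you would need additional work to derive it.
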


The proof of Proposition \ref{gprop:MainGoalEasyRegime}
will be provided in Sections 
\ref{sec-beearly}-\ref{sec-beearly2},
and is
split  into two cases. 
For  $l$ which are not too large, i.e. $l\leq L/2$,
we can deal with 
(\ref{g17.0}) one level at a time. This is 
the content of Section \ref{sec-beearly}. For larger $l$'s, 
which are handled in Section  \ref{sec-begen}, 
and in particular for $l=L$, we need to proceed 
inductively and make use of the facts established for lower levels.
This method can be traced back to Bramson's work \cite{BM78}.
Some crucial auxiliary estimates are postponed to Section \ref{sec-beearly2}.
Finally, Section \ref{sec-fromex-to-ct}  is devoted to the proof of
\eqref{goal.1asto}.

\subsection{$l $ not too large}\label{sec-beearly}

We begin with rephrasing the part of Proposition \ref{gprop:MainGoalEasyRegime}
pertaining to $l$ not too large.

\begin{proposition}
\label{prop:MainGoalEasyRegime}  There exists $c <\ff$ so that,
for all $L $ sufficiently  large   and all $ 0\leq z\leq   L^{1/2}\log^{2} L$,
\be
\Pbm\left(\begin{array}{c}
\exists x\in F_{L}\mbox{ and }1\le l\le L/2\\
\mbox{ such that }\sqrt{2T_{l}^{x,t_{z}}}\le\alpha(l)
\end{array}\right)\le ce^{-2z }e^{-\frac{z^{2}}{20L}\wedge { \frac{z}{4}}}. \label{17.0}
\ee
\end{proposition}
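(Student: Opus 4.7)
The plan is to run a union bound at each scale $1\le l\le L/2$ separately, using the continuity estimates of Section \ref{sec-continest} to reduce the effective cardinality of $F_L$ from $|F_L|\asymp e^{2L}$ down to $\asymp e^{2l}$, and then apply the single-point Gaussian-type barrier estimate for the critical geometric Galton--Watson process from \cite{BRZ} to finish. Concretely, using \eqref{eq-FLgrid} I would cover $F_L$ by $\asymp e^{2l}$ balls $B_d(x_i,\tilde c h_l)$; the constants $\tilde c=q_0/2$ are fixed in \eqref{eq-q0c} precisely so that the continuity estimate of Lemma \ref{lem-cont2} applies and yields, outside an event whose probability is small enough to be absorbed in the final bound,
$$\sup_{x\in F_L\cap B_d(x_i,\tilde c h_l)}\bigl|\sqrt{2T_l^{x,t_z}}-\sqrt{2T_l^{x_i,t_z}}\bigr|\le 1,\qquad 1\le i\le ce^{2l}.$$
Consequently the event in \eqref{17.0} is contained, up to this negligible error, in $\bigcup_{l\le L/2}\bigcup_i\{\sqrt{2T_l^{x_i,t_z}}\le\alpha(l)+1\}$.

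Next I would estimate each term of this union. Under $\Pbm$, $T_l^{x_i,t_z}$ is a critical geometric Galton--Watson iterate starting from $t_z$, so $l\mapsto\sqrt{2T_l^{x_i,t_z}}$ is well-approximated by a two-dimensional Bessel process started from $\sqrt{2t_z}=\rho_L L+z$. The single-point estimate extracted from the barrier lemmas of \cite{BRZ} (see Lemma \ref{prop:BarrierSecGWPropUB} and Appendix \ref{sec:BoundaryCrossing}) then yields
$$\Pbm\bigl(\sqrt{2T_l^{x_i,t_z}}\le\alpha(l)+1\bigr)\le C\exp\!\left(-\frac{(\rho_L l+z+l^\gamma)^2}{2l}\right)\le C\exp\!\left(-2l-2z-2l^\gamma-\frac{z^2}{2l}+O(1)\right),$$
where I have used $\sqrt{2t_z}-\alpha(l)=\rho_L l+z+l^\gamma$ (with $l_L=l$ in the regime $l\le L/2$) and expanded the square with $\rho_L$ close to $2$. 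A union bound over the $\asymp e^{2l}$ clusters cancels $e^{2l}$ against $e^{-2l}$, and summing over $l\le L/2$ gives
$$\sum_{l=1}^{L/2}\exp\!\left(-2z-2l^\gamma-\frac{z^2}{2l}\right)\le c\,e^{-2z}\bigl(e^{-z^2/(20L)}\vee e^{-z/4}\bigr),$$
since $\sum_l e^{-2l^\gamma}$ converges ($\gamma=0.4>0$): for $z$ not too large the dominant contribution comes from $l\asymp L/2$, producing $e^{-z^2/L}$, and otherwise from $l\asymp z$, producing $e^{-z/4}$. The slack in the constants ($20L$ rather than $2L$) absorbs the finite prefactor from the convergent series.

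The main obstacle is the continuity step. It demands control of $\sqrt{2T_l^{x,t_z}}$ uniformly over $x$ in a cluster of $d$-diameter $\le 2\tilde c h_l$, with failure probability small enough to survive the double union bound over $\asymp L\cdot e^{2l}$ events. This is exactly where the spherical symmetry of $\S^2$ is used essentially, via the equal-mean property of Lemma \ref{lem-equalmean} together with a chaining argument in the spirit of \cite{ABH}; once Section \ref{sec-continest} supplies this input, what remains is a standard first-moment computation built on the Bessel/Galton--Watson barrier technology of \cite{BRZ}.
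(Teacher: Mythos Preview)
Your overall strategy---union bound over $1\le l\le L/2$, reduce the effective cardinality of $F_L$ to $\asymp e^{2l}$ via continuity, then apply the one--point Gaussian estimate---is exactly the paper's. The gap is in the continuity step, which you apply to the wrong object.

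Lemma~\ref{lem-cont2} controls
\[
\trav{n}{0}{\tilde R}{\tilde r}{x}{R}{r}-\trav{n}{0}{\tilde R}{\tilde r}{y}{R}{r},
\]
i.e.\ traversal counts around $x$ and $y$ during the \emph{same} $n$ excursions between a common pair of outer circles at $0$, and moreover requires (see \eqref{eq-ratiobounds}) that the inner radii $r,R$ and outer radii $\tilde r,\tilde R$ all be comparable. Neither condition is met by $T_l^{x,t_z}$ vs.\ $T_l^{x_i,t_z}$: these use \emph{different} clocks ($t_z$ excursions between $\partial B_d(x,h_1)$ and $\partial B_d(x,h_0)$ vs.\ the same at $x_i$), and the outer/inner radii differ by a factor $e^{l}$. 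So the lemma does not give the displayed uniform bound $\bigl|\sqrt{2T_l^{x,t_z}}-\sqrt{2T_l^{x_i,t_z}}\bigr|\le 1$.

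The paper fills this gap in two moves. First, it invokes Theorem~\ref{theo-etrt} (the event $\mathcal A_{z,d}$ of \eqref{17.0a}, which holds with probability $\ge 1-ce^{-4z}$) to pass from $T_l^{x,t_z}$ to $T_{x,l}^{0,t_{\widehat z}}$, i.e.\ traversals around $x$ during $t_{\widehat z}$ excursions at the \emph{fixed} center $0$; this synchronizes the clocks at the cost of the shift $\widehat z=z-d\sqrt{z}$. Second, to match the radii, it conditions on $T_{l-2}^{0,t_{\widehat z}}$ (the events $\mathcal D_{0,t_{\widehat z},l}(j)$ of \eqref{eq-Ddef}) and only then applies the continuity/chaining at scales $l-3,l-2$ around $0$; this is the content of Lemma~\ref{lem-sec3.3}, where the conditional failure probability $e^{-4j}$ is combined with the Gaussian bound \eqref{17.1}--\eqref{17.1g} for $\Pbm(\mathcal D_{0,t_{\widehat z},l}(j))$ and summed over $j$. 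Your sketch collapses both moves into a single unjustified line; once they are restored, your first--moment computation is essentially the paper's.
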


\begin{proof}
  Note the statement always holds if $z\leq z_0$ by 
  increasing $c$ if necessary. Hence, it suffices to prove the claim for 
  $z\geq z_0$, for some fixed $z_0$ to be determined.

  Set
\begin{equation}
  \mathcal{A}_{z,d}=\{ \tau_{y}\(t_{z-d\sqrt{z}}\)\leq \tau_{x}\(t_{z}\)\leq \tau_{y}\(t_{z+d \sqrt{z}}\),\,\forall x,y \in F_{L}\}.\label{17.0a}
\end{equation}
($\mathcal{A}_{z,d}$ is the good event in which the time to complete the 
``right'' number of excusions is comparable for different balls.)
Noting that $ t_{z}= s(2z)+O(z^2+z\log L)$
for $z$ in the stated range, 
it follows from Theorem \ref{theo-etrt} (by modifying $d$ there if necessary)
that we can find  $d, z_{0},c$ such that for $L $ sufficiently  large and all $z_{0}\leq z\leq L^{1/2}\log^{2} L$
\begin{equation}
\Pbm\(\mathcal{A}_{z,d}\)\geq 1-ce^{-4z}.\label{17.0b}
\end{equation}
Therefore, 
we need only show  that for  $z_{0}\leq z\leq   L^{1/2}\log^{2} L$,
\be
\Pbm\left(\begin{array}{c}
\exists x\in F_{L}\mbox{ and }1\le l\le L/2\\
\mbox{ such that }\sqrt{2T_{l}^{x,t_{z}}}\le\alpha(l), \,\,\mathcal{A}_{z,d}
\end{array}\right)\le ce^{-2z }e^{-\frac{z^{2}}{20L}\wedge { \frac{z}{4}}}. \label{17.00}
\ee
Since 
$e^{-l_{L}^{\ga}}$ is summable, it thus suffices to show that 
 for all $1\le l\le L/2$,
\begin{equation}
  \mathfrak{P}_1^{(l)}:=\Pbm\left(\exists x\in F_{L}\mbox{ such that }\sqrt{2T_{l}^{x,t_{z}}}\le\alpha(l), \,\mathcal{A}_{z,d}\right)\le ce^{-2z    -
l_{L}^{\ga}}e^{-\frac{z^{2}}{20l}\wedge { \frac{z}{4}}}.\label{eq:MainGoalEasyRegime}
\end{equation}

 By a union bound, \corO{recall \eqref{eq-q0c} and \eqref{eq-FLgrid},} and then using $\mathcal{A}_{z,d}$ 
 and introducing the notation
   \begin{equation}
   \label{eq-hatz}
   \wh z=z-d \sqrt{z}
 \end{equation}
 we have
\begin{eqnarray}
  &&\;\;\mathfrak{P}_1^{(l)}
\le ce^{2l}\Pbm\left(\exists x\in F_{L}\cap B_{d}\left(0,\tilde{c}h_{l}\right)\mbox{ such that }\sqrt{2T_{l}^{x,t_{z}}}\le\alpha(l), \mathcal{A}_{z,d}\right)
\label{eq:displayfol} \\
&&\!\!\!\!\!\!\!\!\le 
ce^{2l}\Pbm\left(\exists x\in F_{L}\cap B_{d}\left(0,\tilde{c}h_{l}\right)\mbox{ such that }\sqrt{2T_{x,l}^{0,t_{\wh z}}}\le\alpha(l)\right)=:c e^{2l} \mathfrak{P}_2^{(l)},
\nn
\end{eqnarray}
where  $0$ is a fixed point
in $F_{L}$
(which could be taken as the south pole) and  we  recall 
that
  $T_{x,l}^{0,n}$
is the number of
traversals from  
$\partial B_{d}\left(x,h_{l-1}\right)\to\partial B_{d}\left(x,h_{l}\right)$
during $n$ excursions  
from $\partial B_{d}\left(0,h_{1}\right)\to\partial B_{d}\left(0,h_{0}\right)$, see
\eqref{eq-calgary}.

Hence to obtain (\ref{eq:MainGoalEasyRegime}) it suffices to show that
\be 
\mathfrak{P}_2^{(l)}
\le ce^{-2l-2z -l_{L}^{\ga}}e^{-\frac{z^{2}}{20l}\wedge { \frac{z}{4}}}. \label{ze13f}
\ee 
We  write $\mathfrak{P}_2^{(l)}\leq \mathfrak{P}_{21}^{(l)}+\mathfrak{P}_{22}^{(l)}$ where
\begin{eqnarray}
  \label{eq-P21}
  \mathfrak{P}_{21}^{(l)}&\!\!\!
:=&\!\!\!\!
\Pbm\left(\sqrt{2T_{l-2}^{0,t_{\wh z}}}\le\alpha(l)+1\right)\\
\label{eq-P22}
\mathfrak{P}_{22}^{(l)}&\!\!\!
:=&
\!\!\!\!
\Pbm\left(\mathcal{B}_{t_{\wh z},l}
\mbox{ and }\sqrt{2T_{l-2}^{0,t_{\wh z}}}\ge\alpha(l)+1\right),
\end{eqnarray}
and
\be
\mathcal{B}_{t,l}=\left\{ \exists x\in F_{L}\cap B_{d}\left(0,\tilde{c}h_{l}\right)\mbox{ s.t. }\sqrt{2T_{x, l}^{0,   t}}\le \alpha(l)\right\},\label{17.defb}
\ee
It is important to remember that $\mathcal{B}_{t,l}$
involves traversal counts centered at points which can differ from $0$.

Before proceeding we need to state some deviation inequalities of 
Gaussian type for
the Galton-Watson process $T_l,l\ge0$ under $\Pgw_n$, see 
\eqref{eq-Pn}.
The proof is very similar to \cite[Lemma 4.6]{BK},
and is therefore omitted.
\begin{lemma}\label{lem: GW proc LD}
There exists a constant $c$ such that for all $n, l =1,2,3,\ldots$,
\begin{equation}
\Pgw_n\left(\left|\sqrt{2T_{l}}-\sqrt{2T_{0}}\right|\ge \theta
	\right)
	\le ce^{-\frac{\theta^{2}}{2l}},\quad \theta\geq 0.\label{eq: tail bound}
\end{equation}
\end{lemma}
Recall, see \eqref{eq-Pn}, that 
$\{T_{l}^{x,t_{z}}\}_{l\geq 0}$
under $\Pbm$ is distributed like $\{T_l\}_{l\geq 0}$ under $\Pgw_{t_z}$.
Therefore,  we obtain the following estimates
from Lemma \ref{lem: GW proc LD}, for $z\geq 0$ and $\theta\in \R$:
\begin{equation}
  \Pbm\left(\sqrt{2T_{l}^{x,t_{z}}}\le\alpha(l)+\th\right)\le ce^{-\(\sqrt{2t_{z}}-\alpha(l)-\th\)^{2} /2l},\mbox{ if } \(\alpha(l)+\th\)^{2}/2\leq t_{z},
  \label{17.1p}
\end{equation}
and 
\begin{equation}
  \Pbm\left(\sqrt{2T_{l}^{x,t_{z}}}\ge\alpha(l)+\th\right)\le c e^{-\(\sqrt{2t_{z}}-\alpha(l)-\th\)^{2} /2l}, \mbox{ if } \(\alpha(l)+\th\)^{2}/2\geq t_{z}.
  \label{17.1gp}
\end{equation}
Using the definitions of $\alpha(l)$ and $t_z$, see \eqref{eq:defoftsr} and
\eqref{eq:AlphaBarrierDefd}, we have that 
\begin{eqnarray*}
 && \frac{(\sqrt{2t_{z}}-\alpha(l)-\th)^2}{2l}
  = \frac{(2l- (l/2L)\log L+z+l_{L}^{\ga}-\th)^2}{2l}\\
  &&=2l+ 2(z +   l_{L}^{\ga} - \th) -\frac{l\log L}{L}  
  + \frac{\left((z +   l_{L}^{\ga} - \th) - \frac{l}{2L}\log L\right)^{2}}
  {2l}\\
&&\geq
2l+ 2(z +   l_{L}^{\ga} - \th) -\frac{l\log L}{L}+
\frac{(z +   l_{L}^{\ga} - \th)^{2}}{ 4l}-o_{L}(1).\label{multdef.1}
\end{eqnarray*}
Therefore, with 
\begin{equation}
  \label{eq-bdef}
  b(l,L,z,\th):=L^{ l/L}e^{-2l-2(z-\th+ l_{L}^{\ga})-(z +   l_{L}^{\ga} - \th)^{2} /4l },
\end{equation}
we have 
\begin{equation}
  \Pbm\left(\sqrt{2T_{l}^{x,t_{z}}}\le\alpha(l)+\th\right)\le 
  cb(l,L,z, \th),
  \mbox{ if } \(\alpha(l)+\th\)^{2}/2\leq t_{z},
  \label{17.1}
\end{equation}
and 
\begin{equation}
  \Pbm\left(\sqrt{2T_{l}^{x,t_{z}}}\ge\alpha(l)+\th\right)\le 
  cb(l,L,z, \th),
  \mbox{ if } \(\alpha(l)+\th\)^{2}/2\geq t_{z}.
  \label{17.1g}
\end{equation}

Applying 
\eqref{17.1} with $\theta=1$ and $x=0$,
and using that
$L^{ l/L}e^{-  l_{L}^{\ga}/2}\leq 1$ for 
$1\le l\le L/2$,
we have that
for $l$ in that range
\be
\mathfrak{P}_{21}^{(l)}
\le ce^{-2l-2z+ 2d \sqrt{z}-1.5\,l_{L}^{\ga}-\wh z^{2}/4l}.\label{17.1xa}
\ee
With $\ga=0.4$, if $z^{1.4}\leq l$ then $z^{0.54}\leq l^{\ga}=l_{L}^{\ga}$, while if $z^{1.4}\geq l $ then $z^{2}/l\geq    z^{0.6} $. 
It follows that
\be
\mathfrak{P}_{21}^{(l)}
\le o_{z}(1) e^{-2l-2z -l_{L}^{\ga}-z^{2}/8l}.\label{17.1x}
\ee

Hence to conclude the proof of Proposition \ref{prop:MainGoalEasyRegime} it will suffice to show that
\be
\label{17.1s}
\mathfrak{P}_{22}^{(l)}\leq
c e^{-2l-2z -l_{L}^{\ga}-\( { \frac{z^{2}}{ 20l}}\wedge { \frac{z}{4}}\)}.
\ee

To obtain \eqref{17.1s}, our strategy will be to 
replace the events 
$\mathcal{B}_{t_{\wh z},l}$ 
by events involving excursions around $0$. Toward this end, recall
that $I_u=[u,u+1)$, see 
\eqref{notation.31},  and define the ``$(l-2)$-endpoint event''
\begin{equation}
  \label{eq-Ddef}
\mathcal{D}_{0,t,l}(   j)=\left\{ \sqrt{2T_{l-2}^{0,t}}\in 
I_{\alpha(l)+j}\right\} .
\end{equation}
Then
\bea
\mathfrak{P}_{22}^{(l)}&=&\sum_{j=1}^{ \ff}
\Pbm
\left(\mathcal{B}_{t_{\wh z},l}\cap \mathcal{D}_{0,t_{\wh z },l}(   j)\right)
=\sum_{j=1}^{ \ff}
\Pbm\left(\mathcal{B}_{t_{\wh z},l}\,\big|\,\mathcal{D}_{0,t_{\wh z },l}(   j)\right)\cdot \Pbm\left(\mathcal{D}_{0,t_{\wh z },l}(   j)\right)\nn\\
&&\leq \sum_{j=1}^{ \ff}
\Pbm\left(\mathcal{B}_{t_{\wh z},l}\,\big|\,\mathcal{D}_{0,t_{\wh z },l}(   j)\right)\,\,ce^{-2l-2z+2d \sqrt{z}+2j-1.5\, l_{L}^{\ga}-{(\wh z+l_{L}^{\ga} - j)}^{2} /4l },
\label{eq-finalP22}
\eea
where the last inequality follows again from  
the 
deviations estimates 
(\ref{17.1}) or (\ref{17.1g}) as appropriate.
We now state the following lemma, whose proof is postponed to
subsection 
 \ref{sec-beearly2}.
\begin{lemma}
  \label{lem-sec3.3}
  There exist positive constants  $\tilde{c}$ and $j_0$ so that,
  with $\mathcal{B}_{t_{\wh z},l}$ as in \eqref{17.defb}, one has
  that for all $  z \geq 0$,
\begin{equation}
\Pbm\left(\mathcal{B}_{t_{\wh z},l}\,\big|\, \mathcal{D}_{0,t_{\wh z },l}(   j)\right)\leq e^{ -4j}, 
\mbox{ for all } j\geq j_0.\label{17.1sv}
\end{equation}
\end{lemma}

Substitute \eqref{17.1sv} into \eqref{eq-finalP22}  and consider separately the case where $j\leq \wh z/2$  and  $j\geq \wh z/2$. In the first case we have the bound 
\be 
 \leq \sum_{j=1}^{z/2}
e^{ -4j}\,\,ce^{-2l-2z+2d\sqrt{z}+2j- 1.5\, l_{L}^{\ga} -{\wh z}^{2} /16l }
\label{multdef.3}\ee
which can be bounded by (\ref{17.1s}) as before. In the second case we simply use
\be 
 \leq \sum_{j=z/2}^{\ff}
e^{ -4j}\,\,ce^{-2l-2z+2d \sqrt{z}+2j- 1.5\, l_{L}^{\ga}  }.
\label{multdef.4}\ee
Since $j\geq {\wh z}/2$ this gives 
\eqref{17.1s} and completes the proof of Proposition 
\ref{prop:MainGoalEasyRegime}.
\end{proof}

\subsection{$l$   large and proof of Proposition \ref{gprop:MainGoalEasyRegime}}\label{sec-begen}
Recall
that 
$\tau_{x}\(m\)$ denotes the time needed to 
complete $m$ excursions from  
$\partial B_{d}\left(x,h_{0}\right)$ 
to $\partial B_{d}\left(x,h_{1}\right)$,
 see \eqref{eq-taux}.
We begin by stating a (simpler) version of Theorem
\ref{theo-etrt}, whose proof is also given in Section
\ref{sec-etrt}.

\bt\label{theo-etrtc} There exists $c>0$ so that
for $L $ sufficiently  large and all $0\leq z\leq L^{1/2}\log^{2} L$,
\bea
&&
\Pbm\( \tau_{y}\(t_{z}-10\)\leq \tau_{x}\(t_{z}\)\leq \tau_{y}\(t_{z}+10\),\,\forall x,y \in F_{L}, \, d(   x,y)\leq h(r_{L/2})\)\nn\\
&&\hspace{3.5 in}\geq 1-ce^{-L/2}.\label{r1.1cl}
\eea
\et

Set 
\begin{equation}\hspace{.2in}
  \mathcal{Q}=\{ \tau_{y}\(t_{z}-10\)\leq \tau_{x}\(t_{z}\)\leq \tau_{y}\(t_{z}+10\),\,\forall x,y \in F_{L}, \, d(   x,y)\leq h(r_{L/2})\}.\label{defQ}
\end{equation}
It follows from Theorem \ref{theo-etrtc} that   
for $L $ sufficiently  large and all $ 0\leq  z\leq L^{1/2}\log^{2} L$,
\begin{equation}
\Pbm\(\mathcal{Q}\)\geq 1-ce^{-L/2}.\label{g17.0b}
\end{equation} 

We fix a 
small  constant $\tilde{c}$, to be chosen later. 
Introduce the ``barrier event''
\begin{equation}
  \label{defG}
\mathcal{G}_{l}=\left\{ \sqrt{2T_{l'}^{y,t_{z}}}\ge \alpha(l')\mbox{ for all }l'=1,\ldots,l\mbox{ and }\forall y\in F_{L}\cap B_{d}\left(0,\tilde{c}h_{0}\right)\right\} .
\end{equation}
Let $L'=L/2$.
By Proposition \ref{prop:MainGoalEasyRegime}, we have that
for all $L$ large and $0\leq z\leq L^{1/2}\log^{2} L$,
\begin{equation}
  \label{eq-thanks1}
  \Pbm\left(\mathcal{G}^{c}_{L'}  \right)\leq c e^{-2z }e^{-\frac{z^{2}}{20L}\wedge { \frac{z}{4}}}.
\end{equation}
We will prove in this section the following lemma. The proof of the lemma
uses
some continuity estimates from Section \ref{sec-continest} below, and barrier
estimates from \cite{BRZ} which are discussed in Appendix 
\ref{sec:BoundaryCrossing}.

\begin{lemma}
  \label{lem-westjet}
There exists a constant $c>0$  so that, for all $l>L'$ and $1\leq z\leq L^{1/2}\log^{2} L$,
\begin{equation}
\Pbm\left(\mathcal{G}_{l}^{c}\cap\mathcal{G}_{l-2}\cap\mathcal{Q}\right)\le 
c(z+1)e^{- l_{L}^{\ga}-2z}e^{-\frac{z^{2}}{20l}\wedge { \frac{z}{4}}}.\label{geq: to prove Gl}
\end{equation}
\end{lemma}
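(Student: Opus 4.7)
My plan is to decompose the event by the first level $l' \in \{l-1, l\}$ at which the barrier is violated, to reduce the cardinality of the implicit union over $y \in F_L$ via continuity estimates, and to apply Bramson-type barrier-crossing estimates to each cluster representative. Write
$$\mathcal{G}_l^c \cap \mathcal{G}_{l-2} \subseteq A_{l-1} \cup A_l,$$
where
$$A_{l'} = \bigl\{ \exists y \in F_L \cap B_d(0, \tilde c h_0) : \sqrt{2T_{l''}^{y, t_z}} \geq \alpha(l'') \text{ for all } l'' \leq l-2, \; \sqrt{2T_{l'}^{y, t_z}} < \alpha(l') \bigr\},$$
so it suffices to bound each $\Pbm(A_{l'} \cap \mathcal{Q})$ by the right-hand side of \eqref{geq: to prove Gl}.

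The set $F_L \cap B_d(0, \tilde c h_0)$ has cardinality $\asymp e^{2L}$, which is far too large for a naive union bound. I cover it by the $\asymp e^{2l'}$ balls $B_d(x_j, \tilde c h_{l'})$ as in \eqref{eq-FLgrid}, and use the continuity estimate of Lemma \ref{lem-cont1} together with the event $\mathcal{Q}$ (which makes the times to complete $t_z$ outer excursions around different centers comparable) to show that, up to an exceptional event of exponentially small probability, $A_{l'}$ forces $\sqrt{2T_{l'}^{x_j, t_z}} \leq \alpha(l') + C$ for some representative $x_j$, and simultaneously $\sqrt{2T_{l''}^{x_j, t_z}} \geq \alpha(l'') - C$ for $l'' \leq l-2$. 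This replaces the union over $F_L$ by one over the $\asymp e^{2l'}$ centers, at the cost of a constant shift of the barrier.

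For each fixed $x_j$ the process $\{T_{l''}^{x_j, t_z}\}_{l'' \geq 0}$ is, by \eqref{eq-Pn}, a critical geometric Galton--Watson process started at $t_z$, and so the probability that $\sqrt{2T_{l''}^{x_j, t_z}}$ stays above $\alpha(l'') - C$ for $l'' \leq l-2$ and lands below $\alpha(l') + C$ at level $l'$ is controlled by the barrier estimate from Appendix \ref{sec:BoundaryCrossing} (Lemma \ref{prop:BarrierSecGWPropUB}). Its Gaussian main term, computed as in \eqref{17.1p}--\eqref{eq-bdef} but now retaining the $l_L^\ga$ correction of $\alpha$, is of order $L^{l'/L} e^{-2l' - 2z - l_L^\ga - z^2/(4l')}$, while the barrier constraint supplies an extra factor of order $(1+z)$. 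Multiplying by the cardinality $\asymp e^{2l'}$ absorbs the $e^{2l'}$; the $L^{l'/L}$ factor is harmless in the regime $l' > L/2$ (compare the use of $L^{l/L}e^{-l_L^\ga/2} \leq 1$ in Section \ref{sec-beearly}), and summing over $l' \in \{l-1, l\}$ yields the desired estimate.

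The main obstacle will be executing the cardinality-reduction step with an error small enough that it does not consume the $e^{-l_L^\ga}$ slack provided by the barrier. This is precisely the role played by the continuity estimates of Section \ref{sec-continest}, whose sharpness rests on the spherical symmetry encoded in Lemma \ref{lem-equalmean}. The factor $(z+1)$ in the final bound, absent from the low-$l$ case of Proposition \ref{prop:MainGoalEasyRegime}, is the price of imposing a barrier constraint over an interval of length $\asymp L$, and reflects the inductive, Bramson-style nature of the argument in the regime $l > L/2$.
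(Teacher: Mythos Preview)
Your overall shape is right — union-bound down to $\asymp e^{2l}$ representative centers, then apply a barrier estimate — and this is indeed what the paper does. But two of your key steps do not work as stated, and the paper's actual argument handles them differently.

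First, the claim that ``the $L^{l'/L}$ factor is harmless in the regime $l'>L/2$'' is false. The inequality $L^{l/L}e^{-l_L^\ga/2}\le 1$ you cite from Section~\ref{sec-beearly} holds only for $l\le L/2$; for $l>L/2$ one has $l_L^\ga=(L-l)^\ga\to 0$ while $L^{l/L}\to L$, so this would lose a full factor of $L$. The point is rather that the barrier estimate \eqref{18.20} does not contain an $L^{l/L}$ factor at all: the barrier constraint over $[1,l-2]$ replaces it by $(1+z+l_L^\ga)(1+j)$. This is the whole reason one must carry the barrier through the argument in the high-$l$ regime rather than relying on the raw Gaussian tail as in \eqref{17.1}.

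Second, and more seriously, your continuity step ``$A_{l'}$ forces $\sqrt{2T_{l'}^{x_j,t_z}}\le \alpha(l')+C$ up to an exponentially small error'' is not what Lemma~\ref{lem-cont1} delivers. With $d(y,x_j)\asymp h_{l'}$ the parameter $q$ is of constant order, and the continuity estimate then controls $|T_{y,l'}^{0,n}-T_{x_j,l'}^{0,n}|$ only up to $\theta\sqrt{qn}$ with failure probability $e^{-C_0\theta^2\sqrt{q}}$; a constant shift in $\sqrt{2T_{l'}}$ corresponds to $\theta\sqrt{q}=O(1)$, giving a failure probability that is merely a constant, not exponentially small. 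The paper therefore does \emph{not} try to get a uniform constant shift. Instead it conditions on the value $j$ of $\sqrt{2T_{l-2}^{0,t_z}}-\alpha(l)$ at the fixed center $0$ and proves (Lemma~\ref{lem-sec3.3a}) that $\Pbm(\mathcal{B}_{t_z,l}\mid \mathcal{C}_{0,t_z,l}\cap\mathcal{D}_{0,t_z,l}(j))\le Ce^{-4j}$, via a chaining over scales $m=1,\ldots,k-l$ with the $j$-dependent choice $k=l+\log(2(\alpha(l)+j))$. This $e^{-4j}$ then beats the $e^{2j}$ coming from the barrier probability in \eqref{cb.decp7}, and summing over $j$ gives the result.

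Two further points you are missing. The paper never transfers the barrier condition from $y$ to the representative: since $\mathcal{G}_{l-2}$ is an intersection over all $y\in F_L\cap B_d(0,\tilde c h_0)$, it already contains $\mathcal{C}_{0,t_z,l}$ at the fixed center $0$, so the barrier is available there for free. And the passage from $T_l^{x,t_z}$ (outer excursions around $x$) to $T_{x,l}^{0,\tilde t_z}$ (outer excursions around $0$) via $\mathcal{Q}$ introduces a shift $\tilde t_z=t_z-10$; the paper disposes of the resulting discrepancy by splitting on whether the extra $10$ outer excursions survive to level $L/2$ (the event $\{A_{L/2}>0\}$ has probability $\le cL^{-1}$, which cancels the crude $L$ in \eqref{new.4}).
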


Assuming Lemma \ref{lem-westjet}, we can complete 
the proof of Proposition \ref{gprop:MainGoalEasyRegime}.
\begin{proof}[Proof of Proposition \ref{gprop:MainGoalEasyRegime}.]
  From 
\eqref{geq: to prove Gl}, one has
\bea
\Pbm\left(\mathcal{G}_{L}^{c}\cap\mathcal{Q}\right)&&\le\sum_{l=L'+1}^{L}\Pbm\left(\mathcal{G}_{l}^{c}\cap\mathcal{G}_{l-1}\cap\mathcal{Q}\right)+\Pbm\left(\mathcal{G}^{c}_{L'}  \right)\nn\\
&&\le\sum_{l=L'+1}^{L}\Pbm\left(\mathcal{G}_{l}^{c}\cap\mathcal{G}_{l-2}\cap\mathcal{Q}\right) +\Pbm\left(\mathcal{G}^{c}_{L'}  \right)\nn\\
&&\le\sum_{l=L'+1}^{L}cze^{-
   l_{L}^{\ga}-2z }e^{-\frac{z^{2}}{20l}\wedge { \frac{z}{4}}}+\Pbm\left(\mathcal{G}^{c}_{L'} 
  \right) \nn\le cze^{-2z }e^{-\frac{z^{2}}{20L}\wedge { \frac{z}{4}}},\label{cbsum}
\eea
where the last inequality used 
\eqref{eq-thanks1}. Combined with \eqref{g17.0b}, we conclude that
$\Pbm\left(\mathcal{G}_{L}^{c}\right)\leq 
   cze^{-2z }$. A simple union bound (over $\sim (1/\tilde c h_0)^2$ balls) then completes
   the proof of Proposition \ref{gprop:MainGoalEasyRegime}.
 \end{proof}

We turn to proving 
Lemma \ref{lem-westjet}.
\begin{proof}[Proof of Lemma \ref{lem-westjet}]
By a union bound as in \eqref{eq:displayfol},
$\Pbm\left(\mathcal{G}_{l}^{c}\cap\mathcal{G}_{l-2}\cap\mathcal{Q}\right)$
is bounded above by
\begin{equation}
ce^{2l} \Pbm\left(\left\{ \exists x\in F_{L}\cap B_{d}\left(0,\tilde{c}h_{l}\right)\mbox{ s.t. }\sqrt{2T_{l}^{x,t_{z}}}\le \alpha(l)\right\} \cap\mathcal{G}_{l-2}\cap\mathcal{Q}\right).\label{geq: union 1}
\end{equation}
On $\mathcal{Q}$ we have that $\left\{ \exists x\in F_{L}\cap B_{d}\left(0,\tilde{c}h_{l}\right)\mbox{ s.t. }\sqrt{2T_{l}^{x,t_{z}}}\le \alpha(l)\right\} $
implies the event, see \eqref{17.defb}, 
\[
\mathcal{B}_{\wt t_{z},l}=\left\{ \exists x\in F_{L}\cap B_{d}\left(0,\tilde{c}h_{l}\right)\mbox{ s.t. }\sqrt{2T_{x, l}^{0,  \wt t_{z}}}\le \alpha(l)\right\},
\]
where $\wt t_{z}=t_{z}-10 $ and, recall \eqref{eq-calgary},
$T_{x,l}^{0,n}$ is the number of
traversals from  $\partial B_{d}\left(x,h_{l-1}\right)\to\partial B_{d}\left(x,h_{l}\right)$
during $n$ excursions
from $\partial B_{d}\left(0,h_{1}\right)\to\partial B_{d}\left(0,h_{0}\right)$.
Hence to prove (\ref{geq: to prove Gl}) it suffices to show that
  for all $l>L'$
\begin{equation}
\Pbm\left(\mathcal{B}_{\wt t_{z},l}\cap\mathcal{G}_{l-2} \right)\le c(z+1)e^{-2l-
 l_{L}^{\ga}-2z}e^{-\frac{z^{2}}{16l}\wedge { \frac{z}{4}}}.\label{geq: to prove Glf}
\end{equation}
Set
\be
A_{l}=T_{l}^{0,t_{z}}-T_{l}^{0,\wt t_{z}}\overset{\mbox{law}}{=}T_{l}^{0,10}.\label{gcb.9.1}
\ee
We bound 
\[\Pbm\left(\mathcal{B}_{\wt t_{z},l}\cap\mathcal{G}_{l-2} \right)\leq 
\Pbm\left(\mathcal{B}_{\wt t_{z},l} ;\,A_{L/2}>0 \right)+
\Pbm\left(\mathcal{B}_{\wt t_{z},l}\cap\mathcal{G}_{l-2};\,A_{L/2}=0 \right), 
\]
and estimate each term separately.

Using the independence of $\mathcal{B}_{\wt t_{z},l}$ and $A_{l}$ we have that
\begin{equation}
\Pbm\left(\mathcal{B}_{\wt t_{z},l} ;\,A_{L/2}>0 \right)= 
\Pbm\left(\mathcal{B}_{\wt t_{z},l} \right)
\Pbm\left(A_{L/2}>0 \right)\leq 
c\Pbm\left(\mathcal{B}_{\wt t_{z},l} \right)L^{ -1},\label{ez.123}
\end{equation}
where we have used \eqref{eq-Pn} and the 
fact that the survival probability 
up to generation $L/2$ 
under $\Pgw_{10}$
is bounded by $cL^{ -1}$.

We claim that 
\begin{equation}
\Pbm\left(\mathcal{B}_{\wt t_{z},l} \right)\leq 
cLe^{-2l- l_{L}^{\ga}-2z}e^{-\frac{z^{2}}{4l}\wedge { \frac{z}{4}}},\;\; l\leq L.\label{new.4}
\end{equation}
To see this, we bound $\Pbm\left(\mathcal{B}_{\wt t_{z},l} \right)\leq \mathfrak{P}_{31}^{(l)}+\mathfrak{P}_{32}^{(l)}$ where
\begin{eqnarray}
  \label{eq-P21n}
  \mathfrak{P}_{31}^{(l)}&\!\!\!
:=&\!\!\!\!
\Pbm\left(\sqrt{2T_{l-2}^{0,{\wt t_{ z}}}}\le\alpha(l)+1\right)\\
\label{eq-P22n}
\mathfrak{P}_{32}^{(l)}&\!\!\!
:=&
\!\!\!\!
\Pbm\left(\mathcal{B}_{t_{\wt z},l}
\mbox{ and }\sqrt{2T_{l-2}^{0,{\wt t_{ z}}}}\ge\alpha(l)+1\right).
\end{eqnarray}
It follows from \eqref{17.1} and \eqref{17.1g} that $\mathfrak{P}_{31}^{(l)}$ is bounded by the right hand side of  (\ref{new.4}). To bound
$\mathfrak{P}_{32}^{(l)}$ we follow the proof of the bound (\ref{17.1s}) we obtained for $\mathfrak{P}_{22}^{(l)}$. 
The only difference is that now in the analogue of (\ref{eq-finalP22}) we obtain an extra factor of $L$.

Combining (\ref{new.4}) with (\ref{ez.123}) we see that to establish (\ref{geq: to prove Glf})  it suffices to show 
that for all $l>L'$,
\begin{equation}
\Pbm\left(\mathcal{B}_{\wt t_{z},l}\cap\mathcal{G}_{l-2};\,A_{L/2}=0  \right)\le 
c(z+1)e^{-2l-l_{L}^{\ga}-2z}e^{-\frac{z^{2}}{16l}\wedge { \frac{z}{4}}}.\label{geq: to prove Glg}
\end{equation}
However, $A_{L/2}=0 $ implies that $T_{m}^{0,t_{z}}=T_{m}^{0,\wt t_{z}}$ 
for all $m\geq L/2$. Since the $x$'s in $\mathcal{B}_{\wt t_{z},l}$ 
are all in $B_{d}\left(0,\tilde{c}h_{l}\right)$, it follows that for such 
$x$'s, $ T_{x, l}^{0,  \wt t_{z}} = T_{x, l}^{0,  t_{z}} $.  
Thus on $\{A_{L/2}=0\}$ we have 
$\mathcal{B}_{\wt t_{z},l}=\mathcal{B}_{t_{z},l}$.

Since $\mathcal{G}_{l-2}\subset\mathcal{C}_{0,  t_{z},l}$, where 
\begin{equation}
  \label{Cdef}
\mathcal{C}_{0,  t,l}:=\left\{ \sqrt{2T_{l'}^{0,t}}\ge \alpha(l')\mbox{ for all }l'=1,\ldots,l-2\right\},
\end{equation} 
it suffices to show that for $l\geq L'$,
\begin{equation}
\Pbm\left(\mathcal{B}_{t_{z},l} \cap\mathcal{C}_{0,  t_{z},l}  \right)\le c(z+1)e^{-2l-
    l_{L}^{\ga}-2z}e^{-\frac{z^{2}}{16l}\wedge { \frac{z}{4}}}.\label{gsuf1}
\end{equation}

Let
\[
  \mathcal{D}_{0,  t_{z},l}(j)=\left\{ \sqrt{2T_{l-2}^{0,  t_{ z }}}\in I_{\alpha(l)+j}\right\} .
\]
Using \eqref{17.1}-\eqref{17.1g}, by (\ref{gsuf1}) it suffices to show that 
\begin{equation}
 \sum_{j=0}^{8L}\Pbm\left(\mathcal{B}_{t_{z},l}\cap\mathcal{C}_{0,  t_{z},l} 
\cap\mathcal{D}_{0,  t_{z},l}(j)\right)\le
c(z+1)e^{-2l- l_{L}^{\ga}-2z}e^{-\frac{z^{2}}{16l}\wedge { \frac{z}{4}}}.\label{gcb.9dec}
\end{equation}

The following analogue of Lemma \ref{lem-sec3.3} will be proved in
Section \ref{sec-beearly2} below.

\begin{lemma}
  \label{lem-sec3.3a}
 There exist constants $j_0,C,z_0$ such that, for 
 all $j\geq j_{0}$, $l\leq L$  and $  z \geq 0$, 
\begin{equation}
\Pbm\left(\mathcal{B}_{t_{z},l}\,\big |\,\mathcal{C}_{0,  
t_{z},l} \cap\mathcal{D}_{0,  t_{z},l}(j)\right)      \le Ce^{-4j }. \label{cb.15y}
\end{equation} 
\end{lemma}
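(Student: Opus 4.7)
The plan is to first strip the conditioning on $\mathcal{C}_{0,t_z,l}$ via the strong Markov property, then use the isotropy of $\S^2$ to reduce the problem to a deviation estimate for critical Galton--Watson processes, and finally invoke the continuity estimates of Section \ref{sec-continest} in a chaining argument to control the union bound over $x\in F_L\cap B_d(0,\tilde c h_l)$. The role of $\mathcal{C}$ is auxiliary: once we condition on $\mathcal{D}_{0,t_z,l}(j)$, i.e. on $m:=T_{l-2}^{0,t_z}$ with $\sqrt{2m}\in I_{\alpha(l)+j}$, the event $\mathcal{B}_{t_z,l}$ depends only on what the Brownian motion does inside $B_d(0,h_{l-2})$ during the $m$ excursions that contribute to $T_{l-2}^{0,t_z}$. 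By the strong Markov property at successive hits of $\partial B_d(0,h_{l-1})$, this is conditionally independent of the trajectory outside $B_d(0,h_{l-1})$, on which $\mathcal{C}_{0,t_z,l}$ depends. Hence it suffices to bound $\Pbm(\mathcal{B}_{t_z,l}\mid\mathcal{D}_{0,t_z,l}(j))\le Ce^{-4j}$.

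On $\{T_{l-2}^{0,t_z}=m\}$, the compatibility relation \eqref{eq:TkandTCompatability} gives $T_{x,l}^{0,t_z}=\trav{m}{0}{r_{l-3}}{r_{l-2}}{x}{r_{l-1}}{r_l}$, and the strong Markov property decomposes this as a sum $\sum_{i=1}^{m}N_x^{(i)}$ of i.i.d. random variables (with a harmonic-measure starting point controlled via the Poisson kernel \eqref{poisson}). Here $N_x^{(i)}$ counts traversals around $x$ at level $l$ during one excursion $\partial B_d(0,h_{l-1})\to\partial B_d(0,h_{l-2})$. The crucial symmetry input is Lemma \ref{lem-equalmean}: by the isotropy of the sphere, $\mathbb{E}[N_x^{(i)}]=\mathbb{E}[N_0^{(i)}]$ for every $x\in B_d(0,\tilde c h_l)$, and the latter equals $1$ by the critical geometric GW property (see \eqref{eq-Pn}). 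Thus $\mathbb{E}[T_{x,l}^{0,t_z}\mid m]=m$ for every such $x$, while the variance of each $N_x^{(i)}$ is bounded by a universal constant (two generations of a geometric GW). Since on $\mathcal{D}_{0,t_z,l}(j)$ one has $m=\tfrac12(\alpha(l)+j)^2+O(j)$, the event $\{\sqrt{2T_{x,l}^{0,t_z}}\le\alpha(l)\}$ translates to the one-sided deviation $\sum_{i=1}^m N_x^{(i)}-m\le -j\alpha(l)-j^2/2$, and a standard Bernstein-type bound yields for a single $x$
\[
\Pbm\!\left(\sqrt{2T_{x,l}^{0,t_z}}\le\alpha(l)\,\big|\,m\right)\le \exp(-c_0 j^2).
\]

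The final step, and the main obstacle, is to convert this pointwise bound into a uniform one over the $\asymp e^{2(L-l)}$ points of $F_L\cap B_d(0,\tilde c h_l)$: a naive union bound is hopeless when $L-l$ is of order $L$. I would therefore apply the continuity estimates of Section \ref{sec-continest} (cf. Lemma \ref{lem:4.58m} and the preparatory Lemmas \ref{lem-cont1}--\ref{lem-cont2}) in a Dudley-style chaining: for a dyadic cover of $B_d(0,\tilde c h_l)$ at scales $h_l\cdot 2^{-k}$, the oscillation of $\sqrt{2T_{x,l}^{0,t_z}}$ between nearest neighbors at scale $k$ is sub-Gaussian with a variance that decays geometrically in $k$, so summing the resulting chain of bounds absorbs the entropy factor $2(L-l)$ into a $\log$ correction. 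Comparing $e^{-c_0 j^2}$ to the target $e^{-4j}$ leaves ample room, and one obtains \eqref{cb.15y} provided $j\ge j_0$ for some absolute $j_0$. The delicate point is that the continuity estimates must hold uniformly across scales $l$ and moreover with the constant-mean property (Lemma \ref{lem-equalmean}); any $x$-dependent shift in $\mathbb{E}[N_x^{(i)}]$ would be of order $1$ per excursion and hence would overwhelm the $j\alpha(l)$ deviation budget after $m\approx \alpha(l)^2/2$ trials. It is precisely this step that forces one to work on $\S^2$ rather than a general compact manifold.
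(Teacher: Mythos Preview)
Your high-level strategy coincides with the paper's: strip the conditioning on $\mathcal{C}_{0,t_z,l}$ via the strong Markov property (this is exactly \eqref{eq-thanks3}), obtain a single-point deviation bound of order $e^{-cj^2}$ for the count centered at $0$ (this is the paper's $\mathfrak{P}_{31}$, see \eqref{bu2}), and then use the continuity estimates of Section~\ref{sec-continest} in a chaining argument to control the sup over $x\in F_L\cap B_d(0,\tilde c h_l)$. However, there is a genuine gap in your final step.

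You propose to chain directly on $T_{x,l}^{0,t_z}$ across $L-l$ dyadic scales, from $0$ down to the full grid $F_L$. This fails because the continuity estimates you need carry side conditions that are violated at fine scales. In Lemma~\ref{lem-cont2} one needs $\theta\le((n-1)q)^2$, and in the proof of Lemma~\ref{lem:4.58m} this is verified via $\sqrt{2n}\,q\ge c_2'$, i.e.\ $(\alpha(l)+j)\,d(\tilde y,y)/r_l\ge c_2'$. Once the chaining scale drops below $r_l/(\alpha(l)+j)$, this constraint fails; equivalently, you can only chain $k-l=\log\bigl(2(\alpha(l)+j)\bigr)$ levels, not $L-l$. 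The sub-Gaussian variant (Lemma~\ref{lem-cont1}) is no help: its bound $e^{-C_0\theta^2\sqrt q}$ becomes trivial as $q\to 0$, so the entropy $e^{2m}$ at level $m$ is not beaten for $m$ beyond $\log\alpha(l)$. For small $l$ (where $L-l$ is of order $L$) the Dudley integral you invoke thus does not absorb the entropy, and your union bound blows up.

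The paper's extra idea, which you are missing, is to perturb the radii: one introduces $\beta=1/(\alpha(l)+j)$ and the modified counts $T_{y,\wt r_l}^{0,t}$ with $\wt r_l=r_l(1-\beta)$, $\wt r_{l-1}^+=r_{l-1}(1+\beta)$, see \eqref{scale2}. The deterministic inclusion \eqref{cent1} gives $T_{y',\wt r_l}^{0,t}\le T_{y,l}^{0,t}$ whenever $d(y,y')\le\beta r_l/2=r_k/2$, so the sup over $y\in F_L$ is dominated by the sup over representatives $y'\in F_k$ of the \emph{perturbed} counts. Only then does one chain, and only from $0$ to $F_k$, i.e.\ across $k-l=\log(2(\alpha(l)+j))$ levels, which is exactly the range where Lemma~\ref{lem:4.58m} applies. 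The cost of the perturbation is a shift of $O(1)$ in $\sqrt{2T}$ (since $\beta(\alpha(l)+j)=1$), which is handled by splitting at $\alpha(l)+j/2$; this is the paper's $\mathfrak{P}_{31}$/$\mathfrak{P}_{32}$ decomposition.

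A minor further point: your claim that the $N_x^{(i)}$ are i.i.d.\ with $\mathbb{E}[N_x^{(i)}]=\mathbb{E}[N_0^{(i)}]$ is not literally correct. The starting points of successive excursions are correlated through the Poisson kernel, and Lemma~\ref{lem-equalmean} establishes $\mathbb{E}^{\mu_{0,h(\tilde r)}}[A_{x,1}-B_{y,1}]=0$ only under the \emph{uniform} starting law. The paper does not use Bernstein directly; instead it builds the continuity lemmas on a renewal construction (see \eqref{arrz}--\eqref{ren1p}) that manufactures i.i.d.\ blocks with the correct zero-mean property, and then proves \eqref{basicexp2}. You should appeal to the continuity lemmas as black boxes rather than rederive a Bernstein bound.
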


 We continue with the proof of 
Lemma \ref{lem-westjet}.
Note that $\mathcal{C}_{0,  t_{z},l} \cap\mathcal{D}_{0,  t_{z},l}(j)$ is a 
barrier event in the sense discussed in \cite{BRZ}. Based on the latter
paper,  we 
develop 
in the appendix the barrier estimates in the form that we need here. 
In particular, 
it follows from (\ref{18.20}) in the appendix that
\bea
&&
\Pbm\left(\mathcal{C}_{0,  t_{z},l} \cap\mathcal{D}_{0,  t_{z},l}(j)\right)\leq  
ce^{-2l-2z-2 l_{L}^{\ga}+2j}\label{cb.decp7}\\
&&\hspace{1.5 in}  \times \left(1+z+l_{L}^{\ga}
\right)\left(1+j   \right)e^{-\frac{\left( z +  l_{L}^{\ga} - j \right)^{2}}{4l}}.\nn
\eea

   We break the sum in (\ref{gcb.9dec}) into a sum over two intervals, $[0,( z +  l_{L}^{\ga})/2] $, 
  and $[( z +  l_{L}^{\ga})/2, 8L] $.   In the first interval we use
 \[e^{-\frac{\left( z +  l_{L}^{\ga} - j \right)^{2}}{4l}}\leq e^{-\frac{(z +  l_{L}^{\ga})^{2}}{16l}}\leq e^{-\frac{z^{2}}{16l}}.\] 
 For the last interval we ignore the last factor in (\ref{cb.decp7}) and use $e^{-j}\leq e^{-z/2}$.

Putting this all together, 
we can bound from 
above the left hand side of (\ref{gcb.9dec}) by  
\begin{equation}
  C(z+1)e^{-2l-2z- l_{L}^{\ga}}e^{-\frac{z^{2}}{16l}\wedge { \frac{z}{4}}} \sum_{j=0}^{\infty}e^{-3j1_{\{j\geq j_{0}\}} +2j}
(1+|j|)\label{gcb.9dec3},
\end{equation}
which proves (\ref{gcb.9dec}) and completes the proof of Lemma \ref{lem-westjet}.
\end{proof}

\subsection{Proof of the conditional barrier estimates}
\label{sec-beearly2}
We prove in this section Lemmas \ref{lem-sec3.3} and \ref{lem-sec3.3a},
whose statements boil down to the estimates 
$\Pbm\left(\mathcal{B}_{t_{\wh z},l}\,\big|\, 
\mathcal{D}_{0,t_{\wh z },l}(   j)\right)\leq e^{ -4j}$, 
and  
$\Pbm\left(\mathcal{B}_{ t_{z},l}\,\big |\,\mathcal{C}_{0,  t_{z},l}
\cap\mathcal{D}_{0,  t_{z},l}(j)\right)      \le Ce^{-4j }, $
see 
(\ref{17.1sv}) and
(\ref{cb.15y}).
We intend to give a proof that will cover both cases. 
It will be seen from the proof that the time, $t_{\wh z}$ or $  t_{z}$, 
does not play a role in the proof. Hence we shall write it as $t$. 
In addition, we will see that the extra conditioning on 
$\mathcal{C}_{0,  t_{z},l} $ present in \eqref{cb.15y} is
not significant. 
\begin{proof}[Proof of Lemma \ref{lem-sec3.3a}]
Fix $\beta\in (0,1/2)$,
\begin{equation}
\wt r_{l-1}^+=r_{l-1}\left(1+\bb \right) ,\hspace{.2 in}\wt r_{l}=r_{l}\left(1-
\bb \right), \label{scale2}
\end{equation} 
and consider the excursions count 
$T_{y,\wt r_{l}}^{0, t}:=
\trav{t}{0}{r_0}{r_1}{y}{\wt r_{l-1}^+}{\wt r_l}$,
compare with \eqref{eq-calgary}, writing
$T_{0,\wt r_{l}}^{0, t}=
T_{\wt r_{l}}^{0, t}$, compare with \eqref{eq-traver1}.
Note that
for $y,y'$ with 
$d\left(y,y'\right)\le{\bb  r_{l} / 2}$, we have (using \eqref{dr.1}) that
\[
B_{d}\left(y',h(\wt r_l)\right)\subset B_{d}\left(y,h_{l}\right)
\subset B_{d}\left(y,h_{l-1}\right)\subset 
B_{d}\left(y',h(\wt r_{l-1}^+)\right),
\]
and therefore, writing $t=t_z$ throughout,
\be
T_{y',\wt r_{l}}^{0,t}\le T_{y,l}^{0,t}\mbox{ for all }y
\mbox{ and }y'\mbox{ such that }d\left(y,y'\right)\le{\bb  r_{l} / 2}.\label{cent1}
\ee

Let
\be
 \mathcal{B}^{ \bb ,k}_{t,l}=\left\{ \exists y\in F_{k}\cap B_{d}\left(0,\tilde{c}h_{l}\right)\mbox{ such that }\sqrt{2T_{y,\wt r_{l}}^{0, t}}\le \alpha(l)\right\}.\label{zdek}
\ee
From now on we fix
\be
\bb =\frac{1}{\alpha(l)+j}\quad\mbox{\rm and} \quad
k=\log\left(2(\alpha(l)+j)\right)+l.\label{conv-k}
\ee
We will show that with these values,
\begin{equation}
  \mathfrak{P}_3=
  \mathfrak{P}_3(j)
  :=\Pbm\left(\mathcal{B}^{  \bb ,k}_{t,l}\,\bigg|\,\mathcal{C}_{0,t,l}\cap\mathcal{D}_{0,t,l}(   j)\right)      \le Ce^{-4j}.\label{eq:to show}
\end{equation}
Using (\ref{cent1}) this will imply \eqref{cb.15y},
since  
for each $y\in F_{L}\cap B_{d}\left(0,\tilde{c}h_{l}\right)$ there exists  a representative $y'\in F_{k}\cap B_{d}\left(0,\tilde{c}h_{l}\right)$
such that 
\[
d\left(y,y'\right)\le r_{k}=\frac{1}{2(\alpha(l)+j)}\,\,r_{l}={\bb  r_{l} / 2}.
\]

We thus turn to proving \eqref{eq:to show}.
We bound
\bea
\label{eq-P3}
\mathfrak{P}_3
&\le& \Pbm\left(\sqrt{2T_{\wt r_{l}}^{0, t }} \le\alpha(l)+\frac{j}{2}\,
\bigg|\,\mathcal{C}_{0,t,l}\cap\mathcal{D}_{0,t,l}(j)\right)\nn\\
&& +\Pbm\left(\mathcal{B}^{ \bb ,k}_{t,l}\cap 
\Big\{\sqrt{2T_{\wt r_{l}}^{0, t }}>\alpha(l)+\frac{j}{2}\Big\}\,\bigg|\,
\mathcal{C}_{0,t,l}\cap\mathcal{D}_{0,t,l}(j)\right)=:\mathfrak{P}_{31}+
\mathfrak{P}_{32}.
\eea
(Do not confuse $\mathfrak{P}_{31}$ and $\mathfrak{P}_{32}$
with $\mathfrak{P}_{31}^{(l)}$ and $\mathfrak{P}_{32}^{(l)}$
from \eqref{eq-P21n} and \eqref{eq-P22n}.
Note also 
that both $\mathfrak{P}_{31}$ and $\mathfrak{P}_{32}$ depend on $j$, but
we continue to supress the dependence in the notation.)

We first bound $\mathfrak{P}_{31}$.
Note that   given $T_{l'}^{0, t}$ for all  $l'=1,\ldots,l-2$, it follows from the Markov property that 
$T_{\wt r_{l}}^{0, t }$ depends only on $T_{l-2}^{0,t }$, \corJ{and if $m=T_{l-2}^{0,t }$ then  $T_{\wt r_{l}}^{0, t }=\trav{t}{0}{r_0}{r_1}{0}{\wt r_{l-1}^+}{\wt r_l}=\trav{m}{0}{r_{l-3}}{r_{l-2}}{0}{\wt r_{l-1}^+}{\wt r_l}$}. Hence 
\be
\label{eq-thanks3}
\mathfrak{P}_{31}=\Pbm\left(\sqrt{2T_{\wt r_{l}}^{0, t }} \le\alpha(l)+\frac{j}{2}\,\bigg|\,\mathcal{D}_{0,t,l}( j)\right).
\ee
Hence, 
\begin{eqnarray}
  \label{bu1}
  \mathfrak{P}_{31}&=&
  \Pbm\left(\sqrt{2T_{\wt r_{l}}^{0, t }} \le\alpha(l)+\frac{j}{2}\,\bigg|\, \sqrt{2T_{l-2}^{0,t }}\in I_{\alpha(l)+j}\right)\nn\\
&=& \Pbm\left( 
\sqrt{2T_{\wt r_{l}}^{0, r_{l-2},T_{l-2}^{0,t }}}\le\alpha(l)+\frac{j}{2}\,\bigg|\, \sqrt{2T_{l-2}^{0,t}}\in 
I_{\alpha(l)+j}\right),\nn\\
&\le& \sup_{u\in I_{\al(l)+j}}
\Pbm\left( 
\sqrt{2T_{\wt r_{l}}^{0, r_{l-2},u^{2}/2}}\le\alpha(l)+\frac{j}{2} \right),
\end{eqnarray}
where we write $T_{\wt r_{l}}^{0, r_{l-2},m}=\corJ{\trav{m}{0}{r_{l-3}}{r_{l-2}}{0}{\wt r_{l-1}^+}{\wt r_l}}$, compare with \eqref{eq-traver2}.

Set $u=\alpha(l )+j+\ze$, where $0\leq \ze\leq 1$.
It follows from  
\cite[Lemma 4.6]{BK},  after correcting a typo, 
that
\begin{equation}
\Pbm\left( 
\sqrt{2T_{\wt r_{l}}^{0, r_{l-2},u^{2}/2}}\le\alpha(l)+\frac{j}{2} \right)\leq e^{-\left(\sqrt{q} (\alpha(l)+j+ \ze) -\sqrt{p}(\alpha(l)+\frac{j}{2})\right)^{2}/2}\label{bu2a}
\end{equation}
where
\begin{equation}
  \label{eq-q}q:=
\frac{\log r_{l-3}-\log r_{l-2}}{\log r_{l-3}-\log (r_{l}\left(1-\bb \right))}=\frac{1}{3+O\left(\bb \right)}
\end{equation}
and
\begin{equation}
  \label{eq-p}
p:=\frac{\log (r_{l-1}\left(1+\bb \right))-\log (r_{l}\left(1-\bb \right))}{\log r_{l-3}-\log (r_{l}\left(1-\bb \right))}=\frac{1+O\left(\bb 
\right)}{3+O\left(\bb \right)}.
\end{equation}
Indeed,
to apply Lemma 
\ref{lem: GW proc LD} or
\cite[Lemma 4.6]{BK} it suffices to show  that 
\[\alpha(l)+\frac{j}{2}\leq (\alpha(l )+j)\sqrt{q/p}
=(\alpha(l )+j)(1-O\left(\bb \right))=(\alpha(l )+j)-O\left(1\right),\]
since $\bb \left(\alpha(l)+j\right)=1$. Thus we can use (\ref{bu2a}) for all $j\geq c_{3}$ for some $c_{3}<\ff$. For such $j$
we therefore have
\[
\Pbm\left( 
\sqrt{2T_{\wt r_{l}}^{0, r_{l-2},u^{2}/2}}\le\alpha(l)+\frac{j}{2} \right)\le ce^{ -\frac{1}{3}\left(\frac{j}{2}+\ze+O\left(\bb \left(\alpha(l)+\frac{j}{2}\right)\right)\right)^{2}/2},
\]
and using again  $\bb \left(\alpha(l)+j\right)=1$
we obtain  
\be
\mathfrak{P}_{31}\leq c'e^{-c_{4}j^{2}}\label{bu2}
\ee
for all $j\geq c_{3}$. 
By enlarging $c'$ we then have (\ref{bu2}) for all $j$.

We turn to bounding $\mathfrak{P}_{32}$.
  Assign to each $y\in F_{l+m}\cap B_{d}\left(0,\tilde{c}h_{l}\right)$
a unique ``parent'' $\tilde{y}\in F_{l+m-1}\cap B_{d}\left(0,\tilde{c}h_{l}\right)$ such that
$d\left(\tilde{y},y\right)\le r_{l+m}$. In particular, for $m=1$ we set $\tilde{y}=0$,  and set $\tilde{y}=y$ if $y\in F_{l+m-1}$. Let $ q= q( \tilde{y},y  )=d\left(\tilde{y},y\right)/r_{l}$ (not to be  
confused with \eqref{eq-q}) and set
\begin{equation}
  \mathcal{A}_{m}=\Big\{\underset{y\in F_{l+m}\cap B_{d}\left(0,\tilde{c}h_{l}\right)}{\sup}\left|T_{y,\wt r_{l}}^{0,t }-T_{\tilde{y},\wt r_{l}}^{0, t }\right|\le d_{0}j m\left(\alpha(l)+j\right)\sqrt{ q}\Big\},\label{s1.1}
\end{equation}
where $d_{0}$ will be chosen later, but small enough so
that $d_{0}\sum_{m\ge1}me^{-m/2}\le\frac{1}{8}$.
In words, $\mathcal{A}_m$ is the good event in which all neighboring 
(at scale $l+m-1$) excursion counts for balls whose centers are in a fixed
ball at scale $l$ are not too distinct.
We claim that 
\bea
&&
\bigcap_{m=1}^{k-l}\mathcal{A}_{m}\cap \Big\{\sqrt{2T_{\wt r_{l}}^{0, t }}>\alpha(l)+\frac{j}{2}\Big\}\label{s1.2}\\
&&\subseteq \Big\{\sqrt{2T_{y,\wt r_{l}}^{0 ,t }}>\alpha(l),\,\forall y\in F_{k}\cap B_{d}\left(0,\tilde{c}h(r_{l})\right)\Big\}, \nn
\eea
that is, under ${\mathcal A}_m$, having excursion counts (centered at $0$)
larger than $\alpha(l)+j/2$ implies that all center counts for slightly off-center balls are larger than $\alpha(l)$.

Using that 
$ q=d\left(\tilde{y},y\right)/r_{l}\le r_{l+m}/r_{l}=e^{-m}$ for $y\in F_{l+m}$ we see that on the event
in the left hand side of (\ref{s1.2}), for  any
$y\in F_{k}\cap B_{d}\left(0,\tilde{c}h_{l}\right)$
one has
\[
T_{y,\wt r_{l}}^{0, t }\ge\left(\alpha(l)+\frac{j}{2}\right)^{2}/2-j\left(\alpha(l)+j\right)d_{0}\sum_{m\ge1}me^{-m/2},
\]
which, since $d_{0}\sum_{m\ge1}me^{-m/2}\le\frac{1}{8}$,
implies that
\[
T_{y,\wt r_{l}}^{0, t }  \ge 
\left(\alpha(l)+\frac{j}{2}\right)^{2}/2-\frac{1}{4}j\left(\alpha(l)+j\right)
  >  \al^{2}(l)/2.
\]
This establishes (\ref{s1.2}) and taking complements we see that
\begin{equation}
  \mathcal{B}^{\bb ,k}_{t,l}
  \subseteq \bigcup_{m=1}^{k-l}\mathcal{A}^{c}_{m}\cup 
\Big\{\sqrt{2T_{\wt r_{l}}^{0, t }}\leq \alpha(l)+\frac{j}{2}\Big\}.
\label{s1.3}
\end{equation}
It follows that
\begin{eqnarray}
&& \mathcal{B}^{\bb ,k}_{t,l}\cap \Big\{\sqrt{2T_{\wt r_{l}}^{0, t }}>\alpha(l)+\frac{j}{2}\Big\}
\subseteq \bigcup_{m=1}^{k-l}\mathcal{A}^{c}_{m}. \label{gamlab}
\end{eqnarray}

For $y\in F_{l+m}\cap B_{d}\left(0,\tilde{c}h_{l}\right)$,
write 
$$\mathcal{A}_m^{y,c}=
\left\{\left|T_{y,\wt r_{l}}^{0, t }-T_{\tilde{y},\wt r_{l}}^{0, t}\right|\ge d_{0}j m\left(\alpha(l)+j\right)\sqrt{ q}\right\}.$$
We thus obtain
that
\begin{eqnarray}
\mathfrak{P}_{32} &\leq &
\sum_{m=1}^{k-l}\Pbm\left(
\cup_{y\in F_{l+m}\cap B_{d}\left(0,\tilde{c}h_{l}\right)}
\mathcal{A}_m^{y,c}
\,\bigg|\,\mathcal{C}_{0,t,l}\cap\mathcal{D}_{0,t,l}(j)\right)\nonumber\\
&\leq&\sum_{m=1}^{k-l}\left|F_{l+m}\cap B_{d}\left(0,\tilde{c}h_{l}\right)\right|
\label{eq:chaining union bound}\underset{y\in F_{l+m}\cap B_{d}\left(0,\tilde{c}h_{l}\right)}{\sup}
\Pbm\left(\mathcal{A}_m^{y,c}
\,\bigg|\,\mathcal{C}_{0,t,l}\cap\mathcal{D}_{0,t,l}(j)\right)\nn\\
&\le& c\sum_{m=1}^{k-l}e^{2m}\underset{y\in F_{l+m}\cap B_{d}\left(0,\tilde{c}h_{l}\right)}{\sup}
\Pbm\left(\mathcal{A}_m^{y,c}
\,\bigg|\,\mathcal{C}_{0,t,l}\cap\mathcal{D}_{0,t,l}(j)\right).\label{17.1ax}
\eea

 \corJ{Note that  if $m=T_{l-2}^{0,t }$ then  $T_{y,\wt r_{l}}^{0, t }=\trav{t}{0}{r_0}{r_1}{y}{\wt r_{l-1}^+}{\wt r_l}=\trav{m}{0}{r_{l-3}}{r_{l-2}}{y}{\wt r_{l-1}^+}{\wt r_l}$.
We can thus write the last probability as
\begin{eqnarray}
  \label{eq-thanks4}
  &&\\
&&
\!\!\!\!\!
\!\!\!\!\!
\Pbm\left(
\left|
\trav{T_{l-2}^{0,t}}{0}{r_{l-3}}{r_{l-2}}{y}{\wt r_{l-1}^+}{\wt r_l}
-\trav{T_{l-2}^{0,t}}{0}{r_{l-3}}{r_{l-2}}{\tilde y}{\wt r_{l-1}^+}{\wt r_l}
\right|\ge d_{0}j m\left(\alpha(l)+j\right)\sqrt{ q}\,\bigg|\,\mathcal{C}_{0,t,l}\cap\mathcal{D}_{0,t,l}(j)\right)\nn\\
&&
\!\!\!\!\!
\!\!\!\!\!\!
\leq  
\!\!\!
\sup_{u\in I_{\alpha(l)+j}}
\!\!\Pbm\left(
\left|
\trav{u^2/2}{0}{r_{l-3}}{r_{l-2}}{y}{\wt r_{l-1}^+}{\wt r_l}
-\trav{u^2/2}{0}{r_{l-3}}{r_{l-2}}{\tilde y}{\wt r_{l-1}^+}{\wt r_l}
\right|\ge d_{0}j mu
\sqrt{ q}/2\,\bigg|\,\mathcal{C}_{0,t,l}\cap\mathcal{D}_{0,t,l}(j)\right)\!.\nn
\end{eqnarray}}
We now have the following lemma, whose proof, based on the continuity
  estimates derived in Section \ref{sec-continest},
  appears at the end of
that section.

   \bl
  \label{lem:4.58m} There exist constants $c>0$ and $j_{0}<\ff$ such that for all $j\geq j_{0}$, $y\in F_{l+m}\cap B_{d}\left(0,\tilde{c}h_{l}\right)$, 
  $ q=d\left(\tilde{y},y\right)/r_{l} $ and $m\leq k-l$ as above,
\begin{eqnarray}
  \label{eq-thanks4lem}
  &&
  \!\!  \!\!  \!\!
  \!\!  \!\!  \!\!
  \sup_{u\in I_{\alpha(l)+j}}
\Pbm\left(
\left| \corJ{
\trav{u^2/2}{0}{r_{l-3}}{r_{l-2}}{y}{\wt r_{l-1}^+}{\wt r_l}
-\trav{u^2/2}{0}{r_{l-3}}{r_{l-2}}{\tilde y}{\wt r_{l-1}^+}{\wt r_l}}
\right|\ge d_{0}j m
u\sqrt{ q}/2\,\bigg|\,\mathcal{C}_{0,t,l}\cap\mathcal{D}_{0,t,l}(j)\right)\nn\\
&&\quad \quad
\quad \quad 
\leq  ce^{-4(j+ m)} .
\end{eqnarray}
  Here $\tilde y$ is the ``parent" of $y$ defined in the paragraph following (\ref{bu2}).
\el

Using \eqref{eq-thanks4lem} and substituting in \eqref{eq-P3} we see
that  for all $j\geq j_{0}$
\be
\mathfrak{P}_{32}\leq c\sum_{m=1}^{k-l}e^{2m}e^{-4(j+ m)}\le Ce^{-4j},\label{dec1a}
\ee 
provided $C$ is large enough.
Combining \eqref{bu2} and 
\eqref{dec1a} yields \eqref{eq:to show} and completes the proof of 
\eqref{cb.15y} and therefore of Lemma \ref{lem-sec3.3a}.
\end{proof}
 
\begin{proof}[Proof of Lemma \ref{lem-sec3.3}.]
  The proof is almost identical to that of Lemma \ref{lem-sec3.3a}.
  Replace $\mathfrak{P}_3$
by the same quantities with the extra conditioning on $\mathcal{C}_{0,t,l}$
omitted. For the analogue of $\mathfrak{P}_{31}$, because of \eqref{eq-thanks3}
we obtain exactly the same estimate, i.e. \eqref{bu2}. For the analogue of
$\mathfrak{P}_{32}$, we follow the proof up to
\eqref{eq-thanks4}, and note that the application of Lemma \ref{lem-cont2} used in the proof of Lemma \ref{lem:4.58m}
still works, since the conditioning on 
$\mathcal{D}_{0,t,l}(j)$ already specifies
$T^{0,t}_{l-2}$. 
This leads to \eqref{dec1a} and completes the proof of \eqref{17.1sv} and
hence of the lemma.
\end{proof}

\subsection{From excursion counts to cover time}
  \label{sec-fromex-to-ct}
  
This short section is devoted to the proof of 
  \eqref{goal.1asto}.
  \begin{proof}[Proof of \eqref{goal.1asto}.]
By the definitions of $t_z$ and $s_{L}(z)$, see
\eqref{eq:defoftsr} and \eqref{clt.2}, 
the estimate (\ref{goal.1})
that we have already proved 
is equivalent to the existence of a constant  $C$ so that
\begin{equation}
 \limsup_{L\to\ff} \Pbm\left( T_{L}^{y,s_{L}\left(z\right)}=0\mbox{ for some }y\in F_{L}\right)\leq C(1+z)  e^{ -z }.\label{ngoal.1}
\end{equation}
With
\[
  \tilde{\mathcal{C}}_{L}=\sup_{x\in F_{L}}\hit_{\partial B_{d}\left(x,h_{L}\right)},
\]
it follows from Theorem \ref{theo-etrt} that
\begin{equation}
  \limsup_{L\to\ff}\Pbm\left(\tilde{\mathcal{C}}_{L}\ge 4s_{L}\left(z\right)\right)\leq C e^{ -z+d\sqrt{z}}.\label{neq: ub real time}
\end{equation}
On the other hand, 
for some $d_{0}<\ff$
\[
  \tilde{\mathcal{C}}_{[\log\epsilon^{-1}]-d_{0}}\le \mathcal{C}_{\epsilon}^*\le
  \tilde{\mathcal{C}}_{[\log\epsilon^{-1}]+d_{0}},
\]
so that (\ref{neq: ub real time}) implies that
\begin{equation}
  \limsup_{\epsilon\to 0}\Pbm\left(\mathcal{C}_{\epsilon}^*
  \ge 4s_{[\log\epsilon^{-1}]}\left(z\right)\right)\leq Ce^{ -z+d \sqrt{z}},\label{neq: ubc real time}
\end{equation}
for a possibly larger $C$ and $d$. This is equivalent to \eqref{goal.1asto}.
\end{proof}

\section{Lower bound} 
\label{sec-lowerboundex}

In this section we complete the proof of tightness of the cover time by 
proving the following. 
\begin{proposition}
\label{lem-LBcover}
  For any $\delta>0$ there exists $-\infty<z<0$ such that
  \begin{equation}\label{eq: cov lb goal1}
\liminf_{\epsilon \to 0} \Pbm\(
\sqrt{\CC^{\ast}_\ep} -2\sqrt{2}\left(\log \epsilon^{-1}-\frac14
\log\log \epsilon^{-1}\right)
\geq z\) \ge 1-\delta.
\end{equation}

In addition, for any   $B_{d}\(   a,r\)\subseteq \S^2$, the same result holds if $\CC^{\ast}_{\eps,\S^2}$ is replaced by $\CC^{\ast}_{\eps,\S^2, B_{d}\(   a,r\)}$, the 
$\ep$-cover time of $B_{d}\(   a,r\)\subseteq \S^2$ by  Brownian motion on $\S^2$.
\end{proposition}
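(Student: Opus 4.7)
The plan is to reduce the statement to an excursion-count question via Theorem \ref{theo-etrt} and then apply the second moment method to a counting statistic of uncovered points, mirroring the argument for \eqref{goal.1L} but now in a regime where the first moment diverges. By Theorem \ref{theo-etrt} together with the elementary computation $\sqrt{4 s_L(z)} = m_{\ep,\S^2} + z/\sqrt{2} + o(1)$ (which underlies the derivation of \eqref{goal.1asto} from \eqref{neq: ub real time}), the event $\sqrt{\CC^{\ast}_\ep} \geq m_{\ep,\S^2} + z$ is, up to an $O(\sqrt{|z|})$ shift in $z$, equivalent to $t^*_L > t_{\sqrt{2}z}$ with $L \approx \log \ep^{-1}$. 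It therefore suffices to show that for every $\delta>0$ there exists $K<\infty$ with $\liminf_{L\to\infty}\Pbm\bigl(\exists\,y\in F_L\text{ with }T_L^{y,t_{-K}}=0\bigr) \geq 1-\delta$.

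To establish this, apply the second moment method to $N_K := \#\{y \in F_L : T_L^{y,t_{-K}}=0 \text{ and a barrier event holds}\}$, with barrier event chosen in direct analogy to the $\Iyzh$-type event from the proof of \eqref{goal.1L} (cf.\ \eqref{eq:TruncatedSummandLBzz}): one requires $\sqrt{2T_l^{y,t_{-K}}} \geq \alpha(l)$ for all $1\leq l\leq L-1$. Combining Lemma \ref{lem:NotHitByrL} with the barrier estimate \eqref{cb.decp7} at $l=L$ (with the substitution $z \leadsto -K$), the first moment satisfies $E[N_K] \asymp L\,e^{2K}$, which in particular tends to infinity as $L\to\infty$ for each fixed $K$.

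For the second moment, decompose pairs $(y,y') \in F_L\times F_L$ by the scale $k$ with $d(y,y') \asymp r_k$ and invoke the decoupling Lemma \ref{lem-decoup} exactly as in the proof of \eqref{goal.1L}. The well-separated pairs (those with $k\ll L$) contribute $(1+o(1))E[N_K]^2$ to $E[N_K^2]$, while the close-pair contributions sum to a lower-order term of size (polynomial in $L$)\,$\times E[N_K]$. Hence $\Var(N_K) = o(E[N_K]^2)$, and Chebyshev's inequality yields $\Pbm(N_K=0)\to 0$ as $L\to\infty$; the desired lower bound follows, with $\delta$ arbitrarily small, provided $K$ is taken sufficiently large.

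For the ball variant $B_d(a,r)\subseteq \S^2$, apply the same scheme with $F_L$ replaced by $F_L\cap B_d(a,r')$ for some $r'<r$; the slight shrinkage handles continuity issues when passing from excursion counts to cover times, via the estimates of Section \ref{sec-continest}. The first moment becomes $\asymp r^2 L\,e^{2K} \to \infty$, and the variance bound is unaffected. The principal technical obstacle is verifying $\Var(N_K)/E[N_K]^2 \to 0$ uniformly in $L$: one must carefully track the polynomial-in-$L$ prefactors coming from the barrier and apply Lemma \ref{lem-decoup} in the parameter regime where the individual marginals $\Pbm(T_L^{y,t_{-K}}=0)$ are large. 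The structural estimates are, however, essentially identical to those already obtained in the course of proving \eqref{goal.1L}, and transfer with the substitution $z \leadsto -K$.
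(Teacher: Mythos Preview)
Your approach differs substantially from the paper's and contains a real gap.

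The paper's argument is short and does \emph{not} revisit the second moment computation. It invokes Proposition \ref{prop:lb-main} at the fixed value $z=1$ and exploits the freedom in the scale parameter $r_0$: since the constant $c$ in \eqref{goal.lb} is uniform in small $r_0$, the right-hand side $\frac{(1+z)e^{-2z}}{(1+z)e^{-2z}+cr_0^2}$ can be pushed arbitrarily close to $1$ simply by shrinking $r_0$. Theorem \ref{theo-etrt} then converts the excursion-count statement into a real-time statement. There is no need to consider negative $z$, no need for a variance-to-zero argument, and no new barrier choice.

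Your route—making the first moment $E[N_K]$ diverge by taking $z\leadsto -K$ and using the $\alpha$-barrier—runs into a structural obstacle. The second moment bound (Lemma \ref{prop:UpperBoundz}, especially the bulk case \eqref{lbj.2}) hinges on the barrier $\gamma(l)=\rho_L(L-l)+l_L^{1/4}$ being curved \emph{upward}: it is the term $+l_L^{1/4}$ that produces the summable factor $e^{-k_L^{1/4}}$ in \eqref{eq:UpperBoundz}. With the downward-curved $\alpha(l)=\rho_L(L-l)-l_L^{\gamma}$ you would obtain $e^{+2k_L^{\gamma}}$ instead, and the sum over separation scales $k$ in the second moment diverges; hence $\mathrm{Var}(N_K)=o(E[N_K]^2)$ does not follow from the available estimates. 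Conversely, if you switch to the $\gamma$-barrier to save the second moment, Lemma \ref{prop:LowerBoundOneProfilez} gives $E[J_z]\asymp r_0^{-2}(1+z)e^{-2z}$ with no factor of $L$, so the first moment no longer diverges as $L\to\infty$ and Chebyshev gives nothing. You are trapped between a barrier strong enough for the variance and weak enough for the mean; the paper escapes precisely by varying $r_0$ rather than $z$.
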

Indeed,  \eqref{eq: cov lb goal1} together with \eqref{goal.1asto}
yield \eqref{eq-tight}.  Along the way, we will  also obtain the
estimates on the right tail of the cover time contained in 
\eqref{goal.1astow}.

As discussed in the introduction, the main technical step is the control on 
the right tail of the $h_L$-cover local excursion time $t^*_L$, see
\eqref{eq-tsar}, in the form of \eqref{goal.1L}. We will in fact
prove a more quantitative version of the latter, which is an analogue 
of \cite[Lemma 5.3]{BRZ}.
Recall the notation $r_0$ and $r_l$, see \eqref{dr.1}. 
\begin{proposition}\label{prop:lb-main}
There exists a constant $c$ so that
for all $0 < \rzero < 1$ and for all $ z\geq 1$,
\begin{equation}
  \liminf_{L\to\ff} \Pbm\left(\inf_{y\in F_{L}} T_{L}^{y, t_z }=0 \right)\geq 
  \frac{(1+z)e^{-2z}}{{(1+z)e^{-2z}+cr^{2}_{0}}}.\label{goal.lb}
\end{equation}

In addition, for any  $B_{d}\(   a,r\)\subseteq \S^2$, the same result holds if $F_{L}$ is replaced by $F_{L}\cap B_{d}\(   a,r\)$.
\end{proposition}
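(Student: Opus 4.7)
The plan is to establish \eqref{goal.lb} via a second-moment (Paley--Zygmund) argument applied to a suitable counting statistic, following the strategy of \cite{BRZ} adapted to the sphere. For each $y\in F_L$ I would introduce a good event $\II_y$ consisting of: (i) the non-hitting event $\{T_L^{y,t_z}=0\}$; (ii) a linear barrier condition of the form $\sqrt{2T_l^{y,t_z}}\ge\alpha(l)$ for $1\le l\le L-1$, together with a matching upper barrier (to prevent atypical excursion counts), in the spirit of \eqref{eq:AlphaBarrierDefd}; and (iii) a Wasserstein-type condition on the empirical distribution of the starting and ending angles of the excursions counted at intermediate scales $l\le L/2$. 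Condition (iii) is what will allow us to invoke the decoupling machinery of Section~\ref{sec-Decoupling}. Setting $\mathcal{W}:=\sum_{y\in F_L}\mathbf{1}_{\II_y}$ and observing $\{\mathcal{W}>0\}\subseteq\{\inf_{y\in F_L} T_L^{y,t_z}=0\}$, Paley--Zygmund reduces the problem to the two bounds $\Ebm\mathcal{W}\ge c_1(1+z)e^{-2z}$ and $\Ebm\mathcal{W}^2\le c_2(\Ebm\mathcal{W})^2+c_3 r_0^2\,\Ebm\mathcal{W}$, whose combination gives precisely the right-hand side of \eqref{goal.lb} (up to the choice of the constant $c$).

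For the first moment, the key input is that by \eqref{eq-Pn} each process $\{T_l^{y,t_z}\}_{l\ge 0}$ is a critical geometric Galton--Watson chain started at $t_z$, so that the ballot/barrier lower bound for such chains developed in Appendix~\ref{sec:BoundaryCrossing} applies. This yields, uniformly in $y\in F_L$, a lower bound of the form $\Pbm(\II_y)\ge c(1+z)e^{-2z}r_L^{2}$, in which $(1+z)$ is the barrier survival probability and $e^{-2z}r_L^{2}$ reflects the non-hitting probability given the barrier constraint; condition (iii) holds with probability $1-o(1)$ and costs only a harmless factor. Summing over $|F_L|\asymp r_L^{-2}$ produces the required lower bound on $\Ebm\mathcal{W}$.

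For the second moment, I would split the double sum over pairs $y\ne y'$ in $F_L$ according to the coalescence scale $k$ defined by $d(y,y')\asymp r_k$, treating separately the bulk regime $k$ of order $L$ and the ``very early'' (and symmetrically ``very late'') regimes, in parallel with subsections \ref{subsec-bulk}--\ref{subsec-veryearly}. The heuristic is that $\{T_l^{y,t_z}\}$ and $\{T_l^{y',t_z}\}$ essentially coincide for $l\le k$ and are approximately independent for $l\ge k$. Exact independence fails on $\S^{2}$, but Lemma~\ref{lem-decoup} provides an $L^{1}$-Wasserstein coupling of the two dependent excursion subsystems to a product of independent copies --- this is precisely where condition (iii) is used. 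Chaining the upper barrier estimate of the appendix (compare \eqref{cb.decp7}) across the interval $[0,k]$ and the two branches $[k,L]$ yields $\Pbm(\II_y\cap\II_{y'})\le C[(1+z)e^{-2z}]^{2}r_L^{4}r_k^{-2}r_0^{-2}$ up to lower-order factors; summing over the $\asymp r_L^{-4}r_k^{2}$ pairs at scale $k$ and then over $0\le k\le L$ gives $\sum_{y\ne y'}\Pbm(\II_y\cap\II_{y'})\le C(\Ebm\mathcal{W})^{2}$, and the diagonal contributes exactly $\Ebm\mathcal{W}$. Rearranging gives the target second-moment bound, and Paley--Zygmund closes the argument. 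The extension to a ball $B_d(a,r)\subseteq \S^2$ is immediate since all estimates above are local: simply restrict $\mathcal{W}$ to $F_L\cap B_d(a,r)$, which only rescales $\Ebm\mathcal{W}$ by a factor proportional to $r^{2}$ and adjusts the constant $c$ accordingly.

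The main obstacle is controlling the second moment via decoupling. Unlike in the binary-tree case of \cite{BRZ}, there is no exact independence between excursions around $y$ and $y'$ past their branching scale $k$, and a naive Poisson-kernel estimate of the type used in \cite{DPRZ,BK} would lose a factor that destroys the matching between first and second moments required for the sharp bound \eqref{goal.lb}. Condition (iii) of $\II_y$ together with the $L^{1}$-Wasserstein coupling produced by Lemma~\ref{lem-decoup} is precisely what recovers enough independence to keep the leading $[(1+z)e^{-2z}]^{2}$ second-moment behavior intact; verifying this across all three distance regimes, and ensuring that the cumulative coupling errors do not swamp the matched second-moment term, constitutes the heart of the proof and occupies the bulk of Section~\ref{sec-lowerboundex}.
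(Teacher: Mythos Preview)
Your overall strategy---a second-moment argument applied to a counting statistic built from barrier-plus-Wasserstein good events, with the decoupling lemma handling the early-branching regime---is exactly the paper's approach. However, several of the quantitative claims in your sketch are off in ways that matter for the argument to close.

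First, the barrier. You propose a lower barrier ``in the spirit of \eqref{eq:AlphaBarrierDefd}'' together with an upper barrier. The paper uses neither: there is no upper barrier, and the lower barrier is $\gamma(l)=\rho_L(L-l)+l_L^{1/4}$ (see \eqref{14.1}), which curves \emph{above} the linear interpolant, opposite to $\alpha(l)$. This curvature is not cosmetic: it is precisely what produces the factor $e^{-ck_L^{1/4}}$ in the second-moment bound \eqref{eq:UpperBoundz}, and that factor is what makes the sum over branching scales $k=1,\ldots,L$ converge. With $\alpha(l)$ you would not get this decay and the second-moment sum would diverge.

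Second, the form of the second-moment bound. You claim $\Pbm(\II_y\cap\II_{y'})\le C[(1+z)e^{-2z}]^{2}r_L^{4}r_k^{-2}r_0^{-2}$, i.e.\ two full factors of $(1+z)e^{-2z}$. The paper does \emph{not} obtain this and it is likely not obtainable: Lemma~\ref{prop:UpperBoundz} gives only a single factor $(1+z)e^{-2z}$ for $k\ge1$, compensated by $e^{-ck_L^{1/4}}$. The $(\Ebm J_z)^2$ term in \eqref{glb.4} comes entirely from $G_0$ (pairs with $d(y,y')>2h_0$), where one has \emph{exact} independence by the Markov property, see \eqref{eq:FirstTermInDecomp}; the contributions from $G_k$, $k\ge1$, sum to $c\,\Ebm J_z$ (no $r_0^2$ prefactor), and it is this that yields the $cr_0^2$ in the denominator of \eqref{goal.lb} after dividing through.

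Third, two smaller points: the Wasserstein condition is imposed only for $d^*\le k\le L_-=16(\log L)^4$, not up to $L/2$; and the decoupling lemma is invoked only in the early-branching regime $k\le L_-$, while bulk and late branching are handled by much cruder arguments (essentially \cite[Corollary~6.7]{BK} and a simple barrier bound), see subsections \ref{subsec-bulk}--\ref{subsec-veryearly}.
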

The proof of Proposition \ref{prop:lb-main} uses a modified 
second moment method and occupies most of this section.
Before giving the proof, we show quickly how all the announced statements 
follow from Proposition \ref{prop:lb-main} and Theorem 
\ref{theo-etrt}.

\begin{proof}[Proof of Proposition \ref{lem-LBcover} (assuming 
  Proposition \ref{prop:lb-main}).]
Note that the inequality 
\eqref{eq: cov lb goal1} is equivalent to the statement that for any $\delta>0$ there exists $-\infty<z<0$ such that
\begin{equation}\label{eq: cov lb goal}
\liminf_{\epsilon \to 0} \Pbm\(
\frac{\CC^{\ast}_\ep}{ 4 \log \epsilon^{-1} } - \( 2 \log \ep^{-1} - \log\log \ep^{-1}\)
\geq z\) \ge 1-\delta.
\end{equation}
For $z=0$ one has that $t_{L,0}=s_{L}(0) $, see
  \eqref{eq:defoftsr} and \eqref{clt.2}.
Set $\rzero$ small enough so that 
\eqref{goal.lb} 
with $z=1$ guarantees
\begin{equation}\label{eq: exc bound}
\liminf_{L\to\ff} \Pbm\left(\inf_{y\in F_{L}} T_{L}^{y, s_{L}(0)}=0 \right)\geq 1 - {\delta}/{2},
\end{equation}
reducing $r_0$ further if necessary to ensure that \eqref{dr.1} holds.
Next define
\begin{equation}\label{eq: L dep on eps}
L = L(\epsilon) = \log\left( \frac{\rzero}{2 \epsilon } \right)
\end{equation}
With this choice of $L$ we have
$$ r_L = 2 \epsilon \mbox{ and } 1.8\,\epsilon \le h_L \le 2 \epsilon,$$
in addition to
$$ \sup_{ y \in \S^2}\inf_{ x \in F_L} d(x,y) \le \frac{h_L}{1000} \le \frac{r_L}{1000} \le \frac{\epsilon}{500}.$$
This implies that for all $y \in \S^2$ the ball $B_d(y,\epsilon)$ is contained in $B_d(x,h_L)$ for some $x \in F_L$.
Therefore on the event that $\inf_{y\in F_{L}} T_{L}^{y,s_{L}(0)}=0$ the cover time $\CC^{\ast}_\epsilon$ has not yet occured
at the time when the first $x \in F_L$ registers $s_{L}(0)$ excursions from $\partial B_d(x,h_0)$ to $\partial B_d(x,h_1)$, i.e. by time $\inf_{x \in F_L} \tau_x(s_{L}(0))$. Thus from \eqref{eq: exc bound} it follows that
\begin{equation}\label{eq: cov exc bound}
\liminf_{\epsilon \to 0} \Pbm\left( \CC^{\ast}_\epsilon \ge \inf_{x \in F_L} \tau_x( s_{L}(0))  \right)\geq 1 - {\delta}/{2},
\end{equation}
where we recall that $L$, and therefore also $\tau_x(s_{L}(0))$, depends on $\epsilon$ through \eqref{eq: L dep on eps}. 

Recall, see  \eqref{clt.2},  that $s(\cdot)$ is monotone increasing, so that $\tau_x(s_{L}(0))\geq \tau_x(s_{L}(-z_0))$  so that
Theorem \ref{theo-etrt} implies that 
\begin{equation}\label{eq: etrt bound}
\liminf_{\epsilon \to 0}\Pbm\left( \inf_{x \in F_L} \tau_x\left(s_{L}(0) \right) \ge 4s_{L}\left(-z_0-d\sqrt{z_0} \right) \right) \ge 1 - {\delta}/{2}.
\end{equation}
Combining  
\eqref{eq: cov exc bound} and \eqref{eq: etrt bound}  proves \eqref{eq: cov lb goal}.

For any  $B_{d}\(   a,r\)\subseteq \S^2$, the same proof shows that if  \eqref{goal.lb} holds
with $F_{L}$ replaced by $F_{L}\cap B_{d}\(   a,r\)$
then
 \eqref{eq: cov lb goal}   holds with $\CC^{\ast}_{\eps,\S^2}$  replaced by $\CC^{\ast}_{\eps,\S^2, B_{d}\(   a,r\)}$.
\end{proof}
\begin{proof}[Proof of \eqref{goal.1astow} (assuming
  Proposition \ref{prop:lb-main}).] 
  We use Proposition \ref{prop:lb-main} with $r_0 $ sufficiently small to obtain that
$$\liminf_{L\to\ff} \Pbm\left(\inf_{y\in F_{L}} T_{L}^{y, t_z }=0 \right) \geq c (1+z)  e^{ - 2 z } \mbox{ for all } z>0,$$
where we bounded above the denominator of the right-hand side 
of \eqref{goal.lb} by a constant. Set 
$L = L(\epsilon) = \log\left( \rzero/(2 \epsilon)  \right)$, as in
\eqref{eq: L dep on eps}, and argue as in
the proof of Proposition \ref{lem-LBcover} to obtain that
\begin{equation}\label{eq: cov exc bound right tail lb1}
\liminf_{\epsilon \to 0} \Pbm\left( \CC^{\ast}_\epsilon \ge \inf_{x \in F_L} \tau_x( t_z )  \right) \geq c  e^{ - 2 z }\mbox{ for all }z>0.
\end{equation}
Now $t_z = s_{L}(2 z + o(1) )$ for $z>0$ and large enough $L$, and by Theorem \ref{theo-etrt}
\begin{equation}\label{eq: etrt bound right tail lb2}
\liminf_{\epsilon \to 0}\Pbm\left( \inf_{x \in F_L} \tau_x\left( s_{L}(2 z) \right) \ge 4s_{L}\left( 2z - d\sqrt{z} \right) \right) \ge 1 - c' e^{-4z}.
\end{equation}
Also
$$ 4s_{L}(2 z-d\sqrt{z} ) = 4 \log \epsilon^{-1} \left( 2 \log \ep^{-1} - \log\log \ep^{-1} + 2 z -d\sqrt{z}+o(1) \right).$$
Together with 
\eqref{eq: cov exc bound right tail lb1} and
\eqref{eq: etrt bound right tail lb2}, we obtain
\begin{eqnarray}\label{eq: etrt bound right tail lb3}
&&\liminf_{\epsilon \to 0} \Pbm\left( \CC^{\ast}_\epsilon \ge 4 \log \epsilon^{-1} \left( 2 \log \ep^{-1} - \log\log \ep^{-1} + 2 z- d\sqrt{z}  \right) \right)\\
&&\quad \quad \quad \quad \geq c   e^{ -2z   } - c'e^{-4z}\geq  c   e^{ -2z   }\mbox{ for all }z\geq z_{0},
  \nonumber
\end{eqnarray}
which in turns implies that
\begin{equation}\label{eq: etrt bound right tail lb4}
\liminf_{\epsilon \to 0} \Pbm\left( \frac{\CC^{\ast}_\epsilon}{4 \log \epsilon^{-1}} - \left(2 \log \ep^{-1} - \log\log \ep^{-1} \right) \ge z \right) \geq c'e^{-z - c\sqrt{z} } \mbox{ for }z \ge z'_{0}.
\end{equation}
Due to the monotonicity in $z$ of the left side of
\eqref{eq: etrt bound right tail lb4}, 
this implies
\eqref{goal.1astow} for $z\geq 0$ (possibly reducing $c'$ as needed).
\end{proof}	

We turn to the main object of this section, which is the proof of
Proposition \ref{prop:lb-main}. We need to set up a second moment method, 
which means to attach to each $y\in F_L$ an event $\mathcal{I}_{y,z}$ 
so that, with $J_z=\sum_{y\in F_L} {\bf 1}_{\mathcal{I}_{y,z}}$,
the following properties hold:
\begin{eqnarray}
  \label{eq-desirada}
  &&\mathcal{I}_{y,z}\subset \left\{T_L^{y,t_z}=0\right\}.\\
  &&\frac{\left(\Ebm J_z\right)^2}{\Ebm J_z^2}
    \geq \frac{(1+z)e^{-2z}}{(1+z)e^{-2  z}+cr_0^2}.
    \label{eq-desirada2}
  \end{eqnarray}
  Indeed, \eqref{eq-desirada} and \eqref{eq-desirada2} together imply
  Proposition \ref{prop:lb-main} by an application of Cauchy-Schwarz.

  The major difficulty in constructing such events as $\mathcal{I}_{y,z}$
  is that  the computation of $\Ebm J_z^2$ involves probabilities of
  the form $\Pbm(\mathcal{I}_{y,z}\cap \mathcal{I}_{y',z})$ for
  $y\neq y'$, and one would like to have the events
  in the last probability decouple as much as possible. 
  A standard method is to have $\mathcal{I}_{y,z}$ include
  a barrier event, similar to but different from the one 
  used in the upper bound. Indeed, this was the approach of \cite{BK}, and
  also the approach taken by us in \cite{BRZ}. However, due to the difficulties
  in decoupling excursions around non-concentric centers, this turns out not
  to be enough to obtain the degree of precision in \eqref{eq-desirada2}.
  Our approach is to add to the barrier event information on
  the start and end points of excursions, in relatively large scales 
  (i.e., small $l$).

  We now begin with the construction, which culminates with
  \eqref{eq:LBTruncatedSum} below. 
  Recall the notation $\rho_L$ and $l_L$, see \eqref{eq:defofts}
  and \eqref{eq:AlphaBarrierDefd},
and set
\begin{equation}
\gamma\left(l\right)=\gamma\left(l,L\right)= \rho_{L}(L-l)+  l^{ 1/4}_{L}.\label{14.1}
\end{equation}
We introduce the events $\Iyz$, beginning with a barrier event. 
Set 
\begin{equation}
\Iyzh=\left\{ \gamma\left(l\right)\le\sqrt{2T_{l}^{y,t_{z}}} \mbox{ for }l=1,\ldots,L-1\mbox{ and }  T_{L}^{y,t_{z}}=0\right\}.\label{eq:TruncatedSummandLBz}
\end{equation}
As discussed above, we need to augment $\Iyzh$ by information on 
the angular increments of the excursions.
Instead of keeping track of individual 
excursions, we track the empirical measure of the increments, by comparing it
in Wasserstein distance to a reference measure. Recall that the Wasserstein
$L^1$-distance between probability measures on $\R$ is given by
\begin{equation}
  \dwa(\mu,\nu)=\inf _{\xi\in \PP^2(   \mu,\nu)}\Big \{   \int |x-y|\,d\xi(   x,y)\Big \},\label{was0}
\end{equation}
where $\PP^2(\mu,\nu)  $ denotes the set of probability measures on $\R\times\R$
with marginals $\mu,\nu$. If $\mu$
is a probability measure on $\R$ with finite support and if 
$\theta_{i},\, 1\leq i\leq n$ denote a sequence of i.i.d 
$\mu$-distributed random variables 
then it follows from \cite[Theorem 2]{FG}  that for some $c_{0}=c_0(\mu)$
\begin{equation}
  \mbox{\rm Prob}\left\{   
  \dwa\(   \frac{1}{n} \sum_{i=1}^{ n} \de_{\theta_{i}},\mu  \)> \frac{c_{0}x}{\sqrt{n}}     \right\}\leq 2 e^{ -x^{ 2}}.\label{was1}
\end{equation}

Let $\BM_{t}$ be Brownian motion in the plane.
For each $k$ let  $\nu_{k}$ be the probability measure on $[0,2\pi]$ defined by 
 \be
\nu_{k}(   dx)=P^{   (0,r_{k})}\(\mbox{arg }\BM_{H_{\partial B(  0, r_{k-1})}}\in \,dx\),\label{meas45}
\ee  
where $\mbox{arg }x$ for $x\in \R^{2}$ is the argument of $x$ measured from the positive $x$-axis and $P^w$ is the law of $\BM_\cdot$ started from $w$.

Returning to $X_{t}$, our Brownian motion on the sphere, and using isothermal coordinates, see Section \ref{sec-isot},
let   $0\leq \th_{k,i}\leq 2\pi$, $i=1,2,\ldots$ be  the angular increments centered at $y$,   mod $2\pi$,  from  $X_{H^{ i}_{\partial B_{d}(   y, h_{k})}}$ 
to $X_{H^{ i}_{\partial B_{d}(   y, h_{k-1})}}$,   the endpoints of the
$i$'th  excursion between $\partial B_{d}(   y, h_{k})$ and $\partial B_{d}(   y, h_{k-1})$. By the Markov property the $  \th_{k,i} $, $i=1,2,\ldots$ are independent,  and using  Section \ref{sec-isot} we see that   each $  \th_{k,i} $ has distribution $\nu_{k}$. We set, for $n$ a positive integer,
\begin{equation}
\WW_{y,k}(n)=\left\{   \dwa
  \(   \frac{1}{n} \sum_{i=1}^{ n} \de_{\th_{k,i}},\nu_{k}  \) \leq  \frac{c_{0}\log (  k)}{2\sqrt{n}}     \right\}.\label{eq:LBTruncatedSumz}
\end{equation}

We are ready to define the good events $\Iyz$. For $a\in \Z_{+}$ let 
\begin{equation}
N_{k,a}=[(   \rho_{L}(L-k)+a+1)^{ 2}/2].\label{nkdef.2}
\end{equation}
We set
\begin{equation}
N_{k}=N_{k,a}\,\, \mbox{  if  }\,\,\sqrt{2T_{k}^{y,t_{z} }}\in I_{\rho_{L}(L-k)+a}.\label{nkdef.1}
\end{equation}
With $L_{-}= 16 ( \log L )^{ 4}$ and $d^*$ a constant to be determined below,
let 
\begin{equation}
\Iyz=\Iyzh\cap_{k=d^{\ast}}^{L_{-}} \WW_{y,k}\(N_{k}\),
\label{eq:TruncatedSummandLBzz}
\end{equation}
and define the count
\begin{equation}
  J_z=\sum_{y\in F_{L}}{\bf 1}_{\Iyz}.\label{eq:LBTruncatedSum}
\end{equation}
To obtain \eqref{eq-desirada2}, we need a control on the
first and second moments of $J_z$, 
which is provided by the next two lemmas. 
Most of this section is devoted to their proof.
We emphasize that in the statements of the lemmas, the implied constants are
uniform in $r_0$ smaller than a fixed small threshold.
\bl [First moment estimate]
\label{prop:LowerBoundOneProfilez} There is a large enough $d^{\ast}$, such that for all $L$ sufficiently large, all   $1\leq z\leq  \(\log L\)^{1/4}$,
and all $y\in F_{L}$,
\begin{equation}
\Pbm  \left(\Iyz \right)\asymp (1+z)e^{-2L} e^{ -2z-z^{2}/4L}.\label{eq:LowerBoundOneProfilez}
\end{equation}
\el

Let 
\begin{eqnarray}
  \label{eq-G0L} &&\\
  G_{0} & = & \left\{ \left(y,y'\right):y,y'\in F_{L}\mbox{ s.t. }d\left(y,y'\right)>2h_{0}\right\} ,\nn\\
G_{k} & = & \left\{ \left(y,y'\right):y,y'\in F_{L}\mbox{ s.t. }2h_{k}<d\left(y,y'\right)\le2h_{k-1}\right\} \mbox{ for }1\le k<L,\nn\\
G_{L} & = & \left\{ \left(y,y'\right):y,y'\in F_{L}\mbox{ s.t. }0<d\left(y,y'\right)\le2h_{L-1}\right\} \nn .
\end{eqnarray}
 
\bl     [Second moment estimate]
\label{prop:UpperBoundz}  There are large enough $d^*,c'$,
such that for all $L $ sufficiently  large,   
all $1\leq    z\leq \(\log L\)^{1/4}$ 
and  all $(y, y')\in G_k $, $1\leq k\leq L$,
\begin{equation}
\Pbm\(\Iyz \cap \Iyzp\)\leq  c'  (1+z)e^{-4L+2k}e^{ -2z  }e^{-ck_{L}^{1/4}}.\label{eq:UpperBoundz}
\end{equation}
\el 
Before providing the proofs of Lemmas \ref{prop:LowerBoundOneProfilez} and \ref{prop:UpperBoundz}, we show how Proposition \ref{prop:lb-main} follows from them.
\begin{proof}[Proof of Proposition \ref{prop:lb-main} (assuming Lemmas \ref{prop:LowerBoundOneProfilez} and \ref{prop:UpperBoundz}.)]
It follows immediately from (\ref{dr.2})  and  Lemma  \ref{prop:LowerBoundOneProfilez}   that
\begin{equation}
\Ebm(J_z)\asymp r^{-2}_{0}(1+z) e^{ -2z}.\label{ez.1}
\end{equation}
Since $J_z$ is a sum of indicators and hence at least $1$ if not $0$, an application of 
the Cauchy-Schwarz inequality to $J_z{\bf 1}_{J_z\geq 1}$ gives that
\begin{equation}
\(\Ebm(J_z)\)^{2}\leq \Pbm\(\inf_{y\in F_{L}} T_{L}^{y,t_{z}}=0\) \Ebm(J_z^{2}).\label{glb.3}
\end{equation}
Hence (\ref{goal.lb}) will follow from (\ref{ez.1}) and \eqref{glb.3}
once we show that 
\begin{equation}
\Ebm(J_z^{2})\leq \(\Ebm(J_z)\)^{2}+c  \Ebm(J_z).\label{glb.4}
\end{equation}
Since 
$\Ebm(J_z^{2})= \Ebm(J_z)+\sum_{y\neq y'\in F_{L}}
\Pbm\left(\Iyz \cap \Iyzp\right),$
 it suffices to show that
\begin{equation}
\sum_{y\neq y'\in F_{L}}\Pbm\left(\Iyz\cap \Iyzp\right) 
\leq \(\Ebm(J_z)\)^{2}+c  E(J_z).\label{glb.6}
\end{equation}
Recall the sets $G_k$, see \eqref{eq-G0L}.
We have that $\bigcup_{k=0}^{L}G_{k}= \{ \left( y,y'  \right) \in
  F_{L}\times F_{L}\,:\, y\neq y'\}$
and therefore 
\begin{equation}
\sum_{y\neq y'\in F_{L}}\Pbm\left(\Iyz \cap \Iyzp\right)
=\sum_{( y,y') \in G_{0}}\Pbm\left(\Iyz \cap \Iyzp\right)
 +\sum_{k=1}^{L}\sum_{(y,y') \in G_{k}}\Pbm\left(
 \Iyz \cap \Iyzp\right) .\label{eq:SecondMomentDecomp}
\end{equation}
By the Markov property, see  \cite[Lemma 5.3]{BK} for details,   we have that
\be
\sum_{( y,y') \in G_{0}}\Pbm\left(\Iyz \cap \Iyzp\right)
= \sum_{( y,y') \in G_{0}}
\Pbm\left(\Iyz
\right)\Pbm\left(\Iyzp\right)\leq \left(\Ebm\left(J_z\right)\right)^{2}. 
\label{eq:FirstTermInDecomp}
\ee
To handle $(y,y')\in G_k$, $k=1,\ldots, L$, 
we write
\begin{equation}
\sum_{(y,y') \in G_{k}}
\Pbm\left(\Iyz\cap \Iyzp\right)
\le cr^{- 2}_{0}e^{4L-2k}\sup_{( y,y') \in G_{k}}
\Pbm\left(\Iyz\cap \Iyzp\right),\label{eq:SumWithSizeOfGk}
\end{equation}
where we have used 
that for $1\le k\le L$,
\[\left|G_{k}\right|  \le 
  \left| F_{L} \right|{\displaystyle \sup_{v\in F_{L}}}
  \left| F_{L}\cap B_{d}\left(v,2h_{k-1}\right)\right| \le
  c\left| F_{L}\right|^2r_{k-1}^{2}
=  c\left| F_{L}\right|^{2} r^{ 2}_{0}e^{-2k},
\]
and therefore $|G_k|\leq cr^{ -2}_{0} e^{4L-2k}$.
Using this and \eqref{eq:UpperBoundz},
the right hand side of \eqref{eq:SumWithSizeOfGk}   
is bounded above by  
$cr^{- 2}_{0}(1+z)e^{ -2z  }e^{-ck_{L}^{1/4}}.$ 
Summing over $k=1,\ldots,L$,
we obtain that 
\[
 \sum_{k=1}^{L}\sum_{(y,y') \in G_{k}}\Pbm\left(
 \Iyz \cap \Iyzp\right)\leq 
 cr^{- 2}_{0}(1+z)e^{ -2z  }\sum_{k=1}^{L}e^{-ck_{L}^{1/4}}\leq c \Ebm
 J_z,\]
 where the second inequality used
\eqref{ez.1}. Combined with \eqref{eq:SecondMomentDecomp} and 
\eqref{eq:FirstTermInDecomp}, we obtain 
\eqref{glb.6} and thus complete the proof of  \eqref{goal.lb}.

For any  $B_{d}\(   a,r\)\subseteq \S^2$, the same proof shows that    \eqref{goal.lb} holds
with $F_{L}$ replaced by $F_{L}\cap B_{d}\(   a,r\)$.
\end{proof}
So, it remains to prove 
Lemmas 
\ref{prop:LowerBoundOneProfilez}.
and \ref{prop:UpperBoundz}.
The proof of Lemma \ref{prop:LowerBoundOneProfilez}
 is relatively straight forward,
given the barrier estimates of \cite{BRZ}, and is provided in subsection
\ref{subsec-UB}. The proof of Lemma \ref{prop:UpperBoundz}
is much more intricate, and is divided to cases according to the 
distance between $y$ and $y'$, see subsections \ref{subsec-bulk}-\ref{subsec-veryearly}.
The last of these sections  uses a decoupling argument which is 
described in detail in subsection \ref{sec-Decoupling}.

Before turning to these proofs, we introduce some notation and
record a simple computation that will be
useful in calculations.
Recall (\ref{nkdef.2})-(\ref{nkdef.1}), 
and for $a\in \Z_{+}$ introduce the level-$k$ event 
\begin{equation}
  \HH_{k,a} =\lc   \sqrt{2T_{k}^{y,t_{z} }}\in I_{\rho_{L}(L-k)+a}\rc.\label{hka.1}
\end{equation}
Note that on $\HH_{k,a} $ we have 
$N_{k}=N_{k,a}$ where 
\begin{equation}
N_{k,a}=[(   \rho_{L}(L-k)+a+1)^{ 2}/2].\label{nkdef.2u}
\end{equation}

The next lemma is the computation alluded to above.

\bl\label{lem-int}
For all $L $ sufficiently  large  and   $1\leq k< L$,
\begin{equation}
e^{-{\gamma^{2}\left( k \right)}/{2(L-k)}}\leq       cL^{{(L-k) }/{L}}e^{-2(L-k)-2k_{L}^{1/4}}.\label{int.30}
\end{equation}
In particular, if $k\geq \log^{4}L$ then
\begin{equation}
e^{-{\gamma^{2}\left( k \right)}/{2(L-k)}}\leq       ce^{-2(L-k) - k_{L}^{1/4}}.\label{int.31}
\end{equation}
Furthermore, for some $d^{\ast}$ sufficiently large, the same bounds hold for\\ $e^{-{ \gamma^{2}\left( k \right)}\(1-{\frac{2}{L-k}}\)/2(L-k)}$ if
$L-k\geq d^{\ast}$.
\el

\begin{proof} 
	Recalling  (\ref{eq:defofts}) and \eqref{14.1} we have
	\bea\gamma^{2}\left( k \right)&=&
	4(L-k)^{2} -2\frac{(L-k)^{2}}{L}\log L+4(L-k) k_{L}^{1/4}\nn\\
	&&\hspace{1 in}+O\left( \frac{(L-k)\log L}{L^{ 3/4}}  \right) + k_{L}^{1/2}.\label{w4.36}
      \eea
      Consequently 
	\begin{equation}
	  \label{eq:TsDivdedByL2}
	  \frac{\gamma^{2}\left( k \right)}{2(L-k)}
	\geq   2(L-k)  -\frac{(L-k) }{L}\log L+2 k_{L}^{1/4}+o_{L}(1)+  {k_{L}^{1/2} /2( L-k)}.
      \end{equation}
	This yields (\ref{int.30}).
	
	To see (\ref{int.31}),
	note that if $ \log^{4}L\leq k\leq L-\log^{4}L$ then $k_{L}^{1/4}\geq \log L$, while if $   k\geq L-\log^{4}L$ then $L^{(L-k)/L}\leq c$.
	
	For the last statement of the lemma, note that by  
	(\ref{w4.36}),  
	\[\frac{\gamma^{2}\left( k \right)}{(L-k)^{2}}=
	\frac{4 k_{L}^{1/4}}{L-k}+\frac{k_{L}^{1/2}}
      { (L-k)^{2}} +O(1).\]
Since $L-k\geq d^{\ast}$, by taking 	$d^{\ast}$ sufficiently large, up to an error which is $O(1)$ this is dominated by the `extra term' ${k_{L}^{1/2}/2
(L-k)}$ in (\ref{eq:TsDivdedByL2}).
\end{proof}

\subsection{The first moment estimate}
\label{subsec-UB}
We prove in this subsection Lemma 
\ref{prop:LowerBoundOneProfilez}. 
In the proof we will use barrier estimates from Section \ref{sec:BoundaryCrossing} and the decoupling lemma (Lemma 
\ref{lem-decoup})  from section \ref{sec-Decoupling}. Recall that our goal is to evaluate $\Pbm(\Iyz)$ up to multiplicative 
constants.

\begin{proof}[Proof of Lemma \ref{prop:LowerBoundOneProfilez}. ]
For the upper bound, we note that 
since $\Iyz \subseteq \Iyzh$,
the upper bound in (\ref{eq:LowerBoundOneProfilez}) 
is immediate from the barrier estimate contained in
Lemma \ref{prop:BarrierSecGWProp} of Appendix I. 

For the lower bound we have for all $0\leq z\leq \(\log L\)^{1/4}$
\begin{eqnarray}
\Pbm  \left(\Iyz\right)&\geq& \Pbm \left(\Iyzh\right)-
\sum_{k=d^{\ast}}^{L_{-}}   \Pbm
\left(\Iyzh\bigcap \WW^{c}_{y,k}\(N_{k}\)  \right)
\label{ty1}\\
&\geq& c\,(1+z)e^{-2L} e^{ -2z}-\sum_{k=d^{\ast}}^{L_{-}}  
\Pbm  \left(\Iyzh\bigcap \WW^{c}_{y,k}\(N_{k}\)\right), \nn
\end{eqnarray}
see \eqref{eq:LBTruncatedSumz} and \eqref{nkdef.1} for notation,
and
where  for $\Pbm (\Iyzh )$ we have 
used the barrier estimate contained in
Lemma \ref{prop:BarrierSecGWProp} of Appendix I. We note that
\begin{equation}
  \Pbm  \!\left(\Iyzh\bigcap \WW^{c}_{y,k}(N_{k})\right)
  \!\!
\leq 
  \!\!\!\!\!
\sum_{a\geq [k^{1/4}]} \!\!\! \Pbm\!
  \left(\Iyzh^{k,a}\right)  \mbox{\rm where } \Iyzh^{k,a}:=\Iyzh\bigcap \HH_{k,a}\bigcap
  \WW^{c}_{y,k}(N_{k,a}),
  \label{nkdef.3}
\end{equation}
see (\ref{hka.1}) and \eqref{nkdef.2} for notation and we have used (\ref{eq:TruncatedSummandLBz}) and 
(\ref{14.1}) to restrict the sum to $a\geq [k^{1/4}]$.
We show below that
for all $d^{\ast}\leq k\leq L_{-}$, and all $0\leq z\leq \(\log L\)^{1/4}$,
\begin{equation}
 \sum_{a\geq [k^{1/4}]} \Pbm  \left(\Iyzh^{k,a}  
\right)\leq c(1+z)e^{-2L} e^{ -2z}e^{ -c'(   \log k)^{ 2}}.\label{was1pl}
\end{equation}
Furthermore, it is easily seen using Lemma \ref{lem: GW proc LD} that the sum over $a>L^{3/4}$ is much smaller than the right hand side of (\ref{was1pl})
so it suffices to show that
\begin{equation}
 \sum_{a= [k^{1/4}]}^{L^{3/4}} \Pbm  \left(\Iyzh^{k,a}  
\right)\leq c(1+z)e^{-2L} e^{ -2z}e^{ -c'(   \log k)^{ 2}}=: \mathcal{E}(z,k).\label{was1p}
\end{equation}
Combining \eqref{was1p} with \eqref{ty1} and \eqref{nkdef.3} yields the lower 
bound in (\ref{eq:LowerBoundOneProfilez}) (when $d^*$ is taken sufficiently
large) and completes the proof of Lemma \ref{prop:LowerBoundOneProfilez}.

We turn to the proof of \eqref{was1p}. We introduce the notation
\begin{equation}
\BB_{y,k,L}=\left\{\gamma \left(l\right)\le\sqrt{2T_{l}^{y,t_{z}}} \mbox{ for }l=k,\ldots,L-1;\;  T_{L}^{y,t_{z}} =0\right\},
\label{eq:JDownnew}
\end{equation}
for the barrier condition from $l\geq k$ to $L$. Then, with 
\begin{equation}
  \label{eq-kpa}\KK_{k,p,a}=\HH_{k-3,p}\bigcap 
\HH_{k,a}\bigcap \WW^{c}_{y,k}(N_{k,a})\bigcap 
\BB_{y,k+1,L},
\end{equation}
see \eqref{hka.1} and \eqref{nkdef.2u},
we have 
\be 
   \Pbm  \left(\Iyzh^{k,a}  
\right)
\leq   \sum_{p=[(   k-3)^{1/4}]}^{L^{3/4}}\Pbm 
\left(\KK_{k,p,a}\right)\label{nkdef.10}
\ee
plus a term which is much smaller than the right hand side of (\ref{was1p}).
Hence to prove \eqref{was1p} it suffices to show that for all $d^{\ast}\leq k\leq L_{-}$, and all $0\leq z\leq \(\log L\)^{1/4}$,
\begin{equation}
\sum_{a=[k^{1/4}]}^{L^{3/4}}\sum_{p=[(   k-3)^{1/4}]}^{L^{3/4}}\Pbm 
\left(\KK_{k,p,a}\right)\leq c(1+z)e^{-2L} e^{ -2z}e^{ -c'(   \log k)^{ 2}}.\label{mwas7p}
\end{equation}
 
Let 
\begin{equation}
  \WW^{\in x}_{y,k}(   n)=\left\{  \dwa
   \(\frac{1}{n} \sum_{i=1}^{ n} \de_{\th_{k,i}},\nu_{k}\) 
   \in \frac{c_{0}}{2\sqrt{n}} I_{ x  }     \right\},\label{eq:LBTruncatedSumz4}
\end{equation}
so that
\begin{equation}
\WW^{c}_{y,k}(N_{k,a})\subseteq 
\cup_{m=\log k}^{\ff} \WW^{\in m}_{y,k}( N_{k,a}),\label{compde.1}
\end{equation}
and consequently, setting
\begin{equation}
  \LL_{k,m,p,a}=\KK_{k,p,a}\cap \WW_{y,k}^{\in m}(N_{k,a}),  \label{extra.1}
\end{equation} 
we have
\begin{equation}
\Pbm \left(\KK_{k,p,a}\right)\leq 
\sum_{m=\log k}^{\ff}   \Pbm \left(\LL_{k,m, p,a}\right).\label{wasd.1}
\end{equation}

Thus to prove (\ref{mwas7p}) it suffices to show that   for all $m\geq \log k $,  all $d^{\ast}\leq k\leq L_{-}$, and all $0\leq z\leq \(\log L\)^{1/4}$,
\begin{equation}
\sum_{a=[k^{1/4}]}^{L^{3/4}}\sum_{p=[(   k-3)^{1/4}]}^{L^{3/4}}\Pbm 
\left(  \LL_{k,m,p,a}\right)\leq c(1+z)e^{-2L} e^{ -2z}e^{ -c'\,m^{ 2}}.\label{firstmomentgoal}
\end{equation}

Write
\begin{equation}
  \label{eq-abbr}
  \LL'_{k,m,p,a}=\HH_{k-3,p}\bigcap \HH_{k,a}\bigcap \WW_{y,k}^{\in m}(N_{k,a}).
\end{equation}
 
Since  the $\th_{k,i}$ are  i.i.d. 
$\nu_{k}$-distributed random variables, as explained in the paragraph 
before (\ref{eq:LBTruncatedSumz}), it follows from (\ref{was1})  that
\begin{equation}
\Pbm \left(\WW_{y,k}^{\in m}( N_{k,a})\,\Big |\, T_{k-3}^{y,t_{z}}\right)
\leq 2e^{ -m^{ 2}/4}.\label{was1u}
\end{equation}

\bl\label{lem- kmpa}
There exist constants $c,c'>0$ so that
\begin{equation}
 \Pbm \left(\LL'_{k, m, p,a}\)\leq c\,  e^{ -2k-2(z-p)-(z-p)^{2}/4k}e^{ -m^{ 2}/8}  e^{-c' (a-p)^{ 2}}.\label{17.jjf}
\end{equation}
\el
\begin{proof}
From Lemma \ref{lem: GW proc LD}, as in \eqref{17.1} and
\eqref{17.1g}, we obtain
\begin{equation}
\Pbm  \left(\HH_{k-3,p}\right)=
\Pbm \left(\sqrt{2T_{k-3}^{y,t_{z}}}\in  I_{[\rho_{L}\left(L-k+3\right)]+p}
\right)\leq c\,e^{ -2k-2(z-p)-(z-p)^{2}/4k},\label{used.1}
\end{equation}
and  a slight variation of the same argument shows that for some $c_1,c>0$
\[\Pbm  \left(\HH_{k,a}\,|\,\HH_{k-3,p} \right)=
\Pbm \left( \sqrt{2T_{3}^{y,(   [\rho_{L}\left(L-k+3\right)]+p)^{ 2}/2}}\in  
I_{[\rho_{L}\left(L-k\right)]+a} \right)\leq 
ce^{-c_{1} (a-p)^{ 2}},\]
and hence 
\begin{equation}
\Pbm  \left(\HH_{k-3,p}\cap \HH_{k,a} \right) 
 \leq c\,e^{ -2k-2(z-p)-(z-p)^{2}/4k} e^{-c_{1} (a-p)^{ 2}}. \label{used.3}
\end{equation}

Using    (\ref{was1u}) and  (\ref{used.1}) we obtain that 
\begin{equation}
\Pbm \left(\HH_{k-3,p}\bigcap \WW_{y,k}^{\in m}(N_{k,a})\right)\leq   c\,e^{ -2k-2(z-p)-(z-p)^{2}/4k}e^{ -m^{ 2}/4}.\label{used.4}
\end{equation}
Using the  Cauchy-Schwarz inequality and \eqref{eq-abbr}, we have that
  $\Pbm (\LL'_{k, m, p,a})
\leq \Big (\Pbm ( \HH_{k-3,p}\bigcap  
\WW_{y,k}^{\in m}(N_{k,a})   \Big )^{ 1/2}\Big (\Pbm (\HH_{k-3,p}
\bigcap \HH_{k,a}  )\Big )^{ 1/2}$. The lemma follows by substituting \eqref{used.3} and \eqref{used.4}.
\end{proof}

Let
\begin{equation}
k^{+}=k+\lceil10^{10}  \log L\rceil,\hspace{.2 in}k^{++}=k+2\lceil10^{10}  \log L\rceil,\label{eq:DefOfKPlus}
\end{equation}
 \bea
  \label{167.1}
&&  \Bbb{  B}_{y,k+3,k^{ +} }^{j' }
=\left\{\gamma \left(l\right)\le\sqrt{2T_{l}^{y,t_{z}}} \mbox{ for }
l=k+3,\ldots,k^{ +}, \right.\nn\\
&&\left.\hspace{2 in} \,\sqrt{2T_{k^{ +}}^{y,t_{z}}} \in 
I_{\rho_{L}(L-k^{ +})+j'}\right\},
\eea
and
\bea
&&  \label{167.2}
  \widehat{\mathfrak{B}}_{y,k^{ ++}+1,L}
=\left\{\rho_{L}(L-l) \le\sqrt{2T_{l}^{y,t_{z}}} \mbox{ for }
l=k^{ ++}+1,\ldots,L-1,\right.\nn\\
&&\left.\hspace{3 in} \,\sqrt{2T_{L}^{y,t_{z}}}=0\right\}.
\eea
(Compare \eqref{167.2} to \eqref{eq:JDownnew}. The only difference is that a straight barrier is used.)
We have that 
\begin{eqnarray}
&& \Pbm \left(\LL_{k, m, p,a}\right)=\Pbm \left(
\LL'_{k, m, p,a}
\bigcap \BB_{y,k+1,L} \right)\leq \sum _{j' = \left(k^{ +}\right)^{1/4}}^{L^{3/4}} \sum _{j'' = \left(k^{ ++}\right)^{1/4}}^{L^{3/4}} 
\nn\\
&& \hspace{.1 in}
\Pbm\(\LL'_{k, m, p,a}\cap    \Bbb{  B}_{y,k+3,k^{ +} }^{j' }\cap 
  \widehat{\mathfrak{  B}}_{y,k^{+ +}+1,L};\,\sqrt{2T_{k^{ ++}}^{y,t_{z}}} \in 
I_{\rho_{L} (L-k^{ ++})+j''}\)\nn\\
&& \hspace{3in}+o(\mathcal{E}(z,k)), \label{161.2}
\end{eqnarray}
see \eqref{was1p} for the definition of $\mathcal{E}(z,k)$,
and the error is due to the restriction
$j', j''\leq L^{3/4}$.
Let 
	\begin{align}
	\label{g-sigma}
	&{\mathcal G}_{k^{ +}}^y = \\
	&\nn
\mbox{\rm $\sigma$-algebra generated by the excursions
from $\partial B_{d}(y,h_{k^{ +}-1})$ to $\partial B_{d}(y,h_{k^{ +}})$.}
\end{align}
 (In the definition of ${\mathcal G}_{k^{ +}}^y$,
if we start outside $\partial B_{d}(y,h_{k^{ +}-1})$ we include the initial excursion to $\partial B_{d}(y,h_{k^{ +}-1})$. Do not confuse with \eqref{defG}.)
 Note that
\begin{equation}
\mathcal{A}_{j'}:= \LL'_{k, m, p,a} \cap    \Bbb{  B}_{y,k+3,k^{ +} }^{j' }\in  {\mathcal G}_{k^{ +}}^y. \label{167.2g}
\end{equation}
 Using (\ref{167.2g})   we have 
\begin{eqnarray}
&& \Pbm\( \mathcal{A}_{j'};\,\sqrt{2T_{k^{ ++}}^{y,t_{z}}} \in 
I_{\rho_{L} (L-k^{ ++})+j''};\,   \widehat{\mathfrak{  B}}_{y,k^{+ +}+1,L} \)
\nn\\
&& \leq \sup_{s\in I_{\rho (L-k^{ +})+j'},\,\, v\in I_{\rho (L-k^{ ++})+j''}}
\Pbm\( \mathcal{A}_{j'};\,\sqrt{2T_{k^{ ++}}^{y,t_{z}}} \in 
I_{\rho (L-k^{ ++})+j''};\, \right.\nn\\
&&\left.\hspace {.8 in}\Pbm\(  \widehat{\mathfrak{  B}}_{y,k^{+ +}+1,L}\,
| \, \trav{s^{2}/2}{y}{k^{+}-1}{k^{+}}{y}{k^{+ +}-1}{k^{+ +}} =v^{2}/2, \, {\mathcal G}_{k^{+}}^y\) \).  \label{161.2m}
\end{eqnarray}

By  Lemma \ref{recursionends} with the $k,k'$ there replaced by 
$k^{ +}, k^{+ +}$ and using that
$(1+10(k^{++}-k^{ +})h_{k^{++}}/h_{k^{ +}})^{4L^2}$ is bounded above uniformly,
we have that for some universal $c<\ff$
\begin{eqnarray}
\label{old4.65}
&&\Pbm\(  \widehat{\mathfrak{  B}}_{y,k^{+ +}+1,L}\,
| \, \trav{{s^{2}}/{2}}{y}{k^{+}-1}{k^{+}}{y}{k^{+ +}-1}{k^{+ +}} =v^{2}/2, \, {\mathcal G}_{k^{+}}^y\)  
\\
&& \leq c\Pbm\(  \widehat{\mathfrak{  B}}_{y,k^{+ +}+1,L}\,
| \, \trav{{s^{2}}/{2}}{y}{k^{+}-1}{k^{+}}{y}{k^{+ +}-1}{k^{+ +}} =v^{2}/2 \).    \nonumber
\end{eqnarray}

By the barrier estimate  
  of  Lemma \ref{prop:BarrierSecGWPropk} in Appendix I, we see  
that for $j''$   in the range of summation in \eqref{161.2}, uniformly in $s\in I_{\rho (L-k^{ +})+j'}$ and $ v\in I_{\rho (L-k^{ ++})+j''}$,
\be 
  \Pbm\(  \widehat{\mathfrak{  B}}_{y,k^{+ +}+1,L}\,
| \, \trav{{s^{2}}/{2}}{y}{k^{+}-1}{k^{+}}{y}{k^{+ +}-1}{k^{+ +}} =v^{2}/2 \)  \leq c( 1+  j'' )e^{ -2(L-k^{+ +})-2 j'' }. 
\label{161.4}
\ee
 Thus,   uniformly in $[k^{1/4}]\le a \leq L^{3/4}$,  
\begin{eqnarray}
&&  \Pbm \left(\LL_{k, m, p,a}\right)=\Pbm\(\LL'_{k, m, p,a}\cap \BB_{y,k+3,L} \)  \leq c  e^{ -2(L-k^{+ +})  } 
\nn\\
  &&\hspace{.5 in} \times \sum _{j' =  \left(k^{ +}\right)^{1/4}}^{L^{3/4}} \sum _{j'' = \left(k^{ ++}\right)^{1/4}}^{L^{3/4}}   j'' e^{  -2 j'' } \Pbm\(\LL'_{k, m, p,a}\cap \Bbb{  B}_{y,k+3,k^{ +}, k^{ ++} }^{j', j'' }\),\label{161.3f}
\end{eqnarray}
 where 
  \bea
  \label{167.1q}
&&  \Bbb{  B}_{y,k+3,k^{ +}, k^{ ++} }^{j', j'' }
=\left\{\gamma \left(l\right)\le\sqrt{2T_{l}^{y,t_{z}}} \mbox{ for }
l=k+3,\ldots,k^{ +},  k^{ ++},\right.\nn\\
&&\left.\hspace{1 in} \,\sqrt{2T_{k^{ +}}^{y,t_{z}}} \in 
I_{\rho_{L}(L-k^{ +})+j'},\,\sqrt{2T_{k^{ ++}}^{y,t_{z}}} \in 
I_{\rho_{L} (L-k^{ ++})+j''}\right\}.
\eea

We first consider the case that $m\ge (\log L)^{4}$. Using (\ref{161.3f}), dropping 
the event  $\Bbb{  B}_{y,k+3,k^{ +}, k^{ ++} }^{j', j'' }$ 
and then  using (\ref{17.jjf}), we obtain
\begin{eqnarray}
&&\Pbm \left(\LL_{k, m, p,a}\right)\leq c  e^{ -2(L-k^{+ +})  } L^{3/2} \Pbm\(\LL'_{k, m, p,a} \)
\label{161.3fm}\\
&& \leq  c\, e^{ -2L  } L^{3/4}e^{ 2( k^{+ +}-k)  }e^{
-2(z-p)-(z-p)^{2}/4k}e^{ -m^{ 2}/8} .\nn
\end{eqnarray}
Using  that $k\leq L_{-}= 16 ( \log L )^{ 4}$ and our assumption that $0\leq z\leq \(\log L\)^{1/4}$, it   follows that (\ref{firstmomentgoal}) holds for all $m\ge (\log L)^{4}$. 

We consider separately the case where $\log k\leq m<\log\log  L$ and the case where $\log\log  L\leq m< (\log L)^{4}$.
 
To handle the last probability  in \eqref{161.3f} for $\log k\leq m<\log\log  L$, we use the following
estimate, whose proof, given in  sub-section \ref{sec-Decoupling}, uses the decoupling lemma (Lemma 
\ref{lem-decoup}), barrier estimates and
the control on Wasserstein distance contained in (\ref{was1}).

\bl\label{lem-decx} Let  $\log k\leq m<\log\log  L$. For $a, p, j', j'',  k$ in the ranges specified above,
 \be
 \Pbm\(\LL'_{k, m, p,a}\cap \Bbb{  B}_{y,k+3,k^{ +}, k^{ ++} }^{j', j'' }\)\leq G_{k,a,j', j''} \Pbm \left(\LL'_{k, m, p,a}\)+
e^{-\sqrt{L}},\nn
\ee
where
 \begin{eqnarray}
 &&G_{k,a,j',j''}= c  (a+m^{5/2})e^{-2a}e^{-2 (k^{ ++}-k  )     }  \sum_{\stackrel{\{\bar j': |\bar j'-j'|\leq 2M_{0}m\}}{\{\bar j'': |\bar j''-j''|\leq 2M_{0}m\}}}
 \label{was22pfa}\\
 &&   \hspace{.5 in} \bar j'\,\frac{e^{  -(\bar j'-a)^{ 2}/(4( k^{ +}-k )   )}}{(k^{ +}-k   ) }     e^{2 \bar j'' } \, \frac{e^{-(\bar j'-\bar j'')^{2}/2(k^{ +}-k  )}}{(k^{ +}-k   ) }.\nonumber
 \end{eqnarray}
\el

Combining  Lemma \ref{lem-decx}
with  \eqref{161.3f} and  (\ref{17.jjf}) we see that
\bea 
&& \hspace{.5 in}\Pbm \left(\LL_{k, m, p,a}\right) \leq  ce^{ -2L -2 z }e^{ -m^{ 2}/8}e^{4M_{0}m}\label{clearlower.1m}\\
&&\times \sum _{j' = \left(k^{ +}\right)^{1/4}}^{L^{3/4}} \sum _{j'' =  \left(k^{ ++}\right)^{1/4}}^{L^{3/4}}   e^{ -(z-p)^{2}/4k}(a+m^{5/2})e^{ -c'( p-a)^{ 2} }\nn\\
&&
\sum_{\bar j': |\bar j'-j'|\leq 2M_{0}m } \bar j'\,\frac{e^{  -(\bar j'-a)^{ 2}/(4( k^{ +}-k )   )}}{(k^{ +}-k   ) }
\sum_{\bar j'': |\bar j''-j''|\leq 2M_{0}m}  \bar j''   \, \frac{e^{-(\bar j'-\bar j'')^{2}/2(k^{ +}-k  )}}{(k^{ +}-k   ) }.  \nn 
\eea
which leads to
\bea 
&&\hspace{.3 in}\Pbm \left(\LL_{k, m, p,a}\right)\leq  ce^{ -2L -2 z }e^{ -m^{ 2}/16}   e^{ -(z-p)^{2}/4k}ae^{ -c'( p-a)^{ 2} }\label{clearlower.1ms}\\
&&
\times \sum_{j' \geq  \left(k^{ +}\right)^{1/4}/2}  \frac{j'}{(k^{ +}-k   ) }  e^{-(j'-a)^{ 2}/(4( k^{ +}-k )  )}
\sum_{j'' }  \frac{j''}{(k^{ +}-k   ) }  e^{-(j''-j')^{ 2}/( 2(k^{ +}-k ) )} \nn
\\
&&\leq ce^{ -2L -2 z }e^{ -m^{ 2}/16}  e^{ -(z-p)^{2}/4k}ae^{ -c'( p-a)^{ 2} }\nn\\
&&
\hspace{1in}\times \sum_{j'\geq   \left(k^{ +}\right)^{1/4}/2}  \frac{\(j'^{2}+j'\sqrt{k^{ +}-k}\)}{(k^{ +}-k   )^{3/2} }  e^{-(j'-a)^{ 2}/(4( k^{ +}-k )  )}.\nn
\eea
We obtain \eqref{firstmomentgoal} for $\log k\leq m\leq \log \log L$
by an elementary, if tedious, calculation.

To handle the case when  $ \log \log L\leq m\leq (\log L)^{4}$, we use, instead of Lemma \ref{lem-decx}, the following a priori weaker
estimate, whose proof, also given in  sub-section \ref{sec-Decoupling}, is a variation of the proof of Lemma \ref{lem-decx}.

\bl\label{lem-decxb} Let  $ \log k\leq m\leq (\log L)^{4}$. For $a, p, j', j'',  k$ in the ranges specified above,
 \be
 \Pbm\(\LL'_{k, m, p,a}\cap \Bbb{  B}_{y,k+3,k^{ +}, k^{ ++} }^{j', j'' }\)\leq H_{k,a,j', j''} \Pbm \left(\LL'_{k, m, p,a}\)+
e^{ -\sqrt{L}},\nn
\ee
where
 \begin{eqnarray}
 &&H_{k,a,j',j''}= c   e^{-2a}e^{-2 (k^{ ++}-k  )     }  \sum_{\stackrel{\{\bar j': |\bar j'-j'|\leq 2M_{0}m\}}{\{\bar j'': |\bar j''-j''|\leq 2M_{0}m\}}}
 \label{was22pf}\\
 &&   \hspace{.5 in} \bar j'\,\frac{e^{  -(\bar j'-a)^{ 2}/(4( k^{ +}-k )   )}}{(k^{ +}-k   )^{1/2}  }     e^{2 \bar j'' } \, \frac{e^{-(\bar j'-\bar j'')^{2}/2(k^{ +}-k  )}}{(k^{ +}-k   )^{1/2} }.\nonumber
 \end{eqnarray}
\el
 
 Combining the last Lemma with  \eqref{161.3f} and  (\ref{17.jjf}) 
 we obtain  
\bea 
&&\hspace{.3 in}\Pbm \left(\LL_{k, m, p,a}\right)\label{clearlower.1mb}\\
&&\leq  ce^{ -2L -2 z }e^{ -m^{ 2}/8}e^{4M_{0}m}\sum _{j' =  \left(k^{ +}\right)^{ 1/4}}^{L^{3/4}} \sum _{j'' =  \left(k^{ ++}\right)^{ 1/4}}^{L^{3/4}}   e^{ -(z-p)^{2}/4k} e^{ -c'( p-a)^{ 2} }\nn\\
&&
\times \sum_{\bar j': |\bar j'-j'|\leq 2M_{0}m } \bar j'\,\frac{e^{  -(\bar j'-a)^{ 2}/(4( k^{ +}-k )   )}}{(k^{ +}-k   )^{1/2} }
\sum_{\bar j'': |\bar j''-j''|\leq 2M_{0}m}  \bar j''   \, \frac{e^{-(\bar j'-\bar j'')^{2}/2(k^{ +}-k  )}}{(k^{ +}-k   )^{1/2} }\nn\\
&&\leq
ce^{ -2L -2 z }e^{ -m^{ 2}/16}   e^{ -(z-p)^{2}/4k} e^{ -c'( p-a)^{ 2} }\nn\\
&&\times \sum_{j'  }  \frac{j'}{(k^{ +}-k   )^{1/2} }  e^{-(j'-a)^{ 2}/(4( k^{ +}-k )  )}
\sum_{j'' }  \frac{j''}{(k^{ +}-k   )^{1/2} }  e^{-(j''-j')^{ 2}/( 2(k^{ +}-k ) )},  \nn \\
&&\leq ce^{ -2L -2 z }e^{ -m^{ 2}/20}(k^{ +}-k   )^{-1}   e^{ -(z-p)^{2}/4k} e^{ -c'( p-a)^{ 2} }
\nn\\
&&\times \sum_{j'  }  \frac{j'}{(k^{ +}-k   )  }  e^{-(j'-a)^{ 2}/(4( k^{ +}-k )  )}
\sum_{j'' }  \frac{j''}{(k^{ +}-k   ) }  e^{-(j''-j')^{ 2}/( 2(k^{ +}-k ) )},  \nn 
\eea
where in the last inequality we used that
for $m\geq \log \log L$, we have $e^{ -m^{ 2}/16}\leq ce^{ -m^{ 2}/20}(k^{ +}-k   )^{-2}$. 
Note however
that we can no longer assert that $j' \geq   \left(k^{ +}\right)^{1/4}/2$.
Another tedious summation now yields  
the inequality 
(\ref{firstmomentgoal}) for the case $ \log \log L$ $\leq m$ $\leq (\log L)^{4}$. This completes the proof
of Lemma \ref{prop:LowerBoundOneProfilez}. 
\end{proof}

 \subsection{Second moment estimate:   branching in the bulk}
 \label{subsec-bulk}
We begin our proof of the second moment estimate
contained in \eqref{eq:UpperBoundz}. The proof is divided to cases according
to the value of $k$.
In this subsection we prove Lemma \ref{prop:UpperBoundz} for  $2^{4}\log^{4} L\le k<L-2^{4}\log^{4}L$, that is, branching in the bulk. 
In this case, the curved boundary
$\gamma(\cdot)$ will play an important role and 
considerably
simplify the proof compared to the case of very early branching treated below
(where
sophisticated decoupling, and the full barrier,  needs to be used).
Indeed, we will content ourselves with dropping the barrier almost 
entirely, and just bound
the probability of the event
\begin{equation}
\left\{ T_{L}^{y,t_{z}}=0\right\} \cap\left\{T_{L}^{y',k,\gamma^{2}(k)/2}=0\right\} ,\label{eq:LargeKBiggerEvent}
\end{equation}
which  contains the event $\Iyz\cap \Iyzp$.
In this, we follow the argument in \cite{BK}. Specifically,
by the proof of \cite[Corollary 6.7]{BK} 
it follows that 
\begin{eqnarray}
&&\Pbm\( T_{L}^{y,t_{z}}=0,\,T_{L}^{y',k,\gamma^{2}(k)/2}=0 \)
\label{lbj.1}\\ 
&&\leq \(1-\frac{ 1}{L-k}\(1-\frac{2}
{L-k}\)\)^{\gamma^{2}\left(k \right)/2}\Pbm\left(T_{L}^{y,t_{z}}=0\right)   \nonumber\\
&&\leq e^{-\frac{ \gamma^{2}\left(k \right)}{2(L-k)}\(1-\frac{2}{
L-k}\)}\Pbm\left(T_{L}^{y,t_{z}}=0\right).   \nonumber
\end{eqnarray}
The contribution in \eqref{lbj.1} 
referring to $y$ is easily bounded by (\ref{eq:NotHitByrL}),
giving
\begin{equation}
\Pbm\left(T_{L}^{y,t_{z}}=0\right)\le ce^{-2L}Le^{ -2z-z^{2}/4L}.\label{eq:HighYsAlone}
\end{equation}
Using Lemma \ref{lem-int} and then the fact that $k^{1/4}_{L}\geq 2 \log L$ we see that 
\begin{eqnarray}
\Pbm(\Iyz\cap \Iyzp)&\leq & c e^{-2L}Le^{ -2z-z^{2}/4L}e^{-2(L-k) -2k^{1/4}_{L}}
\nn\\
&\leq&  c e^{-(4L-2k)- k^{1/4}_{L}}e^{ -2z -z^{2}/4L}.\label{lbj.2}
\end{eqnarray}

\subsection{Second moment estimate: late  branching }
In this subsection we prove Lemma    \ref{prop:UpperBoundz} for  $L-16\log^{4}L\le k<L-1$.
Consider first  the case \[L-16\log^{4}L\le k<L-d^{\ast}.\]
The proof is very similar to the bulk branching case, except that we no longer have $k^{1/4}_{L}\geq 2\log L$ so we will need a barrier bound to control the factor  $L$ on the right hand side (\ref{eq:HighYsAlone}).  Thus we set
\begin{equation}
\BB^{\uparrow}_{y}=\left\{ \rho_{L}\left(L-l \right)\le\sqrt{2T_{l}^{y,t_{z}}}\mbox{ for }l=1,\ldots,k-3\right\}.\label{lbj.3}
\end{equation}
Once again, using the proof of \cite[Corollary 6.7]{BK} and the calculations in (\ref{lbj.1}) it follows that 
\begin{eqnarray}
&&\Pbm\( \BB^{\uparrow}_{y}\cap \left\{T_{L}^{y,t_{z}}=0\right\},\,T_{L}^{y',k,\gamma^{2}(k)/2}=0 \)
\label{lbj.4}\\
&&\leq \(1- \frac{1}{L-k}\(1-{\frac{2}{L-k}}\)\)^{\gamma^{2}(k)/2}\Pbm\left(\BB^{\uparrow}_{y}\cap \left\{T_{L}^{y,t_{z}}=0\right\}\right)   \nonumber\\
&&\leq e^{-{\frac{ \gamma^{2}\left(k \right)}{2(L-k)}}\(1-{\frac{2}{L-k}}\)}\Pbm\left(\BB^{\uparrow}_{y}\cap \left\{T_{L}^{y,t_{z}}=0\right\}\right).   \nonumber\\
&&\leq c   e^{-2(L-k)-2k^{1/4}_{L}}  \Pbm\left(\BB^{\uparrow}_{y}\cap \left\{T_{L}^{y,t_{z}}=0\right\}\right),   \nonumber
\end{eqnarray}
where the last line follows from  Lemma \ref{lem-int}.

The contribution 
referring to $y$ is then bounded by   the barrier estimate contained in
Lemma \ref{prop:BarrierSecGWPropLk} of Appendix I,
\begin{equation}
\Pbm\left(\BB^{\uparrow}_{y}\cap \left\{T_{L}^{y,t_{z}}=0\right\}\right)\le c(1+z)(L-k)^{1/2}e^{-2L}e^{ -2z -z^{2}/4L}.\label{eq:HighYsAlonem}
\end{equation}
Hence
\begin{eqnarray}
\Pbm(\Iyz\cap\Iyzp)&\leq & c(1+z)(L-k)^{1/2}e^{-2L}e^{ -2z-z^{2}/4L}e^{-2(L-k)- 2 k^{1/4}_{L}}
\nn\\
&\leq&  c (1+z)e^{-(4L-2k)-k^{1/4}_{L}}e^{ -2z-z^{2}/4L},\label{lbj.5}
\end{eqnarray}
where we have used part of the exponential in $k^{1/4}_{L}=(L-k)^{1/4}$ to control the factor $(L-k)$.

For $L-d^{\ast}\le k<L-1$ we simply bound the term $\Pbm\left(\Iyz\cap 
\Iyzp\right) $ by $\Pbm\left(\Iyz\right)$
and obtain from (\ref{eq:LowerBoundOneProfilez}) the following upper
bound
\begin{equation}
\Pbm\left(\Iyz\cap\Iyzp\right)\leq c(1+z)e^{-2L} e^{ -2z-z^{2}/4L}\leq c_{d^{\ast}} (1+z)e^{-(4L-2k)-k^{1/4}_{L}}e^{ -2z-z^{2}/4L}.\label{eq:LowK}
\end{equation}

\subsection{Second moment estimate: early  branching }
 \label{subsec-veryearly}
In this subsection we prove Lemma    \ref{prop:UpperBoundz}  for  $1\le k\leq L_{-}=16\log^{4}L$.  It is here that the distinction between $\Iyz$ and $\Iyzh$,
see  \eqref{eq:TruncatedSummandLBz}, plays an important role.
This difference will be controlled by
(\ref{was1}). We remark that for $k$ not too small we can avoid this by  applying  the methods of \cite[Proposition 6.16]{BK}, but for  $k=o\( \(\log\log L\)^{4}\)$, that approach fails.

Consider first the case where  $d^{\ast}\le k\leq L_{-}$. Recall $\BB_{y,k,L}$ 
from  (\ref{eq:JDownnew}).
Then  
\be
\Pbm( \Iyz\cap \Iyzp)\leq
\sum_{a\geq [k^{1/4}]} \Pbm( \WW_{y,k}(N_{k,a } )\cap 
\BB_{y,k+3,L} \cap \HH_{k,a}   \cap \Iyzp).\label{61.0jed}
\ee
Hence it suffices to show that if $d(y,y')\geq 2h_k$ and $k\geq d^*$ then 
\be 
\Pbm( \WW_{y,k}(N_{k,a} )\cap \BB_{y,k+3,L}\cap \HH_{k,a}   \cap 
\Iyzp)\leq  ce^{-a}  E(   k),\label{eq:UpperBoundz2}
\ee 
where we abbreviate
\begin{equation}
E(   k)=e^{-4L+2k}ze^{ -2z  }.\label{eofk}
\end{equation}
By replacing $\Iyzp$ by the larger set $\{T_{L}^{y',k^{+},\gamma^{2}(k^{+})/2}=0\}$ it is easy to see that the probability  in (\ref{61.0jed}) for $a>L^{3/4}$ is much smaller than the right hand side of (\ref{eq:UpperBoundz2}), so we may assume that $a\leq L^{3/4}$.

Set 
\begin{equation}
\Iyzhpk=\left\{ \gamma\left(l\right)\le\sqrt{2T_{l}^{y',t_{z}}} \mbox{ for }l=1,\ldots,k-2,  k+1,\ldots  L-1;  T_{L}^{y',t_{z}}=0\right\},\label{eq:TruncatedSummandLBztt}
\end{equation} 
that is, we drop the barrier in $\Iyzhp$ for $l=k-1,k$.

 Recall the events $\Bbb{  B}_{y,k+3,k^{ +} }^{j' }$ and $\widehat{\mathfrak{  B}}_{y,k^{ ++}+1,L}$,
see \eqref{167.1} and \eqref{167.2}, and the $\sigma$-algebra ${\mathcal G}_{k^{ +}}^y$,
see \eqref{g-sigma}.
%
%
%
 We have that
\begin{eqnarray}
&& \Pbm\(   \WW_{y,k}(N_{k,a} )\cap \BB_{y,k+3,L}\cap 
\HH_{k,a}   \cap \Iyzp\)\leq \sum _{j' = \left(k^{ +}\right)^{1/4}}^{L^{3/4}} \sum _{j'' =  \left(k^{ ++}\right)^{1/4}}^{L^{3/4}} 
\nn\\
&& \hspace{.5 in}
\Pbm\(   \WW_{y,k}(N_{k,a} )\cap    \Bbb{  B}_{y,k+3,k^{ +} }^{j' }\cap 
  \widehat{\mathfrak{  B}}_{y,k^{+ +}+1,L}\cap \HH_{k,a}   \cap \Iyzhpk,\right.\nn\\
  &&\left. \hspace{1.5 in}\,\sqrt{2T_{k^{ ++}}^{y,t_{z}}} \in 
I_{\rho_{L} (L-k^{ ++})+j''}\)+o(e^{-a}  E(   k)), \label{61.2}
\end{eqnarray}
where, once again, the error term (see \eqref{eq:UpperBoundz2})
is coming from the restriction
$j', j''\leq L^{3/4}$.

%
 Next, note that
\begin{equation}
\mathcal{A}_{j'}:= \WW_{y,k}(N_{k,a} ) \bigcap   \HH_{k,a}   \bigcap \Iyzhpk \cap    \Bbb{  B}_{y,k+3,k^{ +} }^{j' }\in  {\mathcal G}_{k^{ +}}^y. \label{67.2g}
\end{equation}
This is the reason we introduced $\Iyzhpk$.
Using (\ref{67.2g})   we have 
\begin{eqnarray}
&& \Pbm\( \mathcal{A}_{j'};\,\sqrt{2T_{k^{ ++}}^{y,t_{z}}} \in 
I_{\rho_{L} (L-k^{ ++})+j''};\,   \widehat{\mathfrak{  B}}_{y,k^{+ +}+1,L} \)
\nn\\
&& \leq \sup_{s\in I_{\rho (L-k^{ +})+j'},\,\, v\in I_{\rho (L-k^{ ++})+j''}}
\Pbm\( \mathcal{A}_{j'};\,\sqrt{2T_{k^{ ++}}^{y,t_{z}}} \in 
I_{\rho (L-k^{ ++})+j''};\, \right.\nn\\
&&\left.\hspace {.8 in}\Pbm\(  \widehat{\mathfrak{  B}}_{y,k^{+ +}+1,L}\,
| \, \trav{s^{2}/2}{y}{k^{+}-1}{k^{+}}{y}{k^{+ +}-1}{k^{+ +}} =v^{2}/2, \, {\mathcal G}_{k^{+}}^y\) \).  \label{61.2m}
\end{eqnarray}
 Recall the estimates \eqref{old4.65} and \eqref{161.4}. We then have that
%
%
uniformly in $[k^{1/4}]\le a \leq L^{3/4}$, 
\begin{eqnarray}
&& \Pbm\(   \WW_{y,k}(N_{k,a} )\cap \BB_{y,k+3,L}\cap 
\HH_{k,a}   \cap \Iyzp\)  
\nn\\
&& \leq c   e^{ -2(L-k^{+ +})  }  \sum _{j' = \left(k^{ +}\right)^{1/4}}^{L^{3/4}} \sum _{j'' =  \left(k^{ ++}\right)^{1/4}}^{L^{3/4}}( 1+  j'' )e^{ -2 j'' }
\nn\\
  &&\hspace{1 in}\times \Pbm\(   \WW_{y,k}(N_{k,a} )\cap \Bbb{  B}_{y,k+3,k^{ +}, k^{ ++} }^{j', j'' }\cap \HH_{k,a}   \cap \Iyzhpk\),\label{61.3}
\end{eqnarray}
 where $\Bbb{  B}_{y,k+3,k^{ +}, k^{ ++} }^{j', j'' }$ is as in \eqref{167.1q}.

To handle the last term in \eqref{61.3}, we use the following
estimate, whose proof, given in the following sub-section, uses the decoupling lemma (Lemma 
\ref{lem-decoup}), barrier estimates and
the control on Wasserstein distance contained in (\ref{was1}).

\bl\label{lem-basicveb}
For some $M_{0}<\ff$ and $k,a,j'$ in the ranges specified above,
\bea
&&
\Pbm(   \WW_{y,k}(N_{k,a} )\bigcap   \Bbb{  B}_{y,k+3,k^{ +}, k^{ ++} }^{j', j'' }
\bigcap \HH_{k,a}   \bigcap \corJ{\Iyzhpk} ) \label{basicveb.1}\\
 && \leq F_{k,a,j', j''}\,\,\Pbm(\corJ{\Iyzhpk})+
e^{-2L-\sqrt{L}},
\nn
\eea
where
 \begin{eqnarray}
 &&F_{k,a,j',j''}= c ae^{-2a}e^{-2 (k^{ ++}-k  )     }  \sum_{\stackrel{\{\bar j': |\bar j'-j'|\leq 2M_{0}\log    k\}}{\{\bar j'': |\bar j''-j''|\leq 2M_{0}\log    k\}}}
 \label{was22p}\\
 &&   \hspace{.5 in} \bar j'\,\frac{e^{  -(\bar j'-a)^{ 2}/(4( k^{ +}-k )   )}}{(k^{ +}-k   ) }     e^{2 \bar j'' } \, \frac{e^{-(\bar j'-\bar j'')^{2}/2(k^{ +}-k  )}}{(k^{ +}-k   ) }.\nonumber
 \end{eqnarray}
\el

Assuming Lemma \ref{lem-basicveb}, 
we can now complete the proof of Lemma \ref{prop:UpperBoundz}
for $d^{\ast}\le k\leq L_{-}=16\log^{4}L$.
In view of  (\ref{61.2})-(\ref{61.2m}),\eqref{old4.65},\eqref{161.4}, and  using the barrier estimate
 Lemma \ref{prop:BarrierSecGWProp} for $\Pbm(\corJ{\Iyzhpk})$, we see that the contribution of 
$F_{k,a,j',j''}$ to (\ref{eq:UpperBoundz2}) is bounded by 
 \begin{eqnarray}
 &&  cz e^{  - 4L+2k -2z } ae^{-2a}  e^{8 M_{0}\log    k} \sum_{\stackrel{\{\bar j': |\bar j'-j'|\leq 2M_{0}\log    k\}}{\{\bar j'': |\bar j''-j''|\leq 2M_{0}\log    k\}}}
 \label{514.5}\\
 &&   \hspace{.5 in} \bar j'\,\frac{e^{  -(\bar j'-a)^{ 2}/(4( k^{ +}-k )   )}}{(k^{ +}-k   ) } (\bar j''+2M_{0}\log    k)     \, \frac{e^{-(\bar j'-\bar j'')^{2}/2(k^{ +}-k  )}}{(k^{ +}-k   ) }.\nonumber
 \end{eqnarray}
 Using the fact that $a\geq k^{1/4}$, and recalling that $E(   k)=e^{-4L+2k}ze^{ -2z  }$, see
\eqref{eofk}, this is bounded by
 \begin{equation}
 cE(   k)\,\, e^{-3a/2}  \sum_{j' }j'\,\frac{e^{  -(  j'-a)^{ 2}/(4( k^{ +}-k )   )}}{(k^{ +}-k   ) } \sum_{j''}   j''     \, \frac{e^{-(  j'-  j'')^{2}/2(k^{ +}-k  )}}{(k^{ +}-k   ) }.\label{514.60}
 \end{equation} 
 The sum over $j''$ can be bounded by $c(1+j'/(k^{ +}-k   )^{1/2})$. Thus we can bound (\ref{514.60}) by 
  \bea
  &&
 cE(   k)\,\, e^{-3a/2}  \sum_{j' }\,\( j' (k^{ +}-k   )^{1/2}+ j'^{2} \)\,\frac{e^{  -(  j'-a)^{ 2}/(4( k^{ +}-k )   )}}{(k^{ +}-k   )^{3/2} }.\label{514.60a}\\
&& \leq  ca^{2}e^{-3a/2}E(   k)\leq  ce^{-a}E(   k).\nn
 \eea
 

Similarly, the contribution of the
last term of \eqref{basicveb.1}
to (\ref{eq:UpperBoundz2}) is bounded by\footnote{Terms depending on $z$ are absorbed in the factor $\sqrt{L}$.}
\begin{equation}
cL^{3/2}e^{ -2(L-k^{+ +}) }e^{-2L-\sqrt{L}},\label{extra.3}
\end{equation}
which, after summation in $k^{1/4}\leq a\leq L^{3/4}$, is easily seen to be  bounded by $ce^{-k^{1/4}}E(   k)$.
Together with the previous displays, this  
completes the proof of Lemma \ref{prop:UpperBoundz} for 
$d^{\ast}\le k\leq L_{-}$.

    \corJ{It thus remains to consider the case where 
 $1\le k< d^{\ast}$. We show below that  for some $c,c'<\ff$, 
 \begin{equation}
\Pbm\(\Iyz \cap \Iyzp\)\leq  c'  (1+z)e^{-4L+2d^{\ast}}e^{ -2z  }e^{-c(d^{\ast}_{L})^{1/4}},\label{dast}
\end{equation}
whenever  $2h_{d^{\ast}}<d(y,y')$, that is, without the condition that $d(y,y')\leq 2h_{d^{\ast}-1}$.
Then,  since for  $1\le k< d^{\ast}$,   $2h_{k}<d(y,y')$ implies $2h_{d^{\ast}}<d(y,y')$, and    $d^{\ast}$ is fixed, by increasing the constant $c'$ in (\ref{dast}) we obtain (\ref{eq:UpperBoundz}) for $1\le k< d^{\ast}$.
 }

    \corJ{To prove (\ref{dast}) solely under the condition that $2h_{d^{\ast}}<d(y,y')$, we return to the proof of (\ref{eq:UpperBoundz}) for $k=d^{\ast}$.       The condition  $2h_{k}<d(y,y')\leq 2h_{ k-1}$ is needed to guarantee that  $\Iyzhpk \in  {\mathcal G}_{k^{+}}^y.$}       \corJ{If instead of just dropping the barrier in $\Iyzhpk$ for $l=k-1, k$ we were to drop it for all $l\leq k$ and replace $\Iyzhpk$ by 
    \begin{equation}
\wh\JJ_{y',k,z}= \left\{ \gamma\left(l\right)\le\sqrt{2T_{l}^{y,t_{z}}} \mbox{ for }l=  k+1,\ldots  L-1;  T_{L}^{y,t_{z}}=0\right\},\label{eq:TruncatedSummandLBzqq}
\end{equation}  
then $\wh\JJ_{y',k,z} \in  {\mathcal G}_{k^{+}}^y.$   We did not use this for general $k$ since dropping the barrier for  a unbounded number of $l$'s can effect our estimates. However, $ d^{\ast}$ is fixed so we can replace $\wh\II_{y',d^{\ast},z}$ by $\wh\JJ_{y',d^{\ast},z}$ and following our proof of (\ref{eq:UpperBoundz}) for $k=d^{\ast}$ we will 
obtain (\ref{dast}) whenever  $2h_{d^{\ast}}<d(y,y')$.}

  This  completes the proof of  Lemma \ref{prop:UpperBoundz}. 
\qed

\subsection{Decoupling}\label{sec-Decoupling}
In this section we assume that  $d^{\ast}\leq k\leq L^{1/2}$.

 Let  $\Psi=\{\psi_{k,i},\,\,    i=1,2,\ldots   \}$ be a collection  of independent $\nu_{k}$-distributed random variables, independent of the
 Brownian motion X. We set
\begin{equation}
\VV_{y,k}(   n)=\left\{   \dwa\(   \frac{1}{n} \sum_{i=1}^{ n} \de_{\psi_{k,i}},\nu_{k}  \)\leq \frac{c_{0}\log k}{2\sqrt{n}}     \right\},\label{eq:LBTruncatedSumz8}
\end{equation}
and define $\WW_{y,k}(   n)$ similarly with the $\psi_{k,i}$ replaced by $\th_{k,i}$.
We set
\begin{equation}
  \label{short}
  \gykm= \VV_{y,k}(N_{k,a } )\cap \WW_{y,k}(N_{k,a }) \cap \HH_{k,a}.
\end{equation}

We will often use the notation $P_{X, \Psi}$ instead of
$\Pbm\times \nu_k^{\Z_+}$, while emphasizing  
that we are dealing with two independent processes. 
It follows from (\ref{was1}) that for any event 
$\mathcal{E}$ which is measurable with respect to $X$,
\begin{equation}
\Pbm \( \WW_{y,k}(N_{k,a } ) \cap \HH_{k,a}   \cap \mathcal{E}\)
\label{cupdef.1azk}
 \leq 2 P_{X, \Psi}\(\GG_{y,k}(N_{k,a})
\cap  \mathcal{E}\).
\end{equation}
Set $\bar \th_{k,N } =(   \th_{k,1},   \th_{k,2},\ldots,   \th_{k,N })$ where the $\th_{k,i }$, as above, are the angular increments of the excursions at level $k$. Similarly we set 
$\bar \psi_{k,N } =$ \\$(   \psi_{k,1},   \psi_{k,2},\ldots,   \psi_{k,N })$.

On $\gykm$,
by the triangle inequality we have that
\begin{equation}
 \dwa\(   \frac{1}{ N_{k,a } } \sum_{i=1}^{ N_{k,a } } \de_{\psi_{k,i}}, 
 \frac{1}{N_{k,a } } \sum_{i=1}^{ N_{k,a } } \de_{\th_{k,i}} \)\leq\frac{c_{0}\log k}{\sqrt{N_{k,a} }}.\label{was4}
\end{equation}

If $\Qb\in \PP^2(\mu,\nu)$ with
marginals $ \mu=\frac{1}{ N_{k,a } } \sum_{i=1}^{ N_{k,a } } \de_{\th_{k,i}}$ 
and $\nu =\frac{1}{N_{k,a } } \sum_{i=1}^{ N_{k,a } } \de_{\psi_{k,i}}$ and 
distinct $\th_{k,i}$ and $\psi_{k,i}$, then we can write 
\[\Qb =\frac{1}{N_{k,a } }\sum_{i,j=1}^{ N_{k,a } }q_{i,j}\,\,\de_{\th_{k,i}}\times \de_{\psi_{k,j}},\] and
\[  \frac{1}{N_{k,a } }=\mu  (\th_{k,i} )=
\frac{1}{N_{k,a } } \sum_{j=1}^{ N_{k,a } }q_{i,j},
\quad 
\frac{1}{N_{k,a } }=\nu  (\psi_{k,j} )=\frac{1}{N_{k,a } } \sum_{i=1}^{ N_{k,a } }q_{i,j}. \]
Thus, 
$\{q_{i,j}\}$ is a doubly stochastic matrix. 
Hence, recall (\ref{was0}), (\ref{was4}) says that
\begin{equation}
\inf _{A\in DS( N_{k,a }   )} \sum_{i,j=1}^{ N_{k,a } }A_{i,j} |\th_{k,i}-\psi_{k,j} |  \leq c_{0}\log k \sqrt{N_{k,a } },\label{was4m}
\end{equation}
where the infimum is over the set of  doubly stochastic $N_{k,a } \times N_{k,a } $ matrices, denoted $DS(N_{k,a})$.

Using G. Birkhoff's theorem that any doubly stochastic  matrix is a convex 
combination of  permutation matrices,  \cite{BvN, M},  (\ref{was4m}) implies that there is a permutation $\pi=\pi_{\bar \th_{k,N_{k,a }},\bar \psi_{k,N_{k,a }}}$ of $[1,N_{k,a } ]$ such that, on $\gykm$,
\begin{equation}
\sum_{i=1}^{ N_{k,a } }  |\th_{k,i}-\psi_{k,\pi(   i)}|\leq c_{0}\log k\sqrt{N_{k,a } }.\label{was5}
\end{equation}

Let $\mathcal{C}(   y, k+2) $ denote the space of finite sequences of continuous paths from 
$\partial B_{d}(   y, h_{k+2})$ to $\partial B_{d}(   y, h_{k+1})$. We define an equivalence relation $\mathcal{R}$ on $\mathcal{C}(   y, k+2) $ by saying that two sequences in $\mathcal{C}(   y, k+2) $ are equivalent if they differ by some rotation around $y$. Let $\wt{\mathcal{C}}(   y, k+2)= \mathcal{C}(   y, k+2)/ \mathcal{R}$, and let $\Delta$ denote an auxiliary point, the `cemetery state'.
To each $\al\in [0,2\pi)$,
  we associate a random variable $\bar Y^{\al }$ with values in 
  $\wt{\mathcal{C}}(   y, k+2)\cup \{\Delta\}$, as follows. 
  Consider the  Brownian excursion started at some point in $\partial B_{d}(   y, h_{k})$ until exiting $\partial B_{d}(   y, h_{k-1})$, conditioned so that $\al$ is the angular increment between its initial and terminal point. If this excursion reaches $\partial B_{d}(   y, h_{k+2})$, we let  $\bar Y^{\al }$ be the element of $\wt{\mathcal{C}}(   y, k+2) $ generated by our excursion. (There may be several  excursions from 
$\partial B_{d}(   y, h_{k+2})$ to $\partial B_{d}(   y, h_{k+1})$  until exiting $\partial B_{d}(   y, h_{k-1})$, which is why $\mathcal{C}(   y, k+2) $ involves sequences of excursions). If our excursion exits $  B_{d}(   y, h_{k-1})$ before reaching $\partial B_{d}(   y, h_{k+2})$, we set 
 $\bar Y^{\al }=\Delta$.  In this manner, for each $\psi_{k,j}$, $j=1,2,\ldots$ we define $\bar Y^{\psi_{k,j} } \in \wt{\mathcal{C}}(   y, k+2)\cup \{\Delta\}$. Recalling the definition of  $\th_{k,i}$,  
 it follows from the Markov property that the 
 excursions $Y^{\th_{k,i} } \in \wt{\mathcal{C}}(   y, k+2)
 \cup\{\Delta\}$, $i=1,2,\ldots$, generated by the
 Brownian motion $X$, have the same law as  $\bar Y^{\th_{k,i} }$, 
 $i=1,2,\ldots$.
 Note that  on the event $\HH_{k,a}  $ we have that 
 $T_{k}^{y,t_{z}}\leq N_{k,a}$.

  By a direct computation using the Poisson kernel in
  \eqref{poisson}, we see that there exists a universal  constant $c_1$
  so that if $x,x'\in  \partial B_d(0,h_{k-1})$ then 
  \bea
    \label{eq-poissondif}
  &&  \max_{u,z\in \partial B_{d}(0,h_k)}
    \left|
    \frac{p_{B_{d}(0,h_{k-1})}(z,x)} {p_{B_{d}(0,h_{k-1})}(u,x)}
    -\frac{p_{B_{d}(0,h_{k-1})}(z,x')} {p_{B_{d}(0,h_{k-1})}(u,x')}\right|\\
    &&=\max_{u,z\in \partial B_{d}(0,h_k)}
    \left|\frac{ \sin^{ 2} ( d(  u,x)/2)}
{ \sin^{ 2} ( d( z,x)/2)}-\frac{ \sin^{ 2} ( d(  u,x')/2)}
{ \sin^{ 2} ( d( z,x')/2)}\right|
    \leq c_1 d_a(x,x'),
    \nonumber
  \eea
  where $d_a(x,x')$ denotes the difference of arguments of $x$ and $x'$, which
  can be taken by definition (since $h_{k-1}$ is small and the ratio
  $h_{k-1}/h_k$ is fixed) as $d(x,x')/h_{k-1}$, 
  and we used the fact that $\frac{ \sin^{ 2} ( d(  u,x)/2)}
{ \sin^{ 2} ( d( z,x)/2)}$ is Lipschitz in $x$, uniformly in $u,z\in \partial B_{d}(0,h_k)$.

  With $c_1$ as in \eqref{eq-poissondif}, 
 let $p_{i}= (c_{1}|\th_{k,i}-\psi_{k,\pi(   i)}|)\wedge 1$.
 Note that
 by (\ref{was5}),
 with $c_2=c_{0}c_{1}$,
 we have that, on $\gykm$, 
\begin{equation}
\sum_{i=1}^{ N_{k,a } }p_{i}\leq c_2\log k \sqrt{ N_{k,a } }.\label{was5d}
\end{equation}
Let $B_{i}, i=1,\ldots, N_{k,a } $  be 
independent Bernoulli random variables with mean $p_{i}$, independent of 
the Brownian motions.   
We can use $B_i$ to create a coupling between
$Y^{\th_{k,i} } $ and $\bar Y^{\psi_{k,\pi(   i)} } $, as follows.

\begin{lemma}
  \label{lem-coupling}
For each $1\leq i\leq N_{k,a }$ there exist random variables $\wh Y_{i}, Z_{i}, \bar Z_{i}$  in $\wt{\mathcal{C}}(   y, k+2)\cup \{\Delta\}$
so that 
\begin{eqnarray}
&&Y^{\th_{k,i} } =(   1-B_{i})\wh Y_{i}+B_{i}Z_{i}
\label{coupl.0}\\
&&\bar Y^{\psi_{k,\pi(   i)}} =(   1-B_{i})\wh Y_{i}+B_{i}\bar Z_{i}.\nn
\end{eqnarray}
The random variables $B_i$ and $\wh Y_i$ can be taken independent of each other.
\end{lemma}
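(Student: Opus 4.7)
The decomposition in (\ref{coupl.0}) is precisely the standard representation of a \emph{maximal coupling}, so the plan is to reduce to the total variation bound
\[
d_{TV}\bigl(\mathrm{Law}(\bar Y^{\th}),\mathrm{Law}(\bar Y^{\psi})\bigr) \leq (c_1|\th-\psi|)\wedge 1
\]
and then invoke maximal coupling. Since $Y^{\th_{k,i}}$ has the same law as $\bar Y^{\th_{k,i}}$ by the Markov property, this TV bound between the abstract $\bar Y^{\th}$ and $\bar Y^{\psi}$ suffices, via a measurable selection joint in $(\th,\psi)$.

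To establish the TV bound I would argue as follows. Quotienting by rotations, the excursion defining $\bar Y^{\al}$ starts at a fixed point $u_0\in\partial B_d(y,h_k)$ and is conditioned (as a Doob $h$-transform) on exit at $x_\al$, the point on $\partial B_d(y,h_{k-1})$ at angular displacement $\al$. All of the structure in $\wt{\mathcal{C}}(y,k+2)$ defining $\bar Y^{\al}$ is completed by the last passage $\tau^*$ through $\partial B_d(y,h_{k+1})$; after $\tau^*$ the path travels directly out to $\partial B_d(y,h_{k-1})$. The $h$-transform reweights the pre-$\tau^*$ path by the Poisson-kernel ratio $p_{B_d(y,h_{k-1})}(X_{\tau^*},x_\al)/p_{B_d(y,h_{k-1})}(u_0,x_\al)$, so for any event $A'$ in $\wt{\mathcal{C}}(y,k+2)\cup\{\Delta\}$, $P(\bar Y^{\al}\in A')$ is the expectation of this ratio against an $\al$-independent measure on the pre-$\tau^*$ structure. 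The Lipschitz bound (\ref{eq-poissondif}) then controls the $\al$-variation of this tilt (with the same proof giving the analogous bound when the source point lies on $\partial B_d(y,h_{k+1})$ rather than on $\partial B_d(y,h_k)$), yielding the TV bound.

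With the TV bound established, write
\[
\mathrm{Law}(\bar Y^{\th}) = (1-p)\nu + p\mu^1,\qquad \mathrm{Law}(\bar Y^{\psi}) = (1-p)\nu + p\mu^2,
\]
with $p=(c_1|\th-\psi|)\wedge 1$, $\nu$ the overlap, and $\mu^1,\mu^2$ mutually singular. Sampling $B_i\sim\mathrm{Ber}(p_i)$, $\wh Y_i\sim\nu$, $Z_i\sim\mu^1$, $\bar Z_i\sim\mu^2$ measurably in $(\th_{k,i},\psi_{k,\pi(i)})$ and extending each by an independent auxiliary draw so that $\wh Y_i$ is defined on all of $\Omega$ and independent of $B_i$, the assignments $Y^{\th_{k,i}}:=(1-B_i)\wh Y_i+B_iZ_i$ and $\bar Y^{\psi_{k,\pi(i)}}:=(1-B_i)\wh Y_i+B_i\bar Z_i$ realize the required decomposition while preserving the marginal laws.

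The main technical obstacle is making the $h$-transform rigorous at the \emph{last} passage time $\tau^*$, which is not a stopping time. I would handle this by time-reversing the Brownian motion in the annulus $\{h_{k+2}<d(y,\cdot)<h_{k-1}\}$ so that $\tau^*$ becomes a first-passage time in reverse, or by approximating $\tau^*$ with the first-entry times $\tau^{(n)}$ into $\partial B_d(y,h_{k+1})$ and passing to the limit using that only finitely many such visits occur before exit almost surely. Once this is in place, the coupling step itself is routine.
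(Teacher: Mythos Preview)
Your high-level strategy matches the paper's exactly: bound the total variation distance between the laws of $\bar Y^{\th}$ and $\bar Y^{\psi}$ by $c_1|\th-\psi|$ using the Poisson-kernel Lipschitz estimate \eqref{eq-poissondif}, then invoke the maximal coupling of \cite[Theorem~5.2]{L}. The independence of $B_i$ and $\wh Y_i$ is handled just as you describe, by splitting each law as $(1-p)\nu+p\mu^j$.

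The difference is in how the TV bound is obtained. You work with the \emph{last} passage $\tau^*$ through $\partial B_d(y,h_{k+1})$ and an $h$-transform, and then have to repair the fact that $\tau^*$ is not a stopping time via time reversal or approximation. The paper avoids this entirely by working with \emph{first}-passage stopping times: it sets $\tau_{2i+1}$ to be the first hit of $\partial B_d(y,h_{k-1})\cup\partial B_d(y,h_{k+2})$ after $\tau_{2i}$, and $\tau_{2i+2}$ the first return to $\partial B_d(y,h_{k+1})$ thereafter. Any rotationally invariant event $\mathcal A$ in $\wt{\mathcal C}(y,k+2)\cup\{\Delta\}$ decomposes over the filtration $\mathcal I_j$ generated by the first $j$ such excursion pieces, and for $\mathcal A\in\mathcal I_j\setminus\mathcal I_{j-1}$ the strong Markov property at the genuine stopping time $\tau_{2j+2}$ gives
\[
\Pbm^{u}\bigl(\mathcal A\mid X_{\bar\tau}=x\bigr)=\frac{\Ebm^{u}\bigl(\mathcal A,\ \tau_{2j+2}<\bar\tau;\ p_{B_d(0,h_{k-1})}(X_{\tau_{2j+2}},x)\bigr)}{p_{B_d(0,h_{k-1})}(u,x)},
\]
so \eqref{eq-poissondif} (with the source point on $\partial B_d(y,h_{k+1})$, which you correctly note needs the same argument) bounds $|\Pbm^{u}(\mathcal A\mid x)-\Pbm^{u}(\mathcal A\mid x')|\le c_1 d_a(x,x')\Pbm^{u}(\mathcal A)$ directly. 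Summing over $j$ gives the TV bound with no last-exit machinery. Your route is correct in principle, but the paper's sequential-stopping-time decomposition is both shorter and technically cleaner.
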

We refer to the joint law of $B_{i}, \wh Y_{i}, Z_{i}, \bar Z_{i}$ in the lemma as
$\wt P^{\th_{k,i}, \psi_{k,\pi(   i)} }$.
\begin{proof}
Without loss of generality we take $y=0$. (\ref{coupl.0}) will follow from \cite[Theorem 5.2]{L}\footnote{The formulation in \cite[Theorem 5.2]{L} allows for the  independence of 
$Z_i$ and $\bar Z_i$ of each other, with the change  that the parameter of the 
Bernoulli $B_i$ equals the variation distance. By splitting $B_i$ in that formulation, we arrive at the current statement.}   
once we show that the variational distance between the distributions $Y^{\th_{k,i} } $ and $\bar Y^{\psi_{k,\pi(   i)} } $  is less than 
 $c_{1}|\th_{k,i}-\psi_{k,\pi(   i)}|$. The latter follows from an application Poisson kernel estimates, 
 similarly to the proof of Lemma \ref{recursionends}. We provide the details below.

Let $\bar{\tau} = \inf \{ t:\, X_t \in \partial B_{d}(0, h_{k-1})\}$,
$\tau_0=0$ and for $i=0,1,\ldots$ define
\begin{eqnarray*}
\tau_{2i+1} & = & \inf \{ t \geq \tau_{2i}  :\; X_t \in \partial
B_{d}(0, h_{k-1}) \cup \partial B_{d}(0, h_{k+2})\} \\
\tau_{2i+2} & = &
 \inf \{ t \ge \tau_{2i+1} :\; X_t \in \partial B_{d}(0, h_{k+1})\}\,.
\end{eqnarray*}

If $\tau_{2i+1}<\bar{\tau}$, let $\mathcal{E}_{i}$ denote the 
excursion  from time $\tau_{2i+1}$ until $\tau_{2i+2}$, while if  $\tau_{2i+1}\geq\bar{\tau}$, let $\mathcal{E}_{i}=\emptyset$.
Then let 
$\mathcal{I}_{j}$ denote the $\si$-algebra of  events measurable with respect to   $\mathcal{E}_{i}$, $0\leq i\leq j$, which are 
rotationally invariant around $0$. We set $\mathcal{I}=\cup_{j=0}^{ \ff} \mathcal{I}_{j}$.

If $\AA\subseteq \mathcal{I}_{j}-\mathcal{I}_{j-1}$, using the Markov property we have, for any $u\in \partial B_{d}(0, h_{k})$, 
\begin{eqnarray}
\Pbm^{u}\(\AA \,\,\big |\, X_{\bar{\tau}}=x \)
&=&\Pbm^{u }\(\AA,\,\, \tau_{2j+2}<\bar{\tau} \,\,\big |\, X_{\bar{\tau}}=x \)
\nn\\
&=&\frac{\Ebm^{u}\(\AA,\,\,\tau_{2j+2}<\bar{\tau};\,\,p_{B_d(   0, h_{k-1 } )}( X_{\tau_{2j+2}},x)  \)}{ p_{B_{d}(   0, h_{k-1})}(u,x)},\nn
\end{eqnarray}
see \eqref{poisson}.
Using \eqref{eq-poissondif}, we obtain that  for
$\AA\subseteq \mathcal{I}_{j}-\mathcal{I}_{j-1}$,
\[  |\Pbm^{u}\(\AA\,\,\big |\, X_{\bar{\tau}}=x \)-\Pbm^{u }\(\AA\,\,\big |\, X_{\bar{\tau}}=x' \)|
\leq c_{1}
d_a(x,x')P^{(u) }\(\AA  \) ,  \]
    uniformly in $\AA$ and $j$.
  Hence if $\BB=\cup_{j=0}^{ \ff}\AA_{j}$ with $\AA_{j}\subseteq \mathcal{I}_{j}-\mathcal{I}_{j-1}$, then
  \begin{eqnarray}
  &&|\Pbm^{u }\(\BB\,\,\big |\, X_{\bar{\tau}}=x \)-
  \Pbm^{u }\(\BB\,\,\big |\, X_{\bar{\tau}}=x' \)|\nonumber \\
  &&\quad \leq   \sum_{j=0}^{ \ff}   |\Pbm^{u}\(\AA_{j}\,\,\big |\, 
  X_{\bar{\tau}}=x \)-\Pbm^{u}\(\AA_{j}\,\,\big |\, 
  X_{\bar{\tau}}=x' \)|\nn\\
  &&\quad \leq   c_{1}   \,d_a(x,x')\,\, 
  \sum_{j=0}^{ \ff}\Pbm^{u}\(\AA _{j} \)= c_{1}\,d_a(x,x')\,\,
  \Pbm^{u }\(\BB\) 
  \leq   c_{1}  \,d_a(x,x'),\nn
  \end{eqnarray}
  uniformly in $\BB$.
  
  Finally we need to consider the event that $\{\tau_1=\bar\tau\}$.
  Since
  \[\Pbm^{u}\(\tau_1=\bar\tau \,\,\big |\, X_{\bar{\tau}}=x \)=
  1-  \Pbm^{u}\(\tau_1\neq \bar \tau \,\,\big |\, X_{\bar{\tau}}=x \),
\]
  and since 
    \[\{\tau_{1}\neq  \bar{\tau}\}= \cup_{j=0}^{ \ff}\{  \tau_{2j+2}<\bar{\tau}\}\in \mathcal{I},\] 
the desired result follows from the previous paragraph.
 \end{proof}

We define 
\begin{equation}
\wt P=\wt P^{\bar \th_{k,N_{k,a }},\bar \psi_{k,N_{k,a }} }=\bigotimes_{i=1}^{ N_{k,a }}\wt P^{\th_{k,i}, \psi_{k,\pi(   i)} },\label{cupdef.1}
\end{equation}
where $\pi=\pi_{\bar \th_{k,N_{k,a }},\bar \psi_{k,N_{k,a }}}$,
see \eqref{was5} and $\wt P^{\th_{k,i}, \psi_{k,\pi(   i)} }$ is as in
Lemma \ref{lem-coupling}.
If $\UU$ is an event measurable on
$Y^{\th_{k,i} } $, $i=1,\ldots,  T_{k}^{y,t_{z}}$, 
then for any event $\mathcal{V}$ 
which depends only on excursions outside of $  B_{d}(y, h_{k})$
we have 
\[  P_{X, \Psi}\( \gykm 
\cap \UU  \cap \mathcal{V}\)
=E_{X, \Psi}\( \wt P^{\bar \th_{k,N_{k,a }},\bar \psi_{k,N_{k,a }} }\(\UU\), 
\gykm \cap \mathcal{V}\).\]

We use $\wt P^{ \psi}$ to denote the law of $\sum_{i=1}^{ N_{k,a }}\de_{\bar Y^{\psi_{k,i}} }$ under $\wt P$. We note that
\begin{equation}
\wt P^{ \psi}\mbox{ does not depend on  }\,\bar \th_{k,N_{k,a }}.\label{cupdef.2}
\end{equation}

 $Y^{\th_{k,i} } $ and $\bar Y^{\psi_{k,\pi(   i)}} $ will coincide  with  probability at least  $1-p_{i}$ under $\wt P$. We will call this a success. Let $\Ga_{k,a }=\Ga_{k,a }(   \bar \th_{k,N_{k,a } },\, \bar \psi_{k,N_{k,a } }  )$ be the number of     successes among the $N_{k,a }$ excursions. 
 We have the following.
 
\bl\label{lem-numberofsucceses} There exists $d^*$ sufficiently large
so that, for $d^*\leq k\leq L^{1/2}$,
  and all $L$ large, 
  \[ \wt{P}\(\Ga_{k,a } <  N_{k,a } -2c_2L\log k \)\leq    e^{- 10L}.\]
\el

\begin{proof}
The number of failures is at most $\sum_{i=1}^{N_{k,a } }B_{i}$, 
and using $\lambda>0$ 
for the first inequality,
$1+px\leq e^{px}$ for the third and
(\ref{was5d}) for last inequality,
we obtain
\begin{eqnarray*}
&&\wt P(\sum_{i=1}^{N_{k,a } }B_{i}\geq 2c_2\log k\sqrt{N_{k,a } })
\leq   e^{-2\lambda c_2 \log k\sqrt{N_{k,a } }}
\prod_{i=1}^{N_{k,a } }\wt E(e^{\la B_{i}})\\
&&\quad \quad =        \prod_{i=1}^{N_{k,a }}\(1+p_{i}(e^{\la}-1)\)
e^{-2\la c_2 \log k\sqrt{N_{k,a } }}\nonumber\\
&&\quad \quad
\leq e^{\sum_{i=1}^{N_{k,a } }p_{i}(e^{\la}-1)} \,\,e^{-2\la c_2 \log k \sqrt{N_{k,a }}}
 \leq  e^{ c_2\log k\sqrt{N_{k,a }}\(e^{\la}-1-2\la\)}.
\end{eqnarray*}   
Hence taking $\la>0$ small, recalling that 
$N_{k,a} \geq L^{2}$ see
\eqref{nkdef.2}, we have that for some $c_{3}>0$
\[\wt P\(\sum_{n=1}^{N_{k,a } }B_{n}\geq 2c_2\log k\sqrt{N_{k,a }  }\)\leq e^{- c_3
L\log k}\leq e^{- 10L}\]
 by choosing $d^{\ast}$ sufficiently large so that $ \log k$ is sufficiently large.  
 \end{proof}

Thus, if  we set 
\begin{equation}
\Phi_{k,a }=\Phi_{k,a }(   \bar \th_{k,N_{k,a } },\, \bar \psi_{k,N_{k,a } }  )=\Big\{\Ga_{k,a } \geq  N_{k,a } -2c_2L \log k  \Big\},\label{was6a}
\end{equation}
we have  
\begin{equation}
  P_{X,\Psi}(\gykm
\cap \UU  \cap \mathcal{V})
\leq E_{X, \Psi}(\wt  P( \UU  \cap \Phi_{k,a }),\gykm
\cap \mathcal{V})+e^{- 10L}.\label{was10}
\end{equation}

Recall (\ref{hka.1}) and assume that $a\leq L^{3/4}$. Let  $N=T_{k}^{y,t_{z}}$. On the event $\HH_{k,a}  $ we have $N_{k,a-1}\leq N\leq N_{k,a}$ 
and 
$N_{k,a }-N_{k,a-1}\leq  4L$. 
 Let 
 \begin{equation}
   \label{eq-uncoupled}
   A=A_\theta=\{i \,|\,\th_{k,i}\in  \bar\th_{k,N } 
 \mbox{ that did not successively couple } \}
 \end{equation}
 and with $A^{c}_{N}=\{1,\ldots, N\}\cap A^{c}$ set
  \begin{equation}
   \label{eq-uncoupledB}
   B=B_\psi=\{i \,|\,\psi_{k,i}\in  \bar\psi_{k,N_{k,a } } 
 \mbox{ that did not successively couple with } A^{c}_{N}\}.
 \end{equation}
 Then under $\Phi_{k,a }$, recalling that $N=T_{k}^{y,t_{z}}$,
 \[\bar \tau_{k,N }=\{   \th_{k,i},\,i\in A \}\cup
 \{   \psi_{k,j},\,j\in B \}\] satisfies that
 $| \bar\tau_{k, N  }|\leq 4c_2 L\log k$.   
 Recall the notation \eqref{eq-traver2} and
 use $T_{l}^{y,k,  \bar\tau_{k,N }}$ to denote 
 the number of excursions at level $l $ that occur during the 
 $| \bar\tau_{k,N }|$ excursions 
 $\{ Y^{\th_{k,i} },\,i\in A \}\cup 
 \{\bar  Y^{\psi_{k,j} },\,j\in B \}$.  
 With $M_0$ to be defined below, see Lemma \ref{lem-IGW}, let 
\begin{equation}
\mathcal{A}_{N,k}=\bigcup_{l=k+3}^{k^{+ +}}   \Big\{ T_{l}^{y,k,  \bar\tau_{k,N } } \ge M_{0}\,\rho_{L}(L-l)\log k \Big\}, 
\mbox{\rm with $k^{+ +}=k+200 [\log L]$}. \label{12was.0}
\end{equation}
In words, $\mathcal{A}_{N,k}$ is the event that for some
$l\in [k+3,k^{+ +}]$, the uncoupled excursions generated an excessive number
of excursions at level $l$.
We have 
\begin{equation}
\UU\subseteq  \(\UU\cap \mathcal{A}^{c}_{N,k}\)\cup \mathcal{A}_{N,k}.\label{was10big}
\end{equation}

We can now state our main decoupling lemma.
It will be under the following assumption.
\begin{assumption}
   $\UU$ is measurable with respect to
    $Y^{\th_{k,i} } $, $i=1,\ldots,  T_{k}^{y,t_{z}}$, and there
  exists an event $\wt \UU$ measurable on $\sum_{i=1}^{ N_{k,a }}\de_{\bar Y^{\psi_{k,i}} }$ so that
  \be \UU\cap \mathcal{A}^{c}_{N,k} \cap \Phi_{k,a }\subseteq \wt \UU(\sum_{i=1}^{ N_{k,a }}\de_{\bar Y^{\psi_{k,i}} }).\label{assump/P}
  \ee
\label{bigassump.1}
\end{assumption}
In words, the assumption allows for events that, whenever
there are not too many uncoupled excursions at level $k$ 
and the latter do not generate too many excursions at a level in $[k,k^+]$, 
can be dominated by an event using only the (empirical measure of the)
coupled excursions. In a typical application, $\mathcal{U}$ will be
a barrier event for the original excursions, and $\wt \UU$ will be 
a barrier event for the coupled excursions, with a slightly modified barrier.

\bl[decoupling lemma]\label{lem-decoup}
Let $\UU$ 
satisfy Assumption \ref{bigassump.1}, 
and let $\mathcal{V}$ be an event  
which depends only on excursions outside of $  B_d(y, h_{k})$. Then for all $d^*\leq k\leq L^{1/2}$ and $a\leq L^{3/4}$
\begin{eqnarray*}
&&\Pbm \( \WW_{y,k}(N_{k,a } ) \cap \HH_{k,a} \cap \UU   \cap \mathcal{V}\)
\label{decoup.basic}\\
&& \leq c \Pbm  \(\wt \UU\(\sum_{i=1}^{ N_{k,a }}\de_{ Y^{\th_{k,i}}}\)\) 
\times \Pbm\(\WW_{y,k}(N_{k,a } )\cap \HH_{k,a}   \cap\mathcal{V}\)\nn\\
&&+ c  e^{-5L\log k/(\log L)}       \Pbm\(\WW_{y,k}(N_{k,a } )\cap 
\HH_{k,a}   \cap \mathcal{V}\)+e^{- 10L}.\nonumber
\end{eqnarray*}
\el

\begin{proof}
By (\ref{cupdef.1azk})
\begin{equation}
\Pbm \( \WW_{y,k}(N_{k,a } ) \cap \HH_{k,a}   \cap  \UU   \cap \mathcal{V}\)
\label{cupdef.1azk7}
 \leq 2 P_{X, \Psi}\(\GG_{y,k}(N_{k,a})
\cap   \UU   \cap \mathcal{V}\),
\end{equation}
and by (\ref{was10})
\begin{equation}
  P_{X,\Psi}(\gykm
\cap \UU  \cap \mathcal{V})
\leq E_{X, \Psi}(\wt  P( \UU  \cap \Phi_{k,a }),\gykm
\cap \mathcal{V})+e^{- 10L}.\label{was107}
\end{equation}

Using (\ref{was10big}) we can bound
\begin{eqnarray}
&&   E_{X, \Psi}(\wt  P( \UU  \cap \Phi_{k,a }),\gykm
\cap \mathcal{V})
\label{was10bigb}\\
&& \leq  E_{X, \Psi}\(\wt  P\( \UU\cap \mathcal{A}^{c}_{N,k} \cap \Phi_{k,a }\) \gykm 
  \cap \mathcal{V}\)\nonumber\\
  &&+E_{X, \Psi}\(\wt  P\( \mathcal{A}_{N,k} \cap \Phi_{k,a } \),
\gykm \cap \VV\).\nonumber
\end{eqnarray}

Using our assumption \eqref{assump/P} we have 
\begin{eqnarray}
&&E_{X, \Psi}\(\wt  P\( \UU\cap \mathcal{A}^{c}_{N,k} \cap \Phi_{k,a }\), 
\label{extra.4}
\gykm 
  \cap \mathcal{V}\)
\\
&&\leq \wt P^{\psi}\(\wt \UU\(\sum_{i=1}^{ N_{k,a }}\de_{\bar Y^{\psi_{k,i}} }\)\) \times \Pbm\(\WW_{y,k}(N_{k,a } )\cap \HH_{k,a}   \cap  \mathcal{V}\)    \nonumber\\
&&=  P \(\wt \UU\(\sum_{i=1}^{ N_{k,a }}\de_{ Y^{\th_{k,i}} }\)\) \times 
\Pbm\(\WW_{y,k}(N_{k,a } )\cap \HH_{k,a} 
\cap \mathcal{V}\),    \nonumber
\end{eqnarray}
where the decoupling in the inequality came from Assumption
\ref{bigassump.1} and (\ref{cupdef.2}), 
and the final equality comes from the fact that 
$\sum_{i=1}^{ N_{k,a }}\de_{\bar Y^{\psi_{k,i}} }$ and 
$\sum_{i=1}^{ N_{k,a }}\de_{ Y^{\th_{k,i}} }$ have the same distribution.

We continue by  bounding the probabilities of the last term 
on the right side of
\eqref{was10bigb}. Toward this end we need the following lemma.

\bl \label{lem-IGW}
There exists
$M_{0}<\ff$ independent of $k$ and $L$ such that for all 
$l=k+3,\ldots,k^{+ +}=k+10^{10} [\log L]$, under $ \Phi_{k,a }$
\begin{equation}
\wt P\( \Big \{T_{l}^{y,k, \bar\tau_{k,N }  } \ge  M_{0}\,\rho_{L}(L-l) \log k \Big\}\,\big |\,   \bar\tau_{k,N }\)\leq 
 e^{-10L\log k/(\log L)}.\label{cup.9a}
\end{equation}
 \el

 \begin{proof}[Proof of Lemma \ref{lem-IGW}]
   We note that conditional on $\bar\tau_{k,N}$, the 
   $|\bar \tau_{k,N}|$ excursions
   that are used to construct 
   $T_{l}^{y,k, \bar\tau_{k,N }}$ are independent. 
  We enumerate these excursions by
   $i=1,\ldots,|\bar \tau_{k,N}|$.
   Write $T_l^{y,k,{(i)}}$ for the
   number of excursions at level $l $ that occur during
   the $i$'th excursion from $\bar\tau_{k,N}$. By Lemma \ref{recursionends} with $n=1$ (which is nothing but a standard use of   Poisson kernel estimates
   as in \cite{DPRZthick}),
   there exists a constant $c$ (independent of $i,l,k$) so that
  \begin{equation}
    \label{eq-flightmiddle}
    \wt P(T_l^{y,k,{(i)}}=j)\leq \frac{c}{(l-k)^2}
    \left(1-\frac{1}{l-k}\right)^{j-1},\; j\geq 1.
  \end{equation}

    Let $T_{l-k}^{(j)}$, $j\geq 1$ denote independent
    copies of $T_{l-k}$ under $\Pgw_1$. Equation \eqref{eq-flightmiddle} implies
    that $T_l^{y,k,{(i)}}$ is stochastically dominated by a (finite) sum
    of independent copies of $T_{l-k}$ plus possibly a constant, in 
    the sense that
    there exist integers
    $n_1,n_2$ so that
    if $l-k>n_1$ then for all $s\geq 0$ integer, 
    \begin{equation}
      \label{eq-katz1}
      \wt P(T_l^{y,k,{(i)}}\geq s)\leq \Pgw_1{ }^{{}^{\otimes n_2}} (\sum_{j=1}^{n_2}
      T_{l-k}^{(j)}\geq s).
    \end{equation}
    (We use inclusion-exclusion and the fact that $l-k\geq n_1$ so that
    the probabilities in \eqref{eq-katz1} are small even for $s=1$, see
    \eqref{eq-flightmiddle}.) Similarly,
    if $l-k\leq n_1$ then
    \begin{equation}
      \label{eq-katz2}
      \wt P(T_l^{y,k,{(i)}}\geq s)\leq \Pgw_1{}^{ {}^{\otimes n_2}}
      (\sum_{j=1}^{n_2}
      T_{l-k}^{(j)}\geq s-n_2).
    \end{equation}
    (Here, we use the additive integer $n_2$ in order to make sure that
    even for small $s\geq 1$ when the left side in 
    \eqref{eq-katz2} is large, the inequality 
    remains true.)
    All in all, \eqref{eq-katz2} holds for all $l-k$.
    Therefore, applying 
    \eqref{eq-katz2},
    the Markov property, 
    and Lemma \ref{lem: GW proc LD}, (or more directly, \cite[Lemma 4.6]{BK}), we obtain that
\begin{eqnarray*}
    &&\wt P(  T_{l}^{y,k, \bar\tau_{k,N }  } \ge  M_{0}\,\rho_{L}(L-l) \log k\,\big |\,   \bar\tau_{k,N }) \label{multdef.10}\\
    &&\leq 
   \Pgw_1{}^{ {}^{\otimes n_2|\bar\tau_{k,N }|}}
      \Big(\sum_{j=1}^{n_2|\bar\tau_{k,N }|}
      T_{l-k}^{(j)}\geq  M_{0}\,\rho_{L}(L-l) \log k-n_2|\bar\tau_{k,N }|\Big)   \\
    &&\leq e^{-(\sqrt{M_{0}\,\rho_{L}(L-l) \log k-n_2|\bar\tau_{k,N }|}-\sqrt{n_2|\bar\tau_{k,N}|})^{2}/(l-k)}.
    \end{eqnarray*}
  Since $\rho_{L}(L-l)\sim 2L$, $| \bar\tau_{k,N }|\leq 4c_2 L\log k$ under $ \Phi_{k,a }$, and
  $l-k\leq 10^{10} \log L$, the lemma follows by choosing 
  $M_0$ large enough.
 \end{proof}
 We continue with the proof of Lemma \ref{lem-decoup}. 
Summing in \eqref{cup.9a} over 
$l=k+3,\ldots,k^{ ++}$ while considering 
$N$ with $\sqrt{N}\in I_{\rho_{L}(L-k)+a}$,
 and using the fact that $L/(\log L)\gg L^{3/4}\gg k^{+ +}$,  
we obtain that
\begin{equation}
\wt P\(   \mathcal{A}_{N,k} \cap \Phi_{k,a  } \)\leq 
 e^{-5L\log k/(\log L)}.\label{cup.9ar}
\end{equation}
Using \eqref{cup.9ar} we see that
 \bea
 \label{easybad.1}
&&E_{X, \Psi}\(\wt  P\( \mathcal{A}_{N,k} \cap \Phi_{k,a } \),
\gykm \cap \VV\)\\
&&\quad 
\leq  c  e^{-5L\log k/(\log L)}       \Pbm\(\WW_{y,k}(N_{k,a } )\cap \HH_{k,a}   \cap\mathcal{V}\).
\nn
\eea
Combining \eqref{cupdef.1azk7}-\eqref{extra.4} and \eqref{easybad.1} completes the proof  
of Lemma \ref{lem-decoup}.
\end{proof}

As in the notation above, let  $T_{l}^{y,k, \bar \psi_{k,N_{k,a }} }   $ denote the number of excursions at level $l $ that occur during the $N_{k,a }$ excursions $ \bar  Y^{\psi_{k,j} },\,j=1,2,\ldots, N_{k,a }$.
In our applications, the functional on the right hand side of (\ref{assump/P}) will be a functional of the $T_{l}^{y,k, \bar \psi_{k,N_{k,a }} }$, for $l=k+3,\ldots, $ which  are functionals of $\sum_{i=1}^{ N_{k,a }}\de_{\bar Y^{\psi_{k,i}} }$.


\begin{proof}[Proof of Lemma \ref{lem-basicveb}.]
  We begin the proof with a use of the decoupling lemma, Lemma \ref{lem-decoup}.
  Recall (\ref{167.1q}) and \eqref{eq:TruncatedSummandLBztt}. Let 
  $\UU= \Bbb{  B}_{y,k+3,k^{ +}, k^{ ++} }^{j', j'' }$ and $\mathcal{V}=
 \corJ{\Iyzhpk}$.
  We first claim, see \eqref{12was.0} and \eqref{was6a} for notation, that
  \bea
&&
\quad \quad
\UU\cap \mathcal{A}^{c}_{N,k } \cap \Phi_{k,a  }\label{bigassump.12}\\
&&\subseteq 
\bigcup_{\{s'\in \Z_{+}:  |s'-(\rho_{L}(L-k^{ +})+j')|\leq 2M_{0}\log k\}}\bigcup_{\{s''\in \Z_{+}:  |s''-(\rho_{L}(L-k^{ ++})+j'')|\leq 2M_{0}\log k\}}
\nn\\
&&\hspace{.5 in}\Big\{\rho_{L}(L-l)\le \sqrt{ 2 T_{l}^{y,k, \bar \psi_{k,N_{k,a }}  }   }\mbox{ for }l=k+3,\ldots, k^{ +},  k^{ ++},\nn\\
&&\hspace{1.5 in} \sqrt{2T_{k^{ +}}^{y,k, \bar \psi_{k,N_{k,a }} }}\in I_{s'},\, \sqrt{2T_{k^{+ +}}^{y,k, \bar \psi_{k,N_{k,a }} }}\in I_{s''}\Big\}.
\nn
\eea

To see (\ref{bigassump.12}), first note that  $|T_{l}^{y,t_{z}}-T_{l}^{y,k, \bar \psi_{k,N_{k,a }} }| \leq T_{l}^{y,k, \bar\tau_{k,N }  }$ by (\ref{coupl.0}) and the definition of $\bar\tau_{k,N }$. Hence  by  
(\ref{12was.0}), on  $ \Bbb{  B}_{y,k+3,k^{ +}, k^{ ++} }^{j', j'' }\bigcap
\mathcal{A}^{ c}_{N,k }$, for all $k+3\leq l\leq k^{ +},k^{+ +}$   
  we have 
\begin{equation}
|T_{l}^{y,t_{z}}-T_{l}^{y,k, \bar \psi_{k,N_{k,a }} } |\leq  M_{0}
\rho_{L}(L-l)\log  k\leq .01 l^{1/4}\rho_{L}(L-l)\label{descrep}
\end{equation}
for $d^*$ sufficiently large. Since on $ \Bbb{  B}_{y,k+3,k^{ +}, k^{ ++} }^{j', j'' }$ we have 
that $\gamma \left(l\right)\le\sqrt{2T_{l}^{y,t_{z}}} $ for 
$l=k+3,\ldots,k^{ +},k^{+ +}$,
it follows that
\be 
 \sqrt{ 2 T_{l}^{y,k, \bar \psi_{k,N_{k,a }} }   }\geq \sqrt{ 2T_{l}^{y,t_{z}}- .02\,\, l^{1/4}\rho_{L}(L-l) }\geq  \rho_{L}(L-l)+.9 l^{1/4},
\label{was21a}
\ee
which completes the proof of 
\eqref{bigassump.12}.
  
Note next that 
the $T_{l}^{y,k, \bar \psi_{k,N_{k,a }} }$ in  
\eqref{bigassump.12}
are measurable on 
$\sum_{i=1}^{ N_{k,a }}\de_{\bar Y^{\psi_{k,i}} }$. Therefore,
Assumption \ref{bigassump.1} is satisfied and we can apply
Lemma \ref{lem-decoup} (together with the fact that
for $l=k+3,\ldots, k^{ +}$ we
have $T_{l}^{y,k, \bar \th_{k,N_{k,a }}}=T_{l}^{y,k,N_{k,a }}$) 
to deduce that
\begin{eqnarray}
&&\hspace{.3 in}  \Pbm(   \WW_{y,k}(N_{k,a} )\bigcap  
 \Bbb{  B}_{y,k+3,k^{ +}, k^{ ++} }^{j', j'' }\bigcap \HH_{k,a}   \bigcap 
\corJ{\Iyzhpk} ) \label{was10q}\\
&&\leq c\,\sum_{\{s'\in \Z_{+}:  |s'-(\rho_{L}(L-k^{ +})+j')|\leq 2M_{0}\log  k\}} \,\sum_{\{s''\in \Z_{+}:  |s''-(\rho_{L}(L-k^{+ +})+j'')|\leq 2M_{0}\log  k\}} 
\nn\\
&&  \Pbm \(\rho_{L}(L-l)\le \sqrt{ 2 T_{l}^{y,k,N_{k,a }}   }\mbox{ for }l=k+3,\ldots, k^{ +};\right.\nn\\
&&\hspace{.2 in}\left. \sqrt{2T_{k^{ +}}^{y,k, N_{k,a }}}\in I_{s'};\, \sqrt{2T_{k^{ ++}}^{y,k, N_{k,a }}}\in I_{s''}\)  \times \Pbm( \corJ{\Iyzhpk})+e^{-\sqrt{L}}\Pbm(\corJ{\Iyzhpk}).\nn
\end{eqnarray}
Using    Lemma \ref{prop:BarrierSecGWPropWas},
we obtain  
 \begin{eqnarray}
 &&  \Pbm \(\rho_{L}(L-l)\le \sqrt{ 2 T_{l}^{y,k, N_{k,a } }   }\mbox{ for }
 l=k+3,\ldots, k^{ +};\right. \label{was21.f}\\
&&\hspace{1.5 in}\left.  \sqrt{2T_{k^{ +}}^{y,k, N_{k,a }}}\in
 I_{\rho_{L}(L-k^{ +})+\bar j'}\)
\nn\\
 &&\leq        \frac{cae^{-2a}}{(k^{ +}-k   )^{ 3/2}}\bar j'e^{ -2(k^{ +}-k   )  +2\bar j'-(\bar j'-a)^{ 2}/( 4(k^{ +}-k)    )}.  \nn
 \end{eqnarray}
 It follows from \cite[Proposition 1.4]{BRZ} that 
 \begin{eqnarray}
 && \sup_{v\in  I_{\rho_{L}(L-k^{ +})+\bar j'}}\Pbm \(\sqrt{2T_{k^{ ++}}^{y,k, N_{k,a }}}\in
 I_{\rho_{L}(L-k^{ ++})+\bar j''}\,\Big |\,  \sqrt{2T_{k^{ +}}^{y,k, N_{k,a }}}=
v\)
 \label{was21}\\
 && \leq c  \frac{1}{(k^{ ++}-k^{+}   )^{ 1/2}}e^{-2 (k^{ ++}-k^{+}  )     }e^{-2( \bar j'-\bar j'') }e^{-(\bar j'-\bar j'')^{2}/2(k^{ ++}-k^{+}  )}. \nonumber
 \end{eqnarray}

Noting that $k^{ ++}-k^{+}=k^{ +}-k $
 we can bound the sum in (\ref{was10q}) by
 \begin{eqnarray}
 && c  ae^{-2a}e^{-2 (k^{ ++}-k  )     }  \sum_{\stackrel{\{\bar j': |\bar j'-j'|\leq 2M_{0}\log    k\}}{\{\bar j'': |\bar j''-j''|\leq 2M_{0}\log    k\}}}
 \label{was22}\\
 &&   \hspace{.5 in} \bar j'\,\frac{e^{  -(\bar j'-a)^{ 2}/(4( k^{ +}-k )   )}}{(k^{ +}-k   ) }     e^{2 \bar j'' } \, \frac{e^{-(\bar j'-\bar j'')^{2}/2(k^{ +}-k  )}}{(k^{ +}-k   ) }.\nonumber
 \end{eqnarray}
  This is the term $F_{k,a,j'}$ in \eqref{basicveb.1}.
  The last term in (\ref{was10q}) gives the second term
  in the right side of \eqref{basicveb.1}. 
\end{proof}

\begin{proof}[Proof of Lemma \ref{lem-decx}.] The proof is very similar to that of  Lemma \ref{lem-basicveb}. We again take   $\UU= \Bbb{  B}_{y,k+3,k^{ +}, k^{ ++} }^{j', j'' }$ but now take $\mathcal{V}=
\LL'_{k, m, p,a}$ and replace $\log k$ by $m$ in (\ref{eq:LBTruncatedSumz8}) and all corresponding expressions. The important consequence for us is that now in (\ref{descrep}) we will only have $M_{0}
\rho_{L}(L-l)m\leq .01 l^{1/4}\rho_{L}(L-l) $ for $l\geq cm^{4}$, so that we have to drop the barrier from $k+3 $ to $cm^{4}<<k^{+}$.  In consequence, in (\ref{was21.f}) we need to replace the factor $a$ by $a+m^{2}$, compare the proofs of Lemmas \ref{prop:BarrierSecGWPropWas} and 
\ref{prop:BarrierSecGWPropk}.
 \end{proof}

\begin{proof}[Proof of Lemma \ref{lem-decxb}.] The proof is very similar to that of  Lemmas
\ref{lem-basicveb} and
 \ref{lem-decx}, except that now we drop the barrier in (\ref{was10q}) and instead of (\ref{was21.f}) we use the analog of (\ref{was21}):
\begin{eqnarray}
 && \Pbm \(  \sqrt{2T_{k^{ +}}^{y,k, N_{k,a }}}\in
 I_{\rho_{L}(L-k^{ +})+\bar j'} \)
 \label{was21z}\\
 && \leq c  \frac{1}{(k^{ +}-k   )^{ 1/2}}e^{-2 (k^{ +}-k  )     }e^{-2(a-\bar j') }e^{-(\bar j'-a)^{2}/2(k^{ +}-k  )}. \nonumber
 \end{eqnarray}

 \end{proof}

\section{Continuity estimates}\label{sec-continest}

This section is devoted to the statement and proof of a general continuity
result for excursion counts, Lemma \ref{lem-cont1}.
We begin by introducing notation.

  Throughout the section we let
  $0<a<b<1$ be fixed constants and let
$0<r<R<\tilde{r}<\tilde{R}$ with
\begin{equation}
  \label{eq-ratiobounds}
  {h(r)}/{h(R)},{h(\tilde{r})}/{h(\tilde{R})}, {h(R)}/{h(\tilde r)}\in [a,b].
\end{equation}

Let $\npp$ denote a fixed point on the sphere, for instance the ``south pole'', which
is identified with $0\in R^2$ when using isothermal coordinates, see
Section \ref{sec-isot}. Let $\mu_{0,h(\tilde{r})}$ denote 
the uniform probability measure on $\partial B_{d}(\npp,h(\tilde{r}))$. 
Let  $Y_{1},Y_{2},\ldots$ denote a collection of
excursions $\partial B_{d}(\npp,h(\tilde{R}))\to\partial B_{d}(\npp,h(\tilde{r}))$, and let
$\mathcal{I}_{1},\ldots,\mathcal{I}_{n}$ be a collection of
  rotationally invariant subsets of excursions $\partial B_{d}(\npp,h(\tilde{R}))\to\partial B_{d}(\npp,h(\tilde{r}))$
 for which the densities
\begin{equation}
\inf_{i=1,\ldots, n}\frac{\mathbb{P}^{w}\left(\BMS_{\hit_{\partial B_{d}\left(\npp,h(\tilde{r})\right)}}\in du\,|\, \BMS_{\cdot\wedge \hit_{\partial B_{d}\left(\npp,h(\tilde{r})\right)}}\in \mathcal{I}_{i}\right)}{\mu_{0,h(\tilde{r})}\left(du\right)}\ge a_0>0,\label{eq: conditioned density bounded below}
\end{equation}
for any (hence all, by rotational invariance) $w\in \partial B_{d}(\npp,h(\tilde{R}))$. Note that $a_0$ is determined by $a,b$ and hence fixed throughout this section.
With $a,b$ fixed, the results in the remainder of this
  section are uniform in choices of $r,R,\tilde{r},\tilde{R}$ satisfying
  \eqref{eq-ratiobounds}
  and
  \eqref{eq: conditioned density bounded below}. 
Recall the notation $\trav{n}{0}{\tilde R}{\tilde r}{x}{R}{r}$, see Definition
\ref{def-trav}, for  the number of
	traversals  
	$\partial B_{d}(x,h(R))\to\partial B_{d}(x,h(r))$
	during $n$ excursions $\partial B_{d}(0,h(\tilde{r}))\to
	\partial B_{d}(0,h(\tilde{R}))$. The following
lemma is the main result of this section.
\bl\label{lem-cont1}
Fix $0<a<b<1$.
  Let $r<R<\tilde r<\tilde R$ so that
  \eqref{eq: conditioned density bounded below}
  and \eqref{eq-ratiobounds} hold.
Let $x,y$ be such that $B_{d}(x,h(R)),B_{d}(y,h(R))\subset B_{d}(\npp,h(\tilde{r}))$.
Then for any $C_{0}<\ff$ there exist small $c_{0},q_{0} >0$,
depending on $a,b,C_0$ only,
such that if $q=d\left(x,y\right)/R\leq q_{0}$,
and $\theta\leq c_{0}\sqrt{(n-1)}$, then
\begin{eqnarray}
&&\Pbm\left(\left| \trav{n}{\npp}{\tilde R}{\tilde r}{x}{R}{r}
        - \trav{n}{\npp}{\tilde R}{\tilde r}{y}{R}{r}
        \right|\ge\theta\sqrt{  q(n-1)}
                \,\,\,\Bigg |
        \,\,Y_{m}\in\mathcal{I}_{m},\,m=1,\ldots,n
\right)\nonumber \\&&\quad \le  \exp\left(-C_{0}\theta^{2}\sqrt{q}\right).\label{cb.1}
\end{eqnarray}
\el
\begin{proof}
The proof  of Lemma \ref{lem-cont1} will involve several steps.
We begin by restating the lemma in terms of
certain traversals counts.
Let $\wt D_{0,i}$ and $\wt R_{0,i}$ be the successive arrivals to 
$\partial B_{d}(\npp,h(\tilde{R}))$
and $\partial B_{d}(\npp,h(\tilde{r}))$. That is, 
$\wt R_{0,1}=\hit_{\partial B_{d}(\npp,h(\tilde{r}))}$, and for $i\geq 1$,
\begin{equation}
\wt D_{0,i}=\hit_{\partial B_{d}(\npp,h(\tilde{R}))}
\circ\th_{\wt R_{0,i}}+\wt R_{0,i}\label{not.1}
\end{equation}
and
\begin{equation}
\wt R_{0,i+1}=\hit_{\partial B_{d}(\npp,h(\tilde{r}))}\circ\th_{\wt D_{0,i}}+\wt D_{0,i},\label{not.2}
\end{equation}
where $\theta_t$ denotes time shift by $t$.
Thus
\[
\wt R_{0,1}<\wt D_{0,1}<\wt R_{0,2}<\wt D_{0,2}<\ldots.
\]
 Let
$A_{x,i}$ and $B_{y,i}$ denote the number of traversals from 
$\partial B_{d}(x,h(R))$
to $\partial B_{d}(x,h(r))$,  respectively
$\partial B_{d}(y,h(R))$ to $\partial B_{d}(y,h(r))$,
during the $i$'th excursion from $\partial B_{d}(\npp,h(\tilde{r}))$ to
$\partial B_{d}(\npp,h(\tilde{R}))$ (i.e. between time $\wt R_{0,i}$ and $\wt D_{0,i}$).
We have that
\begin{equation}
\travshort{n}{\npp}{\tilde{R}}{\tilde{r}}{x}{R}{r} = \sum_{i=1}^{n} A_{x,i}, \hspace{.2 in}   \travshort{n}{\npp}{\tilde{R}}{\tilde{r}}{y}{R}{r}= \sum_{i=1}^{n}B_{y,i}. \label{1.5tot}
\end{equation}
Hence Lemma \ref{lem-cont1} is equivalent to the statement that
there exist small $c_{0},q_{0}>0$, depending on $a,b,C_0$ only,
such that if $q=d\left(x,y\right)/R\leq q_{0}$,
and $\theta\leq c_{0}\sqrt{(n-1)}$,
\bea\label{1.5t}
&&\mathbb{P}\left(\left|\sum_{i=1}^{n}\left(A_{x,i}-B_{y,i}\right)\right|\ge\theta\sqrt{(n-1)q}\,\,\,\Bigg |\,\,Y_{m}\in\mathcal{I}_{m},\,m=1,\ldots,n\right)\\
&&\quad \le\exp\left(-C_{0}\theta^{2}\sqrt{q}\right). \nonumber
\eea

Before proving \eqref{1.5t},
we develop some necessary material.
We assume that
$q_{1}=q_1(a,b)$  is
sufficiently small so that  with $q=d(x,y)/R\leq q_1$,
\begin{equation}
  \label{eq-inclusion1}
  B_{d}\left(x,h(r)\right)\subset B_{d}\left(y,h(R)\right),
  \quad
B_{d}\left(y,h(r)\right)\subset B_{d}\left(x,h(R)\right).
\end{equation}

\bl
\label{lem:c3}
There exists $q_1=q_1(a,b)<1/2$ and $c_{2}<\ff$ so that,
for all $q<q_1$ and all $k\geq 1$,
\[
  \sup_{u\in\partial B_{d}\left(\npp,h(\tilde{r})\right)}\mathbb{P}^{u}\left(A_{x,1}-B_{y,1}\ge k\right)\le c_{2}qe^{-2k}.
\]
\el

\begin{proof}
Let $R_{x,1}, R_{x,2},\ldots$ be the successive hitting
times of
$\partial B_{d}\left(x,h(r)\right)$ after departure times $ D_{x,1},D_{x,2},\ldots$
from $\partial B_{d}\left(x,h(R)\right)$. That is,
set
$D_{x,1}=\hit_{\partial B_{d}\left(x,h(R)\right)}$, and for $i\geq 1$,
\begin{equation}
R_{x,i}=\hit_{\partial B_{d}\left(x,h(r)\right)}\circ\th_{D_{x,i}}+D_{x,i}\label{not.1a}
\end{equation}
and
\begin{equation}
D_{x,i+1}=\hit_{\partial B_{d}\left(x,h(R)\right)}\circ\th_{R_{x,i}}+R_{x,i}.\label{not.2a}
\end{equation}
Thus
\[
D_{x,1}<R_{x,1}<D_{x,2}<R_{x,2}<\ldots.
\]
Let $B_{y,1}\left(j\right)$ denote the number
of traversals from $\partial B_{d}\left(y,h(R)\right)$ to $\partial B_{d}\left(y,h(r)\right)$  up till $\hit_{\partial B_{d}\left(\npp,h(\tilde{R})\right)}$ that
take place between $D_{x,j}$ and $D_{x,j+1}$. Then
\[
B_{y,1}\ge\sum_{j\ge1}B_{y,1}\left(j\right),
\]
so
  \begin{eqnarray}
    \label{eq:bla}
A_{x,1}-B_{y,1}&\le& A_{x,1}-\sum_{j\ge1}B_{y,1}\left(j\right)=\sum_{ j: D_{x,j+1}<\hit_{\partial B_{d}\left( \npp,h(\tilde{R}) \right)}}
\left(1-B_{y,1}\left(j\right)\right) \nonumber\\
&\leq &\sum_{ j: D_{x,j+1}<\hit_{\partial B_{d}\left(\npp,h(\tilde{R}) \right)}}     1_{\left\{ B_{y,1}\left(j\right)=0\right\} }.
\end{eqnarray}
Since
\[\{ \hit_{\partial B_{d}\left(y,h(R)\right)}\circ \th_{D_{x,j}}< R_{x,j}\}\cap
  \{ \hit_{\partial B_{d}\left(y,h(r)\right)}\circ \th_{R_{x,j}}< D_{x,j+1}\}\subseteq \{ B_{y,1}\left(j\right)\neq 0  \}\,\]
we have by taking complements that
\[\{ B_{y,1}\left(j\right)= 0  \}\subseteq \{R_{x,j}< \hit_{\partial B_{d}\left(y,h(R)\right)}\circ  \th_{D_{x,j}}\}\cup
  \{ D_{x,j+1}<\hit_{\partial B_{d}\left(y,h(r)\right)}\circ \th_{R_{x,j}}\}.\]
Hence,
\begin{eqnarray}
\label{eq:preblax}
\sup_{u\in\partial B_{d}\left(x,h(R)\right)}
\!\!\!\!\!\!
\mathbb{P}^{u}\left(B_{y,1}\left(j\right)=0\right)
&\le&
\!\!\!\!\!\!
\!\!\!\!\!\!
\sup_{u\in\partial B_{d}\left(x,h(R)\right)}\mathbb{P}^{u}\left(\hit_{\partial B_{d}\left(x,h(r)\right)}<\hit_{\partial B_{d}\left(y,h(R)\right)}\right) \nn\\
&&
\hspace{-0.5in}+ \sup_{v\in\partial B_{d}\left(x,h(r)\right)}\mathbb{P}^{v}\left(\hit_{\partial B_{d}\left(x,h(R)\right)}<\hit_{\partial B_{d}\left(y,h(r)\right)}\right).
\end{eqnarray}
Further,
with $d=d(x,y)$,
we have  that
$B_{d}\left(y,h(R)\right)
\subseteq B_{d}\left(x,h(R)+d\right)$
and therefore,
$\hit_{\partial B_{d}\left(y,h(R)\right)}<
\hit_{\partial B_{d}\left(x,h(R)+d\right)}$ for any
path starting at 
$u\in \partial B_d\left(x,h(R)\right) \cap  B_d\left(y,h(R)\right)$.
On the other hand, for
$u\in \partial
B_d\left(x,h(R)\right) \cap B_d\left(y,h(R)\right)^c$ we have that
$\hit_{\partial B_{d}\left(y,h(R)\right)}<
\hit_{\partial B_{d}\left(x,h(r)\right)}$ by \eqref{eq-inclusion1}.
All in all, we obtain
\begin{eqnarray}
\label{eq-inclusion3}
&&
\sup_{u\in\partial B_{d}\left(x,h(R)\right)}
\mathbb{P}^{u}\left(\hit_{\partial B_{d}\left(x,h(r)\right)}
<\hit_{\partial B_{d}\left(y,h(R)\right)}\right)
\nonumber
\\
&&\quad \leq
\sup_{u\in\partial B_{d}\left(x,h(R)\right)}
\mathbb{P}^{u}\left(\hit_{\partial B_{d}\left(x,h(r)\right)}<
\hit_{\partial B_{d}\left(x,h(R)+d\right)}\right)\,.
\end{eqnarray}
Similarly, since
$B_{d}\left(x,h(r)-d\right)\subseteq B_{d}\left(y,h(r)\right)$,
we have
  \begin{eqnarray}
\label{eq-inclusion4}
&&
\sup_{v\in\partial B_{d}\left(x,h(r)\right)}
\mathbb{P}^{v}\left(\hit_{\partial B_{d}\left(x,h(R)\right)}
<\hit_{\partial B_{d}\left(y,h(r)\right)}\right)
\nonumber
\\
&&\quad \leq
\sup_{u\in\partial B_{d}\left(x,h(r)\right)}\mathbb{P}^{u}
\left(\hit_{\partial B_{d}\left(x,h(R)\right)}<\hit_{\partial B_{d}\left(x,h(r)-d
\right)}\right)\,.
\end{eqnarray}
By (\ref{isc.5}) 
and \eqref{eq-ratiobounds}, we have that the right
sides of \eqref{eq-inclusion3} and
\eqref{eq-inclusion4} are bounded by $c_1q$, for some $c_1=c_1(a,b)$
(which is fixed in what follows). Combining
this with \eqref{eq:preblax}, it follows that
\be
\label{eq:blax}
\sup_{u\in\partial B_{d}\left(x,h(R)\right)}
\mathbb{P}^{u}\left(B_{y,1}\left(j\right)=0\right)\leq c_1q\,.
\ee

Therefore, if we set
\[
  1-p_{1}:=\sup_{\stackrel{r<R<\tilde r<\tilde R\;
    \text{satisfying \eqref{eq-ratiobounds}}}
  {x,y: q(x,y)\leq q_1}}\;
  \sup_{u\in\partial B_{d}\left(x,h(R)\right)}
  \mathbb{P}^{u }\left(R_{x,1}<\hit_{\partial B_{d}
  \left( \npp,h(\tilde{R})\right)}\right) <1,
\]
then the sum (\ref{eq:bla}) can be stochastically dominated by
$\chi:=\sum_{j=1}^{G-1}I_{j},$
where $G,I_{i}$ are independent, $G$ is geometric with success probability
$p_{1}=p_1(a,b)>0$, and $I_{i}$ are
Bernoulli with success probability $c_{1}q$.
Using that $\Pbm(G-1=k)=(1-p_{1})^{k}p_{1}\leq e^{-kp_{1}}$ we have
that 
\[E(e^{\lambda \chi})=
\frac{p_1}{1-(1-p_1)(1-c_1q+c_1qe^\lambda)}\leq 4\]
if one assumes that $\lambda>0$ and $c_1qe^\lambda= p_1/4$ 
(which requires that $q_1\leq p_1/(4c_1)$). Using Chebycheff's inequality
one then obtains that 
\[P(\sum_{j=1}^{G-1} I_j\geq k)\leq 4 e^{-\lambda k}= 4\left(\frac{4c_1 q}{p_1}\right)^k\leq c_{2}qe^{-2k},\]
with $c_{2}=16c_1 e^{2}/p_1$ if one assumes that $q<q_1(a,b)$ with $4c_1 q_1/p_1<e^{-2}$.
\end{proof}

\begin{corollary}
\label{cor:HigherMomentBound}
For some $c_{4}<\ff$, with quantifiers as in Lemma
\ref{lem:c3},
if $J$ is a geometric random variable with success parameter $p_{2}>0$,
independent of $\{A_{x,i}-B_{y,i}\}$, then for
 $\lambda\le  p_{2}/2  $,
\bea
&&\sup_{u\in\partial B_{d}\left( \npp,h(\tilde{r})\right)}\mathbb{E}^{u}\left(\exp\left(\lambda\sum_{i=1}^{J-1}\left|\left(A_{x,i}-B_{y,i}\right)\right|\right)\right)\leq e^{c_{4}q\lambda /p_2  }.\label{basicexp1}
\eea
\end{corollary}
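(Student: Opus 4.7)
The plan is to derive the exponential moment bound by iterating a single-excursion estimate via the strong Markov property and then integrating against the geometric random variable $J$.

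First I would upgrade Lemma \ref{lem:c3} to a two-sided bound. Interchanging the roles of $x$ and $y$ in the proof of Lemma \ref{lem:c3} (which used no asymmetry between the two points beyond notation) gives
\[
\sup_{u\in\partial B_{d}(\npp,h(\tilde r))}\mathbb{P}^{u}\bigl(|A_{x,1}-B_{y,1}|\ge k\bigr)\le 2c_{2}q\,e^{-2k},\qquad k\ge 1.
\]
Using the layer-cake identity $\mathbb{E}[e^{\lambda X}]=1+(e^\lambda-1)\sum_{k\ge 1}e^{\lambda(k-1)}\mathbb{P}(X\ge k)$ for $X\ge 0$ integer-valued, this tail bound yields, for all $\lambda\le 1$,
\[
\sup_{u\in\partial B_{d}(\npp,h(\tilde r))}\mathbb{E}^{u}\!\left[e^{\lambda|A_{x,1}-B_{y,1}|}\right]\le 1+c_{3}q\lambda,
\]
with $c_3$ depending only on $a,b$.

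Next I would iterate over excursions. Observe that the pair $(A_{x,i},B_{y,i})$ is a measurable function of the path between times $\wt R_{0,i}$ and $\wt D_{0,i}$, and that $X_{\wt R_{0,i}}\in\partial B_{d}(\npp,h(\tilde r))$. By the strong Markov property applied at $\wt R_{0,i}$,
\[
\mathbb{E}^{u}\!\left[e^{\lambda|A_{x,i}-B_{y,i}|}\,\Big|\,\mathcal{F}_{\wt R_{0,i}}\right]=\mathbb{E}^{X_{\wt R_{0,i}}}\!\left[e^{\lambda|A_{x,1}-B_{y,1}|}\right]\le 1+c_{3}q\lambda,
\]
and hence by conditioning successively on $\mathcal{F}_{\wt R_{0,n-1}},\mathcal{F}_{\wt R_{0,n-2}},\ldots$,
\[
\sup_{u\in\partial B_{d}(\npp,h(\tilde r))}\mathbb{E}^{u}\!\left[\exp\!\Bigl(\lambda\sum_{i=1}^{n}|A_{x,i}-B_{y,i}|\Bigr)\right]\le (1+c_{3}q\lambda)^{n}.
\]

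Finally, since $J$ is independent of the sequence $\{A_{x,i}-B_{y,i}\}_{i\ge 1}$ and geometric with parameter $p_2$, conditioning on $J$ and summing the geometric series gives, for $\lambda\le p_2/2$ (so that $(1-p_2)c_3 q\lambda/p_2\le 1/2$ provided $q_0$ is taken small enough),
\[
\mathbb{E}^{u}\!\left[\exp\!\Bigl(\lambda\sum_{i=1}^{J-1}|A_{x,i}-B_{y,i}|\Bigr)\right]\le \sum_{n\ge 1}p_{2}(1-p_{2})^{n-1}(1+c_{3}q\lambda)^{n-1}=\frac{p_{2}}{p_{2}-(1-p_{2})c_{3}q\lambda}\le\exp\!\Bigl(\frac{c_{4}q\lambda}{p_{2}}\Bigr),
\]
with $c_{4}=2c_{3}$, using $1/(1-x)\le e^{2x}$ for $x\in[0,1/2]$. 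The only subtle point—and the part I would want to be careful with—is verifying that Lemma \ref{lem:c3} is genuinely symmetric under exchange of $x$ and $y$ (which it is, since the hypotheses are symmetric and the argument only used the inclusions \eqref{eq-inclusion1}), and checking that all ``universal'' constants appearing can be taken to depend only on $a$ and $b$ and not on the particular configuration of $r<R<\tilde r<\tilde R$.
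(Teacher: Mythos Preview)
Your proof is correct and arguably cleaner than the paper's. The paper takes a different route: it first bounds the $m$-th moment $\mathbb{E}^{u}\bigl(|A_{x,1}-B_{y,1}|^{m}\bigr)\le c_{3}q\,m!$ from the tail estimate, then expands $\bigl(\sum_{i=1}^{J-1}|A_{x,i}-B_{y,i}|\bigr)^{m}$ by the multinomial theorem, applies the Markov property term-by-term, and recognizes the resulting sum as the $m$-th moment of a geometric sum of standard exponentials (hence $c_{3}q\,m!/p_{2}^{m}$); summing over $m$ gives the exponential bound. You instead go straight to the single-excursion MGF bound $\mathbb{E}^{u}[e^{\lambda|A_{x,1}-B_{y,1}|}]\le 1+c_{3}q\lambda$, iterate via the strong Markov property, and sum the geometric series---avoiding the moment combinatorics entirely.

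What the paper's detour buys is that the intermediate moment bound \eqref{jmom.1} is immediately recycled in the proof of \eqref{basicexp2}, where the \emph{signed} sum is considered and the vanishing of the first moment (Lemma~\ref{lem-equalmean}) upgrades the exponent from linear to quadratic in $\lambda$. Your MGF-iteration argument would need a small additional step there (e.g.\ a second-order Taylor bound on the single-step MGF after centering), whereas the paper just drops the $m=1$ term from the already-established moment series. So your approach is more direct for the corollary itself, while the paper's approach is set up with the next lemma in mind.
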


Note the linear in $\lambda$ behavior of the right
  side in \eqref{basicexp1}.
This is essentially due to mean of $|A_{x,i}-B_{y,i}|$. We will later
need to improve on this and obtain a quadratic in $\lambda$ behavior
when considering the same variables without absolute values, see Lemma
\ref{basicexp2}.
\begin{proof}
  We assume throughout that $q<1/2$.
  We begin by a moment computation. Let $m\geq 1$. Then,
by Lemma \ref{lem:c3},
  \begin{eqnarray}
\label{eq:m-th moment}
    \mathbb{E}^{u}\left(\left|A_{x,1}-B_{y,1}\right|^{m}\right)
& \le &\!\!\!\!
\sum_{k\ge1}k^{m}\left( \mathbb{P}^{u}\left(A_{x,1}-B_{y,1}\ge k\right)+\mathbb{P}^u\left(B_{y,1}-A_{x,1}\ge k\right)\right) \nonumber\\
& \le& 2c_{2} \sum_{k\ge1}k^{m}qe^{ -2k}
 \leq 2c_{2}qm!\sum_{k\ge1}e^{ -k}
= c_{3}q m!
 \end{eqnarray}
 For $m=0$ we trivially
  bound the left hand side of \eqref{eq:m-th moment} by $1$.

To prove (\ref{basicexp1}) we adapt the proof of \cite[(8.27)]{BK}\footnote{As pointed out by the referee, there is a typo in the latter;  in the bottom of page 538 and top of page 539, all sums of the form $\sum_{i_1,i_2,\ldots: \sum i_j=k}$ are missing the multiplicative factor $k!/\prod_j i_j!$. With these extra factors, the derivation in \cite{BK} gives the result claimed there.}.
For any $m\ge1$,
\bea
&&\\
&&
\mathbb{E}^{u}\left(\left(\sum_{i=1}^{J -1}\left|\left(A_{x,i}-B_{y,i}\right)\right|\right)^{m}\right)\label{eq:moment of absolute value}\nonumber\\
&&=\sum_{m_{1},m_{2},\ldots:\sum m_{j}=m}\frac{m!}{\prod_j m_j!}\mathbb{E}^{u}\left(1_{\left\{   J -1\geq \sup\left\{ j:m_{j}\ne0\right\}\right\} }\prod_{j=1}^{\infty}
\left|\left(A_{x,j}-B_{y,j}\right)\right|^{m_{j}}\right),\nn
\eea
  so that using the Markov property and  (\ref{eq:m-th moment}) we get that
  the last expression is bounded above by
\bea
\label{jmom.1}
&& m! \sum_{m_{1},m_{2},\ldots:\sum m_{j}=m}\left(1-p_{2}\right)^{\sup\left\{ j:m_{j}\ne0\right\} -1}\prod_{j=1}^{\infty}\left(c_{3} q \right)^{1_{\{m_{j}>0\}}}\\
&&\le c_{3} q m!
 \sum_{m_{1},m_{2},\ldots:\sum m_{j}=m}\left(1-p_{2}\right)^{\sup\left\{ j:m_{j}\ne0\right\} -1}
=c_{3} q   \frac{m!}{p_{2}^{m}}.\nn
\eea
where the last equality follows because
\begin{eqnarray*}
&&m!\sum_{m_{1},m_{2},\ldots:\sum m_{j}=m}
\left(1-p_{2}\right)^{\sup\left\{ j:m_{j}\ne0\right\} -1}\\
  &&=\sum_{m_{1},m_{2},\ldots:\sum m_{j}=m}\frac{m!}{\prod_{j=1}^\infty m_j!}\left(1-p_{2}\right)^{\sup\left\{ j:m_{j}\ne0\right\} -1}
\prod_{j=1}^{\infty}m_{j}!
\end{eqnarray*}
is the $m-th$ moment of a  geometric sum of standard exponentials,
which is itself exponential with mean $p_{2}^{-1}$, and for the inequality we assumed that $q<1/c_{3}$. It follows from (\ref{jmom.1}) that
\be\mathbb{E}^{u}\left(\exp\left(\lambda\sum_{i=1}^{J-1}\left|\left(A_{x,i}-B_{y,i}\right)\right|\right)\right)  \le  
1+c_{3}q\sum_{m\ge 1}\left(\frac{\lambda}{p_{2}}\right)^{m}
  \le  1+c_{4}q\left(\frac{\lambda}{p_{2}}\right),\label{jmom.1r}
\ee
for $\lambda\le  p_{2}/2 $, which proves (\ref{basicexp1}).
 \end{proof}

We can now return to the proof of \eqref{1.5t}.
Consider the excursions $\wt X^{i}=\BMS_{\left(\wt R_{0,i}+\cdot\right)\wedge \wt D_{0,i}}$
of the Brownian motion $\BMS_{\cdot}$ from $\partial B_{d}\left( \npp,h(\tilde{r})\right)$
to $\partial B_{d}\left( \npp,h(\tilde{R})\right)$. Note that $Y_{i}=\BMS_{\left(\wt D_{0,i}+\cdot\right)\wedge \wt R_{0,i+1}}$. The variables $A_{x,i},B_{y,i}$
are measurable with respect to $\BMS_{\left(\wt R_{0,i}+\cdot\right)\wedge \wt D_{0,i}}$.
The excursions $\BMS_{\left(\wt R_{0,i}+\cdot\right)\wedge \wt D_{0,i}}$ are dependent
through the starting and ending points of successive excursions. As
in \cite[Section 8]{BK}  we will construct renewal times that
give some independence.

Let $\mu_{x,r} $ denote the uniform measure on $\partial B_{d}\left(x,r\right)$.
For Brownian motion on $\mathbb{S}^{2}$ it is clear from symmetry that  for any $x$ and $ r< R$,
\begin{equation}
\Pbm^{\mu_{x,R} }\left(\BMS_{\hit_{\partial B_{d}\left(x,r\right)}}\in dw\right)=\mu_{x,r} \left(dw\right), \label{4.1}
\end{equation}
\[
\Pbm^{\mu_{x,r} }\left(\BMS_{\hit_{\partial B_{d}\left(x,R\right)}}\in dw\right)=\mu_{x,R} \left(dw\right).
\]
 In addition, it follows from the rotation invariance of $ \mathcal{I}_{i}$ that
 \begin{equation}
 \mathbb{P}^{\mu_{0,h(\tilde{R})} }\left(\BMS_{\hit_{\partial B_{d}\left( \npp,h(\tilde{r})\right)}}\in dw\,|\, \BMS_{\cdot\wedge \hit_{\partial B_{d}\left( \npp,h(\tilde{r})\right)}}\in \mathcal{I}_{i}\right)=\mu_{0,h(\tilde{r})}. \label{4.1i}
 \end{equation}
 \corJ{This reflects the symmetry of the sphere,
and is the main reason why we work with $M=\S^2$.}

 \corJ{ 
We postpone the proof of the following  lemma to later in this section.} 
\bl\label{lem-equalmean}
On $\S^2$  we have
\begin{equation}
\mathbb{E}^{\mu_{0,h(\tilde{r})}}\left(\left(A_{ x,1}-B_{y,1 }\right)\ \right)=0.\label{4.2}
\end{equation}
\el

Let
\[
p_{3}=\inf_{\stackrel{u\in\partial B_{d}\left( \npp,h(\tilde{R})\right),w\in\partial B_{d}\left( \npp,h(\tilde{r})\right)}{i=1,\ldots,n}}\frac{\mathbb{P}^{u}\left(\BMS_{\hit_{\partial B_{d}\left( \npp,h(\tilde{r})\right)}}\in dw\,|\, \BMS_{\cdot\wedge \hit_{\partial B_{d}\left( \npp,h(\tilde{r})\right)}}\in \mathcal{I}_{i}\right)}{\mu_{0,h(\tilde{r})}\left(dw\right)}.
\]
By our assumption (\ref{eq: conditioned density bounded below}) we have
that
\be
p_{3}\geq a_0>0.\label{check1}
\ee
 For $u\in\partial B_{d}\left( \npp,h(\tilde{R})\right)$ define the measure,
\[
\nu^{i}_{u}\left(dw\right)=\frac{\mathbb{P}^{u}\left(\BMS_{\hit_{\partial B_{d}
\left( \npp,h(\tilde{r})\right)}}\in dw\,|\, \BMS_{\cdot\wedge
  \hit_{\partial B_{d}\left( \npp,h(\tilde{r})\right)}}\in \mathcal{I}_{i}\right)-p_{3}\mu_{0,h(\tilde{r})}\left(dw\right)}{1-p_{3}}.
\]
(We assume that $p_3<1$, otherwise we simply take $\nu^i_u=\mu_{0,h(\tilde{r})}$.)
By the definition of $p_{3}$ we have that $\nu^{i}_{u}\ge0$ and by construction
$\nu^{i}_{u}$ is a probability measure on $\partial B_{d}\left( \npp,h(\tilde{r})\right)$. Furthermore, by   (\ref{4.1i}), when $\nu^{i}_{u}$ is
averaged over $u$ distributed as $\mu_{0,h(\tilde{R})}$ we
recover $\mu_{0,h(\tilde{r})}$:
\bea
&&
\nu^{i}_{\mu_{0,h(\tilde{R})}}\left(dw\right)\label{eq:StayStationary}\\&&=\frac{\mathbb{P}^{\mu_{0,h(\tilde{R})}}\left(\BMS_{\hit_{\partial B_{d}\left( \npp,h(\tilde{r})\right)}}\in dw\,|\, \BMS_{\cdot\wedge \hit_{\partial B_{d}\left( \npp,h(\tilde{r})\right)}}\in \mathcal{I}_{i}\right)-p_{3}\mu_{0,h(\tilde{r})}\left(dw\right)}{1-p_{3}}\nn\\
&&\overset{(\ref{4.1i})}{=}\frac{\mu_{0,h(\tilde{r})}\left(dw\right)-p_{3}\mu_{0,h(\tilde{r})}\left(dw\right)}{1-p_{3}}=\mu_{0,h(\tilde{r})}\left(dw\right).\nn
\eea
Now construct a sequence $X_{\cdot}^{1},X_{\cdot}^{2},\ldots$ of excursions
from $\partial B_{d}\left( \npp,h(\tilde{r})\right)$ to $\partial B_{d}\left( \npp,h(\tilde{R})\right)$
as follows: Let $X_{\cdot}^{1}=\BMS_{\cdot\wedge \hit_{\partial B_{d}\left( \npp,h(\tilde{R})\right)}}\mbox{ under }\mathbb{P}$, and
let $I_{2}, I_{3},\ldots,$ be i.i.d. Bernoulli random variables with success
probability $p_{3}$, independent of the Brownian motion $X$. Then, 
\bea
\label{arrz}
&&\mbox{If }I_{2}=1,\mbox{ let }X_{\cdot}^{2}=\BMS_{\cdot\wedge \hit_{\partial B_{d}\left( \npp,h(\tilde{R})\right)}}\mbox{ under }\mathbb{P}^{\mu_{0,h(\tilde{r})}}.\nonumber\\
&&\\
&&\mbox{If }I_{2}=0,\mbox{ let }X_{\cdot}^{2}=\BMS_{\cdot\wedge \hit_{\partial B_{d}\left( \npp,h(\tilde{R})\right)}}
\mbox{ under }\mathbb{P}^{\nu^{1}_{X_{\infty}^{1}}}.
\nonumber
\eea
Here we have used the abbreviation $X_{\infty}^{1}=\BMS_{ \hit_{\partial B_{d}\left( \npp,h(\tilde{R})\right)}}$, which comes  from the definition of $X_{\cdot}^{1}$.
We iterate this construction to get $X_{\cdot}^{3},X_{\cdot}^{4},\ldots.$
It follows as in \cite[Lemma 8.5]{BK} that
\be
\left(X_{\cdot}^{i}\right)_{i\ge1}\overset{\mbox{law}}{=}\left(\BMS_{\left(\wt R_{0,i}+\cdot\right)\wedge \wt D_{0,i}}\right)_{i\ge1}\mbox{ under }\mathbb{P}\(\cdot \,\Big |\,Y_{i}\in \mathcal{I}_{i},\,i=1,\ldots, n\).\label{ren1}
\ee
Hence,  to bound
\[\mathbb{P}\(\sum_{i=1}^{n}\left(A_{x,i}-B_{y,i}\right)\ge\theta\sqrt{(n-1)q} \,\Bigg|\,Y_{i}\in \mathcal{I}_{i},\,i=1,\ldots, n\)
\]
we may instead bound
\[
\mathbb{P}\left(\sum_{i=1}^{n}\left(A_{x,1}-B_{y,1}\right)\left(X_{\cdot}^{i}\right)\ge\theta\sqrt{(n-1)q}\right).
\]

Consider the renewal times
\[
J_{0}=1\mbox{ and }J_{i+1}=\inf\left\{ j>J_{i}:I_{j}=1\right\} .
\]
We have that
\be
\left(X_{\cdot}^{J_{i}},X_{\cdot}^{J_{i}+1},\ldots,X_{\cdot}^{J_{i+1}-1}\right),i\ge 0,\label{ren1p}
\ee
are an independent sequence of vectors of excursions, whose lengths are distributed
as a geometric random variable on $\left\{ 1,2,\ldots\right\} $ with
parameter $p_{3}$. Furthermore, the sequences (\ref{ren1p}) are identically distributed for $i\geq 1$.

Note that $X_{\cdot}^{J_{1}}=\BMS_{\cdot\wedge \hit_{\partial B_{d}\left( \npp,h(\tilde{R})\right)}}\mbox{ under }\mathbb{P}^{\mu_{0,h(\tilde{r})}}$, so  that $X_{\infty}^{J_{1}}$ has distribution $\mu_{0,h(\tilde{R})}$.
Hence by (\ref{eq:StayStationary}) we have $\mathbb{P}^{\nu^{1}_{X_{\infty}^{J_{1}}}}=\mathbb{P}^{\mu_{0,h(\tilde{r})}}$. Thus, whether $I_{J_{1}+1}=0$ or $I_{J_{1}+1}=1$, we 
 have that $X_{\cdot}^{J_{1}+1}=\BMS_{\cdot\wedge \hit_{\partial B_{d}\left( \npp,h(\tilde{R})\right)}}\mbox{ under }\mathbb{P}^{\mu_{0,h(\tilde{r})}}$, and this will continue for all $X_{\cdot}^{i}$, $i\geq J_{1}$. In particular, it follows from (\ref{4.2}) that
\begin{equation}
\mathbb{E} \left(\sum_{i=J_{1}}^{J_{2}-1}\left(A_{x,1}-B_{y,1}\right)\left(X_{\cdot}^{i}\right)\right)=0.\label{4.2f}
\end{equation}

This leads to the following improvement on  (\ref{basicexp1}).
\bl
 If $\lambda\le  p_{3}/2 $ then
\begin{equation}
\mathbb{E} \left(\exp\left(\lambda\sum_{i=J_{1}}^{J_{2}-1} 
\left(A_{x,1}-B_{y,1}\right)\left(X_{\cdot}^{i}\right) \right)\right) \leq 
e^{c_{4}q\left(\frac{\lambda}{p_{3} }\right)^{2} }.\label{basicexp2}
\end{equation}
\el
\begin{proof}
Using (\ref{4.2f}) for the first moment and bounding the other moments by their absolute values we see that
\[
\begin{array}{l}
\mathbb{E} \left(\exp\left(\lambda  \sum_{i=J_{1}}^{J_{2}-1}\left(A_{x,1}-B_{y,1}\right)\left(X_{\cdot}^{i}\right)  \right)\right)\\
\le1+\sum_{m\ge2}\frac{\lambda^{m}}{m!}\mathbb{E} \left( \left(\sum_{i=J_{1}}^{J_{2}-1}\left|\left(A_{x,1}-B_{y,1}\right)\left(X_{\cdot}^{i}\right)\right| \right)^{m}\right).
\end{array}
\]
As in (\ref{jmom.1r}) this is bounded by
\[  1+c_{3}q\sum_{m\ge2}\left( \lambda/p_{3} \right)^{m} \le 1+c_{4}q\left( \lambda/p_{3} \right)^{2},
\]
for $\lambda\le  p_{3}/2 $ and (\ref{basicexp2}) follows. \end{proof}

Let $U_{n}=\sup\left\{ i>1:J_{i}\le n\right\} $, the number of renewals up till time $n$. We have the upper bound
\bea
&&
\sum_{i=1}^{n}\left(A_{x,1}-B_{y,1}\right)\left(X_{\cdot}^{i}\right)\le \sum_{i=1}^{J_{1}-1}|\left(A_{x,1}-B_{y,1}\right)\left(X_{\cdot}^{i}\right)|\label{renew2}\\
&&+\sum_{i=J_{1}}^{J_{U_{n}}-1}\left(A_{x,1}-B_{y,1}\right)\left(X_{\cdot}^{i}\right)+\sum_{i=J_{U_{n}}}^{J_{U_{n}+1}-1}|\left(A_{x,1}-B_{y,1}\right)\left(X_{\cdot}^{i}\right)|.\nn
\eea
Set $n'=n-1$. Since $U_{n}=\mbox{Bin}\left(n',p_{3}\right)$,
there exists a constant $\bar c$ (independent of all other parameters) so that
\begin{eqnarray}
  \label{eq-barc}
\Pbm\left(U_{n}\ge n'p_{3}\left(1+\delta\right)\right)
&\le& e^{-\bar c\delta^{2}n'},\\
\Pbm\left(U_{n}\le n'p_{3}\left(1-\delta\right)\right)&\le&
e^{-\bar c\delta^{2}n'}. \nonumber
\end{eqnarray}
Let
\begin{equation}
\delta=\sqrt{\frac{1}{\bar cn'}}\,
\theta\leq \frac{c_{0}}{\sqrt{\bar c}}  \leq 1/2,\label{1.8d}
\end{equation}
by the assumptions on $\th$, see the statement of Lemma
\ref{lem-cont1}, after taking $c_{0}$ sufficiently small. With $u_{1}=n'p_{3}\left(1-\delta\right)$ and $u_{2}=n'p_{3}\left(1+\delta\right)-1$
let
\begin{eqnarray}
&&\Phi=\sum_{i=1}^{J_{1}-1}|\left(A_{x,1}-B_{y,1}\right)\left(X_{\cdot}^{i}\right)| \label{renew3}\\
&&\hspace{.3 in}
 +\sum_{i=J_{1}}^{J_{u_{1}}-1}\left(A_{x,1}-B_{y,1}\right)\left(X_{\cdot}^{i}\right)+\sum_{i=J_{u_{1}}}^{J_{u_{2}}-1}|\left(A_{x,1}-B_{y,1}\right)\left(X_{\cdot}^{i}\right)|.
\nn
\end{eqnarray}
By \eqref{eq-barc}
we have
\be
\mathbb{P}\left(\sum_{i=1}^{n}\left(A_{x,1}-B_{y,1}\right)\left(X_{\cdot}^{i}\right)\ge\theta\sqrt{n'q}\right)\le\mathbb{P}\left(\Phi\ge\theta\sqrt{n'q}\right)+2\exp\left(-\theta^{2}\right).\label{renew3a}
\ee
Using the independence properties of the sequences (\ref{ren1p}) we have
\begin{eqnarray}
\mathbb{E}\(e^{\la \Phi}\)
&=& \mathbb{E}  \left(\exp\left(\lambda\sum_{i=1}^{J_{1} -1}|\left(A_{x,1}-B_{y,1}\right)\left(X_{\cdot}^{i}\right)| \right)\right)  \nn\\
&&\hspace{.4 in}\times\prod_{j=1}^{ u_{1}-1} \mathbb{E}\left(\exp\left(\lambda\sum_{i=J_{j}}^{J_{j+1} -1}\left(A_{x,1}-B_{y,1}\right)\left(X_{\cdot}^{i}\right)\right)\right) \nn  \\
&&\hspace{.6 in}\times \prod_{j=u_{1}}^{ u_{2}-1}\mathbb{E}  \left(\exp\left(\lambda\sum_{i=J_{j}}^{J_{j+1} -1} |\left(A_{x,1}-B_{y,1}\right)\left(X_{\cdot}^{i}\right)| \right)\right) .\label{renew6p}
\end{eqnarray}
It follows that for all $\lambda>0$,
\begin{eqnarray}
&&\quad \quad \mathbb{E}\(e^{\la \Phi}\)
\leq
\left(\mathbb{E} \left(\exp\left(\lambda\sum_{i=J_{1}}^{J_{2}-1} \left(A_{x,1}-B_{y,1}\right)\left(X_{\cdot}^{i}\right) \right)\right)\right)^{u_{1}-1} \\
&&
\hspace{-0.2in}\times\left(\sup_{u\in\partial B_{d}\left( \npp,h(\tilde{r})\right)}\mathbb{E}^{u} \left(\exp\left(\lambda\sum_{i=1}^{J -1} | A_{x,1}-B_{y,1} | \left(X_{\cdot}^{i}\right)\right)\right)\right)^{u_{2}-u_{1}+1}
\!\!=:A_1\times A_2.\label{renew6}
\nn
\end{eqnarray}
Using  (\ref{basicexp1}), the definition of $u_1,u_2$ 
and then (\ref{1.8d}) we have
\be
\label{inst}
 A_2\leq \exp\left(c_{4}q \lambda  \left(u_{2}-u_{1}+1\right)/p_3\right)
=\exp\left(2c_{4}q \lambda n'\delta\right)=\exp\left(2c_{4}q \lambda\sqrt{n'/\bar c}\,\theta\right),
\ee
for $\lambda\le  p_{3}/2 $.
For such $\lambda$, using (\ref{basicexp2}),
\[
A_1\le\exp\left(c_{4}q (\lambda/p_3)^2u_{1}\right)\le\exp\left(c_{4}q \lambda^{2}n'/p_3\right).
\]
Thus we get that for $\lambda\le  p_{3}/2 $,
\be\mathbb{P}\left(\Phi\ge\theta\sqrt{n'q}\right)\label{renew7}
\le
\exp\left(c_{4}q \lambda^{2}n'/p_3+2c_{4}q \lambda\sqrt{n'/\bar c}\,\th-\lambda\theta\sqrt{n'q}\right).
\ee
If $\theta\le \(p_{3}/4C_{0}\)\sqrt{n' }$ we set
\[
\lambda= 2C_{0}\frac{\theta}{\sqrt{n' }},
\] and conclude that
\be
\mathbb{P}\left(\Phi\ge\theta\sqrt{n'q}\right)\label{renew73}
\leq
\exp\left(2C_0\theta^2 \sqrt{q}
\left(2c_{4}C_0 \sqrt{q}/p_3+2c_{4}\sqrt{q/\bar c}-1\right)\right), 
\ee
which together with (\ref{renew3a}) gives (\ref{1.5t}) for $q$ sufficiently small.
\end{proof}

 The next lemma is a variant of Lemma \ref{lem-cont1}, allowing one to consider larger values of $\theta$ (in particular, exhibiting
 the transition from Gaussian moderate deviations to 
 exponential large deviations). It is
useful  in the proof of Lemma \ref{lem:4.58m}.

\bl\label{lem-cont2}  Fix $0<a<b<1$ and $0<a_0<1$.
 Let $r<R<\tilde r<\tilde R$ so that
  \eqref{eq: conditioned density bounded below}
  and \eqref{eq-ratiobounds} hold.
Let $x,y$ be such that $B_{d}\left(x,h(R)\right),B_{d}\left(y,h(R)\right)\subset B_{d}\left( \npp,h(\tilde{r})\right)$.
Then for any $C_{0}<\ff$ there exist small $c_{0},q_{0} >0$ and $c_{1}>0$,
depending on $a,b,C_0$ only,
such that if $q=d\left(x,y\right)/R\leq q_{0}$,   $c_{1}\leq \theta\leq c_{0}(n-1)$,
and $  \theta \le  ((n-1)q)^{2} $, then
\begin{align}
\nn&\Pbm\left(
\left| \trav{n}{\npp}{\tilde R}{\tilde r}{x}{R}{r}
        - \trav{n}{\npp}{\tilde R}{\tilde r}{y}{R}{r}
        \right|
\ge\theta\sqrt{  q(n-1)}\,\,\,\Bigg |\,\,Y_{m}\in\mathcal{I}_{m},\,m=1,\ldots,n\right)\\
&\qquad \le \exp\left(-C_{0}\theta \right).\label{cb2.1}
\end{align}
\el

\begin{proof} Let $n'=n-1$. We return to (\ref{1.8d}) but with $\bar c$ as in \eqref{eq-barc} we now take
\begin{equation}
\delta=\sqrt{\frac{C_{0}\theta}{\bar c n'}}\leq \sqrt{\frac{c_{0}C_{0}}{\bar c}}  \leq 1/2,\label{1.8d2}
\end{equation}
by our assumptions on $\th$, and after taking $c_{0}$ sufficiently small. Then instead of (\ref{renew3a})
we obtain
\be
\mathbb{P}\left(\sum_{i=1}^{n}\left(A_{x,1}-B_{y,1}\right)\left(X_{\cdot}^{i}\right)\ge\theta\sqrt{n'q}\right)\le\mathbb{P}\left(\Phi\ge\theta\sqrt{n'q}\right)+2\exp\left(-C_{0}\theta \right).\label{renew3a2}
\ee
With this choice of $\de$, instead of (\ref{inst}) we obtain
\[A_2\le
\exp\left(2c_{4}q \lambda n'\delta\right)=\exp\left(2c_{4}q \lambda\sqrt{n'C_0\th/\bar c} \right),\]
for $\lambda\le  p_{3}/2 $.
Thus, instead of  (\ref{renew7}) we see that
for $\lambda\le  p_{3}/2 $,
\[\mathbb{P}\left(\Phi\ge\theta\sqrt{n'q}\right)
\le\exp\left(c_{4}q \lambda^{2}n'/p_3+2c_{4}q \lambda\sqrt{C_0 n'\th/\bar c} -
\lambda\theta\sqrt{n'q}\right).\]
 If $  \theta \le   (n'q)^{2}  $   we set
$\lambda= {\theta^{1/4}} p_{3}/(2{\sqrt{n'q })}$
 and see that
 \begin{eqnarray}
 &&\hspace{.3in}
\mathbb{P}\left(\Phi\ge\theta\sqrt{n'q}\right)\label{2renew73}\\
 &&\le\exp\left(c_{4}\th^{1/2}p_3/4 +c_{4}p_{3}\sqrt{q}\th^{3/4}\sqrt{C_0/\bar c}
-\th^{5/4} p_{3}/2\right)
\le \exp\left(-C_{0}\theta \right),\nn
 \end{eqnarray}
for $q_{0}$ sufficiently small and all $\th  \geq c_{1} $ sufficiently large
\end{proof}

Before proceeding to the proof of Lemma \ref{lem-equalmean}  we state a preliminary Lemma. 
Let
\begin{equation}
\ka_{a,b}=\mathbb{E}^{u}\(\hit_{\partial B_{d}(x,b)}\)+\mathbb{E}^{v}\(\hit_{\partial B_{d}(x,a)}\),
\quad u\in \partial B_{d}(x,a), v\in \partial B_{d}(x,b).\label{kap.1}
\end{equation}
By symmetry, $\ka_{a,b}$ does not depend on $x$, $u$ or $v$.

\bl\label{lem-hitrun}
On $\S^{2}$, for all $0<a<b<\pi$ we have that
\begin{equation}
\ka_{a,b}=4\log\left(\frac{\tan (b/2)}{\tan (a/2)}\right).\label{eq: hit and return}
\end{equation}

In addition, for any $x\in \S^{2}$ and $0<b<\pi$,
\begin{equation}
\sup_{y\in \S^{2}}\mathbb{E}^{y}\(\hit_{\partial B_{d}(x,b)}\)<\ff,\label{expbound.1}
\end{equation}
and $\hit_{\partial B_{d}(x,b)}$ has an exponential tail.
\el

\begin{proof}
By the last formula in \cite[Section 3]{B}, for $u\in\partial B_{d}(x,b)$,
\begin{equation}
\mathbb{E}^{u}\(\hit_{\partial B_{d}(x,a)}\)
=2\log\left(\frac{1-\cos (b)}{1-\cos (a)}\right).\label{br.1}
\end{equation}
This formula requires $a<b$.
If $x^{\ast}$ denotes the antipode of $x\in\S^{2}$ then by symmetry, 
for $u\in\partial B_{d}(x,a)$ and $v\in\partial B_{d}(x^*,\pi-a)$
\begin{equation}
\mathbb{E}^{u}\(\hit_{\partial B_{d}(x,b)}\)=\mathbb{E}^{v}\(\hit_{\partial B_{d}(x^{\ast},\pi-b)}\).\label{br.2}
\end{equation}
Hence using (\ref{br.1}), for $u\in\partial B_{d}(x,a)$,
\begin{equation}
\mathbb{E}^{u}\(\hit_{\partial B_{d}(x,b)}\)=2\log\left(\frac{1-\cos (\pi-a)}{1-\cos (\pi-b)}\right)=2\log\left(\frac{1+\cos (a)}{1+\cos (b)}\right).\label{br.3}
\end{equation}
Then using the half-angle formula for tangents, $\frac{ 1-\cos (u)}{1+\cos (u)}=\tan^{2}(u /2)$ we obtain, using \eqref{kap.1},
\begin{equation}
  \ka_{a,b}=2\log\left(\frac{\frac{1-\cos (b)}{1+\cos (b)}}{\frac{1-\cos (a)}{
   1+\cos (a)}}\right)
=2\log\left(\frac{\tan^{2}\(b / 2\)}{\tan^{2}\(a / 2\)}\right),\label{br.4}
\end{equation}
which gives (\ref{eq: hit and return}).

(\ref{expbound.1}) follows from (\ref{br.1}) and (\ref{br.2}). By the Kac moment formula \cite{Fitzsimmons-Pitman}  this implies that  $\hit_{\partial B_{d}(x,b)}$ has an exponential tail.
\end{proof}

\begin{proof}[Proof of Lemma \ref{lem-equalmean}]
Recall, see \eqref{not.1},
that $\wt D_{0,n}$ is the time until the $n'th $ excursion from 
$\partial B_{d}(\npp,h(\tilde{r}))$
to $\partial B_{d}(\npp,h(\tilde{R}))$.
Let $\wt D_{0,0}=0$ and note that we can write
\be
\wt D_{0,n}= \sum_{i=1}^{n}T_{i},\label{4.3a}
\ee
where
\[
T_{i}=\left(\hit_{\partial B_{d}\left( \npp,h(\tilde{R})\right)}\circ\theta_{\hit_{\partial B_{d}\left( \npp,h(\tilde{r})\right)}}+\hit_{\partial B_{d}\left( \npp,h(\tilde{r})\right)}\right)\circ\theta_{\wt D_{0,i-1}}.
\]
Using the symmetry of the sphere and the Markov property we see that $T_{2},T_{3},\ldots$ are iid 
with  $\mathbb{E}(T_{i})=\ka_{h(\tilde{r}), h(\tilde{R})}$, $i=2,3,\ldots$ by (\ref{kap.1}). Hence by the Strong Law of
Large numbers
\be
\frac{\wt D_{0,n}}{n}\to \ka_{h(\tilde{r}), h(\tilde{R})}\mbox{ a.s.}\label{4.4}
\ee
Similarly, if $D_{x, m}$ denotes  the time until the $m'th $ excursion from $\partial B_{d}\left(x,h(r)\right)$ to $\partial B_{d}\left(x,h(R)\right)$, then for any $x\in \S^{2}$
\be
\frac{ D_{x,m}}{m}\to \ka_{h(r), h(R)}\mbox{ a.s.}\label{4.4x}
\ee

Recalling Definition \ref{def-trav}, let
\[V_{x,n}=:\sum_{i=1}^{n}A_{x,i}=\trav{n}{0}{\tilde R}{\tilde r}{x}{R}{r}\]
be the number
of traversals from $\partial B_{d}(x,h(R))$ 
 to $\partial B_{d}(x,h(r))$ before $\wt D_{0,n}$.
Then,
\begin{equation}
D_{x,V_{x,n}}\leq \wt D_{0,n}\leq D_{x,V_{x,n}+1}.\label{4.8}
\end{equation}
Hence using (\ref{4.4}) and (\ref{4.4x}) we see that
\begin{equation}
\ka_{h(\tilde{r}), h(\tilde{R})}=\lim_{n\to\infty}\frac{\wt D_{0,n}}{n}=\lim_{n\to\infty}\frac{D_{x,V_{x,n}}}{V_{x,n}}\frac{V_{x,n}}{n}=\ka_{h(r), h(R)}\lim_{n\to\infty}\frac{V_{x,n}}{n}.\label{4.8a}
\end{equation}
It follows that
\begin{equation}
\lim_{n\to\infty}\frac{1}{n}\sum_{i=1}^{n}A_{x,i}=\ka_{h(\tilde{r}), h(\tilde{R})}/\ka_{h(r), h(R)}.\label{4.9}
\end{equation}
Since this holds for any $x$ with $\partial B_{d}\left(x,h(R)\right)\subset \partial B_{d}\left( \npp,h(\tilde{r})\right)$
it follows that 
\begin{equation}
\lim_{n\to\infty}\frac{1}{n}\sum_{i=1}^{n}A_{x,i}=\lim_{n\to\infty}\frac{1}{n}\sum_{i=1}^{n}B_{y,i}.\label{4.11}
\end{equation}

The increments $A_{x,i}-\mathbb{E}^{X_{\wt R_{0,i}}}\left(A_{ x,1}\right),i=1,2,\ldots$,
are orthogonal by the strong Markov property, and  
have bounded second moment by Lemma \ref{lem-hitrun}. Therefore Rajchman's strong law of large
numbers, \cite[Theorem 5.1.1]{C},
implies that
\[
\lim_{n\to\infty}\frac{1}{n}\sum_{i=1}^{n}\left(A_{x,i}-\mathbb{E}^{X_{\wt R_{0,i}}}\left(A_{ x,1}\right)\right)=0,
\]
with a similar result for $A_{x,i}$ replaced by $B_{y,i}$.
It then follows from (\ref{4.11}) that 
\[
\lim_{n\to\infty}\frac{1}{n}\sum_{i=1}^{n}\mathbb{E}^{X_{\wt R_{0,i}}}\left(A_{x,1}\right)=\lim_{n\to\infty}\frac{1}{n}\sum_{i=1}^{n}\mathbb{E}^{X_{\wt R_{0,i}}}\left(B_{y,1}\right),
\]
and by the Strong Law of Large Numbers for a general
state space Markov Chain applied to the chain $X_{\wt R_{0,i}}$ in $\partial B_{d}\left( \npp,h(\tilde{r})\right)$
\[
\lim_{n\to\infty}\frac{1}{n}\sum_{i=1}^{n}\mathbb{E}^{X_{\wt R_{0,i}}}\left(A_{x,1}\right)=\mathbb{E}^{\mu_{0,h(\tilde{r})}}\left(A_{x,1}\right),
\]
and
\[
\lim_{n\to\infty}\frac{1}{n}\sum_{i=1}^{n}\mathbb{E}^{X_{\wt R_{0,i}}}\left(B_{y,1}\right)=\mathbb{E}^{\mu_{0,h(\tilde{r})}}\left(B_{y,1}\right).
\]
This proves (\ref{4.2}).
\end{proof}

We turn now to the proof of Lemma \ref{lem:4.58m}.
\begin{proof}[Proof of Lemma \ref{lem:4.58m}]
  This is a direct application of Lemma
  \ref{lem-cont2}, taking $\tilde R=r_{l-3}$,
  $\tilde r=r_{l-2}$, \corO{$R= \tilde  r_{l-1}^+$}, $r=\tilde r_l$
  $x=y$, $y=\tilde y$, \corJ{$n=u^2/2$} with $u\in I_{\alpha(l)+j}$,
  and $\theta=d_0 jm/2$.
To apply the lemma, we must verify several points.

First, by taking $ j_{0}  $ sufficiently large we will have $\th>c_{1}$.
Next we need to verify that $ \theta\leq c_{0}(n-1)$. By halving $c_{0}$ it suffices to show that $ \theta\leq c_{0}n$   which is
$ d_{0}j m/2\leq \corJ{c_{0}(\al(l)+j)^{2}/2}$. For this it is enough
to show that
$ d_{0}  m/2\leq \corJ{c_{0}(\al(l) +j)/2}$, which follows from the fact that $m\leq k-l=\log (2(\al(l) +j))$, see (\ref{conv-k}),  and taking $ j_{0}  $ sufficiently large. 

Secondly, we need to show that $  \theta \le  ((n-1) q)^{2} $.
Since we have already seen that  $ \theta\leq c_{0}(n-1)$, it suffices to show that    $(n-1) q^{2}\geq   c^{2}_{2}$ for some $c_{2}>0$, or equivalently that $\corJ{\sqrt{2n}}\,\,    q\geq   c'_{2}>0$. That  is,
$(\al(l)+j) d\left(\tilde{y},y\right)/r_{l}  \geq c'_{2}>0$.     Assume that $d\left(\tilde{y},y\right)\ge c_{3}r_{k}$
for a small $c_{3}>0$, so that, see (\ref{conv-k}),
\begin{equation}
\left(\al(l)+j\right) d\left(\tilde{y},y\right)/r_{l}\ge c_{3}\left(\al(l)+j\right)e^{-\left(k-l\right)}=c_{3}/2.\nn
\end{equation}
Recall that $\tilde y$ is the ``parent" of $y$ defined in the paragraph following (\ref{bu2}).
With our construction of  $F_{l}$,
either  $d\left(\tilde{y},y\right)\ge c_{3}r_{k}$
for a small universal $c_{3}>0$, or $y=\tilde{y}$, in which case the corresponding
term in the sum in (\ref{eq:chaining union bound}) is zero. Also, \corO{because $\tilde{c}=q_{0}/2$, see \eqref{eq-q0c},} we will have $d\left(\tilde{y},y\right)/r_{l}\leq q_{0}$.
\end{proof}

\section{Excursion time and real time}\label{sec-etrt}
We prove in this section Theorem \ref{theo-etrt}, which is used
to control the relation between excursion counts and real time, and
compare various excursions with different centering. It was crucially
used in the proof of the upper bound.

Recall that 
for $0<a<b<\pi$,   $ \tau_{x, a,b}(n)$ is
the time needed to complete $n$ excursions in $\S^{2}$ from
$\partial B_{d}\left(x,a\right)$ to $\partial B_{d}\left(x,b\right)$, see 
\eqref{eq-tauab}. Recall $\ka_{a,b}$ from (\ref{kap.1})-(\ref{eq: hit and return}).

\bl\label{lem-clt}
For any $0<a<b<\pi$ there exists a $c = c(a,b)$ such that for $\delta \in (0,1)$ and $x\in\S^{2}$,
\begin{equation}
P\( \tau_{x, a,b}(n)\leq (1-\de)\ka_{a,b}n\)\leq e^{-c\de^{2}n} \label{clt.1}
\end{equation}
and
\begin{equation}
P\( \tau_{x, a,b}(n)\geq (1+\de)\ka_{a,b}n\)\leq e^{-c\de^{2}n}. \label{clt.1a}
\end{equation}
\el
\begin{proof}
By symmetry we can take $x=0$, and then as in the first paragraph of the proof of Lemma \ref{lem-equalmean}, $\tau_{x, a,b}(n)=\sum_{i=1}^{n}T_{i}$ where the $T_{2},T_{3},\ldots$ are iid 
with  $\mathbb{E}(T_{i})=\ka_{a,b}$, $i=2,3,\ldots$
and all $T_{i}$ have exponential tails.
\end{proof}
  
  Throughout this section we assume that $z_{0}\leq z\leq L^{1/2}\log^{2} L$. 
Recall the notation $s(z)$, see \eqref{clt.2}.
It follows from Lemma \ref{lem-clt}
with $n=s\left(z\right)$ and $\delta={d\sqrt{z}}/{(2L)}$   that for
some $d<\ff$,
\begin{equation}
P\(  s\left(z-d\sqrt{z}\right)    \ka_{a,b}   \leq  \tau_{x,  a,b} \(s\left(z \right)\)\leq   s\left(z+d\sqrt{z}\right)   \ka_{a,b}\)\geq 1-e^{-10z},\label{clt.3}
\end{equation}
 uniformly in $x\in\S^{2}$.

It follows from
(\ref{clt.3}),   that  uniformly in $x\in\S^{2}$,
\be
\Pbm(4s(z-d\sqrt{z}) \log (b / a)  \leq  
\tau_{x, h(a),h(b)}(s(z))\leq   4s(z+d\sqrt{z})\log (b /a))
\geq 1-e^{-10z}.\label{clt.3a}
\ee
This is like Theorem \ref{theo-etrt}, except it applies
to one $x$ and not to all $x \in F_L$ simultaneously. We can not derive Theorem \ref{theo-etrt} from \eqref{clt.3a}
via a union bound over all $x \in F_L$, since there are  far too many elements in $F_L$.
To reduce the number of $x$ that need to be considered we will use a chaining argument, contained in Lemma \ref{rlem:Continuity lemma} below.
Before stating it, 
we set
notation.
Throughout the argument, we fix
$r_{0}$ small,  e.g. $r_{0}<10^{-6}$.
Then we define
\[
\tilde{r}_{1}<\tilde{r}_{0}<\tilde{r}_{-1}<\tilde{r}_{-2},
\]
by
\[
\tilde{r}_{0}=r_{0}+\frac{1}{L}\mbox{ and }\tilde{r}_{1}=r_{1}-\frac{1}{L},
\]
\[
\tilde{r}_{-1}=2\tilde{r}_{0} \mbox{ and }\tilde{r}_{-2}=2\tilde{r}_{0}\times\left(\frac{\tilde{r}_{0}}{\tilde{r}_{1}}\right).
\]
It follows from (\ref{clt.3a})   that for some $d<\ff$
\begin{equation}
\Pbm( 4s(z-d\sqrt{z})     \leq \tau_{0, h(\wt r_{-1}),h(\wt r_{-2})}(s(z))\leq   4s(z+d\sqrt{z}))\geq 1-e^{-10z},\label{r3.1}
\end{equation}
and
\begin{equation}
\Pbm(4s(z-d\sqrt{z})     \leq \tau_{0,h(\wt r_{1}),h(\wt r_{0})}(s(z))\leq  
4s(z+d\sqrt{z}))\geq 1-e^{-10z}.\label{r3.2}
\end{equation}
Here we used the fact  that
\be
\log\frac{\tilde{r}_{-2}}{\tilde{r}_{-1}}=\log\frac{\tilde{r}_{0}}{\tilde{r}_{1}}=\log\frac{r_{0}+\frac{1}{L}}{r_{1}-\frac{1}{L}}=\log\left[\frac{r_{0}}{r_{1}}\left(1+O\left(\frac1L\right)\right)\right]=1+O\left(\frac1L\right),\label{r2.7g}
\ee
 so that
\be
s(z\pm d\sqrt{z})\log\frac{\tilde{r}_{0}}{\tilde{r}_{1}} =L\left( 2L-\log L+z\pm d\sqrt{z}+O(1)\right).\label{r2.7h}
\ee

Recall Definition \ref{def-trav}, and abbreviate $T_{y,\tilde{r}_{1} }^{x,\tilde{r}_{-1},n}=
\trav{n}{x}{\tilde r_{-2}}{\tilde r_{-1}}{y}{\tilde r_0}{\tilde r_1}$, the number of 
traversals from  $\partial B_{d}\left(y,h(\tilde{r}_{0})\right)\to\partial B_{d}\left(y,h(\tilde{r}_{1})\right)$
during $n$ excursions from $\partial B_{d}\left(x,h(\tilde{r}_{-1})\right)\to\partial B_{d}\left(x,h(\tilde{r}_{-2})\right)$.
 Once again, possibly enlarging $d$, it follows from (\ref{r3.1}) and (\ref{r3.2}) that
\begin{equation}
P\( s\left(z-d\sqrt{z}\right)     \leq  T_{0,\tilde{r}_{1} }^{0,\tilde{r}_{-1},s\left(z\right)} \leq    s\left(z+d\sqrt{z}\right)\)\geq 1-ce^{-5z}.\label{r3.3}
\end{equation}

Let $a_{0}=\pi^{2}/3.$
The following lemma uses a chaining argument in its proof.

\bl
\label{rlem:Continuity lemma} There exist constants $\tilde c,
  z_{0}<\ff$ such that for $L $ sufficiently  large  and 
  all $z_{0}\leq z\leq L^{1/2}\log^{2} L$, 
\bea
&&\hspace{.3 in}
\mathbb{P}\left[\exists y\in F_{\frac{3}{2}\log L}\cap B_{d}\left(0,\tilde{c}\,\,h(\wt r_{0})\right)\right.\label{req: continuity goal}\\
&&\left.\hspace{.7 in}\mbox{ s.t. }\left|T_{0,\tilde{r}_{1} }^{0,\tilde{r}_{-1},s\left(z \right)}-T_{y,\tilde{r}_{1} }^{0,\tilde{r}_{-1},s\left(z \right)}\right|\ge a_{0}\sqrt{ z}L \right]\le ce^{-4z}.\nn
\eea
\el

\begin{proof}[Proof of Lemma \ref{rlem:Continuity lemma}]
We use Lemma \ref{lem-cont1} from Section \ref{sec-continest}.
Taking $R=\tilde r_0$, $r=\tilde r_1$,
$\tilde R=\tilde r_{-2}$, $\tilde r=\tilde r_{-1}$ and $n=s(z)$,
the lemma shows that that  for any $C_{0}>0$ there
exist  small $c_{0}, q_{0}>0$ such that if   $q=d(x,y)/\wt r_{0}\le q_{0}$,
and $\theta\leq c_{0}\sqrt{(n-1)}$,
\begin{equation}
\mathbb{P}\left[\left|T_{x,\tilde{r}_{1} }^{0,\tilde{r}_{-1},s\left(z\right)}-T_{y,\tilde{r}_{1} }^{0,\tilde{r}_{-1},s\left(z \right)}\right|\ge\theta\sqrt{(n-1)q}\right]\le\exp\left(- C_{0}\theta^{2}\sqrt{q}\right),\label{neq:continuity estimate}
\end{equation}
where $n=s\left(z\right)\sim 2L^{2}$. \corO{Recalling  $\tilde{c}$ from \eqref{eq-q0c}, 
we see  that for any $x,y\in B_{d}\left(0,\tilde{c}\,\,h(\wt r_{0})\right)$,} we
have $q=d(x,y)/\wt r_{0}\le q_{0}$ and $B_{d}\left(x, h(\wt r_{0})\right), B_{d}\left(y, h(\wt r_{0})\right)\subseteq  B_{d}\left(0, h(\wt r_{-1})\right)$. Furthermore, since in our present application of 
Lemma \ref{lem-cont1} there are no $\mathcal{I}_{i}$'s, condition (\ref{eq: conditioned density bounded below}) is easy to verify: 
just use  the Poisson kernel (\ref{poisson}) and the fact that the outside of a circle centered at the south pole is the inside of a circle centered at the north pole.

As is standard in continuity estimates,  we
use a chaining, that is, we construct a tree of points that are embedded in $\S^2$,  and ``cover'' $B_{d}\left(0,\tilde{c}\,\,h(\wt r_{0})\right)$
in the sense that
such that the $k$'th level of the tree has size $256^{k}$ and the
largest distance from any point in $B_{d}\left(0,\tilde{c}\,\,h(\wt r_{0})\right)$ to a point
in the $k$'th level is at most $16^{-k}$. We further require that  the last (i.e. $k= 3\log L/ (2\log16)$) level of the tree contains
$F_{\frac{3}{2}\log L}\cap B_{d}\left(0,\tilde{c}\,\,h(\wt r_{0})\right)$. 
An explicit construction of the tree is obtained by choosing for the $k$th level an arbitray  net $\mathcal{N}_k$ of $256^k$ points 
with maximal distance $16^{-k}$, and declaring the ``parent'' of $x\in \mathcal{N}_k$ to be the element of $\mathcal{N}_{k-1}$ closest to it, with ties broken
e.g. by lexicographic order, and $\mathcal{N}_0=0$. Since the spacing of  $F_{\frac{3}{2}\log L}$ is ${1}/{L^{3/2}}$, we can easily choose the last level
to satisfy the constraint.

Using this tree, we can connect a point  $y\in F_{\frac{3}{2}\log L}$ to
$0$ by a unique geodesic in the tree. This allows us 
 to bound the
left hand side of (\ref{req: continuity goal})
by
\be
\sum_{k=0}^{\frac{\frac{3}{2}\log L}{\log16}}256^{k}\label{neq: chaining}
\sup_{\stackrel{x,y\in B_{d}\left(0,\tilde{c}\,\,h(\wt r_{0})\right),}{d(x,y)\asymp16^{-k}}}\mathbb{P}\left[\left|T_{x,\tilde{r}_{1} }^{0,\tilde{r}_{-1},s\left(z\right)}-T_{y,\tilde{r}_{1} }^{0,\tilde{r}_{-1},s\left(z\right)}\right|\ge 2\sqrt{ z}L(k+1)^{-2}\right].
\ee
Here we use that 
all $x,y$ considered in the sup are such that $d(x,y)\ge c16^{-\frac{\frac{3}{2}\log L}{\log16}}={c}/{L^{3/2}} $,
(and that $\sum_{k=0}^{\infty}(k+1)^{-2}=\pi^{2}/6$).

We now apply (\ref{neq:continuity estimate}) with
 $\th=\frac{2  \sqrt{ z}L(k+1)^{-2}}{\sqrt{(n-1)q}}$ and $d(x,y)\asymp16^{-k}$, once we verify that $\theta\leq c_{0}\sqrt{(n-1)}$. Thus we need
 \begin{equation}
2  \sqrt{ z}L \leq  c_{0}(k+1)^{2}(n-1)\sqrt{q}.\label{cek.1}
 \end{equation}
But
 \begin{equation}
c_{0} (n-1) \sqrt{q}\geq c_{0}L^{2}\sqrt{d(x,y)/\wt r_{0}}\geq \wt c_{0}L^{2}4^{-k} \label{cek.2}
 \end{equation}
 and
 \begin{equation}
(k+1)^{2}4^{-k} \geq c \log^{2}(L) 4^{-\frac{\frac{3}{2}\log L} {\log16}}=c \log^{2}(L) L^{-3/4}.\label{cek.3}
 \end{equation}
 Thus (\ref{cek.1}) will hold as long as $z\leq cL^{1/2} \log^{4}(L) $.

 Clearly we can find $c_{1}$ such that $c_{1}4^{k}(k+1)^{-4}\geq 2(k+1)$ for all $k\geq 0$.
Then, using the fact that   $n\leq 2L^{2}$, we can choose $C_{0}$ sufficiently large so that
  (\ref{neq: chaining})
is at most
\bea
\sum_{k=0}^{\infty}256^{k}e^{-C_{0}\(\frac{2  \sqrt{ z}L(k+1)^{-2}}{\sqrt{(n-1)q}}\)^{2}\sqrt{q}}&\le&\sum_{k=0}^{\infty}256^{k}e^{- c_{1}\frac{4  z(k+1)^{-4}}{2\times 4^{-k}}}\nn\\
&=&\sum_{k=0}^{\infty}256^{k}e^{-4z (k+1)}\le ce^{-4z},
\eea
as long as $z_{0}> \log 16 $. This gives the claim (\ref{req: continuity goal}).\end{proof}
We have completed all preparatory steps and can turn to our main goal.
\begin{proof}[Proof of Theorem \ref{theo-etrt}.]
For any $x\in F_{L}\cap B_{d}\left(0,\tilde{c}\,\,h(\wt r_{0})\right)$, there exists a
$y\in F_{\frac{3}{2}\log L}$ such that
\be
B_{d}\left(y,h(\wt r_{1})\right)\subset B_{d}\left(x,h(r_{1})\right)\subset B_{d}\left(x,h(r_{0})\right)\subset B_{d}\left(y,h(\wt r_{0})\right),\label{r3.10}
\ee
since the ``spacing'' of $F_{\frac{3}{2}\log L}$ is $\frac{1}{L^{3/2}}$
and
\[
\frac{1}{L^{3/2}}+h(\wt r_{1})\le h(r_{1})\mbox{ and }h(r_{0})+\frac{1}{L^{3/2}}\le h(\wt r_{0}).
\]

It follows from (\ref{r3.10}), (\ref{req: continuity goal}) and (\ref{r3.3})  that
\be
\mathbb{P}\left[  T_{x,r_{1} }^{0,\tilde{r}_{-1},s\left(z \right)} \ge s\left(z-d\sqrt{z}\right)   -a_{0}\sqrt{ z}L,\, \forall x\in F_{L}\cap B_{d}\left(0,\tilde{c}\,\,h(\wt r_{0})\right)\right]\label{r3.11}\geq 1- ce^{-4z}.\nn
\ee
Since the event  $T_{x,r_{1} }^{0,\tilde{r}_{-1},s\left(z \right)} \ge s\left(z-d\sqrt{z}\right)   -a_{0}\sqrt{ z}L$ implies that
\begin{equation}
 \tau_{x} (   s( z-(d+ a_{0})\sqrt{z}  ))    \leq     \tau_{0, h(\wt r_{-1}),h(\wt r_{-2})}\(s\left(z \right)\),\label{r3.1f}
\end{equation}
it then follows from (\ref{r3.1}) that for   $d'=2d+a_{0}$
 \begin{equation}
P\(  \tau_{x}\(s\left(z \right) \) \leq  4s\left(z+d'\sqrt{z}\right), \, \forall x\in F_{L}\cap B_{d}\left(0,\tilde{c}\,\,h(\wt r_{0})\right)\)\geq 1-ce^{-4z}.\label{r3.12}
\end{equation}
Since we can cover $\S^{2}$ by a finite number of discs of radius $\tilde{c}\,\,h(\wt r_{0})$ this implies that
\begin{equation}
P\(    \tau_{x}\(s\left(z \right) \) \leq  4s\left(z+d'\sqrt{z}\right)  ,\,\forall x\in F_{L}\)\geq 1-ce^{-4z},\label{r1.13}
\end{equation}

To get the other direction of (\ref{r1.1}) we proceed as above but taking now
$\tilde{r}_{0}=r_{0}-\frac{1}{L}$ and
$\tilde{r}_{1}=r_{1}+\frac{1}{L}$.
\end{proof}

We had to work hard, using the full force of the continuity estimates,
to obtain Theorem \ref{theo-etrt}.
If we restrict to $x,y$ very close, 
as in Theorem \ref{theo-etrtc},
the situation is much simpler.
\begin{proof}[Proof of Theorem \ref{theo-etrtc}.]
 Let $x,y$ be such that $d(   x,y)\leq \De:=h(r_{L/2})\sim e^{ -L/2}$. By (\ref{eq:blax}) the probability that an excursion from $\partial B_{d}\left(x,h_{0}\right)$ to $\partial B_{d}\left(x,h_{1}\right)$ and back does not contain an excursion from $\partial B_{d}\left(y,h_{0}\right)$ to $\partial B_{d}\left(y,h_{1}\right)$ is $O(  \De )$. Hence during $2L^{ 2}$ excursions
  from $\partial B_{d}\left(x,h_{1}\right)$ to $\partial B_{d}\left(x,h_{0}\right)$ the number of `missed opportunities' for excursions from $\partial B_{d}\left(y,h_{1}\right)$ to $\partial B_{d}\left(y,h_{0}\right)$ is bounded by $\mbox{Bin}(2L^{ 2},   c\De )$. Since this has mean $\la=cL^{ 2}\De\to 0$ as $L\to \ff$ we can use the Poisson approximation to bound
  \begin{equation}
 P(  \tau_{x}\(t_{z}\)< \tau_{y}\(t_{z}-10\))\leq c\la^{ 10} =cL^{ 20}e^{ -5L}.\nn
  \end{equation}
Since there are $ce^{ 4L}$ pairs $x,y \in F_{L}$, (\ref{r1.1cl}) follows.
\end{proof}

\section{ Proof of Corollary \ref{cor-plane}: from $\S^2$ to $\R^2$}
\label{sec-genman}

The starting point for the proof is the following lemma concerning Brownian motion on $\S^2$. We use $s$ to denote  the south pole of $\S^2$ \corJ{and $\ast$ to denote the north pole}. 
Set $\la_{L}=\rho_{L}L$, see \eqref{eq:defofts}, with $L=\log (1/\ep)$.
The next lemma
follows by combining  Theorem  \ref{theo-tightness} for   $B_{d}\(   s,h_{2}\)\subseteq \S^2$ with (\ref{clt.3a}).
\begin{lemma}
  Fix $t>0$.
Let  $N_L$ be the number of excursions 
between $\partial B_{d}(s,h_{1})$
and $\partial B_{d}(s,h_{0})$ needed before 
$B_{d}\(   s,t\)$ is $h_{L}$-covered. Then
$\sqrt{2N_L}-\la_{L}$ is tight.
\end{lemma}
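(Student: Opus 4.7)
My plan is to use the extension of Theorem \ref{theo-tightness} to sub-balls of $\S^{2}$, together with the excursion-time estimate \eqref{clt.3a}, to reduce tightness of $\sqrt{2N_{L}}-\la_{L}$ to the already-proved tightness of $\sqrt{\CC^{\ast}_{h_{L},\S^{2},B_{d}(s,t)}}-m_{h_{L},\S^{2}}$. The key observation is that a single round-trip excursion from $\partial B_{d}(s,h_{1})$ to $\partial B_{d}(s,h_{0})$ has expected duration $\ka_{h_{1},h_{0}}=4\log(r_{0}/r_{1})=4$, by Lemma \ref{lem-hitrun} together with $\tan(h_{l}/2)=r_{l}/2$; hence $N_{L}$ excursions consume roughly $4N_{L}$ units of real time, and one expects $4N_{L}\approx \CC^{\ast}_{h_{L},\S^{2},B_{d}(s,t)}$.

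First, I would apply Theorem \ref{theo-tightness} to $B_{d}(s,t)\subseteq \S^{2}$ with $\ep=h_{L}$, using $\log h_{L}^{-1}=L+O(1)$, to produce, for every $\de>0$, a constant $K<\ff$ with
\[
\Pbm\Big(\big|\sqrt{\CC^{\ast}_{h_{L},\S^{2},B_{d}(s,t)}}-2\sqrt{2}(L-\tfrac14\log L)\big|\leq K\Big)\geq 1-\de
\]
for all $L$ large; squaring this gives $\CC^{\ast}_{h_{L},\S^{2},B_{d}(s,t)}=8L^{2}-4L\log L+O(KL)$ on this event. Second, by the definition of $N_{L}$,
\[
\tau_{s}(N_{L}-1)<\CC^{\ast}_{h_{L},\S^{2},B_{d}(s,t)}\leq \tau_{s}(N_{L}),
\]
and the gap $\tau_{s}(N_{L})-\tau_{s}(N_{L}-1)$ is dominated by the maximum of at most $\sim L^{2}$ i.i.d.\ excursion durations, each with exponential tail (Lemma \ref{lem-hitrun}), hence it is $O(\log L)$ with probability $\geq 1-\de$. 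Finally, Lemma \ref{lem-clt} applied with $n\asymp 2L^{2}$ and deviation parameter $K'/L$ yields $|\tau_{s}(N_{L})-4N_{L}|\leq O(K'L)$ with probability $\geq 1-e^{-cK'^{2}}$.

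Combining the three estimates, with probability at least $1-2\de-e^{-cK'^{2}}$ one obtains
\[
4N_{L}=\CC^{\ast}_{h_{L},\S^{2},B_{d}(s,t)}+O(K'L)+O(\log L)=8L^{2}-4L\log L+O((K+K')L),
\]
so that $2N_{L}=4L^{2}-2L\log L+O((K+K')L)$. Taking the square root and Taylor-expanding,
\[
\sqrt{2N_{L}}=2L\sqrt{1-\tfrac{\log L}{2L}+O\Big(\tfrac{K+K'}{L}\Big)}=2L-\tfrac12\log L+O(K+K')=\la_{L}+O(K+K'),
\]
which delivers the required tightness of $\sqrt{2N_{L}}-\la_{L}$. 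The main care needed is in bookkeeping the two independent $O(L)$-scale fluctuations (from cover-time tightness at the $L^{2}$-scale, and from the excursion-counting CLT); these combine linearly and are precisely at the scale that cancels, under the $\sqrt{\,\cdot\,}$ map, against the $8L^{2}$ leading term to produce $O(1)$ tightness of the square-root-centered quantity. There is no serious obstacle beyond this bookkeeping.
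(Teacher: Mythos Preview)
Your approach is correct and is essentially the same as the paper's, which simply records that the lemma ``follows by combining Theorem~\ref{theo-tightness} for $B_{d}(s,\cdot)\subseteq\S^{2}$ with \eqref{clt.3a}''; you have filled in the details of that combination. One small imprecision: in Step~3 you invoke Lemma~\ref{lem-clt} at the random index $n=N_{L}$, but since $N_{L}$ is path-dependent this should be rewritten as applying the lemma at deterministic levels $n_{\pm}=\lfloor 2L^{2}-L\log L\pm K''L\rfloor$ and using monotonicity of $n\mapsto\tau_{s}(n)$ to sandwich $N_{L}$---a routine fix that does not affect the argument.
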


We next use stereographic projection $\si: \S^2\corJ{\backslash \{\ast\}}\to \R^{ 2}$.
With   $X_t$  Brownian motion on the sphere,   let $W_t=\si(X_t)$. 
Because $\si$ provides a system of isothermal coordinates in $\R^2$, we have 
that $W_t$ is a time-changed \corJ{version of}
standard Brownian motion in $\R^2$.

Let $N^P_L$  be the number of excursions between $\partial B_{e}(0,r_{1})$
and $\partial B_{e}(0,r_{0})$ needed before $B_{e}(0,1)$ is $r_{L}$-covered.
Since 
balls of radius \corJ{$h_{L}$} in $B_{d}\(   s,\corJ{h(1)}\)$ are of Euclidean radius in \corJ{$[r_{L}/c,cr_{L}]$}
for some fixed $c$, we have from the lemma,  simple inclusion 
and the fact that $|\la_{L\pm n}-\la_{L}|\leq cn$ for $L\gg n$, that
$\sqrt{2N^P_L}-\la_{L}$ is tight.  

Now note that $ \sum_{k=1}^{N^P_L-1} S_k\leq 
\CC^{\ast}_{\eps,P,R} \leq  \sum_{k=1}^{N^{P}_L} S_k$ where $S_k$ 
are the times spent by planar Brownian 
motion inside $\corJ{B_e(0,R)}$ during an excursion starting
at $\partial B_e(0,r_{1})$,
visiting 
$\partial B_e(0,r_{0})$, and then returning to 
$\partial B_e(0,r_{1})$. The $S_k$ are, by rotational invariance, i.i.d.,
of mean $a_R$,
and  with exponential
tails. We immediately conclude  that
$$\sqrt{\CC^{\ast}_{\eps,P,R}}-\sqrt{a_R} \,\,\la_L/\sqrt{2}$$
is tight.

It only remains to compute $a_R$.
By Doob's stopping theorem, the time to hit radius $r_0$ when starting at
$r_1$ is 
$(r_0^2-r_1^2)/2$. On the other hand, with $\tau_1$ denoting the hitting time
of $\corJ{\partial B_e(0,r_1)}$,
$$ u(x)=\Ebm^x(\int_0^{\tau_1} {\bf 1}_{|W_t|\corJ{\leq}R} dt)$$
satisfies the boundary value problem
$$\frac12(u''(r)+\frac{u'(r)}{r})=-{\bf 1}_{r\corJ{\leq}R}, \quad u(r_1)=0, u'(R)=0,$$
with solution
\corJ{$$ u(r)=\frac{r_1^2}{2}+R^2\log \(\frac{r}{r_1}\)-\frac{r^2}{2}, \hspace{.2 in}r\leq R.$$}
In particular, summing $u(r_0)$ with $(r_0^2-r_1^2)/2$ gives 
$ a_R=R^2$, which completes the proof. 

\section{Appendix I: Barrier estimates}\label{sec:BoundaryCrossing}

Recall that 
  $\al (l)=\rho_{L}(L-l)  - l^{\ga}_{L}$, $\ga<1/2$, see \eqref{eq:defofts} and 
\eqref{eq:AlphaBarrierDefd},
where
\begin{equation}
\rho_{L}= 2-\frac{\log L}{2L},\label{18.01}
\end{equation}
and  recall that 
\begin{equation}
t_{z}=\frac{\(\rho_{L}L+z\)^{ 2}}{2}.\label{18.02t}
\end{equation} 
Recall the notation $I_{y }=[y,y+1)$. 
(Note that in 
\cite{BRZ}, $I_{y }$ is denoted $H_{y }$.) 

\bl\label{prop:BarrierSecGWPropUB}
For any   $k<L$ and all $  j  \geq 0$ and $0\leq z\leq 100k$ with $t_{z}\in \mathbb{N}$,
\begin{eqnarray}
&&\hspace{.2 in}
K_{1}\,:=\Pbm\left(\al (   l) \le  \sqrt{2T_{l}^{0,t_{z}}},\,l=1,\ldots,k-1;\,  \sqrt{2T_{k}^{0,  t_{ z }}}\in I_{\al (k)+j}\right)\label{18.20}\\
&&\nn
\quad \leq  ce^{-2k-2z-2k^{\ga}_{L}+2j} \times \left(1+z+ k^{ \ga}_{L}\right)\left(1+j   \right)e^{-\frac{\left( z +  k_{L}^{\ga} - j \right)^{2}}{4k}}.\end{eqnarray}
\el

\begin{proof}[Proof of Lemma \ref{prop:BarrierSecGWPropUB}]
 Recall that we 
 denote by 
 $T_{l}, l=0,1,2,\ldots$ the critical Galton--Watson process
with geometric distribution, and let $\Pgw_n$ denote its law when 
$T_0=n$. 
 With   $v=\rho_{L}(L-k)- k^{ \ga}_{L}$ we rewrite $K_{1} $  as
 \bea
&&
K_{1}=\Pgw_{t_{z}} \left(\rho_{L}(L-l)- l^{ \ga}_{L}\le  \sqrt{2T_{l} } \mbox{\,\ for }l=1,\ldots, k-1;\right.\label{18.22}\\ 
&&\left.\hspace{3in}\sqrt{2T_{k} }\in  I_{v+ j}  \right).\nn
\eea

We show below  that for any $0<\ga<1$, and all $1\leq l\leq k-1$
\begin{equation}
l_{L}^{\ga}\leq k_{L}^{\ga}+l_{k}^{\ga},\label{bar.6}
\end{equation}
from which it follows that
\be
\rho_{L}\left(L-l\right)-   l_{L}^{\ga}\ge\rho_{L}\left(L-k\right)- k_{L}^{\ga}+\rho_{L}\left(k-l\right)-   l_{k}^{\ga}.\nn
\ee
Hence  
\[
  K_{1}\leq \Pgw_{t_{z}}\left(v+\rho_{L}\left(k-l\right)-   l_{k}^{\ga} \le\sqrt{2T_{l}},l=1,\ldots,k-1;\sqrt{2T_{k}}\in  I_{v+  j}  \right),
\]
and by (\ref{18.02t}),  $\sqrt{2t_{z}} =\rho_{L}L+ z=\rho_{L}k+v+ z +   k^{\ga}_{L} $.

Thus using  \cite[Theorem 1.1]{BRZ},   with $a=\rho_{L}k+v, x=\sqrt{2t_{z}}$ and $b=v, y=v+ j$,
\bea
&&
K_{1}\leq K_{2}=c\frac{\left(1+ z+ k_{L}^{\ga}\right)\left(1+ j\right)}{k}
 e^{-\frac{\left(\rho_{L}k+ z +   k_{L}^{\ga} - j \right)^{2}}{2k}},\nn
\eea
We write out
\begin{eqnarray}
  &&\hspace{0.5 in}
  \left(\rho_{L}k+ z +   k_{L}^{\ga} - j \right)^{2}/2k
\label{multdef.11}\\
&& = \left(2k+ (z +   k_{L}^{\ga} - j) -k\log (L)/2L\right)^{2}/2k \nn\\
&&=2k+ 2(z +   k_{L}^{\ga} - j) -k\log (L)/L 
+ \left( (z +   k_{L}^{\ga} - j) -k\log (L)/2L\right)^{2}/2k. \nn
\end{eqnarray}
Using the concavity of the logarithm, we have $k\log (L)/L \leq \log k$, and using $(r-s)^{2}\geq r^{2}/2-s^{2}$ we see that 
\begin{equation}
\left( (z +   k_{L}^{\ga} - j) -k\log (L)/2L\right)^{2}/2k\geq  (z +   k_{L}^{\ga} - j)^{2} /4k-o_{L}(1)\label{multdef.12}
\end{equation} 

 It follows that 
 \bea
 &&
K_{2}\leq c \left(1+ z+  k_{L}^{\ga}\right)\left(1+j\right)e^{-2k-2z-2k_{L}^{\ga}+2j} e^{-\frac{\left( z +  k_{L}^{\ga} - j \right)^{2}}{4k}}.\nn
\eea

We now prove (\ref{bar.6}).
This certainly holds if $l\leq k/2$, since then $l_{L}^{\ga}=l_{k}^{\ga}=l^{\ga}$.  
If $L/2\leq l\leq k$, then (\ref{bar.6}) says that
\begin{equation}
(L-l)^{\ga}\leq  (L-k)^{\ga}+ (k-l)^{\ga},\label{bar.7u}
\end{equation}
which follows from concavity.

(\ref{bar.6}) holds if  $k\leq L/2$,  since then $l_{L}^{\ga}=l^{\ga}\leq k^{\ga}= k_{L}^{\ga}$. 
Finally, if $k/2\leq l\leq L/2<k$, then (\ref{bar.6}) says that
\begin{equation}
l^{\ga}\leq   (L-k)^{\ga}+ (k-l)^{\ga}.\label{bar.8}
\end{equation}
Since for $  l\leq L/2$ we have $l^{\ga}\leq (L-l)^{\ga }$, (\ref{bar.8}) follows from (\ref{bar.7u}).
\end{proof}

Recall  that  
\begin{equation}
\gamma\left(l\right)=\gamma\left(l,L\right)= \rho_{L}(L-l)+  l^{ 1/4}_{L},\label{114.1}
\end{equation}
see \eqref{14.1}.

 \bl
\label{prop:BarrierSecGWProp} For all $L $ sufficiently  large,   and all $0\leq   z\leq  10 L$ with $t_{z}\in \mathbb{N}$,  
\bea 
&&\Pbm \left(\gamma(l)\le  \sqrt{2T_{l}^{0,t_{z}}} \mbox{\,\ for }l=1,\ldots,L-1;\,T_{L}^{0,t_{z}}=0 \right]\nn\\
&&\hspace{2.5 in}  \asymp  (1+z)e^{ -2L-2z}e^{-z^{2}/4L}.\label{18.6}
\eea
 Similar estimates hold if we delete the barrier condition on some fixed finite interval.
\el

\begin{proof}[Proof of Lemma \ref{prop:BarrierSecGWProp}]
The left hand side
of \eqref{18.6}   can be written 
in terms of the critical Galton-Watson process $T_{l}, l=0,1,2,\ldots$ as
\bea
&&
\Pgw_{t_{z}}\left(\rho_{L}(L-l)+  l^{ 1/4}_{L}\le  \sqrt{2T_{l} },   \mbox{\,\ for }l=1,\ldots, L-1;\,T_{L} =0 \right).\nn
\eea
(\ref{18.6}) follows immediately from \cite[Theorem 1.1]{BRZ}, with $a=\rho_{L}L,$ $x=\rho_{L}L+z$, $ b=y=0$,
since $\{   T_{L} =0\}=\{  T_{L} \in [0,1] \}$.

\corJ{For the last statement, we simply note that following the proof of
 \cite[Lemma 2.3]{BRZ}
we can show that the analogue of \cite[Theorem 1.1]{BRZ} holds where we skip some fixed finite interval.}
\end{proof}

\bl\label{prop:BarrierSecGWPropLk}
If $(L-k) \log L/L=o_{L}(1)$ then for all $L $ sufficiently  large,   and all 
$0\leq   z \leq \log L$ with $t_{z}\in \mathbb{N}$,  
\bea
&&\Pbm \left(\rho_{L}(L-l)\le  \sqrt{2T_{l}^{0,t_{z}}} \mbox{\,\ for }l=1,\ldots, k;\,T_{L}^{0,t_{z}}=0 \right)\label{18.7}\\
&& \le  c (1+z)(L-k)^{ 1/2}e^{ -2L-2z-z^{2}/4L}.\nn
\eea 
\el
\begin{proof}[Proof of Lemma \ref{prop:BarrierSecGWPropLk}]  
  We rewrite this in terms of the critical Galton--Watson process with geometric offspring distribution, $T_{l}, l=0,1,2,\ldots$, as 
 \begin{equation}
\Pgw_{t_{z}}\left(\rho_{L}(L-l)\le  \sqrt{2T_{l} } \mbox{\,\ for }l=1,\ldots, k;\,T_{L}=0 \right).\label{18.12}
\end{equation}

We condition on $T_{k}$ as follows: let $v=\rho_{L}\left(L-k\right)$. Then 
\bea
&&
J_{1}:=\Pgw_{t_{z}}\left(\rho_{L}\left(L-l\right)\le\sqrt{2T_{l}},l=1,\ldots,k,T_{L}=0\right)\nn\\
&&\le\sum_{j=0}^{\infty}\Pgw_{t_{z}}\left(\rho_{L}\left(L-l\right)\le\sqrt{2T_{l}},l=1,\ldots,k,\sqrt{2T_{k}}\in I_{v+j}\right)\nn\\
&& \hspace{2in}\times\sup_{u \in I_{j}}\Pgw_{(   v+u)^{2}/2}\left(T_{L-k}=0\right)\nn\\
&&=\sum_{j=0}^{\infty}\Pgw_{t_{z}}\left(\rho_{L}\left(L-l\right)\le\sqrt{2T_{l}},l=1,\ldots,k,\sqrt{2T_{k}}\in I_{v+j}\right)\nn\\
&& \hspace{2in}\times\sup_{u\in I_{j}}\left(1-\frac{1}{L-k}\right)^{(v+u)^{2}/2}\nn.
\eea
 Since, by  (\ref{eq: tail bound}),
\[\Pgw_{t_{z}}\left(\sqrt{2T_{k}}\ge100L\right)\le ce^{-\left(\rho_{L}L+z-100L\right)^{2}/2L}\le e^{-100L},\]
we obtain 
\bea
&&J_1 \le\sum_{j=0}^{100L}\Pgw_{t_{z}}\left(\rho_{L}\left(L-l\right)\le\sqrt{2T_{l}},l=1,\ldots,k,\sqrt{2T_{k}}\in I_{v+j}\right)\nn\\
&&\hspace{1in}\times e^{-\frac{(   v+j)^{2}}{2(L-k)}}+e^{-100L}=:
J_{1,1}+e^{-100L}.\nn
\eea

By \cite[Theorem 1.1]{BRZ},  with $a=\rho_{L}L$, $x=\rho_{L}L+z$ recall (\ref{18.02t}), and $b=v, y=v+j$,  
\begin{eqnarray*}
J_{1,1}&\le& \sum_{j=0}^{100L}c\frac{(1+z)(1+j) }{k} \sqrt{\frac{\rho_{L}L}{k(   v+j)}}\,\, e^{-\frac{\left(\rho_{L}L+z-v-j\right)^{2}}{2k}}e^{-\frac{(   v+j)^{2}}{2(L-k)}}\\
&\le& c\sum_{j=0}^{100L}\frac{(1+z)(   1+j) }{k}\sqrt{\frac{L}{k(v+j)}}\,\, e^{-\frac{\left(\rho_{L}k+z-j\right)^{2}}{2k}}e^{-\frac{\left(\rho_{L}\left(L-k\right)+j\right)^{2}}{2(L-k)}}\\
&\leq&c(1+z)e^{-\frac{ \rho_{L}^{2}L }{2}-2z}\sum_{j=0}^{100L}   \frac{(1+j) }{k}\sqrt{\frac{L}{k(   v+j)}}\,\,e^{-\frac{(j-z)^{2}}{2k}-\frac{j^{2}}{2(L-k)}} \nn\\
&\leq &c(1+z)e^{-2L-2 z-z^{2}/2k}\sum_{j=0}^{100L}(1+j) \frac{1}{\sqrt{  L-k}}\,\,e^{-\frac{j^{2}}{2(L-k)}}e^{jz/k}.\nn
\end{eqnarray*}
But 
\begin{equation}
e^{ -z^{2}/4k}e^{-\frac{j^{2}}{4(L-k)}}e^{jz/k}\leq 1\label{multdef.13}
\end{equation}
which follows by considering separately $j\geq 4(L-k)z/k$ and $j< 4(L-k)z/k$, since in this case 
$j<z/4$ because our condition on $k$ implies that $(L-k)/k $ is tiny. It then follows that 
\begin{eqnarray*}
J_{1,1}&\le&  c(1+z)e^{-2L-2 z-z^{2}/4L}  \sum_{j=0}^{100L} (   1+j)\frac{1}{\sqrt{  L-k}}\,\,e^{-\frac{j^{2}}{4(L-k)}}\\ 
& \le  &c(1+z)  e^{-2L-2 z-z^{2}/4L}\sqrt{L-k}.
\end{eqnarray*}
\end{proof}

\bl\label{prop:BarrierSecGWPropk}
If $k\log L/L=o_{L}(1)$ and  $m=\rho_{L}(L-k)+j$, then for all $L $ sufficiently  large,   with $m^{ 2}/2\in \mathbb{N}$,  
\bea
&&\Pbm \left(\rho_{L}(L-l)\le  \sqrt{2T_{l}^{0,k,m^{ 2}/2}}\mbox{\,\ for }l=k+1,\ldots, L-1;\,T_{L}^{0,k,m^{ 2}/2}=0 \right) \nn\\
&&  \hspace{1 in}  \le  c \left(1+j\right)e^{-2\left(L-k\right)-2j -\frac{j^{2}}{4(L-k)}},\label{18.8a}
\eea
and if $j\leq \eta L$ and we skip the barrier from $k+1$ to $k+s$, with 
$s\leq \eta'  \log L$, for some $\eta, \eta'<\ff $, then
\bea
&&\Pbm \left(\rho_{L}(L-l)\le  \sqrt{2T_{l}^{0,k,m^{ 2}/2}}\mbox{\,\ for }l=k+s,\ldots, L-1;\,T_{L}^{0,k,m^{ 2}/2}=0 \right) \nn\\
&&  \hspace{1 in}  \le c\,(   1+j+\sqrt{s}) e^{-2\left(L-k\right)-2j -\frac{j^{2}}{4(L-k)}}.\label{18.8as}
\eea

\el

\begin{proof}  We first turn to the probability in  (\ref{18.8a}).
By the Markov property,  the probability in  question can be rewritten  in terms of the critical Galton-Watson process $T_{l}, l=0,1,2,\ldots$
as 
\[
M_{1}:=\Pgw_{m^{2}/2}\left(\rho_{L}\left((L-k)-l\right)\le\sqrt{2T_{l}},l=1,\ldots,L-k-1;\, T_{L-k}=0\right).
\]
By \cite[Theorem 1.1]{BRZ},  with  $a=\rho_{L}\left(L-k\right), x=m$ and $b=y=0$,  
\be
M_{1}\leq c\frac{\left(1+j\right) }{L-k} \,\, e^{-\frac{\left(\rho_{L}\left(L-k\right)+ j\right)^{2}}{2\left(L-k\right)}}.\label{18.3w}
\ee
We have
\bea
&&
e^{-\frac{\left(\rho_{L}\left(L-k\right)+ j\right)^{2}}{2\left(L-k\right)}}=e^{-\frac{\left(\rho_{L}\left(L-k\right)\right)^{2}}{2(L-k)}- j\rho_{L}-\frac{j^{2}}{2(L-k)}}\nn\\
&&\le ce^{-\frac{\left(2\left(L-k\right)-\frac{\log L}{2}\frac{L-k}{L} \right)^{2}}{2\left(L-k\right)}-2j-\frac{j^{2}}{ 3(L-k)}}\leq Ce^{\log L \frac{L-k}{L}}e^{-2\left(L-k\right)-2j -\frac{j^{2}}{3(L-k)}}.\nn
\eea
Using   the concavity of the logarithm, 
we get that 
\[ M_{1}\leq c \left(1+j\right)e^{-2\left(L-k\right)-2j -\frac{j^{2}}{3(L-k)}},\]
which gives (\ref{18.8a}).  


For (\ref{18.8as}) with $k'=k+
s$, $v= \rho_{L}(L-k')$ we bound
\bea
&&\Pbm \left(\rho_{L}(L-l)\le  \sqrt{2T_{l}^{0,k,m^{ 2}/2}}\mbox{\,\ for }l=k+s,\ldots, L-1;\,T_{L}^{0,k,m^{ 2}/2}=0 \right) \nn\\
&& \hspace{.5 in} \leq \sum_{j'=0}^{\ff}\Pgw_{m^{2}/2}\left( \sqrt{2T_{s}}\in I_{v+j'}\right)M'_{1}\label{180.1}
\eea
where
\begin{align*}
&M'_{1}
=\\
&\sup_{u\in I_{v+j'}}\Pgw_{u^{2}/2}\left(\rho_{L}\left((L-k')-l\right)\le\sqrt{2T_{l}},l=1,\ldots,L-k'-1;\, T_{L-k'}=0\right).
\end{align*}
By \cite[Proposition 1.4, Remark 2.2]{BRZ}
\bea
&&\Pgw_{m^{2}/2}\left( \sqrt{2T_{s}}\in I_{v+j'}\right)\leq  c\sqrt{\frac{m}{(v+j')s}}\,\,e^{-\(m-(v+j')\)^{2}/2s}\label{180.2}\\
&&\leq c\sqrt{\frac{1}{s}}\,\,e^{-\(\rho_{L}s+(j-j')\)^{2}/2s}\leq c\sqrt{\frac{1}{s}}\,\,e^{-2s-2(j-j')- (j-j')^{2}/3s}\nn
\eea
and as in the first part of this proof
\begin{equation}
M'_{1}\leq c \left(1+j'\right)e^{-2\left(L-k'\right)-2j' -\frac{j'^{2}}{3(L-k')}}.\label{180.3}
\end{equation}
Thus
\begin{eqnarray}
&&
\sum_{j'=0}^{\ff}\Pgw_{m^{2}/2}\left( \sqrt{2T_{s}}\in I_{v+j'}\right)M'_{1}\label{180.4}\\
&&\leq   c e^{-2\left(L-k\right)-2j  }\sqrt{\frac{1}{s}}\,\,\sum_{j'=0}^{\ff}  \left(1+j'\right)e^{  -\frac{j'^{2}}{3(L-k')}}    e^{  -(j'-j)^{2}/3s}.  \nonumber
\end{eqnarray}
Considering separately the cases where $j'\leq 1.1 j$ and $j'> 1.1 j$ we see that for   $L $ sufficiently  large,
\begin{equation}
e^{  -\frac{j'^{2}}{3(L-k')}}    e^{  -(j'-j)^{2}/3s}\leq ce^{  -\frac{j^{2}}{4(L-k)}}    e^{  -(j'-j)^{2}/4s},\label{180.5}
\end{equation}
and (\ref{18.8as}) follows.
\end{proof}

\bl\label{prop:BarrierSecGWPropWas}
If  $v=\rho_{L}(L-k)+u$
then for $L$ large  with $v^{2}/2\in \mathbb{N}$,  and
$1\le k,\wt k, u,j\leq  L^{3/4}$,
\begin{align}
&\label{18.30}\\
&
\Pbm\left(\rho_{L}(L-l)\le\sqrt{2T^{0,k, v^{2}/2}_{l}},l=k+1,\ldots,k+\wt k;\sqrt{2T^{0,k, v^{2}/2}_{k+\wt k}}\in
I_{\rho_{L}(L-k-\wt k)+j}
\right)\nn\\
&\nn
\le C\frac{\left(1+u\right)\left(1+j\right)}{\wt k^{3/2}}e^{-2\wt k-2\left(u-j\right)-\frac{(u-j)^{2}}{4\wt k}}.
\end{align}
In addition, if we skip the barrier from $k+1$ to $k+3$, 
\begin{align}
\label{18.30y} &\\
&
\Pbm\left(\rho_{L}(L-l)\le\sqrt{2T^{0,k, v^{2}/2}_{l}},l=k+3,\ldots,k+\wt k;\sqrt{2T^{0,k, v^{2}/2}_{k+\wt k}}\in
I_{\rho_{L}(L-k-\wt k)+j}
\right)\nn\\
&\le C\frac{\left(1+u \right)\left(1+j\right)}{\wt k^{3/2}}e^{-2\wt k-2\left(u-j\right)-\frac{(u-j)^{2}}{4\wt k}}.\nn
\end{align}
\el

\begin{proof}  
By the Markov property,   the probability in  question can be rewritten  in terms of the critical Galton-Watson process $T_{l}, l=0,1,2,\ldots$
as 
\bea
&&
 V_{1}:=\Pgw_{ v^{2}/2}\left(\rho_{L}\left(L-k-l\right)\le\sqrt{2T_{l}},l= 1,\ldots, \wt k;\right.\\
&&\left.\hspace{2 in}\sqrt{2T_{ \wt k}}\in
I_{\rho_{L}\left(L-k-\wt k\right)+ j } \nn
\right).\nn
\eea 
This is a linear barrier  of length $\wt k$. At the start of the barrier it is at distance
$ u$ from the starting point, and at the end it is at distance $ j$
from the end  point. Therefore by  \cite[Theorem 1.1]{BRZ} we
have that the probability is at most
\bea
&&
c\frac{\left(1+ u\right)\left(1+ j\right)}{\wt k^{3/2}}\sqrt{
  \frac{\rho_{L}\left(L-k\right)+ u} {\rho_{L}\left(L-k-\wt k\right)+ j}} \nn\\
&&\hspace{2 in}e^{-\frac{\left(\rho_{L}\left(L-k\right)+ u-\left(\rho_{L}\left(L-k-\wt k\right)+ j\right)\right)^{2}}{2\wt k}}\nn\\
&&\leq c\frac{\left(1+u\right)\left(1+j\right)}{\wt k^{3/2}}e^{-\frac{\left(\rho_{L}\wt k+ u-  j \right)^{2}}{2\wt k}},\nn
\eea
 where we have bounded the square root using the fact that    $k,\wt k, u,j<L^{3/4}$ so that the ratio inside the square root is $\asymp 2L/2L=1$.
Using  $k,\wt k, u,j<L^{3/4}$ again we see that  
\bea
e^{-\frac{\left(\rho_{L}\wt k+ u-  j \right)^{2}}{2\wt k}}&\leq &ce^{-\frac{\left(\rho_{L}\wt k\right)^{2}}{2\wt k}}e^{ -2\left(u- j\right) }e^{-\frac{\left(u- j \right)^{2}}{2\wt k}}\nn\\
&=&ce^{-\frac{\left(2\wt k-\frac{\log L}{2}\frac{\wt k}{L} \right)^{2}}{2\wt k}}e^{ -2\left(u- j \right) }e^{-\frac{\left(u- j \right)^{2}}{2\wt k}}\nn\\
&\leq &Ce^{- 2\wt k }e^{ -2\left(u- j \right) }e^{-\frac{\left(u- j \right)^{2}}{2\wt k}}.\nn
\eea
 This gives (\ref{18.30}).

  (\ref{18.30y}) follows as in the proof of the previous Lemma.
\end{proof}

\section{Appendix II: Conditional excursion probabilities}\label{probends}

The following is a modification of \cite{DPRZthick} adapted to our situation.
Fix   $ k \geq 1$,   $k' \geq k+10$ and let ${\mathcal G}_k^y$ to be the $\sigma$-algebra generated by the excursions
from $\partial B_{d}(y,h_{k-1})$ to $\partial B_{d}(y,h_{k})$ (and if we start outside $\partial B_{d}(y,h_{k-1})$ we include the initial excursion to $\partial B_{d}(y,h_{k-1})$). 
Note that ${\mathcal G}_k^y$ includes the information on the end points of the excursions from  $\partial B_{d}(y,h_{k})$ to  $\partial B_{d}(y,h_{k-1})$.

Recall that $\trav{n}{y}{k-1}{k}{y}{l-1}{l}$ is the number of excursions    from
	$\partial B_{d}(y,h_{l-1})\to\partial B_{d}(y,h_{l})$ during the first $n$
 excursions    from
	$\partial B_{d}(y,h_{k })\to\partial B_{d}(y,h_{k-1})$, see \eqref{eq-calgary}.

\begin{lemma}\label{recursionends}
For any $L-2k\geq k '>k+10\geq 11$ and $n>1$,
let $\mathcal{A}_{k'}$ denote an event, measurable on the excursions of the Brownian motion 
inside $ B_{d}(y,h_{k'})$  during the first $n$
 excursions    from
	$\partial B_{d}(y,h_{k })\to\partial B_{d}(y,h_{k-1})$. Then,
\bea
&& \Pbm(\mathcal{A}_{k'} \,
| \, \trav{n}{y}{k-1}{k}{y}{k'-1}{k'} = m_{k'}, \, {\mathcal G}_k^y)
\nn\\
&&=
\left(1+ O\( (k'-k+1)\frac{h_{k'-1}}{h_{k-1}}\)\right)^{m_{k'}} \Pbm(\mathcal{A}_{k'}\,
| \, \trav{n}{y}{k-1}{k}{y}{k'-1}{k'} = m_{k'})  \,.
\label{m1.2e0}
\eea
In particular, 
%
for  all $m_l ;l=k',\ldots L$, and all $y \in \S^2$, 
\bea
&& \Pbm(\trav{n}{y}{k-1}{k}{y}{l-1}{l} = m_l ;l=k'+1,\ldots L \,
|\, \trav{n}{y}{k-1}{k}{y}{k'-1}{k'} = m_{k'}, \, {\mathcal G}_k^y) \nonumber \\ 
&&=
\left(1+ O\( (k'-k+1)\frac{h_{k'-1}}{h_{k-1}}\)\right)^{m_{k'}} 
\label{m1.2e}\\
&&\hspace{.4 in}\Pbm(\trav{n}{y}{k-1}{k}{y}{l-1}{l} = m_l ;l=k'+1,\ldots L \,
|\, \trav{n}{y}{k-1}{k}{y}{k'-1}{k'} = m_{k'} ).\nn
\eea
\end{lemma}
\noindent
In Lemma \ref{recursionends}
we have  $| O\( (k'-k+1)\frac{h_{k'-1}}{h_{k-1}}\)|\leq 50\( (k'-k+1)\frac{h_{k'-1}}{h_{k-1}}\)$. 

The key to the proof of Lemma \ref{recursionends} is the following Lemma. 

\begin{lemma}\label{lemprobends}
For $ k \geq 0$,   $k' \geq k+10$
and a Brownian path $X_\cdot$ 
starting at $z\in \partial B_{d}(y,h_{k'-1})$,
let $\bar{\tau}=\inf \{t>0\, :\,X_t\in \partial B_{d}(y,h_{k})\}$ and let 
$F$ denote an event measurable with respect to the path of the Brownian motion
inside $B_d(y,h_{k'})$, prior to $\bar \tau$.
Then,  
uniformly in $z\in \partial B_{d}(y,h_{k'-1})$, $v\in \partial B_{d}(y,h_{k})$ and $y$, 
\begin{equation}
\label{m1.5eua}
  \left|\frac{\Ebm^z (F\,\big |\,X_{\bar{\tau}}=v)}{\Ebm^z 
      (F   )}-1\right|
\leq  4.8 (k'-k)\frac{h_{k'-1}}{h_k}.
\end{equation}

 If
$Z_{k'}=\trav{1}{y}{k}{k'-1}{y}{k'-1}{k'}$ 
denotes the number of excursions of
the path  from 
$\partial B_{d}(y,h_{k'-1})\to\partial B_{d}(y,h_{k'})$, prior to
$\bar{\tau}=\inf \{t>0\, :\,X_t\in \partial B_{d}(y,h_{k})\}$, then,  
uniformly in $z\in \partial B_{d}(y,h_{k'-1})$, $v\in \partial B_{d}(y,h_{k})$, $j$ and $y$, 
\begin{equation}
\label{m1.5eu}
  \left|\frac{\Ebm^z (F ;Z_{k'}=j \,\big |\,X_{\bar{\tau}}=v)}{\Ebm^z 
      (F;Z_{k'}=j )}-1\right|
\leq  4.8 (k'-k)\frac{h_{k'-1}}{h_k},
\end{equation}
and
\begin{equation}
\label{m1.5euc}
  \left|\frac{\Ebm^z (F \,\big |\,Z_{k'}=j, X_{\bar{\tau}}=v)}{\Ebm^z 
      (F\,\big |\,Z_{k'}=j )}-1\right|
\leq  9.8 (k'-k)\frac{h_{k'-1}}{h_k}.
\end{equation}
Further, uniformly in $x\in \partial B_{d}(y,h_{k+1})$, $v\in \partial B_{d}(y,h_{k})$, $j\geq 1$, and $y$,  
\begin{equation}
\label{m1.5eucc}
  \left|\frac{\Ebm^x (F \,\big |\,Z_{k'}=j, X_{\bar{\tau}}=v)}{\Ebm^{\la_{k'-1}} 
      (F\,\big |\,Z_{k'}=j )}-1\right|
\leq  28.2 (k'-k)\frac{h_{k'-1}}{h_k}.
\end{equation}
where $\la_{k'-1}$ denotes uniform measure on $\partial B_{d}(y,h_{k'-1})$.
\end{lemma}

In words, conditioning by the endpoint of the excursion at level $k$ has only minor influence on the probability
of events involving pieces of excursions inside the ball of radius $h_{k'}$.

\noindent{\bf Proof of Lemma \ref{lemprobends}:}
We first prove \eqref{m1.5eu}. \eqref{m1.5eua}
follows from it by multiplying both sides by
$\Ebm^z (F;
    Z_{k'}=j )$ and summing over $j$.
    
    Toward the proof of \eqref{m1.5eu}, it is enough to prove that
    \begin{equation}
\Big| \Ebm^z (F ;Z_{k'}=j \,\big |\,X_{\bar{\tau}}=v)-
\Ebm^z (F; Z_{k'}=j )\Big|\leq 
     4.8 (k'-k)\frac{h_{k'-1}}{h_k}\Ebm^z (F;
    Z_{k'}=j ).\label{92.1}
\end{equation}
    Without loss of generality we can take $y=0$.  
Fixing $k \geq 0$ and
$z \in \partial B_{d}(0, h_{k'-1})$,  
consider  a positive continuous function $g$ on $\partial B_{d}(0,h_{k})$.
Let $\bar{\tau} = \inf \{ t:\, w_t \in \partial B_{d}(0,h_{k})\}$,
$\tau_0=0$ and for $i=0,1,\ldots$ define
\begin{eqnarray*}
\tau_{2i+1} & = & \inf \{ t \geq \tau_{2i}  :\; X_t \in \partial
B_{d}(0,h_{k'}) \cup \partial B_{d}(0,h_{k}) \} \\
\tau_{2i+2} & = &
 \inf \{ t \ge \tau_{2i+1} :\; X_t \in \partial B_{d}(0,h_{k'-1})\}\,.
\end{eqnarray*}
 Then, by the strong Markov property at $\tau_{2j}$,
\beaa
&&\Ebm^z (g(X_{\bar\tau}); F
    ;Z_{k'}=j  )
=
\Ebm^z \big[ \E^{X_{\tau_{2j}}} (g(X_{\bar\tau}); Z_{k'}=0) ;F,
Z_{k'}=j, {\bar\tau} \geq \tau_{2j} \big]
\,.
\eeaa
and, substituting $g=1$,
\bea
&&
\Pbm^z \(F,Z_{k'}=j \)\nn
=
\Ebm^z \left[ \Pbm^{X_{\tau_{2j}}} ( Z_{k'}=0) ;F,
Z_{k'}=j, {\bar\tau} \geq \tau_{2j} \right]\nn
 \;.
\eea
Consequently,
\beaa
&&\Pbm^z \(F, Z_{k'}=j \)
\inf_{x\in \partial B_{d}(0,h_{k'-1})}
\frac{\E^{x} \( g(X_{\bar\tau});\, Z_{k'}=0 \)} {\Pbm^x \(Z_{k'}=0\)}\\
&&\leq \Ebm^z [ g(X_{\bar\tau}); F, Z_{k'}=j ]\\ 
&& \leq 
\Pbm^z \(F, Z_{k'}=j \)
\sup_{x\in \partial B_{d}(0,h_{k'-1})}
\frac{\E^{x} \( g(X_{\bar\tau});\, Z_{k'}=0 \)} {\Pbm^x \(Z_{k'}=0\)}
\,,
\eeaa
and, using again the strong Markov property at time $\tau_{2}$,
\begin{eqnarray*}
\E^{x} \( g(X_{\bar\tau});\, Z_{k'}=0 \) &=&
\E^{x} \( g(X_{\bar\tau}) \) -
\E^{x} \( \E^{X_{\tau_2}} (g(X_{\bar\tau}));\, Z_{k'} \geq 1 \) \\
&\leq& \E^{x} \( g(X_{\bar\tau}) \) -
\Pbm^{x} (Z_{k'} \geq 1) \inf_{y\in \partial B_{d}(0,h_{k'-1})} \E^{y} (g(X_{\bar\tau})),
\end{eqnarray*}
with the reversed inequality if the $\inf$ is replaced by $\sup$.
Writing $p=\Pbm^{x} (Z_{k'} \geq 1) =1-1/(k'-k)$
whenever $x\in \partial B_{d}(0,h_{k'-1})$, c.f.
(\ref{isc.5}), it thus
follows that
\begin{eqnarray}
\label{eq10.1}
&& \Ebm^z \left[  g(X_{\bar\tau}); F, Z_{k'}=j  \right] \\
&\leq&
\Pbm^z\(F, Z_{k'}=j  \)
\nonumber \\ & &\cdot 
\E^z\( g(X_{\bar\tau})\)
(1-p)^{-1}
\Big\{ \frac{\sup_{x\in \partial B_{d}(0,h_{k'-1})} E^x( g(X_{\bar\tau}))} 
{\inf_{y\in \partial B_{d}(0,h_{k'-1})} E^y( g(X_{\bar\tau}))} -p\Big\}, 
\nn
\end{eqnarray}
with the reversed inequality if the $\inf$ and $\sup$ are interchanged.

Let $p_{B_{d}(   0, h_{k})}(   z,x)$ denote 
 the Poisson kernel  for $B_{d}(   0, h_{k})\subseteq \S^{ 2}$,  see
 \eqref{poisson}. Then,
$$
\E^{z'} g (X_{\bar{\tau} })
= \int_{\partial B_{d}(0,h_{k})}  p_{B_{d}(0,h_{k})}(   z',u) g(u) du.
$$
Therefore,
we get the Harnack inequality
\bea
 \frac{\sup_{x\in \partial B_{d}(0,h_{k'-1})} \E^x( g(X_{\bar\tau}))}
{\inf_{y\in \partial B_{d}(0,h_{k'-1})} \E^y( g(X_{\bar\tau}))}& \leq &
\frac{\max_{x\in \partial B_{d}(0,h_{k'-1}),u\in \partial B_{d}(0,h_{k })}p_{B_{d}(0,h_{k})}( x,u)}
{\min_{y\in \partial B_{d}(0,h_{k'-1}),u\in \partial B_{d}(0,h_{k })} p_{B_{d}(0,h_{k})}( y,u)}\nn \\
&=&\frac{\max_{y\in \partial B_{d}(0,h_{k'-1}),u\in \partial B_{d}(0,h_{k })}\sin^{ 2} ( d(  u,y)/2)}
{\min_{x\in \partial B_{d}(0,h_{k'-1}),u\in \partial B_{d}(0,h_{k })} \sin^{ 2} ( d(  u,x)/2)}
\nn \\
&\leq&
\frac{\sin^{ 2}((h_k+h_{k'-1})/2) }{\sin^{ 2}((h_k-h_{k'-1})/2) }=:B_{k,k'-1}^2.\label{eq10.2}
\eea
Writing $\al=(h_k-h_{k'-1})/2$, $\bb=h_{k'-1}$  we have
\be
B_{k,k'-1}=\frac{\sin (\al+\bb) }{\sin (\al) }=\cos(\bb)+\frac{\sin ( \bb)\cos (\al) }{\sin (\al) }.
\label{eq10.2a}
\ee
Using the bounds  $\sin (\alpha)\geq .9\alpha $, $\sin (\bb)\leq \bb $, $\cos (\al) , \cos \beta\leq 1$, which are valid for  $0<\alpha,\beta <0.01$, we obtain that 
$B_{k,k'-1}$ is bounded above by 
\begin{equation}
1+ \bb/.9\al=1+2 h_{k'-1}/.9(h_k-h_{k'-1}).\label{jr,b}
\end{equation}
Since $k' \geq k+10$ we have that  $.99 h_k \leq h_k-h_{k'-1}$, so that 
(\ref{jr,b}) is bounded by 
$1+2.3h_{k'-1}/h_k$, and therefore $B_{k,k'-1}^2$ 
is bounded above by 
$1+4.8h_{k'-1}/h_k$ while its reciprocal is bounded below by
$1-4.8h_{k'-1}/h_k$.
Substituting this bound into 
 (\ref{eq10.1}) (and its reversed version with inequality and sup/inf interchanged), and using the value of $p$, yields 
 (\ref{92.1}). 
 
 For (\ref{m1.5euc}) we have by (\ref{92.1}), first for $F$ and then replacing $F$ by $1$
\[ \Ebm^z (F ;Z_{k'}=j \,\big |\,X_{\bar{\tau}}=v) \leq 
     \(1+4.8 (k'-k)\frac{h_{k'-1}}{h_k}\)\Ebm^z (F; Z_{k'}=j ).\]
and
\[ \Ebm^z (Z_{k'}=j \,\big |\,X_{\bar{\tau}}=v) \geq 
     \(1-4.8 (k'-k)\frac{h_{k'-1}}{h_k}\)\Ebm^z (Z_{k'}=j ).\]
This gives the upper bound 
\begin{eqnarray}
\Ebm^z (F \,\big |\,Z_{k'}=j, X_{\bar{\tau}}=v)&=& { \Ebm^z (F ;Z_{k'}=j \,\big |\,w_{\bar{\tau}}=v) \over  \Ebm^z (Z_{k'}=j \,\big |\,w_{\bar{\tau}}=v)}
\nn\\
&\leq &       \(1+9.8 (k'-k)\frac{h_{k'-1}}{h_k}\)\Ebm^z (F \,\big |\,Z_{k'}=j). \label{}
\end{eqnarray}
The lower bound is obtained similarly.

We finally turn to the proof of  \eqref{m1.5eucc}. Let $\wh{\tau}=\inf \{t>0\, :\,X_t\in \partial B_{d}(   0, h_{k'-1})\}$. We will need to show that
the law of $X_{\wh\tau}$, started at $x\in \partial B_d(0, h_{k+1})$ and
conditioned on the event $D=\{\wh \tau< \bar \tau\}$, is close to uniform. Toward this end, we begin  with an unconditional statement. 
Let $0^{\ast}$ denote the antipode of $0\in \S^{ 2}$. Note that $\wh{\tau}=\inf \{t>0\, :\,X_t\in \partial B_{d}(   0, h_{k'-1})=\partial B_{d}(  0^{\ast}, \pi-h_{k'-1})\}$.
 Then for any positive  function $h$ on  $\partial B_{d}(   0, h_{k'-1})=\partial B_{d}(  0^{\ast}, \pi-h_{k'-1})$,  and $x\in \partial B_{d}(   0, h_{k+1 })=\partial B_{d}(  0^{\ast}, \pi-h_{k+1 })$, writing   $\bar h=h(X_{\wh \tau})$,
\be
\E^{x}\bar h
= \int_{\partial B_{d}(   0, h_{k'-1})}  p_{B_{d}(0^{\ast}, \pi-h_{k'-1})}( x,u) h(u) d\la_{k'-1}(u),\label{poissonk}
\ee
where $\la_{k'-1}$ is uniform measure on $\partial B_{d}(   0, h_{k'-1})$. We have,   see (\ref{poisson}),
  \bea
p_{B_{d}(0^{\ast}, \pi-h_{k'-1})}( x,u)&=&\frac{\sin^{ 2} (\pi/2-h_{k'-1}/2)-\sin^{ 2} (\pi/2-h_{k+1}/2)}{\sin^{ 2} ( d( u,x)/2)} \nn\\
&=&\frac{\sin^{ 2} (h_{k+1}/2)-\sin^{ 2} (h_{k'-1}/2)}{\sin^{ 2} ( d( u,x)/2)}.\label{poissonj}
  \eea
For the upper bound, using $\sin^{ 2}(a)-\sin^{ 2}(b)=\sin (a+b)\sin (a-b)$
  \begin{equation}
 p_{B_{d}(0^{\ast}, \pi-h_{k'-1})}( x,u)\leq \frac{\sin  (  ( h_{k+1}+h_{k'-1})/2)}{\sin  (  ( h_{k+1}-h_{k'-1})/2)}, \label{poissonj1}
  \end{equation}
which is   $B_{k+1, k'-1}$, see (\ref{eq10.2a}),   so that 
   \begin{equation}
 p_{B_{d}(0^{\ast}, \pi-h_{k'-1})}( x,u)\leq   1+2.3 \(\frac{   h_{k'-1}  }{ h_{k+1}  }\). \label{poissonj3}
  \end{equation} 
  
  For the lower bound, using $\sin^{ 2}(a)-\sin^{ 2}(b)=\sin (a+b)\sin (a-b)$ again we  note that
    \begin{equation}
p_{B_{d}(0^{\ast}, \pi-h_{k'-1})}( x,u)\geq \frac{\sin  (  ( h_{k+1}-h_{k'-1})/2)}{\sin  (  ( h_{k+1}+h_{k'-1})/2)}={1 \over B_{k+1, k'-1}}.  \label{poissonj5}
  \end{equation}
It follows  that
     \begin{equation}
p_{B_{d}(0^{\ast}, \pi-h_{k'-1})}( x,u)\geq 1-2.4 \(\frac{   h_{k'-1}  }{ h_{k+1}  }\).  \label{poissonj8}
  \end{equation}
   Combining this with (\ref{poissonk}), we conclude that with $h^\lambda=\int h(u) d\lambda_{k'-1}(u)$,
 %
  \begin{equation}
  \label{eq-monday1}
  \big|\Ebm^x(\bar h)-h^\lambda\big| \leq 2.4 \(\frac{   h_{k'-1}  }{ h_{k+1}  }\) h^\lambda=:\delta_{k'-1,k+1}h^\lambda.
  \end{equation}
  
  We next turn to proving the analogous  conditional statement. Recall the event $D=\{\wh \tau< \bar \tau\}$.
By the Markov property $\Ebm^x(\bar h, \{\bar \tau<\wh \tau\})=\Ebm^x(\Ebm^{X_{\bar \tau}}(\bar h), \{\bar \tau<\wh \tau\})$, and hence by  \eqref{eq-monday1}  with $k+1$ replaced by $k$
  \begin{equation}
  \label{eq-monday2}
  \big|\Ebm^x(\bar h|D^{c})- h^\lambda\big| \leq\delta_{k'-1,k}h^\lambda.
  \end{equation}
  Note by (\ref{isc.5}) that  $\Pbm^x(D)= 1/(k'-k-1)=:\delta'_{k',k}$ and is independent of $x$. We have
  \[ \Ebm^x(\bar h)=\Ebm^x(\bar h|D)\Pbm^x(D)+\Ebm^x(\bar h|D^c)\Pbm^x(D^c)\]
  and therefore, using the last display and \eqref{eq-monday1} and \eqref{eq-monday2} in the inequality,
  \begin{align}
  \nn\big|\Ebm^x(\bar h|D)-h^\lambda\big|&=\big|\frac{1}{\Pbm^{x}(D)} \left(\Ebm^x(\bar h)-\Ebm^x(\bar h|D^c)\right)+\Ebm^x(\bar h|D^c) -h^\lambda\big|\\
  &\hspace{-.5 in}\leq \left(\frac{(\delta_{k'-1,k+1}+ \delta_{k'-1,k}  )}{\delta'_{k',k}} +\delta_{k'-1,k} \right) h^\lambda\leq 9.6(k'-k)\(\frac{   h_{k'-1}  }{ h_{k}  }\)h^\lambda.\label{eq-monday3}
  \end{align}
This gives the desired conditional estimate.

To obtain \eqref{m1.5eucc} we first consuder $H=F1_{\{Z_{k'}=j\}} g(X_{\bar{\tau}})$. For all $j\geq 1$ we have that $H=H\circ\th_{\wh{\tau}}\,\,\,1_{\{\wh{\tau}<\bar{\tau}\}}$.          Hence
 by the strong Markov property, 
$\Ebm^x (F1_{\{Z_{k'}=j\}}g(X_{\bar{\tau}}))=\Ebm^x \(\Ebm^{X_{\wh{\tau}}  } \(F1_{\{Z_{k'}=j\}} g(X_{\bar{\tau}})\), \,D\)$.  Let $h(y)= \Ebm^{y  } \(F1_{\{Z_{k'}=j\}} g(X_{\bar{\tau}})\)$. (\ref{eq-monday3}) then shows that
\begin{eqnarray}
&&|\Ebm^x (F1_{\{Z_{k'}=j\}}g(X_{\bar{\tau}}))-\Ebm^{\la} (F1_{\{Z_{k'}=j\}}g(X_{\bar{\tau}}))\Pbm^x(D)|
\label{1922}\\
&&   \leq 9.6(k'-k)\(\frac{   h_{k'-1}  }{ h_{k}  }\)\Ebm^{\la} (F1_{\{Z_{k'}=j\}}g(X_{\bar{\tau}}))\Pbm^x(D).\nonumber
\end{eqnarray}
Using this also with $F=1$, proceeding as in the proof of (\ref{m1.5euc}),  and then letting $g\to \de_{v}$ gives  
\begin{eqnarray}
&&|\Ebm^x (F\,\big |\,Z_{k'}=j, X_{\bar{\tau}}=v)-\Ebm^{\la} (F\,\big |\,Z_{k'}=j, X_{\bar{\tau}}=v)|
\label{1923}\\
&&   \leq 19.3(k'-k)\(\frac{   h_{k'-1}  }{ h_{k}  }\)\Ebm^{\la} (F\,\big |\,Z_{k'}=j, X_{\bar{\tau}}=v),\nonumber
\end{eqnarray}
and (\ref{m1.5eucc}) then follows from (\ref{m1.5euc}).
\qed

 \noindent{\bf Proof of Lemma \ref{recursionends}:} This follows from Lemma \ref{lemprobends}
in the same manner as \cite[Lemma 7.3]{DPRZthick} was derived from  \cite[Lemma 7.4]{DPRZthick}: using the strong Markov property, each excursion 
from $\partial B_{d}(y,h_{k'})$ to $\partial B_{d}(y,h_{k-1})$ will contribute a multiplicative
factor $(1+50 (k'-k+1)h_{k'-1}/{h_{k-1}})$ to the probability.
(We emphasize that no rotation invariance of the event 
${\mathcal A}_{k'}$ is used in the argument, since none was imposed on $F$
in Lemma \ref{lemprobends}.)

In more detail, for the $i$'th excursion from $\partial B_{d}(y,h_{k})$ to $\partial B_{d}(y,h_{k-1})$
let $x_{i}, v_{i}$ denote the the starting and endpoint of the excursion and 
let $Z_{k'}^{i}$ denote the number of excursions from $\partial B_{d}(y,h_{k'-1})$ to $\partial B_{d}(y,h_{k'})$. It suffices to show that
\bea
&&\Pbm \({\mathcal A}_{k'} \,\big |\,x_{i}, v_{i}, Z^{i}_{k'}=j_{i}; i\in [1, n]\)\label{indf.1}\\
&&=\left(1+O\( (k'-k+1)\frac{h_{k'-1}}{h_{k-1}}\)\right)^{\sum_{i=1}^{n}j_{i}}\Pbm \({\mathcal A}_{k'} \,\big |\,Z^{i}_{k'}=j_{i}; i\in [1, n]\),\nn
\eea
with $| O\( (k'-k+1)\frac{h_{k'-1}}{h_{k-1}}\)|\leq 50 (k'-k+1)\frac{h_{k'-1}}{h_{k-1}}$. 

Let $F=\prod_{i=1}^{n}F_{i}$ where $F_{i}$ depends on the excursions inside $B_{d}(y,h_{k'})$ during the $i$'th excursion from $\partial B_{d}(y,h_{k})$ to $\partial B_{d}(y,h_{k-1})$. We set $F_{i}=1$ if $Z^{i}_{k'}=0$. Then to prove (\ref{indf.1}) it suffices to show that
\bea
&&\Ebm \(F \,\big |\,x_{i}, v_{i}, Z^{i}_{k'}=j_{i}; i\in [1, n]\)\label{indf.2}\\
&&=\left(1+O\( (k'-k+1)\frac{h_{k'-1}}{h_{k-1}}\)\right)^{\sum_{i=1}^{n}j_{i}}\Ebm \(F \,\big |\,Z^{i}_{k'}=j_{i}; i\in [1, n]\).\nn
\eea
By the Markov property and (\ref{m1.5eucc}) with $k$ replaced by $k-1$ we have 
\bea
&&\hspace{.3 in}\Ebm \(\prod_{i=1}^{n}F_{i} \,\big |\,x_{i}, v_{i}, Z^{i}_{k'}=j_{i}; i\in [1, n]\)\label{indf.3}\\
&&=\Ebm \(\prod_{i=1}^{n-1}F_{i} \,\big |\,x_{i}, v_{i}, Z^{i}_{k'}=j_{i}; i\in [1, n-1]\)\Ebm^{x_{n}} \(F_{n} \,\big |\, v_{n}, Z^{n}_{k'}=j_{n}\)\nn\\
&&\leq \left(1+28.2\( (k'-k+1)\frac{h_{k'-1}}{h_{k-1}}\)\right)^{1_{\{j_{n}>0\}}}\nn\\
&&\hspace{.3 in}\Ebm \(\prod_{i=1}^{n-1}F_{i} \,\big |\,x_{i}, v_{i}, Z^{i}_{k'}=j_{i}; i\in [1, n-1]\)\Ebm^{\la_{k'-1}} \( F_{n} \,\big |\, Z^{n}_{k'}=j_{n}\),\nn
\eea
and by induction
\bea
&&\hspace{.3 in}\Ebm \(\prod_{i=1}^{n}F_{i} \,\big |\,x_{i}, v_{i}, Z^{i}_{k'}=j_{i}; i\in [1, n]\)\label{indf.4}\\ 
&&\leq \left(1+28.2\( (k'-k+1)\frac{h_{k'-1}}{h_{k-1}}\)\right)^{\sum_{i=1}^{n}j_{i}}\prod_{i=1}^{n}\Ebm^{\la_{k'-1}} \( F_{i} \,\big |\, Z^{i}_{k'}=j_{i}\).\nn
\eea
Similarly, by the analogue of (\ref{1923}), but without conditioning on the endpoint, which follows from (\ref{1922}) with $g\equiv1$, we have
\bea
&&\hspace{.3 in}\Ebm \(\prod_{i=1}^{n}F_{i} \,\big |\,  Z^{i}_{k'}=j_{i}; i\in [1, n]\)\label{indf.5}\\ 
&&\geq \left(1-19.3\( (k'-k+1)\frac{h_{k'-1}}{h_{k-1}}\)\right)^{\sum_{i=1}^{n}j_{i}}\prod_{i=1}^{n}\Ebm^{\la_{k'-1}} \( F_{i} \,\big |\, Z^{i}_{k'}=j_{i}\).\nn
\eea
Together this gives the upper bound in  (\ref{indf.2}) and the lower bound is proven similarly.\qed

\bigskip
\noindent
\begin{tabular}{lll} & David Belius   \\
& Institute of
Mathematics  \\
& University of Zurich  \\
& CH-8057 Zurich, Switzerland \\ 
& david.belius@cantab.net\\ 
& &\\
& & \\
& Jay Rosen\\
& Department of Mathematics\\
&  College of Staten Island, CUNY\\
& Staten Island, NY 10314 \\
& jrosen30@optimum.net\\
& &\\
& & \\
 & Ofer Zeitouni\\
& Department of Mathematics\\
& Weitzmann Institute and Courant Institute, NYU\\
& Rehovot 32000, Israel and NYC, NY 10012 \\
& ofer.zeitouni@weizmann.ac.il
\end{tabular}

\end{document}